\newcommand{\nc}{\newcommand}
\nc{\dmo}{\DeclareMathOperator}
\dmo{\ra}{\rightarrow}
\dmo{\N}{\mathbb{N}}
\dmo{\Z}{\mathbb{Z}}
\dmo{\Q}{\mathbb{Q}}
\dmo{\R}{\mathbb{R}}
\dmo{\C}{\mathcal{C}}
\dmo{\Ha}{\mathbb{H}}
\dmo{\AC}{\mathcal{AC}}
\dmo{\Mod}{Mod}
\dmo{\Sp}{Sp}
\dmo{\PMod}{PMod}
\dmo{\B}{B}
\dmo{\PB}{PB}
\dmo{\PR}{PSL(2,\mathbb{R})}
\dmo{\I}{\mathcal{I}}
\dmo{\el}{\ell_{\C}}
\dmo{\NN}{\mathcal{N}}
\dmo{\T}{\mathcal{T}}
\dmo{\rk}{rk}
\dmo{\w}{\omega}
\dmo{\F}{\mathcal{F}}
\tikzset{->-/.style={decoration={
  markings,
  mark=at position #1 with {\arrow{>}}},postaction={decorate}}}
\theoremstyle{plain}
\newtheorem{thm}{Theorem}[section]
\newtheorem{lem}[thm]{Lemma}
\newtheorem{proposition}[thm]{Proposition}
\numberwithin{equation}{section}
\newtheorem{property}[thm]{Property}
\newtheorem{fact}[thm]{Fact}
\theoremstyle{definition}
\newtheorem{definition}[thm]{Definition}
\newtheorem{notation}[thm]{Notation}
\theoremstyle{remark}
\newtheorem{remark}[thm]{Remark}
\title[Simple length spectra as moduli]
{Simple length spectra as moduli for hyperbolic surfaces\\and Rigidity of length identities}
\date{\today}
\author{Hyungryul Baik}
\address{%
		Department of Mathematical Sciences, KAIST\\
		291 Daehak-ro Yuseong-gu, Daejeon, 34141, South Korea 
}
\email{%
        hrbaik@kaist.ac.kr
}
\thanks{The first author was partially supported by Samsung Science \& Technology Foundation grant No.~SSTF-BA1702-01. The revision of the manuscript was supported by the National Research Foundation of Korea (NRF) grant funded by the Korean government (MSIT) (No.~RS-2025-00513595).}
\author{Inhyeok Choi}
\address{%
		Department of Mathematics, Cornell University\\
		310 Malott Hall, Ithaca, New York, 14850, USA 
}
\email{%
        ic393@cornell.edu
        }
\thanks{The second author was partially supported by Samsung Science \& Technology Foundation grant No. SSTF-BA1702-01.}
\author{Dongryul M. Kim}
\address{%
		Department of Mathematics, Yale University\\
		219 Prospect Street, New Haven, CT 06511, USA
}
\email{%
        dongryul.kim@yale.edu
}
\keywords{Simple length spectrum, Hyperbolic surfaces, Teichm\"uller space, Rigidity, Moduli of hyperbolic structures}
\subjclass{30F60, 57M50, 32G15}
\begin{document}

\begin{abstract}
In this article, we revisit classical length identities enjoyed by simple closed curves on hyperbolic surfaces. We state and prove the rigidity of such identities over Teichm{\"u}ller spaces. Due to this rigidity, certain collections of simple closed curves which minimally intersect are characterized on generic hyperbolic surfaces by their lengths.

As an application, we construct a meagre set $V$ in the Teichm{\"u}ller space of a topological orientable surface $S$, possibly of infinite type. Then the isometry class of a (Nielsen-convex) hyperbolic structure on $S$ outside $V$ is characterized by its unmarked simple length spectrum. Namely, we show that the simple length spectra can be used as moduli for generic hyperbolic surfaces. In the case of compact surfaces, an analogous result using length spectra was obtained by Wolpert.
\end{abstract}

\maketitle

%%%%%%%%%%%%%%%%%%%%%%%%%%%%%%%%%%%%%%%%%%%%%%%%
%
%							Introduction
%
%%%%%%%%%%%%%%%%%%%%%%%%%%%%%%%%%%%%%%%%%%%%%%%%

\section{Introduction}	\label{sec:introduction}

Given a closed Riemannian manifold $M$, one can define the Laplace-Beltrami operator $\Delta$ acting on $\mathcal{L}^{2}(M)$. The \emph{(eigenvalue) spectrum} of $M$ is the collection of eigenvalues of $\Delta$, counting multiplicities. A closely related notion is the \emph{(unmarked) length spectrum} (\emph{simple length spectrum}, resp.) of $M$, the collection of lengths of closed geodesics (simple closed geodesics, resp.) in $M$ counting multiplicities. A classical result of Huber (\cite{huber1959hyperbol}, \cite{huber1961hyperbol}) asserts that the spectrum determines the length spectrum in general, and vice versa in the case of hyperbolic surfaces of constant curvature.

The $9g-9$ theorem implies that the marked length spectrum of a closed orientable surface determines its isometry class. We note its generalization to negatively curved surfaces in \cite{otal1990negative} and \cite{croke1990negative}. In this spirit, Gel'fand conjectured in~\cite{gelfand1962icm} that the length spectrum of a closed surface determines its isometry class. (See ~\cite{kac1966drum} for an analogous question by Kac for planar domains.) Since then, various attempts have been made to extract the Riemannian structure of manifolds from their spectra. 

In general, a closed surface of genus $g$ is isospectral to at most finitely many other surfaces. McKean provided the first upper bound on this number as a function of $g$ in \cite{mcKean1972compact}. See \cite{buser1992compact} and \cite{parlier2018length} for further development. Moreover, M\"uller also proved in \cite{muller1992spectral} that a (possibly non-compact) hyperbolic surface of finite area is isospectral to at most finitely many surfaces. Meanwhile, Vign{\'e}ras constructed in ~\cite{vigneras1978exemples} the first examples of isospectral, non-isometric closed surfaces. Sunada explained in ~\cite{sunada1985iso} a general recipe for isospectral closed manifolds in general dimensions including 2. These works answer Gelfand's conjecture in the negative.

Meanwhile, the length spectrum indeed determines the isometry class of some surfaces of low complexity. The case of one-holed torus with a fixed boundary length was proved by Haas in \cite{haas1985moduli}, and the restriction on the boundary length was removed by Buser and Semmler in \cite{buser1988torus}. We note that both of their approaches work when the length spectrum is replaced with the simple length spectrum.

In contrast, it is not known whether the simple length spectrum determines the isometry class of a general hyperbolic surface. In~\cite{maungchang2013sunada}, Maungchang investigated examples of isospectral, non-isometric hyperbolic surfaces constructed in \cite{sunada1985iso} and showed that their simple length spectra differ. \cite{aougab2020covers} observed the relationship between this question and characterizing finite covers of surfaces via simple closed curves. See also \cite{mondal2017rigidity} for a variation of this question involving the length-angle spectrum.

One can instead focus on the simple length spectra of \emph{generic} hyperbolic surfaces. In~\cite{mcshane2008simple}, McShane and Parlier asked whether there is a surface for which all the multiplicities are 1. They showed that the set of marked hyperbolic surfaces (of a given finite type) that does not have this property is meagre, providing a strongly affirmative answer. They also related this set to other questions on low-genus surfaces, including the Markoff conjecture. 

The strategy of McShane and Parlier is to investigate the length equality $l_{X}(\alpha) = l_{X}(\beta)$ for simple closed curves $\alpha$, $\beta$ over the Teichm{\"u}ller space. If two curves are same, then the equality clearly holds on the entire space; otherwise, the equality holds only on a submanifold of the Teichm{\"u}ller space. We note that this strategy is not applicable for non-simple closed curves. Indeed, there are arbitrary many distinct curves on a surface that have the same length with respect to any hyperbolic structure~\cite{randol1980length}.

Motivated by McShane and Parlier, we consider other length identities enjoyed by few-intersecting simple closed curves. As in McShane and Parlier's work, we construct a meagre subset $V$ of the Teichm{\"u}ller space which is a union of countably many analytic submanifolds; few-intersecting simple closed curves and their topological configuration are characterized by their lengths on any hyperbolic surface outside $V$. In other words, we prove the following proposition (which is an interpretation of Proposition~\ref{prop:mainProp} in plain words). This argument does not rely on the finiteness of the surface type. Indeed, it equally applies to Nielsen-convex hyperbolic structures on surfaces of infinite type, which means that the hyperbolic structure admits a nice pants decomposition (see Fact \ref{fact.nielsenconvex} for the precise definition). Teichm\"uller space for such hyperbolic structures is defined in a similar way to the usual Teichm\"uller spaces (Definition \ref{def.Teich}).

\begin{proposition}[Interpretation of the main proposition]
 Let $S$ be a topological orientable surface. Then there exists a meagre subset $V \subseteq \T(S)$ such that for $X \in \T(S) \setminus V$ and a hyperbolic surface $X'$ homeomorphic to either one-holed torus or $p$-punctured $b$-holed sphere where $p + b = 4$, the following implication holds: $$ \mathcal{L}(X') \subseteq \mathcal{L}(X) \Rightarrow \begin{matrix} \exists \mbox{ isometric immersion} \\ X' \to X\end{matrix}$$
	
	Here, $\mathcal{L}$ stands for the simple length spectrum.
\end{proposition}

As an application of this result, we prove that the simple length spectra of generic surfaces determine their isometry classes. An analogous result for the length spectra was obtained by Wolpert in~\cite{wolpert1979moduli}. Wolpert considered a subvariety $V_{g}$ of the Teichm{\"u}ller space $\T_{g}$ of genus $g$ and proved the following: if $[f_{1}, X] \in \T_{g} \setminus V_{g}$ and $[f_{2}, X'] \in \T_{g}$ have the same length spectrum, then $[f_{1}, X]$ and $[f_{2}, X']$ belong to the same orbit of the extended mapping class group $\Mod^{\pm}_{g}$. 

We note that Wolpert's argument requires length information of some non-simple closed curves, which are not available from the simple length spectrum. In addition, Wolpert's argument heavily relies on Mumford's compactness theorem, which is hard to be generalized to infinite-type surfaces. Our main result replaces the length spectrum in Wolpert's theorem with the simple length spectrum, using techniques that apply to both finite-type and infinite-type surfaces. 

The following is the main theorem of this paper. By a meagre subset, we mean a union of countably many analytic submanifolds of positive codimension.

\begin{restatable}[Simple length spectra as moduli]{thm}{aeLengthSpectra} \label{thm:aeLengthSpectra}
Let $S$ be a topological orientable surface with compact boundaries and with non-abelian fundamental group and let $\T(S)$ be the Teichm{\"u}ller space of $S$. Then there exists a meagre subset $V$ of $\T(S)$ satisfying the following: if $[f_{1}, X] \in \mathcal{T}(S) \setminus V$ and $[f_{2}, X'] \in \mathcal{T}(S)$ have the same simple length spectra, then $[f_{1}, X]$ and $[f_{2}, X']$ belong to the same orbit of $\Mod^{\pm}(S)$.
\end{restatable}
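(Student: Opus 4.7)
The plan is to take Proposition~\ref{prop:mainProp} as local input and assemble it into a global isometry. Fix $[f_{1}, X] \in \T(S) \setminus V$ and $[f_{2}, X'] \in \T(S)$ with $\mathcal{L}(X) = \mathcal{L}(X')$. For any essential subsurface $Y' \subset X'$ that is either a one-holed torus or a four-times punctured/holed sphere, every simple closed geodesic of $Y'$ remains a simple closed geodesic of $X'$ of the same length, so $\mathcal{L}(Y') \subseteq \mathcal{L}(X') = \mathcal{L}(X)$. Proposition~\ref{prop:mainProp} therefore furnishes an isometric immersion $\iota_{Y'} \colon Y' \to X$ for every such $Y'$.

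The next step is to paste the $\iota_{Y'}$ into a global local isometry by a developing-map construction. Fix a pants decomposition $\mathcal{P}$ of $X'$, and for each curve $c \in \mathcal{P}$ let $Y_{c}' \subset X'$ be the complexity-one subsurface formed by the pairs of pants adjacent to $c$. Two neighbouring subsurfaces $Y_{c}'$ and $Y_{c'}'$ share a common pair of pants $P$; lifting each immersion to the universal cover $\mathbb{H}^{2}$ of $X$, the two lifts of $P$ agree up to an isometry of $\mathbb{H}^{2}$ because a pair of pants is rigid in its cuff lengths. Choosing a spanning tree in the dual graph of $\mathcal{P}$ and correcting each lift accordingly, I obtain coherent lifts that assemble into an equivariant developing map $\widetilde{\phi} \colon \widetilde{X'} \to \mathbb{H}^{2}$, which descends to a local isometry $\phi \colon X' \to X$.

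Because $X$ and $X'$ are Nielsen-convex hyperbolic structures on the same topological surface $S$, the map $\phi$ is a covering onto its image, and a standard open-closed argument forces this image to be all of $X$. A covering between two Nielsen-convex structures on $S$ must then be one-sheeted (by a Hopficity-and-area argument on finite-type pieces, which can be globalised thanks to Nielsen-convexity), so $\phi$ is an isometry. Transporting the marking, $\phi \circ f_{2}$ is isotopic to $f_{1}$ up to a self-homeomorphism of $S$, possibly orientation-reversing, placing $[f_{1}, X]$ and $[f_{2}, X']$ in the same $\Mod^{\pm}(S)$-orbit.

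The hardest point will be the consistency of the developing-map construction: when transporting lifted immersions around a loop in the dual graph of $\mathcal{P}$, the accumulated isometry of $\mathbb{H}^{2}$ must realise an element of the deck group $\pi_{1}(X) < \mathrm{Isom}^{+}(\mathbb{H}^{2})$ rather than an arbitrary ambient isometry. This amounts to rigidity of the twist parameters along each curve of $\mathcal{P}$, and should be extracted from the lengths of curves that cross the decomposition---precisely the curves captured in the complexity-one neighbourhoods where Proposition~\ref{prop:mainProp} applies. In the infinite-type regime the dual graph may be infinite, so one must exhaust $X'$ by compact subsurfaces and pass to a limit using the Nielsen-convex hypothesis; this is exactly where our approach sidesteps Wolpert's reliance on Mumford compactness.
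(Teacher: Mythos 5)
Your overall architecture is in the right spirit---use the length identities to locally recognise complexity-one subsurfaces, then assemble a global isometry---but the step you flag as ``the hardest point'' is exactly where a genuine gap sits, and the paper sidesteps it rather than filling it. You propose to glue all of the local immersions at once via a developing map over the (arbitrary) pants graph of $X'$, and then to rule out spurious holonomy around cycles of that graph; you do not actually prove that the holonomy lands in $\pi_{1}(X)$. The paper never faces this problem because it does not use a generic pants decomposition: Proposition~\ref{prop:goodPantsDec} is set up precisely so that the exhaustion $S_{n}$ is built by attaching each new generalised pair of pants (or one-holed torus) along a \emph{single} curve, making the attaching graph a tree. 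One then constructs isometric \emph{embeddings} $\psi_{n}\colon X'_{n}\hookrightarrow X$ inductively, with no cycle-consistency issue ever arising. A proof following your route would need to independently establish the cocycle condition you describe, and nothing in Proposition~\ref{prop:mainProp} supplies it.

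A second, related gap is orientation and twist matching at each gluing curve. Two pairs of pants that are cuff-length-isometric admit both an orientation-preserving and an orientation-reversing isometry, so ``rigidity in cuff lengths'' does not pin down the lift. The paper's Propositions~\ref{prop:main2Torus} and~\ref{prop:main5sphere} are devoted precisely to ruling out the orientation-reversing mismatch: they compare \emph{signed} displacements of perpendicular feet along the gluing curve and show that a flipped twist on one side would produce an incompatible unsigned twist in an overlapping immersed shirt, using the explicit bound $2L < l_{X}(\delta_{1})/2$ to control periodicity. Your proposal does not touch this. Finally, the closing ``covering plus Hopficity-and-area'' step is not available in the stated generality: for Nielsen-convex surfaces of infinite type one cannot invoke area, and even in finite type the map $\phi$ has not yet been shown proper. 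The paper instead argues surjectivity directly from the length-spectrum hypothesis, by showing that a boundary curve of $X'$ cannot map to an interior curve of $X$ (since its transverse partners would then be visible in $\mathcal{L}(X)$ but not realisable on $\psi(X')$). So: right skeleton, but three nontrivial lemmas---tree-like pants structure, twist/orientation coherence, and surjectivity without compactness---are assumed rather than proved.
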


We emphasize again that in the above theorem $S$ does not have to be of finite type. 

\subsection*{Organization of the paper}
In Section~\ref{sec:backgrounds}, we cover some background for the paper. It especially includes Teichm\"uller spaces and pants decompositions of infinite-type surfaces, and the fractional Dehn twists. The relation between topological configurations of curves and identities among their lengths is dealt with in Section~\ref{sec:lengthidentities}. In Section~\ref{sec:lowcomplexity}, the main theorems for surfaces of low complexity are proved. They serve as base cases for the induction argument in the proof of the main theorem provided in Section~\ref{sec:proofofmainthm}. Further questions are asked in Section~\ref{sec:furtherquestions}. For some lemmas which seem to be well-known to the experts while the authors could not find explicit references, we provide their proofs in Appendix~\ref{appendix:A1},~\ref{appendix:A}, and~\ref{appendix:B} for the sake of completeness. 

\subsection*{Acknowledgments}
We appreciate Changsub Kim, KyeongRo Kim, Bram Petri, Philippe A. Tranchida, Scott Wolpert for helpful conversations. We would like to thank the anonymous referee for valuable comments.

%%%%%%%%%%%%%%%%%%%%%%%%%%%%%%%%%%%%%%%%%%%%%%%%
%
%							Backgrounds
%
%%%%%%%%%%%%%%%%%%%%%%%%%%%%%%%%%%%%%%%%%%%%%%%%

\section{Backgrounds} \label{sec:backgrounds}

\subsection{Surfaces, curves and hyperbolic geometry}

In this section, we introduce basic notions. For details, we refer the readers to~\cite{FarbMargalit12} and~\cite{alessandrini2011fenchel}.

In this article, (topological) surfaces are second-countable, connected, oriented 2-dimensional manifolds with compact boundaries. Those with finitely generated fundamental groups are said to be of finite type; others are said to be of infinite type. A finite-type surface is characterized by the genus, the number of boundary components, and the number of ends. Topologically, a finite-type surface is homeomorphic to the connected sum of a sphere and finitely many tori, with finitely many open discs and points removed. We denote by $S_{g, p, b}$ the genus $g$ surface with $p$ punctures and $b$ boundaries.  Throughout, we only consider surfaces which are not sphere, disc, and punctured disc.

A homotopy on a surface is required to preserve each boundary component of the surface setwise, but not necessarily pointwise. A \emph{loop} on a surface is a continuous map from $S^{1}$ to the surface. A loop is said to be \emph{simple} if it is injective. A \emph{curve} on a surface is a nontrivial free homotopy class of simple loops. A curve bounding an annulus is said to be \emph{peripheral}; otherwise it is said to be \emph{essential}. Each peripheral curve either bounds a puncture or a boundary component.

An \emph{arc} on a surface is either an essential curve or the homotopy class of an essential simple arc connecting ends or boundary components. A \emph{multicurve} (\emph{multi-arc}, respectively) is a finite union of disjoint essential curves (arcs, respectively). All curves, arcs, multicurves, and multi-arcs are unoriented in this article, unless stated otherwise.

A \emph{(properly embedded) subsurface} of a surface $S$ is the image of a proper embedding $\psi$ of a surface $S'$ into $S$. The properness forbids an open end of the subsurface from accumulating on the boundary of the ambient surface. Abusing the notation, we sometimes refer to the image $\psi(S')$ in $S$ as a subsurface. An \emph{immersed subsurface} of a surface $S$ is the image of a proper immersion $\psi$ of a surface $S'$ into $S$ whose restriction on $\textrm{int}(S')$ is an embedding. For example, a subsurface of type $S_{1, 0, 2}$ can be viewed as an immersed subsurface of type $S_{0, 0, 4}$.

The following are terminologies for surfaces with small complexity. A \emph{generalized pair of pants} is a surface whose interior is homeomorphic to a 3-punctured sphere. These include $S_{0, p, b}$ with $b+p = 3$. A \emph{generalized shirt} is a surface whose interior is homeomorphic to a 4-punctured sphere. These include $S_{0, p, b}$ with $b+p=4$. Note that some literature use \emph{$Y$-piece} and \emph{$X$-piece} to denote the generalized pair of pants and the generalized shirt, respectively.

\begin{definition}
A \emph{pants decomposition} $\mathcal{P}$ of a surface $S$ is a collection of disjoint, distinct curves $\{C_{i}\}_{i \in I}$ satisfying the following: \begin{enumerate}
\item each component of $S \setminus \bigcup_{i} C_{i}$ is a generalized  pair of pants without boundary, and
\item there exist disjoint tubular neighborhoods $N_{i}$ of $C_{i}$ in $S$.
\end{enumerate}
\end{definition}

Note that a pants decomposition of a surface should include all of its boundary components. Condition (2) is to prevent the case when some of the $C_i$'s accumulate to one of the $C_j$. Such a collection can be made for instance when $S$ is $\mathbb{S}^2$ minus a Cantor set.
The following notion will be useful when we discuss Fenchel-Nielsen coordinates in the later section: recall that an arc on a surface is either an essential curve or the homotopy class of an essential simple arc connecting ends or boundary components.

\begin{definition}
A \emph{seam} for a pants decomposition $\mathcal{P} = \{C_{i}\}_{i\in I}$ is a collection of mutually disjoint arcs $\{A_{j}\}_{j \in J}$ that satisfies the following:\begin{enumerate}
\item $\{A_{j}\}_{j}$ and $\{C_{i}\}_{i}$ are in a general position, i.e., $( \bigcup_{i} C_{i}) \cap (\bigcup_{j} A_{j})$ is a discrete subset of $S$;
\item On each generalized pair of pants of $S \setminus \bigcup_{i} C_{i}$, $\bigcup_{j} A_{j}$ connects each pair of ends and decomposes the pair of pants into two generalized hexagons.
\end{enumerate}
When a seam $\{A_{j}\}_{j \in J}$ is given for a pants decomposition $\mathcal{P}= \{C_{i}\}_{i \in I}$, we call the pair $(\{C_{i}\}_{i \in I}, \{A_{j}\}_{j \in J})$ a \emph{seamed pants decomposition}. By abuse of notation, we also denote it by $\mathcal{P}$.
\end{definition}

Infinite-type surfaces are characterized by their genus, number of boundary components and the nested space of ends~\cite{richards1963surface}. From this characterization, we obtain pants decompositions of surfaces that will be used in the proof of Theorem~\ref{thm:aeLengthSpectra}. The construction is apparent in Figure~\ref{fig:pantsDec}; nonetheless, we include a proof in Appendix~\ref{appendix:A1} for the sake of completeness. (See also ~\cite{hernandez2019alex}.)

\begin{proposition}\label{prop:goodPantsDec}
Let $S$ be a topological surface. Then there exist a seamed pants decomposition $\mathcal{P} = (\{C_{i}\}_{i \in I}, \mathcal{A}_{j \in J})$ and finite-type subsurfaces $\{S_{n}\}$ satisfying the following:
\begin{enumerate}
\item $\{S_{n}\}$ is an exhaustion of $S$, i.e., $S_{n} \subseteq S_{n+1}$ for each $n$, $S = \cup_{n} S_{n}$ and each compact subset of $S$ is contained in some $S_{n}$;
\item each of $S_{n}$ is bounded by some $C_{i}$'s and $\mathcal{P}$ restricts to $S_{n}$ as a seamed pants decomposition, and
\item $S_{n+1}$ is made by attaching a generalized pair of pants or a one-holed torus to $S_{n}$ along only one curve.
\end{enumerate}
\end{proposition}

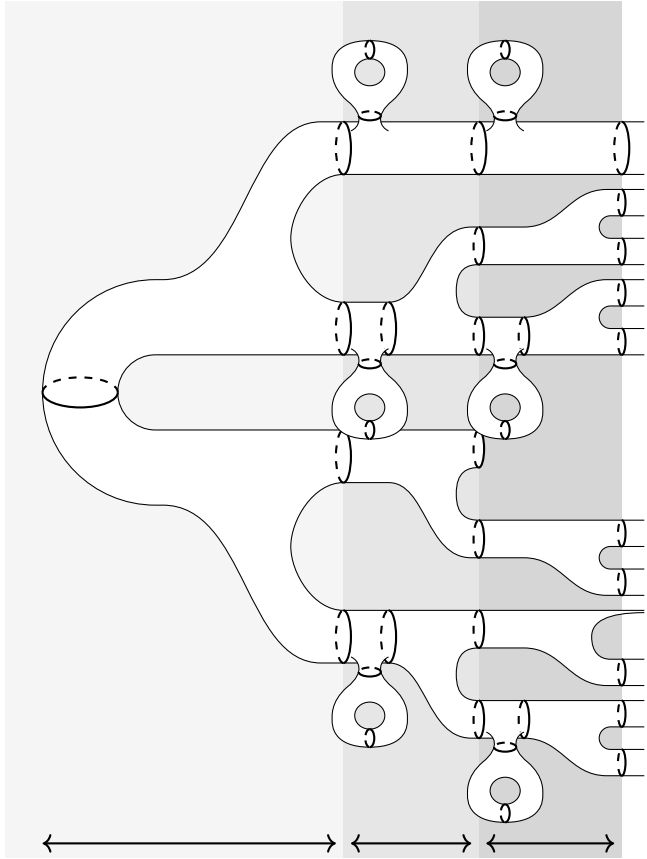
\begin{figure}[ht]
	\begin{tikzpicture}
	
	%%% Filling the bg
	\fill[black!4] (4, -6.2) -- (-0.5, -6.2) -- (-0.5, 5.2) -- (4, 5.2);
	\fill[black!10] (4, -6.2) -- (5.8, -6.2) -- (5.8, 5.2) -- (4, 5.2);
	\fill[black!16] (5.8, -6.2) -- (7.7, -6.2) -- (7.7, 5.2) -- (5.8, 5.2);
	
	%%% Filling the surface
	
	\fill[black!0](8, 3.6) -- (3.7, 3.6) .. controls (2.7, 3.6) and (2.7, 1.5) .. (1.6, 1.5) -- (1.5, 1.5) arc (90:270:1.5) -- (1.6, -1.5)  .. controls (2.7, -1.5) and (2.7, -3.6) .. (3.7, -3.6) -- (4.6, -3.6) .. controls (5.1, -3.6) and (5.1, -4.6) .. (5.7, -4.6)--(6.4, -4.6) .. controls (6.9, -4.6) and (7, -5.1) .. (7.5, -5.1) -- (8, -5.1)
	-- (8, -4.75) -- (7.55, -4.75) arc (270:90:0.15) -- (8, -4.45)
	-- (8, -4.1) -- (5.8, -4.1) .. controls (5.65, -4.1) and (5.5, -4.05) .. (5.5, -3.75) .. controls (5.5, -3.45) and (5.65, -3.4) .. (5.8, -3.4) -- (6.4, -3.4) .. controls (6.9, -3.4) and (7, -3.9) .. (7.5, -3.9) -- (8, -3.9)
	-- (8, -3.55) -- (7.55, -3.55) .. controls (7.4, -3.55) and (7.3, -3.45) .. (7.3, -3.25) .. controls (7.3, -3.05) and (7.5, -2.93) .. (8, -2.93)
	--(8, -2.9) -- (4, -2.9) .. controls (3.6, -2.9) and (3.3, -2.4) .. (3.3, -2.05) .. controls (3.3, -1.7) and (3.6, -1.2) .. (4, -1.2) -- (4.6, -1.2) .. controls (5.1, -1.2) and (5.1, -2.2) .. (5.7, -2.2) -- (6.4, -2.2) ..  controls (6.9, -2.2) and (7, -2.7) .. (7.5, -2.7) -- (8, -2.7)
	-- (8, -2.35) -- (7.55, -2.35) arc (270:90:0.15) -- (8, -2.05)
	-- (8, -1.7) -- (5.8, -1.7) .. controls (5.65, -1.7) and (5.5, -1.65) .. (5.5, -1.35) .. controls (5.5, -1.05) and (5.65, -1) .. (5.8, -1) arc (-90:90:0.07 and 0.25) -- (1.5, -0.5) arc (270:90:0.5) -- (8, 0.5)
	-- (8, 0.85) -- (7.55, 0.85) arc (270:90:0.15) -- (8, 1.15) -- (8, 1.5) -- (7.5, 1.5) ..controls (7, 1.5) and (6.9, 1) .. (6.4, 1) -- (5.8, 1) .. controls (5.65, 1) and (5.5, 1.05) .. (5.5, 1.35) .. controls (5.5, 1.65) and (5.65, 1.7) .. (5.8, 1.7) -- (8, 1.7)
	-- (8, 2.05) -- (7.55, 2.05) arc (270:90:0.15) -- (8, 2.35)
	-- (8, 2.7) -- (7.5, 2.7) .. controls (7, 2.7) and (6.9, 2.2) .. (6.4, 2.2) -- (5.7, 2.2) .. controls (5.1, 2.2) and (5.1, 1.2) .. (4.6, 1.2) -- (4, 1.2) .. controls (3.6, 1.2) and (3.3, 1.7) .. (3.3, 2.05) .. controls (3.3, 2.4) and (3.6, 2.9) .. (4, 2.9) -- (8, 2.9);
	
	%%DrawLines
	
	\draw (8, 3.6) -- (3.7, 3.6) .. controls (2.7, 3.6) and (2.7, 1.5) .. (1.6, 1.5) -- (1.5, 1.5) arc (90:270:1.5) -- (1.6, -1.5)  .. controls (2.7, -1.5) and (2.7, -3.6) .. (3.7, -3.6) -- (4.6, -3.6) .. controls (5.1, -3.6) and (5.1, -4.6) .. (5.7, -4.6)--(6.4, -4.6) .. controls (6.9, -4.6) and (7, -5.1) .. (7.5, -5.1) -- (8, -5.1);
	\draw (8, 0.5) -- (1.5, 0.5) arc (90:270:0.5) -- (5.8, -0.5);

	\draw (8, 2.9) -- (4, 2.9) .. controls (3.6, 2.9) and (3.3, 2.4) .. (3.3, 2.05) .. controls (3.3, 1.7) and (3.6, 1.2) .. (4, 1.2) -- (4.6, 1.2) .. controls (5.1, 1.2) and (5.1, 2.2) .. (5.7, 2.2) -- (6.4, 2.2) ..  controls (6.9, 2.2) and (7, 2.7) .. (7.5, 2.7) -- (8, 2.7);
	
	\draw[yscale=-1]  (8, 2.9) -- (4, 2.9) .. controls (3.6, 2.9) and (3.3, 2.4) .. (3.3, 2.05) .. controls (3.3, 1.7) and (3.6, 1.2) .. (4, 1.2) -- (4.6, 1.2) .. controls (5.1, 1.2) and (5.1, 2.2) .. (5.7, 2.2) -- (6.4, 2.2) ..  controls (6.9, 2.2) and (7, 2.7) .. (7.5, 2.7) -- (8, 2.7);

	\draw (8, 1.7) -- (5.8, 1.7) .. controls (5.65, 1.7) and (5.5, 1.65) .. (5.5, 1.35) .. controls (5.5, 1.05) and (5.65, 1) .. (5.8, 1) -- (6.4, 1) .. controls (6.9, 1) and (7, 1.5) .. (7.5, 1.5) -- (8, 1.5);
	
	\draw[yscale=-1] (8, 1.7) -- (5.8, 1.7) .. controls (5.65, 1.7) and (5.5, 1.65) .. (5.5, 1.35) .. controls (5.5, 1.05) and (5.65, 1) .. (5.8, 1);

	\draw (8, 2.35) -- (7.55, 2.35) arc (90:270:0.15) -- (8, 2.05);
	\draw (8, 1.15) -- (7.55, 1.15) arc (90:270:0.15) -- (8, 0.85);
	
	\draw[yscale=-1] (8, 2.35) -- (7.55, 2.35) arc (90:270:0.15) -- (8, 2.05);

	\draw (8, -4.1) -- (5.8, -4.1) .. controls (5.65, -4.1) and (5.5, -4.05) .. (5.5, -3.75) .. controls (5.5, -3.45) and (5.65, -3.4) .. (5.8, -3.4) -- (6.4, -3.4) .. controls (6.9, -3.4) and (7, -3.9) .. (7.5, -3.9) -- (8, -3.9);
	
	\draw[yscale=-1] (8, 4.75) -- (7.55, 4.75) arc (90:270:0.15) -- (8, 4.45);
	\draw (8, -2.93) .. controls (7.5, -2.93) and (7.3, -3.05) .. (7.3, -3.25) .. controls (7.3, -3.45) and (7.4, -3.55) .. (7.55, -3.55) -- (8, -3.55);

	%%DrawSeams

	\draw[thick] (0, 0) arc (180:360:0.5 and 0.2);
	\draw[thick, dashed] (0, 0) arc (180:0:0.5 and 0.2);

	\draw[thick] (4, 2.9) arc (-90:90:0.1 and 0.35);
	\draw[thick, dashed] (4, 2.9) arc (270:90:0.1 and 0.35);

	\draw[thick] (4, 0.5) arc (-90:90:0.1 and 0.35);
	\draw[thick, dashed] (4, 0.5) arc (270:90:0.1 and 0.35);
	
	\draw[thick] (4.6, 0.5) arc (-90:90:0.1 and 0.35);
	\draw[thick, dashed] (4.6, 0.5) arc (270:90:0.1 and 0.35);
	
	\draw[thick] (4, -1.2) arc (-90:90:0.1 and 0.35);
	\draw[thick, dashed] (4, -1.2) arc (270:90:0.1 and 0.35);
	
	\draw[thick] (4, -3.6) arc (-90:90:0.1 and 0.35);
	\draw[thick, dashed] (4, -3.6) arc (270:90:0.1 and 0.35);
	
	\draw[thick] (4.6, -3.6) arc (-90:90:0.1 and 0.35);
	\draw[thick, dashed] (4.6, -3.6) arc (270:90:0.1 and 0.35);

	\draw[thick] (5.8, 2.9) arc (-90:90:0.1 and 0.35);
	\draw[thick, dashed] (5.8, 2.9) arc (270:90:0.1 and 0.35);

	\draw[thick] (7.7, 2.9) arc (-90:90:0.1 and 0.35);
	\draw[thick, dashed] (7.7, 2.9) arc (270:90:0.1 and 0.35);

	%%%%
	
	\draw[thick] (5.8, 1.7) arc (-90:90:0.07 and 0.25);
	\draw[thick, dashed] (5.8, 1.7) arc (270:90:0.07 and 0.25);

	\draw[thick] (5.8, 0.5) arc (-90:90:0.07 and 0.25);
	\draw[thick, dashed] (5.8, 0.5) arc (270:90:0.07 and 0.25);
	
	\draw[thick] (6.4, 0.5) arc (-90:90:0.07 and 0.25);
	\draw[thick, dashed] (6.4, 0.5) arc (270:90:0.07 and 0.25);

	\draw[thick] (5.8, -1) arc (-90:90:0.07 and 0.25);
	\draw[thick, dashed] (5.8, -1) arc (270:90:0.07 and 0.25);
	
	\draw[thick] (5.8, -2.2) arc (-90:90:0.07 and 0.25);
	\draw[thick, dashed] (5.8, -2.2) arc (270:90:0.07 and 0.25);
	
	\draw[thick] (5.8, -3.4) arc (-90:90:0.07 and 0.25);
	\draw[thick, dashed] (5.8, -3.4) arc (270:90:0.07 and 0.25);
	
	\draw[thick] (5.8, -4.6) arc (-90:90:0.07 and 0.25);
	\draw[thick, dashed] (5.8, -4.6) arc (270:90:0.07 and 0.25);
	
	\draw[thick] (6.4, -4.6) arc (-90:90:0.07 and 0.25);
	\draw[thick, dashed] (6.4, -4.6) arc (270:90:0.07 and 0.25);
	
	\foreach \i in {2.35, 1.7, 1.15, 0.5, -2.05, -2.7, -3.9, -4.45, -5.1}{
		\draw[thick] (7.7, \i) arc (-90:90:0.05 and 0.175);
		\draw[thick, dashed] (7.7, \i) arc (270:90:0.05 and 0.175);
	}

	%\draw (4.45, -3.5) .. controls (4.55, -3.54) and (4.55, -3.6) .. (4.55, -3.7) .. controls (4.55, -3.8) and (4.2, -3.85) .. (4.2, -4.1) .. controls (4.2, -4.35) and (4.2, -4.7) .. (4.7, -4.7) .. controls (5.2, -4.7) and (5.2, -4.35) .. (5.2, -4.1) .. controls (5.2, -3.85) and (4.85, -3.8) .. (4.85, -3.7)  .. controls (4.85, -3.6) and (4.85, -3.54)..  (4.95, -3.5);
	\begin{scope}[shift={(-0.35, -0.02)}]
	\draw[fill=black!0] (4.45, -3.5) .. controls (4.55, -3.54) and (4.65, -3.7) .. (4.4, -3.9) .. controls (4.17, -4.1) and (4.2, -4.3) .. (4.2, -4.35) .. controls (4.2, -4.4) and (4.2, -4.7) .. (4.7, -4.7) .. controls (5.2, -4.7) and (5.2, -4.4) .. (5.2, -4.35) .. controls (5.2, -4.3) and (5.23, -4.1) .. (5, -3.9) .. controls (4.75, -3.7) and (4.85, -3.54)..  (4.95, -3.5);
	\draw[fill=black!10] (4.7, -4.28) circle (0.2 and 0.18);
	\draw[thick] (4.847, -3.7) arc (0:-180: 0.147 and 0.06);
	\draw[thick, densely dashed] (4.847, -3.7) arc (0:180: 0.147 and 0.06);
	\draw[thick] (4.7, -4.46) arc (90:-90:0.06 and 0.12);
	\draw[thick, densely dashed] (4.7, -4.46) arc (90:270:0.06 and 0.12);
	\end{scope}
	
	\begin{scope}[shift={(-0.35+1.8, -0.02-1)}]
	
	\draw[fill=black!0] (4.45, -3.5) .. controls (4.55, -3.54) and (4.65, -3.7) .. (4.4, -3.9) .. controls (4.17, -4.1) and (4.2, -4.3) .. (4.2, -4.35) .. controls (4.2, -4.4) and (4.2, -4.7) .. (4.7, -4.7) .. controls (5.2, -4.7) and (5.2, -4.4) .. (5.2, -4.35) .. controls (5.2, -4.3) and (5.23, -4.1) .. (5, -3.9) .. controls (4.75, -3.7) and (4.85, -3.54)..  (4.95, -3.5);
	\draw[fill=black!16] (4.7, -4.28) circle (0.2 and 0.18);
	\draw[thick] (4.847, -3.7) arc (0:-180: 0.147 and 0.06);
	\draw[thick, densely dashed] (4.847, -3.7) arc (0:180: 0.147 and 0.06);
	\draw[thick] (4.7, -4.46) arc (90:-90:0.06 and 0.12);
	\draw[thick, densely dashed] (4.7, -4.46) arc (90:270:0.06 and 0.12);
	\end{scope}
	
	\begin{scope}[shift={(-0.35, -0.02+4.1)}]
	\draw[fill=black!0] (4.45, -3.5) .. controls (4.55, -3.54) and (4.65, -3.7) .. (4.4, -3.9) .. controls (4.17, -4.1) and (4.2, -4.3) .. (4.2, -4.35) .. controls (4.2, -4.4) and (4.2, -4.7) .. (4.7, -4.7) .. controls (5.2, -4.7) and (5.2, -4.4) .. (5.2, -4.35) .. controls (5.2, -4.3) and (5.23, -4.1) .. (5, -3.9) .. controls (4.75, -3.7) and (4.85, -3.54)..  (4.95, -3.5);
	\draw[fill=black!10] (4.7, -4.28) circle (0.2 and 0.18);
	\draw[thick] (4.847, -3.7) arc (0:-180: 0.147 and 0.06);
	\draw[thick, densely dashed] (4.847, -3.7) arc (0:180: 0.147 and 0.06);
	\draw[thick] (4.7, -4.46) arc (90:-90:0.06 and 0.12);
	\draw[thick, densely dashed] (4.7, -4.46) arc (90:270:0.06 and 0.12);
	\end{scope}
	
	\begin{scope}[shift={(-0.35+1.8, -0.02+4.1)}]
	\draw[fill=black!0] (4.45, -3.5) .. controls (4.55, -3.54) and (4.65, -3.7) .. (4.4, -3.9) .. controls (4.17, -4.1) and (4.2, -4.3) .. (4.2, -4.35) .. controls (4.2, -4.4) and (4.2, -4.7) .. (4.7, -4.7) .. controls (5.2, -4.7) and (5.2, -4.4) .. (5.2, -4.35) .. controls (5.2, -4.3) and (5.23, -4.1) .. (5, -3.9) .. controls (4.75, -3.7) and (4.85, -3.54)..  (4.95, -3.5);
	\draw[fill=black!16] (4.7, -4.28) circle (0.2 and 0.18);
	\draw[thick] (4.847, -3.7) arc (0:-180: 0.147 and 0.06);
	\draw[thick, densely dashed] (4.847, -3.7) arc (0:180: 0.147 and 0.06);
	\draw[thick] (4.7, -4.46) arc (90:-90:0.06 and 0.12);
	\draw[thick, densely dashed] (4.7, -4.46) arc (90:270:0.06 and 0.12);
	\end{scope}
	
	\begin{scope}[shift={(-0.35, -0.02)}, yscale=-1]
	\draw[fill=black!0] (4.45, -3.5) .. controls (4.55, -3.54) and (4.65, -3.7) .. (4.4, -3.9) .. controls (4.17, -4.1) and (4.2, -4.3) .. (4.2, -4.35) .. controls (4.2, -4.4) and (4.2, -4.7) .. (4.7, -4.7) .. controls (5.2, -4.7) and (5.2, -4.4) .. (5.2, -4.35) .. controls (5.2, -4.3) and (5.23, -4.1) .. (5, -3.9) .. controls (4.75, -3.7) and (4.85, -3.54)..  (4.95, -3.5);
	\draw[fill=black!10] (4.7, -4.28) circle (0.2 and 0.18);
	\draw[thick] (4.847, -3.7) arc (0:180: 0.147 and 0.06);
	\draw[thick, densely dashed] (4.847, -3.7) arc (0:-180: 0.147 and 0.06);
	\draw[thick] (4.7, -4.46) arc (90:-90:0.06 and 0.12);
	\draw[thick, densely dashed] (4.7, -4.46) arc (90:270:0.06 and 0.12);
	\end{scope}
	
	\begin{scope}[shift={(-0.35+1.8, -0.02)}, yscale=-1]
	\draw[fill=black!0] (4.45, -3.5) .. controls (4.55, -3.54) and (4.65, -3.7) .. (4.4, -3.9) .. controls (4.17, -4.1) and (4.2, -4.3) .. (4.2, -4.35) .. controls (4.2, -4.4) and (4.2, -4.7) .. (4.7, -4.7) .. controls (5.2, -4.7) and (5.2, -4.4) .. (5.2, -4.35) .. controls (5.2, -4.3) and (5.23, -4.1) .. (5, -3.9) .. controls (4.75, -3.7) and (4.85, -3.54)..  (4.95, -3.5);
	\draw[fill=black!16] (4.7, -4.28) circle (0.2 and 0.18);
	\draw[thick] (4.847, -3.7) arc (0:180: 0.147 and 0.06);
	\draw[thick, densely dashed] (4.847, -3.7) arc (0:-180: 0.147 and 0.06);
	\draw[thick] (4.7, -4.46) arc (90:-90:0.06 and 0.12);
	\draw[thick, densely dashed] (4.7, -4.46) arc (90:270:0.06 and 0.12);
	\end{scope}
	
	%%% Label	
	
	\draw[thick, <->] (0, -6) -- (3.9, -6);
	\draw[thick, <->] (4.1, -6) -- (5.7, -6);
	\draw[thick, <->] (5.9, -6) -- (7.6, -6);

	\end{tikzpicture}
	\caption{Pants decomposition of a surface of infinite type.} \label{fig:infinitesurface}
	\label{fig:pantsDec}
\end{figure}

\begin{notation} \label{not.I0}
From now on, $S$ shall be reserved for a surface with the non-abelian fundamental group, equipped with a seamed pants decomposition $\mathcal{P} = (\{C_{i}\}_{i \in I}, \{A_{j}\}_{j \in J})$ obtained from Proposition~\ref{prop:goodPantsDec}. Moreover, $I_{0} \subseteq I$ denotes the set of indices corresponding to the boundary components of $S$.
\end{notation}

A hyperbolic surface is a 2-dimensional Riemannian manifold, possibly with compact geodesic boundary, of constant curvature $-1$. Subsurfaces and generalized subsurfaces of a hyperbolic surface are always assumed to have geodesic boundary. A hyperbolic surface is \emph{convex} if every arc is homotoped to a geodesic arc, fixing endpoints. Convex hyperbolic surfaces are obtained as a quotient of a convex subset of $\mathbb{H}$ by free, properly discontinuous action of a subgroup of $\textrm{Isom}^{+} (\mathbb{H})$. A convex hyperbolic surface is \emph{Nielsen-convex} if every point is contained in a (possibly non-simple) geodesic segment whose endpoints lie on simple closed geodesics. Such hyperbolic structures are suitable for our purpose due to the following fact. 

\begin{fact}[Theorem 4.5,~\cite{alessandrini2011fenchel}] \label{fact.nielsenconvex}
Let $X$ be a hyperbolic surface. Then the following facts are equivalent: \begin{enumerate}
\item $X$ is obtained by gluing some hyperbolic pairs of pants along their boundary components.
\item $X$ is Nielsen-convex.
\item Every topological pair of pants decomposition of $S$ by a system of curves $\{C_{i}\}_{i}$ is isotopic to a geometric pair of pants decomposition (i.e. if $\gamma_{i}$ is the simple closed geodesic on S that is freely homotopic to $C_{i}$, then $\{\gamma_{i}\}_{i}$ defines a pair of pants decompositon).
\end{enumerate}
\end{fact}

Especially, a Nielsen-convex hyperbolic surface cannot contain a funnel or a hyperbolic half-plane. Thus, all isolated ends are punctures, that means, quotients of $\mathbb{H}$ by parabolic elements. Conversely, a finite-type hyperbolic surface is Nielsen-convex if it does not contain funnels, or equivalently, if it is has finite area. As a result, finite-type subsurfaces of a Nielsen-convex hyperbolic surface are again Nielsen-convex.

Let $X$ be a Nielsen-convex (hence convex) hyperbolic surface. A curve $C$ on $X$ not bounding a puncture has a unique geodesic representative. We denote its length by $l_{X}(C)$. By an abuse of notation, we sometimes refer to the geodesic representative as $C$. Similarly, each arc $A$ on $X$ attains a unique geodesic representative. If $C$ is bounding a puncture, it does not have a geodesic representative, and we conventionally set $l_{X}(C)$ by 0. Instead, it is associated to a representative called \emph{horocycle}, a simple loop around a cusps with curvature 1. Then every geodesic arc $A$ emanating from that puncture intersects with the horocycle perpendicularly.

For a hyperbolic surface $X$, we denote by $\mathsf{Sim}$ the set of essential or boundary curves and define its \emph{marked length spectrum} $\mathcal{L}^{m}(X) \in \mathbb{R}^{\mathsf{Sim}}$ by the function sending each essential or boundary curve $C$ on $X$ to its length $l_{X}(C)$. The \emph{(unmarked) length spectrum} $\mathcal{L}(X)$ is the unordered set of curve lengths on $X$ counting multiplicities. If we consider the quotient map $\varphi : \mathbb{R}^{\mathsf{Sim}} \rightarrow \mathbb{R}^{\mathsf{Sim}}/\operatorname{Sym}(\mathsf{Sim})$ by permutations $\operatorname{Sym}(\mathsf{Sim})$, then $\mathcal{L}(X)$ is the image of $\mathcal{L}^{m}(X)$ under $\varphi$.

\subsection{Teichm{\"u}ller space and moduli space}

Teichm{\"u}ller space of $S$ can be defined in various ways. Those definitions are compatible if the base surface is of finite type, but may differ if the base surface is of infinite type. For details, see~\cite{FLP},~\cite{imayoshi1992Teich},~\cite{hubbard2006Teich} or~\cite{alessandrini2011fenchel}. Our definition follows:

\begin{definition} \label{def.Teich}
The \emph{Teichm{\"u}ller space} $\T(S)$ of $S$ is the set of equivalence classes $[h, X]$ of pairs $(h, X)$, where $X$ is a Nielsen-convex hyperbolic surface of topological type $S$ and $h: S\rightarrow X$ is a homeomorphism. Here, two pairs $(h, X)$ and $(h', Y)$ are considered equivalent if $h' \circ h^{-1}$ is homotopic to an isometry.

The \emph{(extended) mapping class group} $\Mod^{\pm}(S)$ of $S$ is the set of equivalence classes $[\varphi]$ of self-homeomorphism $\varphi$ on $S$, where $\varphi$ and $\phi$ are considered equivalent if $\phi \circ \varphi^{-1}$ is isotopic to the identity.

The \emph{moduli space} $\mathcal{M}(S)$ is the set of Nielsen-convex hyperbolic surfaces of topological type $S$.
\end{definition}

We note that $\Mod^{\pm}(S)$ acts on $\T(S)$ by pre-composition, and the quotient of $\T(S)$ by $\Mod^{\pm}(S)$ is equal to $\mathcal{M}(S)$.

In contrast to the case of finite-type surfaces, there are several (different) ways to give a topology on the Teichm\"uller space of infinite-type surfaces. Since our argument deals with finitely many curves at one time, our argument works for any topology on the Teichm\"uller space satisfying the following property. We will come up with a natural topology on $\T(S)$ that satisfies the following property:

\begin{property} \label{property:topology}
	For each finite-type subsurface $S_1 \subseteq S$, the Teichmu{\"u}ller space $\T(S)$ of the ambient surface $S$ is expressed as the product of the Teichm{\"u}ller space $\T(S_{1})$ of $S_{1}$ and some other space. That means, there exists a topological space $\T(S;S_1)$ such that $\T(S)$ is homeomorphic to $\T(S_{1}) \times \T(S; S_{1})$. We denote the projection to each factor by $\pi_{S_1} : \T(S) \to \T(S_1)$ and $\pi_{S;S_1}: \T(S) \to \T(S;S_1)$.

\end{property}

An example of such a topology on $\T(S)$ satisfying Property~\ref{property:topology} can be constructed by means of \emph{Fenchel-Nielsen coordinates}. To elaborate, we first define the length and twist parameters on $\T(S)$ from a given pants decomposition on $S$.

Recall that $S$ is equipped with a seamed pants decomposition $\mathcal{P}=(\{C_{i}\}, \{A_{j}\})$ and let $[h, X] \in \T(S)$. The pants decomposition $\mathcal{P}$ induces a (topological) seamed pants decomposition $\mathcal{P}' = (\{C'_{i} := h(C_{i})\}, \{A'_{j} := h(A_{j})\})$ on $X$. Since $X$ is Nielsen-convex, $\{C'_{i}\}$ can be considered a geometric pants decomposition. We call $l_{X}(C'_{i})$ the $i$-th \emph{length parameter} of $[h, X]$.

We now construct twist parameters of $[h, X]$. Twist parameters are assigned to $C_i$'s which are not boundary components of $S$ as there is nothing to twist on the boundary of $S$. Suppose that $C_i$ is not a boundary component of $S$, hence $C_i'$ is not a boundary component of $X$. We will consider the signed length of a segment in $C_i'$ where the the signed distance along $C_i'$ is defined using the orientation of the surface in a way that the (right) Dehn twist corresponds to the positive direction (cf. Subsection~\ref{subsec:fractionalDehn}).

We choose an arc $A_j'$ that intersects $C_i'$. Along the arc $A_j'$, $A_j'$ passes through pairs of pants which are components of $X - \cup_{i} C_i'$. Fixing any orientation on $A_j'$, we enumerate the pairs of pants as $\cdots, P_{-1}, P_0, P_1, \cdots$ so that $C_i'$ is the intersection of $P_{-1}$ and $P_0$. This enumeration is finite if the arc $A_j'$ is compact and is infinite if $A_j'$ is contained in an end of $X$. For each $P_{t}$, $A_{j}' \cap P_{t}$ connects two of the three boundary components of $P_{t}$. Let $L_{t}$ be the simple geodesic segment on $P_{t}$ perpendicular to those two boundary components.  Then for each $t$, $L_{t-1}$ and $L_{t}$ divide the intersection $P_{t-1} \cap P_{t}$ into two (possibly degenerate) simple geodesic segments. We choose one of them and denote it by $K_t$ for each $t$ so that $A_j'$ is homotopic to a concatenation $\cdots L_{t-1} K_{t}L_t K_{t+1} L_{t+1} \cdots$. Noting that $C_i' = P_{-1} \cap P_0$, we define the $i$-th \emph{twist parameter} $\tau_X(C_i')$ of $X$ as the signed length of $K_0$ along $C_i'$ divided by $l_X(C_i')$. There are a priori two choices of twist parameter at $C'_{i}$, one defined with $A'_{j}$ and one defined with another arc $A'_{j'}$ passing through $C'_{i}$. Nonetheless, two values are always equal so no confusion occurs.

Using the Fenchel-Nielsen parameters, we can construct a bijection \[
FN: \T(S) \ni [h, X] \mapsto \left(  \left(\log l_{X}(C_{i}')\right)_{i \in I},  \left( l_{X}(C_{i}') \tau_{X}(C_{i}')\right)_{i \in I \setminus I_{0}} \right)
\]
between $\T(S)$ and $\mathbb{R}^{I} \times \mathbb{R}^{I \setminus I_{0}}$. (Recall $I$, $I_0$ from Notation \ref{not.I0}.) We now endow the space with the $l^{\infty}$-topology. We consider the following:

\begin{definition}
We define the \emph{Fenchel-Nielsen distance} $d_{FN}$ between two points $[h, X], [h', Y] \in \T(S)$ by \[\begin{aligned}
&d_{FN}([h, X], [h', Y]) :=\\ &\sup_{i \in I} \left\lbrace \left| \log \frac{l_{X}(h(C_{i}))}{l_{Y}(h'(C_{i}))} \right|, \left| l_{X}(h(C_{i})) \tau_{X}(h(C_{i})) - l_{Y}(h'(C_{i})) \tau_{Y}(h'(C_{i}))\right|\right\rbrace
\end{aligned}
\] where we set $\tau_{X}(h(C_i)) = \tau_{Y}(h'(C_i)) = 0$ for $i \in I_0$.
\end{definition}

Then the balls $B_{r}([h, X]) := \{[h', Y] \in \T(S) : d_{FN}([h, X], [h', Y]) < r\}$ generate the $l^{\infty}$-topology on $\T(S)$. If $S$ is of infinite type, this space contains uncountably many components, each comprised of elements $d_{FN}$-bounded to each other.

\begin{remark}
The Fenchel-Nielsen Teichm{\"u}ller space was originally constructed in~\cite{alessandrini2011fenchel}, which uses different conventions. Precisely, the convention in~\cite{alessandrini2011fenchel} picks a hyperbolic structure $[h, X_{0}]$ and considers the component of $\T(S)$ containing $[h, X_{0}]$ as the Fenchel-Nielsen Teichm{\"u}ller space $\T(X_{0})$. To see how the choice of basepoint $X_{0}$ affects the property of $\T(X_{0})$, see~\cite{alessandrini2011fenchel}.
\end{remark}

When $S$ is of finite type, $\T(S)$ is homeomorphic to a finite-dimensional Euclidean space, and there are several equivalent definitions of $\T(S)$. For instance, $\T(S)$ can be identified with the set of all discrete faithful representations $\pi_1(S) \to \PR$, modulo conjugations by $\PR$, which send elements represented by curves freely homotopic to punctures to parabolic elements.
Especially, the Fenchel-Nielsen parameters for different seamed pants decompositions give rise to the same analytic structure on $\T(S)$. The length of a curve on $S$ then becomes an analytic function on $\T(S)$.

To see that the Fenchel-Nielsen topology satisfies Property \ref{property:topology}, consider a surface $S$ made by gluing a finite-type surface $S_{1}$ with another surface $S_{2}$ along curves $\{C_{i}\}_{i \in I_{1}}$. Then pants decompositions $\{C_{i}\}_{i \in I_{1} \cup I_{2}}$ on $S_{1}$ and $\{C_{i}\}_{i \in I_{1} \cup I_{3}}$ on $S_{2}$ give rise to a pants decomposition $\{C_{i}\}_{i \in I}$ on $S$, where $I = I_{1} \cup I_{2} \cup I_{3}$. We can further construct a seamed pants decomposition $\mathcal{P} = (\{C_{i}\}_{i \in I}, \{A_{j}\}_{j \in J})$ on $S$, which gives seamed pants decomposition $\mathcal{Q}$ and $\mathcal{R}$ on $S_{1}$ and $S_{2}$, respectively, by restriction. Then the Fenchel-Nielsen parametrization of $\T(S)$ is decomposed into that of $\T(S_{1})$, that of $\T(S_{2})$ (omitting the length parameters for $I_{1}$), and twist parameters for $I_{1}$. By setting $\T(S;S_{1})$ to be the parameter space consisting of Fenchel-Nielsen parameters for $S_2$ together with twist parameters for $I_1$, $\T(S)$ is the product space of $\T(S_{1})$ and $\T(S;S_{1})$ as desired.

\subsection{Intersection number}
Let us first assume that $\alpha$, $\beta$ are oriented multicurves on $S$. Let $A$, $B$ be smooth representatives of $\alpha$ and $\beta$, respectively, transverse to each other at a point $p$. Let $A_{p}, B_{p}$ be the tangent vectors along $A$ and $B$ at $p$, respectively. The \emph{index} of $(A, B)$ at $p$ is defined as $+1$ if the oriented basis $(A_{p}, B_{p})$ agree with the orientation of the surface, and $-1$ otherwise. We further define the \emph{algebraic intersection number} of $\alpha$ and $\beta$ by the sum of indices of $(A, B)$ over all intersection points, and denote it by $i_{alg}(\alpha, \beta)$. The algebraic intersection number does not depend on the choice of representatives $A$ and $B$. It does, however, depend on the orientations of $\alpha$ and $\beta$ and is well-defined up to sign.

However, the total number of intersection points depends on the choice of representatives. The minimum such number, counted with multiplicity, is called the \emph{geometric intersection number} and denoted by $i_{geom}(\alpha, \beta)$ (or $i(\alpha, \beta)$ for short). Note that geometric intersection number is also well-defined for unoriented multicurves. Representatives $A$, $B$ of $\alpha$, $\beta$ realizing $i(\alpha, \beta)$ are said to be \emph{in minimal position}. The following fact serves as a practical criterion for representative curves in minimal position.

\begin{fact}{\cite[Proposition 3.10]{FLP}}\label{fact:bigon}
Representatives $A$, $B$ of two multicurves are in minimal position if and only if $A$ and $B$ do not form a bigon, a contractible region of $S \setminus (A \cup B)$ bounded by one simple segment of $A$ and one simple segment of $B$.
\end{fact}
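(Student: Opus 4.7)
The forward direction is the easier half: if $A$ and $B$ bound a bigon $D$ with $\partial D = \alpha \cup \beta$, $\alpha \subseteq A$ and $\beta \subseteq B$, then because $D$ is null-homotopic one can isotope $\alpha$ across $D$ and slightly past $\beta$ through a homotopy supported in a neighbourhood of $D$. Taking $D$ innermost among such bigons guarantees that no new intersections are created, while the two corner intersections of the bigon are destroyed. Hence the transverse intersection count strictly decreases, contradicting the assumption that $A, B$ were in minimal position.

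For the converse I would argue the contrapositive: if $A$ and $B$ are transverse but not in minimal position, I will produce a bigon. The plan is to pass to the universal cover $\pi : \tilde{S} \to S$, which is a topological disk (the hypothesis of nonabelian $\pi_1$ is harmless here, but even for the torus the cover is a plane and the argument adapts). The key lemma is that, under the no-bigon hypothesis on $S$, any lift $\tilde{a}$ of a component of $A$ and any lift $\tilde{b}$ of a component of $B$ intersect in at most one point. If not, two consecutive intersection points along $\tilde{a}$ would bound a disk region $D_0 \subseteq \tilde{S}$, and an innermost-bigon argument inside $D_0$ --- successively replacing by sub-bigons cut off by arcs of $\pi^{-1}(A \cup B)$ that enter --- yields a bigon $D$ whose interior meets no other lifts. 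Since $A$ and $B$ are each simple and each a disjoint union of simple curves, the boundary of $D$ remains one subarc of a single lift of $A$ together with one subarc of a single lift of $B$; the projection $\pi(D)$ is then an embedded bigon in $S$, contradicting the hypothesis.

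Once the key lemma holds, $|A \cap B|$ equals the number of orbits of crossing lift-pairs $(\tilde{a}, \tilde{b})$ under the deck group, a quantity governed purely by how the endpoints on the ideal boundary $\partial \tilde{S}$ link; since those endpoints depend only on the free homotopy classes of the components of $A$ and $B$ (as witnessed, e.g., by choosing any hyperbolic structure and passing to geodesic representatives, which are automatically in minimal position), this linking count coincides with $i(A, B)$. Hence absence of bigons forces $|A \cap B| = i(A, B)$, i.e., minimal position. The main obstacle in the argument is the innermost-bigon iteration: at each stage one must verify that the boundary of the shrunken bigon still splits into one subarc of a single lift of $A$ and one subarc of a single lift of $B$, rather than two subarcs coming from distinct lifts of the same multicurve. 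This is where the simpleness and pairwise disjointness of the components of each of $A$ and $B$ are essential, and is the only place where the hypothesis that $A, B$ are multicurves (rather than arbitrary immersed loops) enters.
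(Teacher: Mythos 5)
The paper does not prove this statement; it is cited verbatim from FLP as a background Fact, so there is no internal proof to compare against. What you have written is essentially the proof of Proposition 1.7 in Farb--Margalit's \emph{Primer on Mapping Class Groups} (universal cover, lifts meet at most once, linking at infinity), which is a standard and correct alternative to the FLP treatment.

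Your forward direction is fine: an innermost bigon has interior disjoint from $A \cup B$, so the push-across isotopy removes exactly two intersection points and creates none. Your converse is also structurally sound, and you correctly identify the main subtlety. The reason the innermost region cut off inside $D_0$ is again a bigon of the right type is precisely the disjointness you invoke: a lift of a component of $A$ entering $D_0$ cannot cross the $A$-side of $\partial D_0$ (distinct lifts of a multicurve are disjoint), so it enters and exits through the $B$-side, cutting off a sub-bigon with one $A$-arc and one $B$-arc, and symmetrically for lifts of $B$. Two small points you leave implicit. First, the innermost bigon $D$ in $\tilde S$ projects to an \emph{embedded} bigon in $S$; this needs the short argument that if $gD \cap D \neq \emptyset$ for a nontrivial deck transformation $g$, then since $\mathrm{int}(D)$ avoids $\pi^{-1}(A \cup B) \supseteq \partial(gD)$ one gets $D = gD$, contradicting freeness of the action via Brouwer. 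Second, your appeal to ``geodesic representatives are automatically in minimal position'' is not circular -- that is Farb--Margalit's Proposition 1.3, proved independently of the bigon criterion by the same linking-at-infinity argument -- but you should say so explicitly, since a reader might otherwise worry you are using the statement to be proved. With those two clarifications the argument is complete.
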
 

We introduce an abuse of notation as follows: curves $\alpha$ and $\beta$ may also refer to representatives $A$ and $B$ of $\alpha$ and $\beta$, respectively, in minimal position. Such representatives are chosen up to simultaneous ambient isotopy as described follows:

\begin{fact}{\cite[Lemma 2.9]{FarbMargalit12}}\label{fact:simIsotopy}
Let $S$ be a finite-type surface, $\gamma_{1}$, $\gamma_{2}$ be distinct essential curves on $S$, and $c_{i}$, $c_{i}'$ be representatives of $\gamma_{i}$. Then there exists an isotopy of $S$ that takes $c_{i}'$ to $c_{i}$ for both $i$ simultaneously.
\end{fact}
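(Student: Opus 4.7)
The plan is to reduce the simultaneous statement to the one-curve ambient isotopy theorem (any two simple representatives of an essential free homotopy class on a finite-type surface differ by an ambient isotopy of the surface) and then handle the remaining curve by cutting along the first and applying the arc version of the same theorem. As is implicit in how the lemma is invoked above, I proceed under the hypothesis that both pairs $(c_1,c_2)$ and $(c_1',c_2')$ are in minimal position; this assumption is essentially forced, since otherwise no ambient isotopy can relate the two pairs because the geometric intersection number is a homeomorphism invariant.

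First, I would apply the single-curve lemma to $\gamma_1$ to get an ambient isotopy $\phi_t$ of $S$ with $\phi_0=\mathrm{id}$ and $\phi_1(c_1')=c_1$. Setting $c_2'':=\phi_1(c_2')$, we still have a representative of $\gamma_2$ in minimal position with $c_1$, and it now suffices to find an ambient isotopy of $S$ fixing $c_1$ (pointwise, after a further collar isotopy) and taking $c_2''$ to $c_2$. Cut $S$ along $c_1$ to obtain a finite-type surface $\widetilde S$ (possibly disconnected) with new boundary components; since both $c_2$ and $c_2''$ are in minimal position with $c_1$, they each meet $c_1$ in $i(\gamma_1,\gamma_2)$ points and descend to collections $\widetilde\alpha,\widetilde\alpha''$ of essential properly embedded arcs in $\widetilde S$ with matching endpoint sets on $\partial\widetilde S\setminus\partial S$. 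Using that $c_2''$ is freely homotopic to $c_2$ in $S$, one checks that the components of $\widetilde\alpha''$ and $\widetilde\alpha$ can be paired so that corresponding arcs share endpoints and are homotopic rel endpoints in $\widetilde S$.

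Finally, I would invoke the arc version of the isotopy lemma: a properly embedded essential arc in a surface with boundary, homotopic rel endpoints to another such arc, is carried to it by an ambient isotopy of the surface fixing the boundary pointwise. I apply it to the arcs of $\widetilde\alpha''$ one at a time, using the no-bigon criterion (Fact~\ref{fact:bigon}) to arrange the support of each successive arc isotopy to be disjoint from the remaining arcs, so the process yields a single ambient isotopy of $\widetilde S$ carrying $\widetilde\alpha''$ onto $\widetilde\alpha$ while fixing $\partial\widetilde S$. Re-gluing across $c_1$ produces an ambient isotopy of $S$ fixing $c_1$ pointwise and sending $c_2''$ to $c_2$; concatenating with $\phi_t$ delivers the desired simultaneous isotopy. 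The main obstacle I expect is the disjoint-support step: running the arc isotopies in $\widetilde S$ so that no arc ever sweeps across another. The standard remedy is an innermost-bigon (or innermost-disk) argument inside $\widetilde S$, iteratively thickening each arc to a regular neighborhood compatible with the absence of bigons; this is the only delicate point, the rest being direct applications of the classical single-object isotopy results.
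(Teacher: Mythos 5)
The paper itself supplies no argument here: Fact~\ref{fact:simIsotopy} is recorded only as a citation to Farb--Margalit, Lemma~2.9, so there is no in-text proof to compare against. You correctly observed that the statement must tacitly assume both pairs $(c_1,c_2)$ and $(c_1',c_2')$ are in minimal position, since geometric intersection number is an isotopy invariant; the paper invokes the Fact exactly in that setting, so this reading is right.

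As for your argument: the cut-and-reglue scheme is reasonable, but you have misplaced the difficulty. The step you dispatch with ``one checks'' --- that after cutting along $c_1$ the arcs of $c_2$ and of $c_2''$ can be made to share endpoint sets, paired compatibly, and shown to be homotopic rel endpoints in $\widetilde S$ --- is essentially the entire content of the lemma, not a routine verification. To begin with, $c_2\cap c_1$ and $c_2''\cap c_1$ are a priori different $k$-point subsets of $c_1$; a collar isotopy aligns them as sets, but one must still argue that the resulting arcs pair up so that matched arcs share \emph{both} endpoints, and that matched arcs are homotopic rel endpoints. That requires a genuine input: for instance, passing to the annular cover associated to $\langle\gamma_1\rangle$ and using minimal position to show that each lift of $c_2$ (resp.\ $c_2''$) crosses the core at most once, which pins down the combinatorial pattern of the arcs; or passing to geodesic representatives for an auxiliary hyperbolic metric and tracking the arcs there. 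As written, your reduction presupposes its conclusion. By contrast, the ``disjoint support'' step you single out as the main obstacle is comparatively mild: the arcs of $\widetilde\alpha''$ are pairwise disjoint (as are those of $\widetilde\alpha$), so once the rel-endpoints homotopies are in hand, a standard simultaneous-straightening or innermost-bigon argument does finish it, as you suggest.
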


The intersection numbers with finitely many curves are sufficient to determine a multicurve \cite[Section 4.3]{FLP}. This fact is due to Dehn and Thurston (see e.g., \cite{luo2004dehn-thurston} for the context). We record one variant suited for our purpose.

\begin{fact}{cf. \cite[ Th{\'e}or{\`e}me 4.8]{FLP}} \label{fact:DehnThurston}
Let $\{B_{1}, \ldots, B_{m}, C_{1}, \ldots, C_{n}\}$ be a pants decomposition of a finite-type surface, where $B_{i}$'s are boundary curves. Then there exist curves $\{C_{1}', \ldots, C_{n}', C_{1}'', \ldots, C_n''\}$ on the surface satisfying the following: \begin{enumerate}
\item $i(C_{i}, C_{j}') = 0 \Leftrightarrow i \neq j \Leftrightarrow i(C_{i}, C_{j}'') = 0$ , and
\item if $D$, $D'$ are distinct essential multicurves (i.e., not containing boundary curves), then we have $i(D, C) \neq i(D', C)$ for at least one $C \in \{C_{i}, C_{i}', C_{i}''\}$.
\end{enumerate}
\end{fact}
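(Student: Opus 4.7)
I aim to construct $C_i'$ and $C_i''$ as ``dual'' and ``twisted dual'' curves to the pants decomposition, so that the triples $\bigl(i(D,C_i),\, i(D,C_i'),\, i(D,C_i'')\bigr)_{i=1}^n$ recover the Dehn--Thurston coordinates of a multicurve $D$; condition (2) then follows from the uniqueness half of Dehn--Thurston theory. For each pants curve $C_i$, consider the (one or two) pairs of pants $P, P'$ of the decomposition adjacent to $C_i$. Inside each, I pick an essential arc with both endpoints on $C_i$ (such an arc exists in any pair of pants, namely the one separating its other two boundary components). Arranging the endpoints to alternate along $C_i$, the union of these arcs closes up into a simple closed curve $C_i'$ that meets $C_i$ transversely twice and is disjoint from every $B_k$ and every $C_j$ with $j \neq i$. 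In the self-abutting case, where both sides of $C_i$ lie in the same pair of pants, a single arc already suffices and $C_i'$ crosses $C_i$ exactly once. I then set $C_i'' := T_{C_i}(C_i')$, the image of $C_i'$ under the (right) Dehn twist about $C_i$; since $T_{C_i}$ is supported in an annular neighbourhood of $C_i$, $C_i''$ inherits disjointness from every other pants and boundary curve, and condition (1) is immediate for both $C_i'$ and $C_i''$.

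\textbf{Recovery of Dehn--Thurston coordinates.} By the classical Dehn--Thurston parameterisation, an essential multicurve $D$ without boundary-parallel components is determined up to isotopy by the nonnegative integers $m_i := i(D, C_i)$ together with twist parameters $t_i$; here $t_i \in \mathbb{Z}$ when $m_i > 0$, while $t_i \in \mathbb{Z}_{\geq 0}$ records the multiplicity of $C_i$ in $D$ when $m_i = 0$. The data $i(D, C_i)$ recovers $m_i$ directly. For the twist, one has the standard piecewise-linear identity
\begin{equation*}
i(D, C_i') \;=\; \alpha_i\, |t_i| + L_i(m_\bullet),
\end{equation*}
with $\alpha_i = i(C_i, C_i') \in \{1, 2\}$ and $L_i$ a linear expression in the $m_j$'s indexed by pants curves $C_j$ adjacent to $C_i$. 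Since $C_i'' = T_{C_i}(C_i')$, the analogous formula has the twist shifted by $m_i$:
\begin{equation*}
i(D, C_i'') \;=\; \alpha_i\, |t_i + m_i| + L_i(m_\bullet)
\end{equation*}
(the sign of the shift depending on the twist convention). When $m_i > 0$, comparing the two values resolves the sign in each absolute value and pins $t_i$ down uniquely; when $m_i = 0$, $i(D, C_i') = \alpha_i t_i$ reads off the multiplicity of $C_i$ in $D$. Hence the triples determine all Dehn--Thurston coordinates of $D$, and distinct multicurves must disagree on at least one entry, yielding (2).

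\textbf{Main obstacle.} The technical heart is the precise form of the twist formula and its behaviour under $T_{C_i}$, which involves a short case analysis according to whether $C_i$ is separating, non-separating, or self-abutting in the decomposition. The $m_i = 0$ case also requires separate treatment, since there the twist parameter encodes multiplicity rather than shearing, and the formula for $i(D, C_i')$ collapses to something essentially linear. Both points are subsumed by the cited reference~\cite[Th\'eor\`eme 4.8]{FLP}, and modulo this input the argument reduces to straightforward bookkeeping with the explicit ``dual system'' $\{C_i', C_i''\}$ built above.
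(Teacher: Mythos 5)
The paper does not prove this statement; it is recorded as a \emph{Fact} with a pointer to~\cite{FLP} (Th\'eor\`eme 4.8), so there is no in-text argument to compare against. Your construction --- $C_i'$ built from essential seam arcs with endpoints on $C_i$ in the one or two adjacent pairs of pants, and $C_i''=T_{C_i}(C_i')$ --- is precisely the standard Dehn--Thurston dual system, and the reduction of condition~(2) to the uniqueness half of the Dehn--Thurston parametrization is the intended route. Condition~(1) is indeed immediate from the construction, as you say.

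The one imprecision worth flagging is the displayed formula $i(D,C_i')=\alpha_i|t_i|+L_i(m_\bullet)$ with $L_i$ \emph{linear}. In the actual Dehn--Thurston/Penner--Harer bookkeeping, the term playing the role of $L_i$ is a piecewise-linear (max-plus) expression in the $m_j$'s, obtained from the arc counts of the several combinatorial types of essential arc in the adjacent pants; and the exact coefficient in front of $|t_i|$, as well as the sign of the $m_i$-shift for $C_i''$, depend on normalization conventions that must be fixed with some care. None of this damages the argument, because all that is actually used is (a) that the residual term is a function of $(m_j)_j$ alone --- which you can see directly, since $C_i'$ is disjoint from every $C_j$ with $j\neq i$, so $i(T_{C_j}(D),C_i')=i(D,C_i')$ and the quantity cannot depend on the twists $t_j$, $j\neq i$ --- and (b) that the pair of values $i(D,C_i')$, $i(D,C_i'')$ distinguishes $t_i$ from $-t_i$ and recovers the multiplicity of $C_i$ in $D$ when $m_i=0$. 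You correctly identify these as the technical inputs subsumed by the citation, so the sketch is sound; just replace ``linear'' by ``piecewise linear'' and treat the displayed formula as heuristic rather than literal.
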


Here $C_{i}'$ and $C_{i}''$ are used to measure the `twist' of multicurves along $C_{i}$'s. See also Fact~\ref{fact:onlyTwist}.

\subsection{Pinching a curve}

Let $\alpha$ be a curve on $S$. Since $\alpha$ is compact, it is contained in a finite-type subsurface $S_{1}$. For each $[h, X] \in \T(S)$. the curve $f(\alpha)$ has its geodesic representative $A$ on $X$ realising the minimum length. Abusing notation, we omit the marking and denote $l_{X}(A)$ by $l_{X}(\alpha)$. Then the function $l_{X}(\alpha)$ is continuous on $\T(S)$ and descends to an analytic function on $\T(S_{1})$.

We state a lemma regarding the pinching process, whose proof is deferred to Appendix~\ref{section:pinching}. The pinching process is, roughly speaking, choosing a simple closed curve and then making its length to converge to 0. For a detailed discussion, see~\cite{wolpert1990hyp}.

Let $\{C_{1}, C_{2}, \ldots \}$ be a pants decomposition on $S$ and $X \in \T(S)$. Pinching the length of $C_{1}$ means that we follow the path $\{X_{r}\}_{r > 0} \subseteq \T(S)$ as $r \to 0$ where \[
l_{X_{r}}(C_{i}) = \left\{ \begin{array}{cc} r & i = 1 \\ l_{X}(C_{i}) & i \neq 1 \end{array}\right., \quad \tau_{X_{r}}(C_{i}) = \tau_{X}(C_{i}) \quad \textrm{for all}\,\, i.
\]

\begin{restatable}{lem}{pinching} \label{lem:pinching}
Let $\alpha$ be a multicurve on $S$ with $i(\alpha, C_{1}) = k$. \begin{enumerate}
	\item If $k = 0$, then $l_{X_{r}}(\alpha)$ converges to a finite value as $r \rightarrow 0$.
	\item If $k > 0$, then $\lim_{r\rightarrow 0} l_{X_{r}}(\alpha) / \ln r = -2k$.
\end{enumerate}
\end{restatable}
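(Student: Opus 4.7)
The plan is to analyze the length via the geometry of a standard collar around $C_{1}$. By Keen's collar lemma, around the simple closed geodesic $C_{1}$ of length $r$ there is an embedded annular collar $N_{r}$ of half-width $w(r) = \sinh^{-1}(1/\sinh(r/2))$, which satisfies $w(r) = -\ln r + O(1)$ as $r \to 0$.

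For case (1), when $k = 0$, since $\alpha$ is compact it is supported on a finite-type subsurface and a geodesic representative on $X_{r}$ can be taken disjoint from $C_{1}$. As $r \to 0$, all Fenchel--Nielsen parameters of $X_{r}$ other than $l(C_{1})$ remain fixed by construction, so the hyperbolic structure on each pair of pants of $\mathcal{P}$ not adjacent to $C_{1}$ is unchanged, while the structure on each pair of pants adjacent to $C_{1}$ converges to that of a cusped pair of pants (with the other two boundary lengths preserved). The length of a closed geodesic in a pants decomposition depends analytically on the boundary lengths and twists of the pants it traverses, and this dependence extends continuously to the cusped degeneration. Hence $l_{X_{r}}(\alpha)$ converges to a finite value, the length of $\alpha$ on the limiting noded surface.

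For case (2), when $k > 0$, let $A_{r}$ denote the geodesic representative of $\alpha$ on $X_{r}$. Since $A_{r}$ and $C_{1}$ are both geodesic and hence in minimal position, they meet transversely at exactly $k$ points, all of which lie inside $N_{r}$. The $k$ subarcs of $A_{r} \cap N_{r}$ that cross $C_{1}$ each have endpoints on opposite components of $\partial N_{r}$, and any such arc in the collar has length at least $2w(r)$. Thus
\[
l_{X_{r}}(\alpha) = l_{X_{r}}(A_{r}) \geq 2k \, w(r) = -2k \ln r + O(1),
\]
and dividing by the negative quantity $\ln r$ gives $\limsup_{r \to 0} l_{X_{r}}(\alpha)/\ln r \leq -2k$. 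For the matching inequality, we exhibit a piecewise geodesic competitor homotopic to $\alpha$: take $k$ geodesic arcs in $N_{r}$ that cross $C_{1}$ perpendicularly, each of length exactly $2w(r)$, and connect them outside $N_{r}$ by geodesic arcs in the rest of $X_{r}$. Because all Fenchel--Nielsen coordinates except $l(C_{1})$ are fixed, the outside arcs lie in a compact family and their total length is bounded by some constant $C$ independent of $r$, yielding $l_{X_{r}}(\alpha) \leq 2k \, w(r) + C$ and hence $\liminf_{r \to 0} l_{X_{r}}(\alpha)/\ln r \geq -2k$. The two bounds together give the limit $-2k$.

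The main delicacy is the upper-bound construction: as $r \to 0$ the collar $N_{r}$ grows and the outside endpoints on $\partial N_{r}$ move, while the pants adjacent to $C_{1}$ deform toward cusped pants. One must verify that the outside geodesic arcs can still be chosen with lengths bounded independently of $r$, which follows from continuity of the pants' metric geometry in Fenchel--Nielsen coordinates, together with a horocyclic normalisation of the endpoints on $\partial N_{r}$ so that they converge as $r \to 0$.
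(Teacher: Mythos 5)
Your proof takes a genuinely different route from the paper's. The paper works in the universal cover: it fixes a base point and base vector, studies the monodromy representations $\Gamma_r$, decomposes $\alpha$ into perpendicular arcs $\gamma_i$ and twist arcs $\beta_i$ along $C_1$, and then performs an explicit upper-half-plane computation (tracking endpoints $c_\pm$, horocycle heights $y_{L_\pm}$, Euclidean widths) to pin down $l_{X_r}(\gamma_i)/(-\ln r)\to 2$, while arguing separately that $l_{X_r}(\beta_i)=O(r)$ because the twist parameter $\tau$ is frozen. Your argument replaces all of this with the collar lemma: the lower bound $l_{X_r}(\alpha)\geq 2kw(r)$ is immediate from the fact that each crossing arc must traverse the collar, and $w(r)=-\ln r+O(1)$ gives the constant $-2k$ directly. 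This is considerably more elementary and transparent than the paper's computation, at the cost that the upper bound now requires a competitor construction rather than falling out of the same decomposition. For case (1), the paper likewise decomposes into arcs and argues convergence of each piece under the monodromy; your continuity argument in Fenchel--Nielsen coordinates reaches the same conclusion with less machinery.

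One point in the upper bound deserves more care than you give it. Your competitor uses $k$ arcs that cross $C_1$ \emph{perpendicularly}, but that object is homotopic to $\alpha$ only if the FN twist vanishes. Since the pinching path freezes the twist parameter $\tau_{X_r}(C_1)=\tau_X(C_1)$, the actual twist displacement along $C_1$ is $r\,\tau_X(C_1)=O(r)$; you should insert short arcs of length $O(r)$ along $C_1$ between consecutive perpendicular arcs (this is exactly the role of the segments $\beta_i$ in the paper's proof), which of course does not disturb the asymptotics. Your final paragraph correctly flags that boundedness of the outside connecting arcs needs justification; the cleanest route is to observe that $\partial N_r$ has length $r\cosh w(r)\to 2$ and that $X_r\setminus N_r$ converges geometrically to the noded surface minus a horocyclic cusp neighborhood, so the endpoints and the outside arcs stabilise. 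With the twist correction inserted and that convergence spelled out, the argument is complete.
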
	

The proof of Lemma \ref{lem:pinching} will be given in Appendix \ref{appendix:B}.

\subsection{Fractional Dehn twists} \label{subsec:fractionalDehn}

Let $\alpha$ be a curve and $\beta$ be a multicurve with $i(\alpha, \beta) = k$. We choose their representatives to be curves in minimal position and
denote these curves respectively by $\alpha$ and $\beta$ by abusing notation. To define the fractional Dehn twist $T_{\alpha}^j(\beta)$ for $j \in \Z$, let us take an annular neighborhood $N$ of $\alpha$ in a way that
$$N = S^1 \times [-1, 1] = \{(e^{2\pi i s}, t) : s \in [0, 1], t \in [-1, 1]\}$$ where $\alpha$ is parametrized by $[0, 1] \to N$, $s \mapsto (e^{2\pi i s}, 0)$ and $\beta \cap N = \{ (e^{{2\pi n i \over k}}, t) : t \in [-1, 1], n = 1, \ldots, k  \}$.

We now define a homeomorphism $\varphi : N \to N$ by $$\varphi(z, t) = \begin{cases}
(z e^{{2\pi i t \over k}}, t) &, t \in [0, 1]\\
(z, t) &, t \in [-1, 0].
\end{cases}$$ We then extend $\varphi$ to $\Phi$ on the whole surface by setting $\Phi$ to be an identity outside of $N$. Even though $\Phi$ may not be continuous on the surface, $\Phi(\beta)$ is an unoriented multicurve on the surface. We define the fractional Dehn twist $$T_{\alpha}^j (\beta) := \Phi(\beta).$$ Here, $T_{\alpha}^j (\beta)$ is well-defined up to isotopy, thanks to Fact~\ref{fact:simIsotopy}. Note that the superscript notation is consistent with composition. Indeed, we observe that $T_{\alpha}^{ i+j}(\beta) = T_{\alpha}^i(T_{\alpha}^j(\beta))$ for $i, j \in \Z$. Also note that $T_{\alpha}^{k}(\beta)$ precisely defines a right Dehn twist of $\beta$ along $\alpha$. We now record two facts on the intersection number and fractional Dehn twists.

\begin{remark}
Fractional Dehn twists should be distinguished from the roots of Dehn twists that Margalit and Schleimer introduced in~\cite{margalit2009Dehn}. The roots of Dehn twists are mapping classes while  fractional Dehn twists are a priori not induced from a homeomorphism on the surface. As a result, the roots of Dehn twists always send a single curve to another single curve while fractional Dehn twists may send a single curve to multicurves.
\end{remark}

\begin{fact}{\cite[Proposition 3.4]{FarbMargalit12}}\label{fact:triangleIneqInters}
Let $\alpha$, $\beta$, $\gamma$ be curves on a surface $S$ and $i(\alpha, \beta) = k \ge 1$. Then \[
\left| i(T_{\alpha}^{nk}(\beta), \gamma) - nk i(\alpha, \gamma) \right| \le i(\beta, \gamma).
\]
\end{fact}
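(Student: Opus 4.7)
The strategy is to realize $T_{\alpha}^{nk}(\beta)$ explicitly using the homeomorphism $\Phi$ from Subsection~\ref{subsec:fractionalDehn}, count its transverse intersections with $\gamma$, and then reduce to minimal position by bigon removal.

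I would begin by placing $\alpha$, $\beta$, $\gamma$ in pairwise minimal position with no triple intersections, invoking Fact~\ref{fact:bigon}. Next, I would pick a thin annular neighborhood $N = S^{1} \times [-1,1]$ of $\alpha$ so that, after a small ambient isotopy, $\beta \cap N$ is a disjoint union of $k$ radial arcs of the form $\{\mathrm{pt}\} \times [-1,1]$, $\gamma \cap N$ is a disjoint union of $i(\alpha, \gamma)$ such radial arcs, and no point of $\beta \cap \gamma$ lies inside $N$. Applying $\Phi$ with $j = nk$ produces an explicit representative of $T_{\alpha}^{nk}(\beta)$ coinciding with $\beta$ outside $N$ and, inside $N$, replacing each radial $\beta$-arc by an arc whose upper half $S^{1} \times [0,1]$ winds $n$ times around $\alpha$. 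Counting intersections directly: within $N$, each of the $k$ winding arcs meets each of the $i(\alpha, \gamma)$ radial $\gamma$-arcs in exactly $n$ points, contributing $nk \cdot i(\alpha, \gamma)$ total; outside $N$ the representative agrees with $\beta$ and meets $\gamma$ in exactly $i(\beta, \gamma)$ points. Summing yields
\[
\#\bigl(T_{\alpha}^{nk}(\beta) \cap \gamma\bigr) \;=\; nk \cdot i(\alpha, \gamma) + i(\beta, \gamma),
\]
giving the upper bound $i(T_{\alpha}^{nk}(\beta), \gamma) \le nk \cdot i(\alpha, \gamma) + i(\beta, \gamma)$.

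For the matching lower bound $i(T_{\alpha}^{nk}(\beta), \gamma) \ge nk \cdot i(\alpha, \gamma) - i(\beta, \gamma)$, I would reduce the constructed representative to minimal position by iteratively removing bigons, each removal dropping the intersection count by $2$ (Fact~\ref{fact:bigon}). Letting $B$ denote the number of bigon removals, we obtain
\[
i(T_{\alpha}^{nk}(\beta), \gamma) \;=\; nk \cdot i(\alpha, \gamma) + i(\beta, \gamma) - 2B,
\]
so the bound reduces to $B \le i(\beta, \gamma)$. I would establish this by constructing an injection from innermost bigons into the finite set $\beta \cap \gamma \cap (S \setminus N)$: the grid formed by the winding $T_{\alpha}^{nk}(\beta)$-arcs and radial $\gamma$-arcs within $N$ itself is bigon-free, so any innermost bigon must contain on its boundary a segment of $\beta$ in $S \setminus N$ realising an intersection with $\gamma$, and that intersection serves as its assigned image.

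The main obstacle will be making this bigon-to-intersection injection fully rigorous: bigons may traverse $N$ multiple times and share boundary segments, so distinctness of assigned images needs care. A clean route is to lift the entire configuration to the annular cover $\widehat{S}_{\alpha} \to S$ associated to the cyclic subgroup $\langle \alpha \rangle \subset \pi_{1}(S)$; there the winding arcs become embedded lines and the bigon combinatorics trivialise, making the required injection transparent and yielding the claimed bound.
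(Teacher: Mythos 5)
This statement is quoted from Farb--Margalit (Proposition~3.4) and the paper gives no proof of its own, so there is nothing internal to compare against; your proposal is really an attempt to reprove the cited result. Your overall strategy --- realise the twist explicitly on a collar $N$ of $\alpha$, count crossings of the resulting representative with $\gamma$, and then control the defect from minimal position --- is the classical route, and the first half is sound: after putting $\alpha,\beta,\gamma$ in pairwise geodesic position and arranging $N$ thin enough that $\beta\cap N$, $\gamma\cap N$ are radial and $\beta\cap\gamma\cap N=\emptyset$, the representative $\Phi(\beta)$ does meet $\gamma$ in exactly $nk\cdot i(\alpha,\gamma)+i(\beta,\gamma)$ points, giving the upper bound $i(T_\alpha^{nk}(\beta),\gamma)\le nk\cdot i(\alpha,\gamma)+i(\beta,\gamma)$.

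The gap is in the lower bound, and you correctly locate it but do not close it. The assertion that every innermost bigon must have on its boundary a segment of $\beta$ in $S\setminus N$ that ``realises an intersection with $\gamma$'' is not justified: a bigon can leave and re-enter $N$ through a $\beta$-segment that meets $\partial N$ twice without ever passing through a point of $\beta\cap\gamma$, and the $\gamma$-side of the bigon may also run through $N$. Moreover the assignment is not obviously injective (several bigon removals could plausibly be ``blamed'' on the same $\beta\cap\gamma$ point), and the quantity you actually need to bound is the total number of removals in an arbitrary simplification sequence, not merely the number of innermost bigons present at one stage. What is really needed is a statement of the form: any embedded disc bounded by one subarc of $\Phi(\beta)$ and one subarc of $\gamma$ forces a cancelling pair of crossings of $\beta$ with $\gamma$ outside $N$; proving this requires tracking how the disc meets $\partial N$. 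Your suggestion to pass to the annular cover $\widehat S_\alpha$ is the right repair --- there the winding arcs become graphs of monotone functions over the core, $\gamma$-arcs become vertical, and the intersection pattern can be read off directly; this is essentially how the bound is established in Farb--Margalit. As written, though, the lower half of the argument is a sketch with an acknowledged hole rather than a proof.
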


\begin{lem}\label{lem:fracDehn}
$i(T_{\alpha}^j(\beta), \alpha) = i(\alpha, \beta)$ and $i(T_{\alpha}^j(\beta), \beta) = |j| i(\alpha, \beta)$.
\end{lem}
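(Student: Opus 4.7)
My plan is to read off both equalities from the explicit local model of the homeomorphism $\Phi$ defining $T_\alpha^j$, working inside the annular neighbourhood $N = S^1 \times [-1,1]$ of $\alpha$ and its universal cover $\tilde N = \R \times [-1,1]$, and to justify minimality in each case via the bigon criterion (Fact~\ref{fact:bigon}). As a preliminary step I replace $\Phi$ by an ambient-isotopic continuous representative, spreading the shear across a slightly larger collar of $\alpha$ so that the discontinuity at $\partial_+ N$ is absorbed; this changes neither $T_\alpha^j(\beta)$ as a multicurve nor any intersection count.

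For the first equality, the formula for $\Phi$ gives $\Phi|_\alpha = \mathrm{id}$ and $\Phi|_{S \setminus N} = \mathrm{id}$, which forces $T_\alpha^j(\beta) \cap \alpha = \beta \cap \alpha$ as point sets, yielding exactly $k = i(\alpha,\beta)$ geometric intersections. Any bigon between $T_\alpha^j(\beta)$ and $\alpha$ lies on one side of $\alpha$: on the lower side $T_\alpha^j(\beta)$ coincides with $\beta$, so the bigon would already exist for $(\alpha, \beta)$, contradicting its minimality; on the upper side, the continuous model of $\Phi^{-1}$ transports the bigon to one for $(\alpha, \beta)$, again a contradiction.

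For the second equality, I lift the picture inside $N$ to $\tilde N$, where $\beta$ lifts to the vertical lines $\{s = m/k : m \in \Z\}$ and the $n$-th arc of $T_\alpha^j(\beta)$ lifts to the piecewise-linear arc that is vertical on $s = n/k$ for $t \in [-1,0]$ and follows $s = n/k + jt/k$ for $t \in [0,1]$. A small isotopy pushes the lower half to $s = n/k + \epsilon$ and smooths the resulting corner, so that the upper strand sweeps the open $s$-interval $\bigl(n/k + \epsilon,\; (n+j)/k + \epsilon\bigr)$ monotonically, with neither endpoint on the lattice $\{m/k\}$; this strand crosses exactly $|j|$ vertical lifts of $\beta$. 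Summing over the $k$ strands gives $|j|k$ transverse crossings in $N$, and outside $N$ the two curves coincide and can be isotoped apart without creating new crossings. Monotonicity of $s(t)$ along the spirals versus constancy along the verticals precludes bigons in $\tilde N$, and hence in $N$, so by Fact~\ref{fact:bigon} the count $|j|k$ is minimal. The main obstacle, handled by the preliminary smoothing and the lower-half perturbation, is ensuring that the matching across $\partial_+ N$ to the outside continuation of $\beta$ does not produce spurious crossings or bigons; once a continuous representative of $\Phi$ is fixed, this becomes a routine piecewise-linear check in an annulus.
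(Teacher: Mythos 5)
Your overall strategy --- fix a concrete representative of $T_\alpha^j(\beta)$, count crossings, and certify minimality with the bigon criterion (Fact~\ref{fact:bigon}) --- is the same as the paper's, and your crossing count of $|j|\,i(\alpha,\beta)$ inside $N$ is correct. Two points need attention.

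First, the opening step cannot be carried out as written: for $j\not\equiv 0 \pmod{k}$ the map $\Phi$ is not a homeomorphism of $S$, and it admits no continuous replacement producing the same multicurve. A homeomorphism that equals the identity outside an annular collar of $\alpha$ is isotopic rel the complement to an integer power of the genuine Dehn twist $T_\alpha^k$; but $T_\alpha^j(\beta)$ may have a different number of components than $\beta$, so it is in general not the image of $\beta$ under any self-homeomorphism of $S$. What \emph{is} true, and all one needs, is that the set $\Phi(\beta)$ is already a well-defined smooth multicurve; fix a representative of it directly, as the paper does with its model $C$. Your argument for $i(T_\alpha^j(\beta),\alpha)=i(\alpha,\beta)$ then goes through once the bigon transport is phrased as the paper's isotopy of complementary regions: cut $S$ along $\alpha$, where a rotation of the new boundary circle carries the arcs of $T_\alpha^j(\beta)$ to those of $\beta$.

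Second, and more substantively, for $i(T_\alpha^j(\beta),\beta)$ your universal-cover picture in $\widetilde N$ rules out bigons whose boundary arcs stay inside $N$, but you explicitly defer the remaining case (``matching across $\partial_+ N$ \ldots a routine piecewise-linear check in an annulus''). This is where the real work of the lemma lies, and it is not routine. Outside $N$ the two curves coincide, so they must be pushed apart, and the direction of the push at $\partial N$ is forced by the $\pm\epsilon$ perturbation you made inside. A uniform push $s\mapsto s+\epsilon$ is not a single consistent side of $\beta$: along an arc of $\beta\smallsetminus\alpha$ that leaves and re-enters $N$ through the same boundary circle $\partial_+N$ (or $\partial_-N$), the $+s$ direction sits on the right of $\beta$ at one end and on its left at the other, so a uniform push either creates a spurious crossing outside $N$ or, after flipping a sign to avoid it, changes the inside count. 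The paper's choice of pushoff for the type $B$ segments (``left side of $\beta$ if $j$ is positive''), its three structural observations about how segments of types $A_1$, $A_2$, $B$ abut one another and $\beta$, and its three-case analysis of the putative bigon side $\tau$ are precisely the bookkeeping needed: they show that any such $\tau$ is homotopic rel $\beta$ to an arc of $\alpha$, transferring any bigon of $(T_\alpha^j(\beta),\beta)$ to one for the minimal pair $(\alpha,\beta)$. As written, your proof leaves this step as an unverified claim, so there is a genuine gap.
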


\begin{proof}
In this proof, we denote by $N(\gamma)$ an annular neighborhood of a curve $\gamma$. 

We temporarily orient $\beta$ and fix a representative $C$ of $T_{\alpha}^{j}(\beta)$ as in Figure~\ref{fig:fracDehn}. Here, segments of $C$ parallel to $\beta$ (called type $B$) are drawn on the left side of $\beta$ if $j$ is positive, and on the right side otherwise.

\begin{figure}[ht]
	\begin{tikzpicture}

	\draw (0, 0.3) -- (0, 6.2);
	\draw[thick, decoration={markings, mark=at position 0.5 with {\draw (-0.18, 0.06) -- (0, 0) -- (-0.18, -0.06);}}, postaction={decorate}] (-3, 5.5) -- (3, 5.5);
	\draw[thick, decoration={markings, mark=at position 0.5 with {\draw (-0.18, 0.06) -- (0, 0) -- (-0.18, -0.06);}}, postaction={decorate}] (-3, 4) -- (3, 4);
	\draw[thick, decoration={markings, mark=at position 0.5 with {\draw (0.18, 0.06) -- (0, 0) -- (0.18, -0.06);}}, postaction={decorate}] (-3, 2.5) -- (3, 2.5);
	\draw[thick, decoration={markings, mark=at position 0.5 with {\draw (-0.18, 0.06) -- (0, 0) -- (-0.18, -0.06);}}, postaction={decorate}] (-3, 1) -- (3, 1);
	\draw[thick, dashed] (-3, 5.7) -- (-0.68, 5.7) -- (0.68, 2.3) -- (3, 2.3);
	\draw[thick, dashed] (-3, 4.2) -- (-0.68, 4.2) -- (0.52, 1.2) -- (3, 1.2);
	\draw[thick, dashed] (-3, 2.3) -- (-0.52, 2.3) -- (0.2, 0.5);
	\draw[thick, dashed] (-3, 1.2) -- (-0.68, 1.2) -- (-0.4, 0.5);
	\draw[thick, dashed] (-0.2, 6) -- (0.52, 4.2) -- (3, 4.2);
	\draw[thick, dashed] (0.4, 6) -- (0.52, 5.7) -- (3, 5.7);

	\draw (0, 0.1) node {$\alpha$};
	
	\draw[thick] (3.5, 1.5) -- (4.7, 1.5);
	\draw[thick, dashed] (3.5, 1) -- (4.7, 1);
	\draw (5, 1.5) node {$\beta$};
	\draw (5.28, 1) node {$T_{\alpha}^{2} (\beta)$};
	
	\end{tikzpicture}
	\caption{$\alpha$, $\beta$ and $T_{\alpha}^2(\beta)$. Here $\beta$ is equipped with an orientation in order to determine a representative of $T_{\alpha}^2(\beta)$ in minimal position with $\alpha$ and $\beta$.}
	\label{fig:fracDehn}
\end{figure}
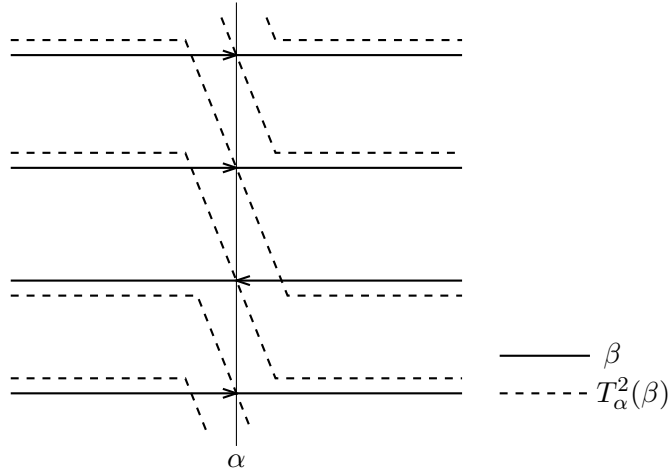

$C$ also has segments in $N(\alpha) \setminus \beta$ (called type $A$), which are classified further into two subtypes: those that are contained in $N(\beta)$ (called type $A_1$) and the others that are not contained in $N(\beta)$ (called type $A_2$). See Figure~\ref{fig:fracDehnSupp}. We observe in Figure~\ref{fig:fracDehnSupp} that \begin{enumerate}
	\item each type $B$ segment is disjoint from $\beta$;
	\item each type $B$ segment either closes itself, or is sandwiched by a type $A_1$ segment and a type $A_2$ segment;
	\item each type $A_1$ segment is adjacent to a type $B$ segment and $\beta$.
\end{enumerate}

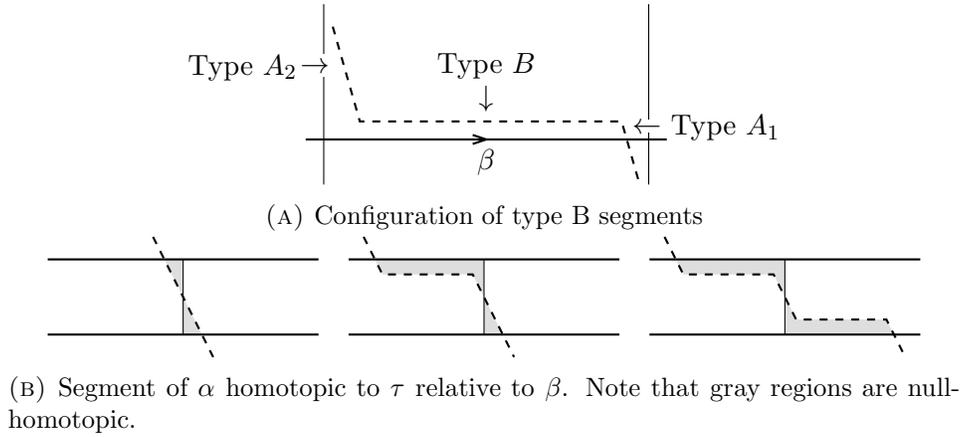
\begin{figure}[ht]

	\begin{subfigure}[c]{\textwidth}
			\centering
	\begin{tikzpicture}
	
	\begin{scope}[scale=1.2]
	\draw[thick, decoration={markings, mark=at position 0.5 with {\draw (-0.18, 0.06) -- (0, 0) -- (-0.18, -0.06);}}, postaction={decorate}] (-2, -0.5) -- (2,-0.5);
	\draw (-1.8, -1) -- (-1.8, 1);
	\draw (1.8, -1) -- (1.8, 1);
	\draw[thick, dashed] (-1.7, 0.75) -- (-1.4, -0.3) -- (1.5, -0.3) -- (1.7, -1);
	
	\fill[black!0] (-1.95, 0.2) -- (-1.7, 0.2) -- (-1.7, 0.45) -- (-1.95, 0.45);
	\draw (-1.9, 0.3) node {$\rightarrow$};
	\draw (-2.7, 0.3) node {Type $A_{2}$};
	\draw (0, -0.05) node {$\downarrow$};
	\draw (0, 0.32) node {Type $B$};
	\fill[black!0] (1.7, -0.42) -- (1.7, -0.28) -- (1.9, -0.28) -- (1.9, -0.42);
	\draw (1.78, -0.37) node {$\leftarrow$};
	\draw (2.65, -0.37) node {Type $A_{1}$};
	\draw (0, -0.75) node {$\beta$};
%	\draw (-5, 1) node {(a)};

	%%REVISION2
	\draw[blue, fill=blue!10, opacity=0.4] (-1.7, 0.75) .. controls (-1.85, 0.75) and (-1.55, -0.3) .. (-1.4, -0.3) .. controls (-1.25, -0.3) and (-1.5, 0.75) .. (-1.7, 0.75);
	\draw[red, fill=red!10, opacity=0.4] (1.52, -0.37) circle(0.1);
	\end{scope}
	
	\end{tikzpicture}
	\caption{Configuration of type B segments}
\end{subfigure}

	\begin{subfigure}[c]{\textwidth}
			\centering
	\begin{tikzpicture}
	
	\begin{scope}[shift={(-4, -3.5)}]
	
%	\draw (-2, 1.3) node {(b)};

	\fill[black!13] (-0.25, 0.5) -- (0, 0.5) -- (0, 0) -- cycle;
	\fill[black!13] (0.25, -0.5) -- (0, -0.5) -- (0, 0) -- cycle;

	\draw[thick] (-1.8, 0.5) -- (1.8, 0.5);
	\draw[thick] (-1.8, -0.5) -- (1.8, -0.5);
	\draw[thick, dashed] (-0.4, 0.8) -- (0.4, -0.8);
	
	\draw (0, 0.5) -- (0, -0.5);
	
	\end{scope}
	
	\begin{scope}[shift={(0, -3.5)}]

	\fill[black!13] (-1.45, 0.5) -- (0, 0.5) -- (0, 0) -- (-0.15, 0.3) -- (-1.35, 0.3) -- cycle;
	\fill[black!13] (0.25, -0.5) -- (0, -0.5) -- (0, 0) -- cycle;
	\draw[thick] (-1.8, 0.5) -- (1.8, 0.5);
	\draw[thick] (-1.8, -0.5) -- (1.8, -0.5);
	\draw[thick, dashed] (-1.6, 0.8) --  (-1.35, 0.3) -- (-0.15, 0.3) -- (0.4, -0.8);
	\draw (0, 0.5) -- (0, -0.5);
	\end{scope}
	
	\begin{scope}[shift={(4, -3.5)}]
	\fill[black!13] (-1.45, 0.5) -- (0, 0.5) -- (0, 0) -- (-0.15, 0.3) -- (-1.35, 0.3) -- cycle;
	\fill[black!13] (1.45, -0.5) -- (0, -0.5) -- (0, 0) -- (0.15, -0.3) -- (1.35, -0.3) -- cycle;
	\draw[thick] (-1.8, 0.5) -- (1.8, 0.5);
	\draw[thick] (-1.8, -0.5) -- (1.8, -0.5);
	\draw[thick, dashed] (-1.6, 0.8) --  (-1.35, 0.3) -- (-0.15, 0.3) -- (0.15, -0.3) -- (1.35, -0.3) -- (1.6, -0.8);
	\draw (0, 0.5) -- (0, -0.5);
	\end{scope}

	\end{tikzpicture}
	\caption{Segment of $\alpha$ homotopic to $\tau$ relative to $\beta$. Note that gray regions are null-homotopic.}
\end{subfigure}

	\caption{Configurations of $\beta$ and $C$}
	\label{fig:fracDehnSupp}
\end{figure} 

We claim that the curves in Figure~\ref{fig:fracDehn} are indeed in minimal position. First, any complementary region of $T_{\alpha}^{j}(\beta) \cup \alpha$ can be isotoped to a complementary region of $\alpha \cup \beta$. Since $\alpha$ and $\beta$ are assumed to be in minimal position, such complementary regions are not bigons. Consequently, $T_{\alpha}^{j}(\beta)$ and $\alpha$ are also in minimal position.

We now discuss the minimal position of $T_{\alpha}^{j}(\beta)$ and $\beta$. To this end, suppose to the contrary that a segment $\tau$ of $T_{\alpha}^{j}(\beta)$ and a segment $\sigma$ of $\beta$ bound a bigon.  As observed above (1), each type $B$ segment is disjoint from $\beta$, and hence $\tau$ must contain at least one type $A_1$ or type $A_2$ segment. Moreover, it follows from (2) and (3) above that $\tau$ falls into one of the following (Figure \ref{fig:fracDehnSupp}(B)): \begin{itemize}
\item $\tau$ consists of only one type $A_2$ segment $a_{2}$;
\item $\tau$ is a concatenation of  type $A_1$ segment $a_{1}$, type $B$ segment $b_{1}$ and type $A_2$ segment $a_{2}$; or
\item $\tau$ is a concatenation of type $A_1$ segment $a_{1}$, type $B$ segment $b_{1}$ and type $A_2$ segment $a_{2}$, type $B$ segment $b_{2}$ and type $A_1$ segment $a_{3}$.
\end{itemize}
In any case, $\tau$ is homotopic (relative to $\beta$) to a segment of $\alpha \setminus \beta$. We deduce that $\alpha$ and $\beta$ bound a bigon, contradicting  the minimal position assumption. Thus, we conclude that $T_{\alpha}^{j}(\beta)$ and $\beta$ are also in minimal position.

Given this conclusion, the intersection numbers follow immediately.
\end{proof}

We will make use of the following variant of Fact~\ref{fact:DehnThurston} later on to characterize fractional Dehn twists.

\begin{fact}\label{fact:onlyTwist}
Let $S$ be a surface of finite type and $\{C_{j}, C_{j}', C_{j}''\}_{j=1}^{n}$ be the curves on $S$ mentioned in Fact~\ref{fact:DehnThurston}. Fix $k$ and suppose that $D$, $D'$ are essential multicurves satisfying $i(C_{j}, D) = i(C_{j}, D')$ for all $j$, $i(C_{j}', D) = i(C_{j}', D')$ for $j \neq k$, and $i(C_{j}'', D) = i(C_{j}'', D')$ for $j \neq k$. Then $D$ and $D'$ are related by a fractional  Dehn twist along $C_{k}$.
\end{fact}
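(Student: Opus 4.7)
The plan is to interpret the hypothesis as the statement that $D$ and $D'$ share all Dehn--Thurston coordinates except possibly the twist at $C_j$, and then exhibit the remaining discrepancy as a fractional Dehn twist along $C_j$.

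First I would observe that, since $i(C_i, D) = i(C_i, D')$ for every $i$, the two multicurves have the same number of strands crossing each cuff of every pair of pants in the decomposition $\{C_i\}$. Properly embedded multi-arc systems in a pair of pants are classified up to proper isotopy by the number of endpoints on each cuff, so $D$ and $D'$ restrict to isotopic multi-arcs inside each pair of pants. Thus both global multicurves are reconstructed from identical pair-of-pants data, and the only remaining freedom consists of the rotational identifications used to glue these strands across each $C_i$; each such identification is a twist datum in $\mathbb{Z}/i(C_i,D)\mathbb{Z}$.

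By Fact~\ref{fact:DehnThurston}, the triple $(i(C_i,\cdot), i(C_i',\cdot), i(C_i'',\cdot))_i$ determines a multicurve, so for each $i$ the pair $(i(C_i',\cdot), i(C_i'',\cdot))$ in effect records the twist datum at $C_i$. Combined with the hypothesis, this forces the twist data of $D$ and $D'$ to coincide for every $i \neq j$, leaving a possible difference only at $C_j$. Now set $k := i(C_j, D) = i(C_j, D')$. If $k = 0$, the hypothesis already determines $D = D' = T_{C_j}^0(D)$, and we are done. Assume $k \geq 1$, and consider the family $T_{C_j}^m(D)$ for $m \in \mathbb{Z}$. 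By Lemma~\ref{lem:fracDehn}, together with the observation that $C_j$ is disjoint from each $C_i$ for $i \neq j$ in the pants decomposition, and that $C_i'$, $C_i''$ for $i \neq j$ can be isotoped disjoint from $C_j$ (using $i(C_j, C_i') = i(C_j, C_i'') = 0$ when $i \neq j$), every intersection number appearing in the hypothesis is preserved under $T_{C_j}^m$. Inspection of the annular model from Subsection~\ref{subsec:fractionalDehn} further shows that $T_{C_j}^m$ shifts the twist datum at $C_j$ by exactly $m \pmod{k}$; as $m$ ranges over $\{0, 1, \ldots, k-1\}$, every rotational identification at $C_j$ is achieved. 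For a suitable $m$ the multicurve $T_{C_j}^m(D)$ agrees with $D'$ in every Dehn--Thurston coordinate, and Fact~\ref{fact:DehnThurston} then yields $T_{C_j}^m(D) = D'$.

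The hard part of this plan is the final identification: verifying that the parameter $m$ in the definition of $T_{C_j}^m$ matches, modulo $k$, the twist datum shift at $C_j$ in the convention encoded by $C_j'$ and $C_j''$. This is essentially bookkeeping inside the annular neighbourhood, but must be done carefully so that one full Dehn twist ($m=k$) reproduces the integer shift expected from the Dehn--Thurston coordinates, while the intermediate fractional twists $m = 1, \ldots, k-1$ bijectively sweep out the remaining twist data.
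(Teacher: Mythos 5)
The paper does not actually prove Fact~\ref{fact:onlyTwist}: it is stated without proof and described as ``a variant of Fact~\ref{fact:DehnThurston}'', i.e.\ it is treated as part of folklore Dehn--Thurston theory. There is therefore no paper argument to compare against. Your plan --- read the hypotheses as saying that $D$ and $D'$ share all Dehn--Thurston coordinates except the twist at $C_j$, note that cutting along the pants curves gives identical multi-arc data, and realise the remaining discrepancy as $T_{C_j}^m$ for some $m\in\Z$, appealing to the injectivity in Fact~\ref{fact:DehnThurston} to finish --- is the standard argument and is structurally correct. What needs scrutiny is your own justifications.

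There is one genuine gap. You write that ``every intersection number appearing in the hypothesis is preserved under $T_{C_j}^m$'' and cite Lemma~\ref{lem:fracDehn} for this. But Lemma~\ref{lem:fracDehn} only computes $i(T_\alpha^m(\beta),\alpha)$ and $i(T_\alpha^m(\beta),\beta)$; it says nothing about $i(T_\alpha^m(\beta),\gamma)$ for a third curve $\gamma$ disjoint from $\alpha$. That third-curve statement is exactly what you need for $\gamma \in \{C_i, C_i', C_i''\}$ with $i\neq j$. The statement is true, but it requires its own bigon-elimination argument in the spirit of the proof of Lemma~\ref{lem:fracDehn}: choose an annular neighbourhood $N(C_j)$ disjoint from those curves, note that the representative $\Phi(D)$ agrees with $D$ outside $N(C_j)$ and therefore meets $\gamma$ in the same set of points as $D$, and then rule out bigons between $\Phi(D)$ and $\gamma$ (the candidate bigon would have its $\gamma$-side outside $N(C_j)$, and one reduces it to a bigon between $D$ and $\gamma$ or between $D$ and $C_j$). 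As written, your proof is resting on a lemma that does not cover the case you use.

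A smaller imprecision: you assert that $T_{C_j}^m$ shifts the twist datum at $C_j$ ``by exactly $m \pmod{k}$'' with $m$ ranging over $\{0,\dots,k-1\}$. The Dehn--Thurston twist parameter at $C_j$ is $\Z$-valued, not $\Z/k$-valued --- a full Dehn twist $T_{C_j}^k$ is nontrivial on $D$ whenever $k = i(C_j,D)\ge 1$ --- and $T_{C_j}^m$ shifts it by $m$ with $m$ running over all of $\Z$. Your final sentence quantifies over a ``suitable $m$'' without the $\pmod k$ restriction, so the conclusion survives, but the intermediate phrasing misplaces where the twist datum lives and would be misleading if taken literally.
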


\begin{proof}[Sketch of proof]
A step of the proof of \cite[Th{\'e}or{\`e}me 4.8]{FLP} concerns the construction of a model multicurve $\delta$ on $S$ for each admissible value $\{i(D, C_{j}), i(D, C_{j}'), i(D, C_{j}'')\}_{j}$. Here, the information $\{i(D, C_{j})\}_{j}$ determines the relative isotopy class of this model multicurve $\delta$ restricted to $S \setminus (\cup_{j} N(C_{j}))$. Furthermore, for each $j$, the information $(i(D, C_{j}'), i(D, C_{j}''))$ determines the relative isotopy class of $\delta$ restricted to $N(C_{j})$. Hence, under the assumption of the statement, the proof of \cite[Th{\'e}or{\`e}me 4.8]{FLP} yields model multicurves $\delta$ and $\delta'$, isotopic to $D$ and $D'$ respectively, such that $\delta|_{S \setminus N(C_{k})} = \delta'|_{S \setminus N(C_{k})}$ up to isotopy relative to $\partial N(C_{k})$. Hence, $\delta$ and $\delta'$ are related by a fractional Dehn twist along $C_{k}$, and so are $D$ and $D'$.
\end{proof}

\section{Length identities} \label{sec:lengthidentities}

In this section, we show how length identities of curves keep track of their topological configuration. This is a converse procedure of previously known result, introduced in Subsection~\ref{subsec:topconftolength}.

We begin by referring to a theorem of McShane and Parlier.

\begin{thm}{\cite[Theorem 1.1]{mcshane2008simple}}\label{thm:mcshane}
For each pair of distinct essential or boundary curves $\alpha$, $\beta$ on a surface $S$ of finite type, there exists a connected analytic submanifold $E(\alpha, \beta)$ of $\T(S)$ such that $l_{X}(\alpha)\neq l_{X}(\beta)$ for $X \in \T(S) \setminus E(\alpha, \beta)$. Consequently, points in $\T(S) \setminus \bigcup_{\alpha \neq \beta} E(\alpha, \beta)$ have simple simple length spectra (that is, simple length spectra such that multiplicity of each length is $1$).
\end{thm}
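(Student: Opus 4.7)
The plan is to prove the theorem via real-analyticity of length functions on the finite-dimensional Teichm\"uller space $\T(S)$, with the pinching lemma providing the key non-degeneracy input.

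\textbf{Step 1 (Analyticity).} Since $S$ is of finite type, $\T(S)$ is a connected real-analytic manifold, e.g.\ via Fenchel--Nielsen coordinates, and for any essential or boundary curve $\gamma$ the length function $l_{\gamma} : \T(S) \to \R_{\geq 0}$ is real-analytic. Hence $F := l_{\alpha} - l_{\beta}$ is a real-analytic function on the connected real-analytic manifold $\T(S)$.

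\textbf{Step 2 ($F \not\equiv 0$).} Since $\alpha \neq \beta$, Fact~\ref{fact:DehnThurston} (Dehn--Thurston) produces a curve $C$ (from the canonical system $\{C_i, C_i', C_i''\}$) with $i(C, \alpha) \neq i(C, \beta)$. After extending $C$ to a pants decomposition and choosing any base Fenchel--Nielsen data, consider the one-parameter family $X_r \in \T(S)$ obtained by pinching $C$ to length $r$ while keeping all other length and twist parameters constant. Lemma~\ref{lem:pinching} gives the asymptotics
\[
\lim_{r \to 0^+} \frac{l_{X_r}(\alpha)}{\ln r} = -2\, i(C, \alpha), \qquad \lim_{r \to 0^+} \frac{l_{X_r}(\beta)}{\ln r} = -2\, i(C, \beta).
\]
Since these two limits are distinct, $F(X_r) \neq 0$ for all sufficiently small $r > 0$, proving $F \not\equiv 0$ on $\T(S)$.

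\textbf{Step 3 (The submanifold $E(\alpha,\beta)$).} As the zero set of a non-trivial real-analytic function on a connected real-analytic manifold, $F^{-1}(0)$ is a proper real-analytic subvariety of $\T(S)$, hence has empty interior and its regular locus is a real-analytic submanifold of codimension at least one. Take $E(\alpha, \beta)$ to be this zero set, or if one insists on a submanifold, pass to a top-dimensional connected component of its smooth locus (throwing away the singular stratum, which has higher codimension and is thus absorbed into the meagre set anyway). In either case $l_X(\alpha) \neq l_X(\beta)$ on $\T(S) \setminus E(\alpha, \beta)$.

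\textbf{Step 4 (Consequence via Baire).} On a finite-type surface there are only countably many homotopy classes of essential or boundary curves, hence only countably many unordered pairs $\{\alpha, \beta\}$. Each $E(\alpha, \beta)$ is nowhere dense in $\T(S)$ because it is contained in a proper real-analytic subvariety of a connected real-analytic manifold. Therefore $\bigcup_{\alpha \neq \beta} E(\alpha, \beta)$ is a countable union of nowhere dense sets and so is meagre by Baire. On its complement, distinct curves have distinct lengths, so the simple length spectrum is multiplicity-free.

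\textbf{Main obstacle.} The substantive step is Step 2: showing that two distinct curves cannot have identical length functions across all of $\T(S)$. It couples a combinatorial ingredient (Dehn--Thurston coordinates witness $\alpha \neq \beta$) with an analytic one (the pinching asymptotics of Lemma~\ref{lem:pinching}); once this is in hand the rest is soft, following from standard facts about real-analytic functions and Baire category. The technical point of producing a single \emph{connected} analytic submanifold rather than a subvariety is inessential for the application and can be handled by restricting to a component of the smooth stratum.
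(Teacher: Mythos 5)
This theorem is quoted from McShane--Parlier~\cite{mcshane2008simple}; the paper supplies no proof of its own, so the natural comparison is to the cited source. Your route is genuinely different from theirs. McShane--Parlier establish the hypersurface structure directly: they show that at any point of the equal-length locus the differential $d(\ell_\alpha - \ell_\beta)$ is nonzero, using twist (earthquake) flows, the strict convexity of length along twists intersecting the core curve, and Wolpert's derivative formula; smoothness and codimension one then follow from the implicit function theorem. You instead argue only that $\ell_\alpha - \ell_\beta \not\equiv 0$, by combining Dehn--Thurston separation with the pinching asymptotics of Lemma~\ref{lem:pinching}, and then lean on the soft theory of real-analytic zero sets. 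Your version is more elementary and (as the surrounding paper itself observes after stating the theorem) is exactly the key non-degeneracy input; but it proves strictly less, and this shows up in two places.

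First, Step~3 does not actually deliver what the statement asserts. The zero set $F^{-1}(0)$ of a non-trivial analytic function is an analytic subvariety, and its singular locus need not be empty. Taking a single top-dimensional component of the smooth stratum as $E(\alpha,\beta)$ and ``throwing away'' the rest produces a set whose complement still meets $F^{-1}(0)$, so the advertised implication $X \notin E(\alpha,\beta) \Rightarrow \ell_X(\alpha) \neq \ell_X(\beta)$ fails on the discarded pieces. To obtain a single \emph{connected analytic submanifold} containing the whole zero set you need the stronger differential non-vanishing fact, i.e.\ essentially McShane--Parlier's argument. With your soft approach the honest output is a countable union of submanifolds covering $F^{-1}(0)$ (exactly the content of Lemma~\ref{lem:analytic} in the appendix, which the paper itself uses for the analogous constructions in Section~\ref{sec:lengthidentities}); that suffices for meagreness and Weil--Petersson measure zero, which is all the application needs, but it is not literally the theorem as stated.

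Second, Step~2 has a small case gap. Fact~\ref{fact:DehnThurston} distinguishes distinct \emph{essential} multicurves, so your choice of $C$ is only guaranteed when both $\alpha$ and $\beta$ are essential. The theorem also allows boundary curves. That case is easy to patch --- if both are boundary curves they are independent Fenchel--Nielsen length coordinates, and if exactly one is a boundary curve, pinch the essential one and use that the boundary length stays bounded while the essential length tends to $0$ --- but it should be said. With these two points addressed your argument is a valid, softer replacement for the theorem in the way it is actually used in the rest of the paper.
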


This theorem asserts that essential or boundary curves on $S$ are faithfully labelled by their lengths at almost every point of $\T(S)$, although not everywhere. Note that this can be generalized to surfaces of infinite type as follows. Let $\alpha$, $\beta$ be distinct curves on a surface $S$ of infinite type. Since curves are compact, they are contained in some finite-type subsurface $S_{1}$ of $S$ bounded by some curves $C_{i_{1}}$, $\ldots$, $C_{i_{n}}$. Then $l_{X}(\alpha) - l_{X}(\beta)$ becomes a non-constant analytic function on $\T(S_{1})$. By Theorem~\ref{thm:mcshane}, there exists a submanifold $E$ of $\T(S_{1})$ such that $l_{X}(\alpha) - l_{X}(\beta)$ does not vanish outside $E$. Since $E$ is nowhere dense, $\tilde{E} := \pi_{S_{1}}^{-1}(E) \subseteq \T(S)$ is also nowhere dense. \label{page.E}

The key observation for Theorem~\ref{thm:mcshane} is that $E(\alpha, \beta)$ is the zero locus of a non-constant analytic function $l_{X}(\alpha) - l_{X}(\beta)$ of $X$ on $\T(S)$. The purpose of this section is proving analogous results for other length identities. 

\subsection{From topological configurations to length identities} \label{subsec:topconftolength}
Here we review classical length identities of curves on hyperbolic surfaces. For details, 
see ~\cite{mcKean1972compact} or~\cite{Luo_geodesic}. Given curves $\eta_{1}$, $\eta_{2}$ on a surface $S$, we define the following functions on $\T(S)$: \[
f(X;\eta_{1}, \eta_{2}):= 2 \cosh \frac{l_{X}(\eta_{1})}{2}  \cosh \frac{l_{X}(\eta_{2})}{2},
\]\[
g(X;\eta_{1}, \eta_{2}) := \cosh \frac{l_{X}(\eta_{1})}{2} +  \cosh \frac{l_{X}(\eta_{2})}{2}.
\]
Both $f(X;\eta_{1}, \eta_{2})$ and $g(X; \eta_{1}, \eta_{2})$ are functions on $\T(S)$ with infimum 2. Moreover, if $\lim_{r} f(X_{r}; \eta_{1}, \eta_{2}) = 2$ or $\lim_{r} g(X_{r}; \eta_{1}, \eta_{2}) = 2$ for some path $\{X_{r}\} \subseteq \T(S)$, then both $l_{X_{r}}(\eta_{1})$, $l_{X_{r}}(\eta_{2})$ converge to 0. We also note that $l_{X}(\eta)$ becomes a constant function over $\T(S)$ if $\eta$ is bounding a puncture.

Now let $\alpha$, $\gamma$ be two curves on $S$ with $i(\alpha, \gamma) = 1$. Then $\alpha \cup \gamma$ becomes a spine of a one-holed/punctured torus with boundary $\delta := \alpha \gamma \alpha^{-1} \gamma^{-1}$. See Figure~\ref{fig:spineof1holed}.

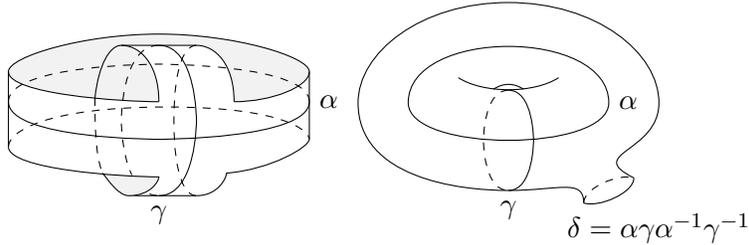
\begin{figure}[th]
	\begin{tikzpicture}[scale=1, every node/.style={scale=1}]
	\filldraw[fill=black!05, draw=white] (0, 0) .. controls (-0.5, 0) and (-2, 0.1) .. (-2, 0.4) .. controls (-2, 0.6) and (-1, 0.9) .. (0, 0.9) .. controls (1, 0.9) and (2, 0.6) .. (2, 0.4) .. controls (2, 0.2) and (1.5, 0) .. (1, 0) .. controls (1, 0.5) and (0.75, 0.75) .. (0.5, 0.75) -- (-0.5, 0.75) .. controls (-0.25, 0.75) and (0, 0.5) .. (0, 0);
	
	\draw (-0.5, 0.75) -- (0.5, 0.75);
	\draw (0.5, -1.25) -- (-0.4, -1.25);
	\draw (-2, -0.6) -- (-2, 0.4);
	\draw (2, -0.5) -- (2, 0.4);

	\draw (0,0.75) .. controls (0.5, 0.75) and (0.5, -0.25) .. (0.5, -0.25) .. controls (0.5, -0.25) and (0.5, -1.25) .. (0, -1.25);
	\draw[dashed] (0,0.75) .. controls (-0.5, 0.75) and (-0.5, -0.25) .. (-0.5, -0.25) .. controls (-0.5, -0.25) and (-0.5, -1.25) .. (0, -1.25);
	
	\draw (0, 0) .. controls (-0.5, 0) and (-2, 0.1) .. (-2, 0.4) .. controls (-2, 0.6) and (-1, 0.9) .. (0, 0.9) .. controls (1, 0.9) and (2, 0.6) .. (2, 0.4) .. controls (2, 0.2) and (1.5, 0) .. (1, 0);

	%	\draw[fill=black!05, draw=white] (0, 0.01) .. controls (0, 0.5) and (-0.25, 0.75) .. (-0.5, 0.75) .. controls (-0.65, 0.75) and (-0.8, 0.5) .. (-0.85, 0.06);
	\draw (0, 0) .. controls (0, 0.5) and (-0.25, 0.75) .. (-0.5, 0.75) .. controls (-0.65, 0.75) and (-0.8, 0.5) .. (-0.85, 0.05);
	
	\draw[dashed] (-0.85, 0.05) .. controls (-0.85,  -0.25) and (-0.85, -0.7).. (-0.75, -0.95);
	\draw (1, 0) .. controls (1, 0.5) and (0.75, 0.75) .. (0.5, 0.75);
	\draw[dashed] (0.5, 0.75) .. controls (0.35, 0.75) and (0.2, 0.5) .. (0.2, -0.25) .. controls (0.2, -1) and (0.35, -1.25) .. (0.5, -1.25);
	
	\draw[dashed] (2, 0) .. controls (2, 0.5) and (0, 0.5) .. (0, 0.5) .. controls (0, 0.5) and (-2, 0.5) .. (-2, 0);
	\draw (2, 0) .. controls (2, -0.5) and (0, -0.5) .. (0, -0.5) .. controls (0, -0.5) and (-2, -0.5) .. (-2, 0);

	\filldraw[fill=black!05, draw=white] (-0.75, -0.95) .. controls (-0.7, -1) and (-0.55, -1.25) ..(-0.4, -1.25) .. controls (-0.1, -1.25) and (0, -1.05) .. (0, -1);
	\draw(-0.75, -0.95) .. controls (-0.7, -1) and (-0.55, -1.25) ..(-0.4, -1.25) .. controls (-0.1, -1.25) and (0, -1.05) .. (0, -1) .. controls (-0.5, -1) and (-2, -0.9) .. (-2, -0.6);
	\draw[dashed] (-2, -0.6) .. controls (-2, -0.4) and (-1.5, -0.07) .. (0.1, -0.07) .. controls (1.7, -0.07) and (2, -0.4) .. (2, -0.5);
	\draw (0.5, -1.25) .. controls (0.8, -1.25)  and (0.9, -1) .. (0.9, -0.95) .. controls (1.5, -0.9) and (2, -0.6) .. (2, -0.5);
	
	\draw (0, -1.25) node[below] {$\gamma$};
	\draw (2, 0) node[right] {$\alpha$};
	
	\draw (0, -2);
	
	\end{tikzpicture}
	\begin{tikzpicture}[scale=2/3, every node/.style={scale=1}]

	\draw (0,0.75) .. controls (0.5, 0.75) and (0.5, -0.25) .. (0.5, -0.25) .. controls (0.5, -0.25) and (0.5, -1.25) .. (0, -1.25);
	\draw[dashed] (0,0.75) .. controls (-0.5, 0.75) and (-0.5, -0.25) .. (-0.5, -0.25) .. controls (-0.5, -0.25) and (-0.5, -1.25) .. (0, -1.25);
	
	\draw (-1, 1) .. controls (-0.5, 0.68) and (0.5, 0.68) .. (1, 1);
	\draw (-0.3, 0.77) .. controls (-0.1, 0.9) and (0.1, 0.9) .. (0.3, 0.77);

	\draw  (1.5, -1.5) .. controls (1.5, -1) and (1, -1.25) .. (0, -1.25) .. controls (-1, -1.25) and (-3, -1) .. (-3, 0.5) .. controls (-3, 1.5) and (-1.75, 2.5) ..  (0, 2.5) .. controls (1.75, 2.5) and (3, 1.5) .. (3, 0.5) .. controls (3, -0.5) and (1.5, -0.5) .. (2.5, -1);
	
	\draw (2.5, -1) .. controls (2.5, -1.3) and (1.8, -1.6) .. (1.5, -1.5);
	\draw[dashed] (1.5, -1.5) .. controls (1.5, -1.2) and (2.2, -0.9) .. (2.5, -1);
	
	\draw (3, -1.5) node[below] {$\delta = \alpha \gamma \alpha^{-1} \gamma^{-1}$};
	
	\draw (2, 0.5) .. controls (2, -0.5) and (-2, -0.5) .. (-2, 0.5) .. controls (-2, 2) and (2, 2) .. (2, 0.5);
	
	\draw (0, -1.25) node[below] {$\gamma$};
	\draw (2, 0.5) node[right] {$\alpha$};
%	\draw (0, 1.975);
	\end{tikzpicture}
	\caption{One-holed torus with spine $\alpha \cup \gamma$} \label{fig:spineof1holed}
\end{figure}

\begin{fact}\label{fact:lengthID1}
Let $\alpha$ and $\gamma$ be as above. Then 
$f(X;\alpha, \gamma) = g(X;T_{\gamma}^{1} (\alpha), T_{\gamma}^{-1}(\alpha))$ identically holds on $T(S)$.
\end{fact}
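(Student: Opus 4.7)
My plan is to reduce this identity to the universal trace identity in $\mathrm{SL}(2,\mathbb{R})$. Let $\rho : \pi_{1}(S) \to \mathrm{PSL}(2,\mathbb{R})$ be the holonomy of $X$, basing loops at the intersection point of $\alpha$ and $\gamma$ so these curves represent specific elements of $\pi_{1}$. From the definition of the Dehn twist on a regular neighborhood of $\gamma$ and the right-twist convention fixed in Subsection~\ref{subsec:fractionalDehn}, $T_{\gamma}^{1}(\alpha)$ and $T_{\gamma}^{-1}(\alpha)$ are freely homotopic to the classes $\gamma\alpha$ and $\gamma^{-1}\alpha$, respectively. Lift $\rho(\alpha), \rho(\gamma)$ to $A, B \in \mathrm{SL}(2,\mathbb{R})$. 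Since each of $\alpha, \gamma, T_{\gamma}^{1}(\alpha), T_{\gamma}^{-1}(\alpha)$ is a nontrivial simple closed curve on a Nielsen-convex surface, each of $A, B, BA, B^{-1}A$ is hyperbolic with trace $\pm 2\cosh(\ell/2)$, where $\ell$ is its translation length.

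The next step is the Cayley--Hamilton consequence $\mathrm{tr}(M)\mathrm{tr}(N) = \mathrm{tr}(MN) + \mathrm{tr}(MN^{-1})$, valid universally in $\mathrm{SL}(2,\mathbb{R})$. Applied with $M = B$ and $N = A$ it gives
$$\mathrm{tr}(A)\,\mathrm{tr}(B) \;=\; \mathrm{tr}(BA) + \mathrm{tr}(B^{-1}A).$$
Once the lifts are arranged so that all four traces are positive, substituting $\mathrm{tr}(\cdot) = 2\cosh(l_{X}(\cdot)/2)$ and dividing by $2$ yields precisely
$$2 \cosh\tfrac{l_{X}(\alpha)}{2}\cosh\tfrac{l_{X}(\gamma)}{2} = \cosh\tfrac{l_{X}(T_{\gamma}^{1}\alpha)}{2} + \cosh\tfrac{l_{X}(T_{\gamma}^{-1}\alpha)}{2},$$
which is the identity $f(X;\alpha,\gamma) = g(X;T_{\gamma}^{1}(\alpha), T_{\gamma}^{-1}(\alpha))$.

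The main obstacle I anticipate is the sign bookkeeping for the lifts. After flipping signs if necessary, we may assume $\mathrm{tr}(A), \mathrm{tr}(B) > 0$. To force $\mathrm{tr}(BA) > 0$, I would invoke the Fricke commutator identity
$$\mathrm{tr}([A,B]) = \mathrm{tr}(A)^{2} + \mathrm{tr}(B)^{2} + \mathrm{tr}(BA)^{2} - \mathrm{tr}(A)\,\mathrm{tr}(B)\,\mathrm{tr}(BA) - 2,$$
combined with the fact that $[A,B]$ is the holonomy of the boundary curve $\delta$ of the one-holed torus subsurface spanned by $\alpha \cup \gamma$: since $\delta$ is either parabolic (cusp) or hyperbolic, the standard sign convention yields $\mathrm{tr}([A,B]) \le -2$. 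Viewed as a quadratic in $\mathrm{tr}(BA)$, this inequality places $\mathrm{tr}(BA)$ between two positive roots whose sum equals $\mathrm{tr}(A)\mathrm{tr}(B)$; in particular $\mathrm{tr}(BA) > 0$, and then $\mathrm{tr}(B^{-1}A) = \mathrm{tr}(A)\mathrm{tr}(B) - \mathrm{tr}(BA)$ is likewise positive, so all four traces are simultaneously positive and the identity above is genuinely the trace identity.
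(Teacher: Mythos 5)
Your argument is correct: the paper states Fact~\ref{fact:lengthID1} without proof, citing classical references (Klein, McKean, Luo), and the trace-identity derivation you give is precisely the classical route those references take. Your handling of the sign ambiguity via $\mathrm{tr}([A,B])\le -2$ and the Fricke relation is the one genuinely delicate point, and you have dealt with it correctly.
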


We then set $\alpha_{i} := T_{\gamma}^i(\alpha)$ and $\gamma_{i} := T_{\alpha}^{i}(\gamma)$. Note that $$i(\gamma, \alpha_{i}) = i(\alpha_{i-1}, \alpha_{i}) = 1 \mbox{ and }i(\alpha, \gamma_{i}) = i(\gamma_{i-1}, \gamma_{i}) = 1.$$ Moreover, one of $\{T_{\alpha_{i}}^{\pm 1}(\alpha_{i-1})\}$ is $\gamma$; we denote the other one by $\beta_{i}$. Similarly, one of $\{T_{\gamma_{i}}^{\pm 1}(\gamma_{i-1})\}$ is $\alpha$ and we denote the other one by $\epsilon_{i}$. See Figure~\ref{fig:fractwiston1holed}.

\begin{figure}[th]
		\begin{tikzpicture}[scale=0.8, every node/.style={scale=1}]

	\draw[thick] (0,0.75) .. controls (0.5, 0.75) and (0.5, -0.25) .. (0.5, -0.25);
	\draw[thick] (0, -1.25) .. controls (0.5, -1.25) and (0.5, -0.25) .. (0.5, -0.25);
	\draw[dashed, thick] (0,0.75) .. controls (-0.5, 0.75) and (-0.5, -0.25) .. (-0.5, -0.25) .. controls (-0.5, -0.25) and (-0.5, -1.25) .. (0, -1.25);
	
	\draw[thick] (2, 0.5) .. controls (2, -0.5) and (-2, -0.5) .. (-2, 0.5) .. controls (-2, 2) and (2, 2) .. (2, 0.5);
	
	\draw (-1, 1) .. controls (-0.5, 0.68) and (0.5, 0.68) .. (1, 1);
	\draw (-0.3, 0.77) .. controls (-0.1, 0.9) and (0.1, 0.9) .. (0.3, 0.77);

	\draw  (1.5, -1.5) .. controls (1.5, -1) and (1, -1.25) .. (0, -1.25) .. controls (-1, -1.25) and (-3, -1) .. (-3, 0.5) .. controls (-3, 1.5) and (-1.75, 2.5) ..  (0, 2.5) .. controls (1.75, 2.5) and (3, 1.5) .. (3, 0.5) .. controls (3, -0.5) and (1.5, -0.5) .. (2.5, -1);
	
	\draw (2.5, -1) .. controls (2.5, -1.3) and (1.8, -1.6) .. (1.5, -1.5);
	\draw[dashed] (1.5, -1.5) .. controls (1.5, -1.2) and (2.2, -0.9) .. (2.5, -1);
	
	\draw (2.5, -1.5) node[below] {$\delta$};
	
	\draw[blue]  (-0.5, -1.25) .. controls (-0.75, -0.25) and (-2.5, 0) .. (-2.5, 0.5) .. controls (-2.5, 2.5) and (2.5, 2.5) .. (2.5, 0.5) .. controls (2.5, -0.5) and (0.75, -0.25) .. (0.5, 0.8);
	%	\draw[blue, dashed] (-0.5, 0.8) .. controls (-0.5, -0.5) and (0.3, -1).. (0.5, -1.25);
	\draw[blue, dashed] (-0.5, -1.25) .. controls (-0.5, -0.5) and (0.3, 0.5).. (0.5, 0.8);
	
	\draw (0.25, -1.25) node[below] {$\gamma = \color{red}{T_{\alpha_1}^{-1}(\alpha)}$};
	\draw (2, 0.5) node[left] {$\alpha$};
	\draw[blue] (-1.2, -0.5) node[left] {$\alpha_1$};
	
	\draw[red]  (-0.3, -1.25) .. controls (-0.6, -0.25) and (-2.3, 0) .. (-2.3, 0.5) .. controls (-2.3, 2.3) and (2.3, 2.3) .. (2.3, 0.5) .. controls (2.3, 0) and (2, -0.05) .. (1.7, -0.05) .. controls (1.7, 0.05) and (2, 0.1) .. (2.1, 0.5) .. controls (2.1, 2) and (-1.5, 2) .. (-1.5, 0.5) .. controls (-1.5, -0.1) and (0.8, -0.25) .. (0.8, 0) .. controls (0.8, 0.2) and (0.4, 0.3) .. (0.4, 0.8);
	
	\draw[red, dashed] (-0.3, -1.25) .. controls (-0.3, -0.5) and (0, 0.5) .. (0.4, 0.8);
	..	\end{tikzpicture}
	\begin{tikzpicture}[scale=0.8, every node/.style={scale=1}]

	\draw[thick] (0,0.75) .. controls (0.5, 0.75) and (0.5, -0.25) .. (0.5, -0.25);
	\draw[thick] (0, -1.25) .. controls (0.5, -1.25) and (0.5, -0.25) .. (0.5, -0.25);
	\draw[dashed, thick] (0,0.75) .. controls (-0.5, 0.75) and (-0.5, -0.25) .. (-0.5, -0.25) .. controls (-0.5, -0.25) and (-0.5, -1.25) .. (0, -1.25);
	
	\draw[thick] (2, 0.5) .. controls (2, -0.5) and (-2, -0.5) .. (-2, 0.5) .. controls (-2, 2) and (2, 2) .. (2, 0.5);
	
	\draw (-1, 1) .. controls (-0.5, 0.68) and (0.5, 0.68) .. (1, 1);
	\draw (-0.3, 0.77) .. controls (-0.1, 0.9) and (0.1, 0.9) .. (0.3, 0.77);

	\draw  (1.5, -1.5) .. controls (1.5, -1) and (1, -1.25) .. (0, -1.25) .. controls (-1, -1.25) and (-3, -1) .. (-3, 0.5) .. controls (-3, 1.5) and (-1.75, 2.5) ..  (0, 2.5) .. controls (1.75, 2.5) and (3, 1.5) .. (3, 0.5) .. controls (3, -0.5) and (1.5, -0.5) .. (2.5, -1);
	
	\draw (2.5, -1) .. controls (2.5, -1.3) and (1.8, -1.6) .. (1.5, -1.5);
	\draw[dashed] (1.5, -1.5) .. controls (1.5, -1.2) and (2.2, -0.9) .. (2.5, -1);
	
	\draw (2.5, -1.5) node[below] {$\delta$};
	
	\draw[blue]  (-0.5, -1.25) .. controls (-0.75, -0.25) and (-2.5, 0) .. (-2.5, 0.5) .. controls (-2.5, 2.5) and (2.5, 2.5) .. (2.5, 0.5) .. controls (2.5, -0.5) and (0.75, -0.25) .. (0.5, 0.8);
	%	\draw[blue, dashed] (-0.5, 0.8) .. controls (-0.5, -0.5) and (0.3, -1).. (0.5, -1.25);
	\draw[blue, dashed] (-0.5, -1.25) .. controls (-0.5, -0.5) and (0.3, 0.5).. (0.5, 0.8);
	
	\draw (0.25, -1.25) node[below] {$\gamma$};
	\draw (2.5, 0.5) node[left] {$\alpha$};
	\draw[blue] (-0.5, -0.4) node[left] {$\alpha_1$};
	\draw[red] (-1.3, -1.2) node[below] {$\beta_1 = T_{\alpha_1}^{1}(\alpha)$};
	
	\draw[red]  (-0.7, -1.2) .. controls (-1, -0.25) and (-2.7, 0) .. (-2.7, 0.5) .. controls (-2.7, 2.7) and (2.7, 2.7) .. (2.7, 0.5) .. controls (2.7, -0.5) and (1.7, -0.5) .. (1.4, -0.2) .. controls (1.4, 0) and (-1.7, -0.3) .. (-1.7, 0.5) .. controls (-1.7, 1.7) and (1.6, 1.7) .. (1.6, 0.5) .. controls (1.6, 0) and  (0.8, 0.3) .. (0.7, 0.85);% .. controls (1.3, -0.1) and (0, 0) .. (-1, 0);% .. controls (1.7, 0.05) and (2, 0.1) .. (2.1, 0.5) .. controls (2.1, 2) and (-1.5, 2) .. (-1.5, 0.5) .. controls (-1.5, 0.2) and (0.8, -0.25) .. (0.8, 0) .. controls (0.8, 0.2) and (0.4, 0.3) .. (0.4, 0.8);
	
	\draw[red, dashed] (-0.7, -1.2) .. controls (-0.3, -0.2) and (0, 0.6) .. (0.7, 0.85);
	\end{tikzpicture}
	\caption{Fractional Dehn twists of $\alpha$ along $\alpha_1$. Note that we sometimes have $\alpha = \alpha_0$ and $\gamma = \gamma_0$.} \label{fig:fractwiston1holed}
\end{figure}

\begin{lem}\label{lem:lengthIDSet1}
For each $i \in \mathbb{Z}$ and any of $(\eta_{1}, \eta_{2}, \eta_{3}, \eta_{4}) = (\alpha_{i}, \gamma_{0}, \alpha_{i-1}, \alpha_{i+1})$, $(\alpha_{i-1}, \alpha_{i}, \gamma_{0}, \beta_{i})$, $(\gamma_{i}, \alpha_{0}, \gamma_{i-1}, \gamma_{i+1})$, $(\gamma_{i-1}, \gamma_{i}, \alpha_{0}, \epsilon_{i})$, the identity \begin{equation}\label{eqn:lengthIDSet1}
f(X;\eta_{1}, \eta_{2}) = g(X;\eta_{3}, \eta_{4})
\end{equation}
 holds on all of $\T(S)$.
\end{lem}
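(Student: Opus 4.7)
The plan is to reduce all four identities to a single coordinate-free version of Fact~\ref{fact:lengthID1}. Concretely, I would first establish the following strengthening: for any essential simple closed curves $\eta, \zeta$ on $S$ with $i(\eta, \zeta) = 1$ whose union spines the one-holed torus $\Sigma$ bounded by $\delta$, the identity
\[
f(X; \eta, \zeta) = g\bigl(X; T_\zeta^{1}(\eta),\, T_\zeta^{-1}(\eta)\bigr)
\]
holds on all of $\T(S)$. This follows from Fact~\ref{fact:lengthID1} by pulling back along a self-homeomorphism $\phi$ of $S$, supported in a regular neighborhood of $\Sigma$, sending the ordered pair $(\alpha, \gamma)$ to $(\eta, \zeta)$; such $\phi$ exists because $\Mod(S_{1,1}) \cong SL(2,\mathbb{Z})$ acts transitively on ordered pairs of intersection-one curves on a one-holed torus. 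Under the pullback, lengths transform as $l_X(\phi(\cdot)) = l_{\phi^{\ast} X}(\cdot)$ and the integer fractional Dehn twist conjugates as $\phi \circ T_\gamma^{\pm 1} \circ \phi^{-1} = T_\zeta^{\pm 1}$, so applying Fact~\ref{fact:lengthID1} to $\phi^{\ast} X \in \T(S)$ yields the displayed identity for $X$.

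Granted this coordinate-free form, each tuple is handled by a direct substitution. For the first tuple, take $(\eta, \zeta) = (\alpha_i, \gamma)$: the pair spines $\Sigma$ since $\alpha_i$ lies in $\Sigma$ and $i(\alpha_i, \gamma) = 1$ by Lemma~\ref{lem:fracDehn}, and $T_\gamma^{\pm 1}(\alpha_i) = T_\gamma^{i \pm 1}(\alpha) = \alpha_{i \pm 1}$ by additivity of the fractional-twist exponent. For the second tuple, take $(\eta, \zeta) = (\alpha_{i-1}, \alpha_i)$: the pair spines $\Sigma$ since both curves lie in $\Sigma$ with mutual intersection one, and by the very definition of $\beta_i$ the unordered pair $\{T_{\alpha_i}^{1}(\alpha_{i-1}), T_{\alpha_i}^{-1}(\alpha_{i-1})\}$ equals $\{\gamma_0, \beta_i\}$. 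The remaining two tuples follow from the first two by interchanging the roles of $\alpha$ and $\gamma$ throughout, under which $\epsilon_i$ takes the place of $\beta_i$.

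The main subtlety lies in justifying the coordinate-free form: one must verify that the relevant fractional Dehn twist is in fact realised by an honest self-homeomorphism of $S$ (so that it can be used to pull back hyperbolic structures), and not merely as a set-theoretic operation on isotopy classes of multicurves. This is transparent from the explicit formula for $\varphi$ in Subsection~\ref{subsec:fractionalDehn}: when the exponent is an integer and the base intersection number is one, $\varphi$ reduces to an integer power of the standard Dehn twist, and hence extends to a global mapping class of $S$. The analogous point for the second tuple is that some mapping class of $\Sigma$ carries $(\alpha, \gamma)$ to $(\alpha_{i-1}, \alpha_i)$, which is visible from the $SL(2,\mathbb{Z})$ action on $H_1(\Sigma)$ together with the classical fact that an essential simple closed curve on a one-holed torus is determined up to isotopy by its primitive homology class.
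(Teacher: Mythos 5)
Your argument is correct, and it is essentially what the paper intends: Lemma~\ref{lem:lengthIDSet1} is stated without explicit proof as a direct consequence of Fact~\ref{fact:lengthID1}, applied to each of the four intersection-one pairs $(\alpha_i,\gamma_0)$, $(\alpha_{i-1},\alpha_i)$, $(\gamma_i,\alpha_0)$, $(\gamma_{i-1},\gamma_i)$, combined with the additivity $T_{\gamma}^{i\pm1}(\alpha)=T_{\gamma}^{\pm1}(\alpha_i)$, the definitions of $\beta_i,\epsilon_i$, and the symmetry of $g$. One small simplification: the ``coordinate-free strengthening'' and the accompanying pullback through a self-homeomorphism are unnecessary, because Fact~\ref{fact:lengthID1} is already stated for an arbitrary pair of curves $\alpha,\gamma$ on $S$ with $i(\alpha,\gamma)=1$, not for a fixed distinguished pair; you can simply substitute the relevant pair into the fact directly.
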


This time, we consider $\alpha, \gamma$ satisfying $i_{alg}(\alpha, \gamma) = 0$ and $i_{geom}(\alpha, \gamma) = 2$. Then $\alpha \cup \gamma$ becomes a spine of an immersed subsurface $\psi : S' \rightarrow S$ where $S'$ is a generalized shirt. This shirt is accompanied by peripheral curves $\delta_{1}$, $\ldots$, $\delta_{4}$. They are labelled in such a manner that $\gamma$ separates $\{\delta_{1}, \delta_{2}\}$ from $\{\delta_{3}, \delta_{4}\}$ and $\alpha$ separates $\{\delta_{1}, \delta_{3}\}$ from $\{\delta_{2}, \delta_{4}\}$. Note that $T_{\gamma}^{\pm 1}(\alpha)$ then separates $\{\delta_{2}, \delta_{3}\}$ from $\{\delta_{1}, \delta_{4}\}$. See Figure~\ref{fig:spineof4holed}.

\begin{figure}[th]
	\begin{tikzpicture}[scale=1, every node/.style={scale=1}]
	
	\draw (-0.5, 2) -- (0.5, 2);
	\draw (-0.5, -2) -- (0.5, -2);
	\draw (-2, -0.6) -- (-2, 0.4);
	\draw (2, -0.5) -- (2, 0.4);
	
	\draw[fill=black!05, draw=white] (-0.5, 2) .. controls (0, 2) and (0, 0) .. (0, 0) .. controls (-0.5, 0) and (-2, 0.1) .. (-2, 0.4) .. controls (-2, 0.6) and (-1.5, 0.8) .. (-1, 0.8) .. controls (-1, 0.8) and (-1, 2) .. (-0.5, 2);
	\draw[fill=black!05, draw=white] (1, 0) .. controls (1.5, 0) and (2, 0.2) .. (2, 0.4) .. controls (2, 0.6) and (1.7, 0.7) .. (1, 0.8);
	\draw[fill=black!05, draw=white]  (0, -1) .. controls (0, -1) and (0, -2) .. (-0.5, -2) .. controls (-1, -2) and (-1, -0.9) .. (-1, -1);
	
	\draw (0, 2) .. controls (0.5, 2) and (0.5, 0) .. (0.5, 0) .. controls (0.5, 0) and (0.5, -2) .. (0, -2);
	\draw[dashed] (0, 2) .. controls (-0.5, 2) and (-0.5, 0) .. (-0.5, 0) .. controls (-0.5, 0) and (-0.5, -2) .. (0, -2);
	
	\draw[dashed] (2, 0) .. controls (2, 0.5) and (0, 0.5) .. (0, 0.5) .. controls (0, 0.5) and (-2, 0.5) .. (-2, 0);
	\draw (2, 0) .. controls (2, -0.5) and (0, -0.5) .. (0, -0.5) .. controls (0, -0.5) and (-2, -0.5) .. (-2, 0);
	
	\draw (0.5, 2) .. controls (1, 2) and (1, 0) .. (1, 0) .. controls (1.5, 0) and (2, 0.2) .. (2, 0.4) .. controls (2, 0.6) and (1.7, 0.7) .. (1, 0.8);
	
	\draw[dashed] (0.5, 2) .. controls (0, 2) and (0, 0.9) .. (0, 0.85) .. controls (0.5, 0.85) .. (0.9, 0.8);
	
	\draw (-0.5, 2) .. controls (0, 2) and (0, 0) .. (0, 0) .. controls (-0.5, 0) and (-2, 0.1) .. (-2, 0.4) .. controls (-2, 0.6) and (-1.5, 0.8) .. (-1, 0.8) .. controls (-1, 0.8) and (-1, 2) .. (-0.5, 2);
	
	\draw (0, -1) .. controls (0, -1) and (0, -2) .. (-0.5, -2) .. controls (-1, -2) and (-1, -0.9) .. (-1, -1);
	
	\draw (0, -1) .. controls (-0.5, -1) and (-2, -0.9) .. (-2, -0.6);
	\draw[dashed] (-2, -0.6) .. controls (-2, -0.4) and (-1.5, -0.2) .. (-1.05, -0.2) .. controls (-1.025, -0.6) .. (-1, -1);
	
	\draw[dashed] (2, -0.5) .. controls (2, -0.4) and (1.7, -0.1) .. (0.1, -0.1) .. controls (0.1, -0.1) and (0.1, -2) .. (0.5, -2);
	
	\draw (0.5, -2) .. controls (0.8, -2) and (0.9, -0.95) .. (0.9, -0.95) .. controls (1.5, -0.9) and (2, -0.6) .. (2, -0.5);
	
	\draw (0, 2) node[above] {$\alpha$};
	\draw (2, 0) node[right] {$\gamma$};
	
	\end{tikzpicture}
	\begin{tikzpicture}[scale=2/3, every node/.style={scale=1}]
	\draw[thick] (0, 2) .. controls (0.5, 2) and (0.5, 0) .. (0.5, 0) .. controls (0.5, 0) and (0.5, -2) .. (0, -2);
	\draw[thick, dashed] (0, 2) .. controls (-0.5, 2) and (-0.5, 0) .. (-0.5, 0) .. controls (-0.5, 0) and (-0.5, -2) .. (0, -2);
	
	\draw[thick, dashed] (2, 0) .. controls (2, 0.5) and (0, 0.5) .. (0, 0.5);
	\draw[thick, dashed] (-2, 0) .. controls (-2, 0.5) and (0, 0.5) .. (0, 0.5);
	\draw[thick] (2, 0) .. controls (2, -0.5) and (0, -0.5) .. (0, -0.5);
	\draw[thick] (0, -0.5) .. controls (0, -0.5) and (-2, -0.5) .. (-2, 0);
	
	\draw (-2, 3) .. controls (-2, 2.5) and (-1, 2) .. (0, 2) .. controls (1, 2) and (2, 2.5) .. (2, 3);
	\draw (-2, -3) .. controls (-2, -2.5) and (-1, -2) .. (0, -2) .. controls (1, -2) and (2, -2.5) .. (2, -3);
	\draw (3, -2) .. controls (2.5, -2) and (2, -1) .. (2, 0) .. controls (2, 1) and (2.5, 2) .. (3, 2);
	\draw (-3, -2) .. controls (-2.5, -2) and (-2, -1) .. (-2, 0) .. controls (-2, 1) and (-2.5, 2) .. (-3, 2);
	
	\draw[fill = black!05] (-2, 3) .. controls (-2, 2.5) and (-2.5, 2) .. (-3, 2);
	\draw[fill = black!05] (-2, 3) .. controls (-2.5, 3) and (-3, 2.5) .. (-3, 2);
	
	\draw[dashed] (-2, -3) .. controls (-2, -2.5) and (-2.5, -2) .. (-3, -2);
	\draw (-2, -3) .. controls (-2.5, -3) and (-3, -2.5) .. (-3, -2);
	
	\draw[fill = black!05] (2, 3) .. controls (2, 2.5) and (2.5, 2) .. (3, 2);
	\draw[fill = black!05] (2, 3) .. controls (2.5, 3) and (3, 2.5) .. (3, 2);
	
	\draw[dashed] (2, -3) .. controls (2, -2.5) and (2.5, -2) .. (3, -2);
	\draw (2, -3) .. controls (2.5, -3) and (3, -2.5) .. (3, -2);
	
	\draw (0, 2) node[above] {$\alpha$};
	\draw (2, 0) node[right] {$\gamma$};
	
	\draw (-3.3, 3) node[below] {$\delta_1$};
	\draw (3.3, 3) node[below] {$\delta_2$};
	\draw (-3.3, -3) node[above] {$\delta_3$};
	\draw (3.3, -3) node[above] {$\delta_4$};
	
	\draw[blue] (-1, 2.15) .. controls (-1.2, 1.5) and (-1.5, 1.2) .. (-2.15, 1);
	\draw[blue] (-1+3.15, 2.15-3.15) .. controls (-1.2+3.15, 1.5-3.15) and (-1.5+3.15, 1.2-3.15) .. (-2.15+3.15, 1-3.15);
	%\draw[blue] (1, -2.15) .. controls (1.2, -1.5) and (1.5, -1.2) .. (2.15, -1);
	
	\draw[blue, dashed] (-1, 2.15) .. controls (0.85, 1.7) and (1.7, 0.85) .. (2.15, -1);
	\draw[blue, dashed] (-2.15, 1) .. controls (-1.7, -0.85) and (-0.85, -1.7) .. (1, -2.15);
	\draw[blue] (2.15, -1) node[right] {$T_{\gamma}^1 (\alpha)$};
	%\draw[blue] (-1, 2.15) .. controls (-0.8, 1) and (1, -0.8) .. (2.15, -1);
	%\draw[blue] (-2.15, 1) .. controls (-1, 0.8) and (0.8, -1) .. (1, -2.15);

	\draw[red] (1, 2.15) .. controls (1.2, 1.5) and (1.5, 1.2) .. (2.15, 1);
	\draw[red] (1-3.15, 2.15-3.15) .. controls (1.2-3.15, 1.5-3.15) and (1.5-3.15, 1.2-3.15) .. (2.15-3.15, 1-3.15);
	%\draw[red] (-1, -2.15) .. controls (-1.2, -1.5) and (-1.5, -1.2) .. (-2.15, -1);
	
	%\draw[red, dashed] (1, 2.15) .. controls (0.8, 1) and (-1, -0.8) .. (-2.15, -1);
	\draw[red, dashed] (1, 2.15) .. controls (-0.85, 1.7) and (-1.7, 0.85) .. (-2.15, -1);
	\draw[red, dashed] (2.15, 1) .. controls (1.7, -0.85) and (0.85, -1.7) .. (-1, -2.15);
	\draw[red] (-2.15, -1) node[left] {$T_{\gamma}^{-1} (\alpha)$};
	
	\draw (-2, 3) .. controls (-2, 2.5) and (-1, 2) .. (0, 2) .. controls (1, 2) and (2, 2.5) .. (2, 3);
	\draw (-2, -3) .. controls (-2, -2.5) and (-1, -2) .. (0, -2) .. controls (1, -2) and (2, -2.5) .. (2, -3);
	\draw (3, -2) .. controls (2.5, -2) and (2, -1) .. (2, 0) .. controls (2, 1) and (2.5, 2) .. (3, 2);
	\draw (-3, -2) .. controls (-2.5, -2) and (-2, -1) .. (-2, 0) .. controls (-2, 1) and (-2.5, 2) .. (-3, 2);
	\end{tikzpicture}
	\caption{A shirt with spine $\alpha \cup \gamma$} \label{fig:spineof4holed}
\end{figure}
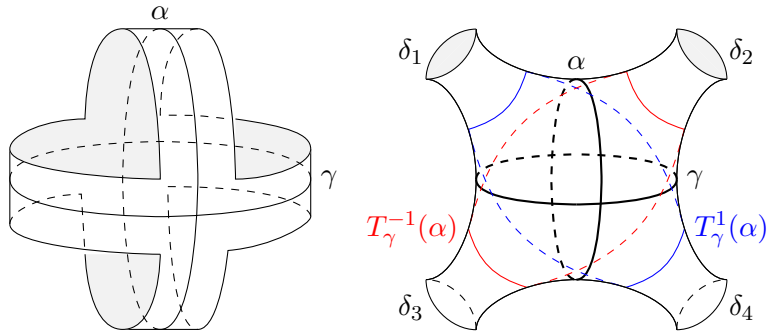

\begin{fact}\label{fact:lengthID2}
Let $\alpha$, $\gamma$, $\{\delta_{i}\}$ be as above. Then the identity $$f(X; \alpha, \gamma) = g(X; T_{\gamma}^{1}(\alpha), T_{\gamma}^{-1} (\alpha)) + f(X; \delta_{2}, \delta_{3}) + f(X; \delta_{1}, \delta_{4})$$ holds on all of $\T(S)$.
\end{fact}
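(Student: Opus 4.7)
The plan is to derive the identity from the classical Fricke--Klein trace identities in $SL(2,\mathbb{R})$, after reducing from $S$ to the generalized shirt $S'$. Since the immersion $\psi : S' \to S$ can be arranged so that $S'$ carries the pullback hyperbolic structure (under which $\psi$ is a local isometry), and since every curve appearing in the identity is supported in $\psi(S')$, it suffices to prove the identity on the shirt itself. From this point on, assume $S = S'$.

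Fix a Fuchsian holonomy $\rho : \pi_1(S) \to PSL(2,\mathbb{R})$ realizing $X$ and lift it to $SL(2,\mathbb{R})$ so that $D_1 D_2 D_3 D_4 = I$, where $D_i = \rho(\delta_i)$ are the peripheral elements. For any hyperbolic simple closed curve $\eta$ represented by a matrix $M$ we have $|\mathrm{tr}(M)| = 2\cosh(\ell_X(\eta)/2)$. Choose matrix representatives $A, C$ for $\alpha, \gamma$ as words in $D_1, D_2, D_3$; since $\gamma$ separates $\{\delta_1,\delta_2\}$ from $\{\delta_3,\delta_4\}$ and $\alpha$ separates $\{\delta_1,\delta_3\}$ from $\{\delta_2,\delta_4\}$, one can take $C$ conjugate to $D_1 D_2$ and $A$ to a suitable representative with abelianization $d_1 + d_3$. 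The matrices obtained from $A$ by multiplying by $C^{\pm 1}$ (up to conjugation) represent the fractional-Dehn-twisted curves $T_\gamma^{\pm 1}(\alpha)$; this matches the intersection-number data of Lemma~\ref{lem:fracDehn} and the concrete description of the fractional Dehn twist in Subsection~\ref{subsec:fractionalDehn}.

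The heart of the computation is the basic Fricke identity $\mathrm{tr}(XY) + \mathrm{tr}(XY^{-1}) = \mathrm{tr}(X)\mathrm{tr}(Y)$. Applied to $X = A$, $Y = C$, this yields
\[
\mathrm{tr}(A)\mathrm{tr}(C) \;=\; \mathrm{tr}(AC) + \mathrm{tr}(AC^{-1}).
\]
If all four traces had matching signs, taking absolute values would immediately reproduce Fact~\ref{fact:lengthID1} and there would be no boundary correction. The essential new content of Fact~\ref{fact:lengthID2} is that, on the shirt, this sign alignment fails: the relation $D_1 D_2 D_3 D_4 = I$ forces the traces of $AC$ and $AC^{-1}$ to carry signs determined by the combinatorial position of these curves relative to the four peripheral classes. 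I would handle the resulting discrepancy by a secondary application of Fricke, rewriting each $\mathrm{tr}(AC^{\pm 1})$ (when its sign disagrees with the $2\cosh(\cdot/2)$ convention) as a sum involving $\mathrm{tr}(D_i D_j)$ and $\mathrm{tr}(D_i)\mathrm{tr}(D_j)$ for appropriate peripheral pairs.

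The main obstacle is this sign bookkeeping; the specific pairings $\{\delta_2,\delta_3\}$ and $\{\delta_1,\delta_4\}$ in the correction $f(\delta_2,\delta_3) + f(\delta_1,\delta_4)$ arise precisely because $T_\gamma^{\pm 1}(\alpha)$ separate $\{\delta_1,\delta_4\}$ from $\{\delta_2,\delta_3\}$, in complementary fashion to $\alpha$ and $\gamma$, so the "missing" positive contributions organize themselves into exactly these two products. A more pedestrian alternative that sidesteps the sign analysis is to work in Fenchel--Nielsen coordinates on $\T(S)$: the hyperbolic shirt is determined by its four boundary lengths and the length--twist pair along $\gamma$, and the hyperbolic hexagon formulas express each of $\ell(\alpha), \ell(T_\gamma \alpha), \ell(T_\gamma^{-1}\alpha)$ in closed form, reducing the identity to a direct (if laborious) trigonometric verification. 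Either route confirms that the claimed identity is an everywhere-valid analytic identity on $\T(S)$.
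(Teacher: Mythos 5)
Note first that the paper states Fact~\ref{fact:lengthID2} as a classical result, citing Klein, McKean and Luo, and does not itself supply a proof; so there is no internal argument to compare against. Your Fricke-trace route is the standard one and the right tool. But the central identification you make is false. You claim that $AC^{\pm 1}$ (up to conjugation) represent $T_\gamma^{\pm 1}(\alpha)$; that is true when $i(\alpha,\gamma)=1$ (Fact~\ref{fact:lengthID1}) but not when $i(\alpha,\gamma)=2$. You can see this from your own abelianization data: $A$ abelianizes to $d_1+d_3$ and $C$ to $d_1+d_2$, so $AC$ and $AC^{-1}$ abelianize to $2d_1+d_2+d_3$ and $d_3-d_2$; whereas $T_\gamma^{\pm1}(\alpha)$ separates $\{\delta_2,\delta_3\}$ from $\{\delta_1,\delta_4\}$ and hence abelianizes to $\pm(d_2+d_3)$. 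Neither matches. Concretely, with $A\sim D_1D_3$ and $C=D_1D_2$, one has $AC^{-1}\sim D_3D_2^{-1}$ and $AC\sim D_1D_3D_4^{-1}D_3^{-1}$, and neither is the conjugacy class of a simple closed curve at all. The paragraph built on this misidentification (``sign alignment fails, so absolute values don't just give Fact~\ref{fact:lengthID1}'') therefore misdescribes the mechanism.

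What actually happens is that the half-twist traces appear only \emph{after} the second Fricke application, which is the whole content of the computation and not mere sign bookkeeping. Using $D_1D_2D_3D_4=I$ to rewrite $AC=D_1D_3D_4^{-1}D_3^{-1}$, one gets $\mathrm{tr}(AC)=\mathrm{tr}(D_1)\mathrm{tr}(D_4)-\mathrm{tr}(D_1D_3D_4D_3^{-1})$ and $\mathrm{tr}(AC^{-1})=\mathrm{tr}(D_3D_2^{-1})=\mathrm{tr}(D_2)\mathrm{tr}(D_3)-\mathrm{tr}(D_2D_3)$, where $D_2D_3$ and $D_1D_3D_4D_3^{-1}$ are representatives of the two half-twist curves, both separating $\{\delta_2,\delta_3\}$ from $\{\delta_1,\delta_4\}$ and differing by one full twist along $\gamma$. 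Combined with the first Fricke step this gives $\mathrm{tr}(A)\mathrm{tr}(C)=\mathrm{tr}(D_1)\mathrm{tr}(D_4)+\mathrm{tr}(D_2)\mathrm{tr}(D_3)-\mathrm{tr}(T_\gamma^{+1}\alpha)-\mathrm{tr}(T_\gamma^{-1}\alpha)$, and choosing an $SL(2,\mathbb{R})$ lift of the holonomy in which all of these traces are negative recovers exactly $f(\alpha,\gamma)=g(T_\gamma^{+1}\alpha,T_\gamma^{-1}\alpha)+f(\delta_1,\delta_4)+f(\delta_2,\delta_3)$. So your plan is salvageable, but as written it contains a false step; and in a complete proof you would still owe two items that your sketch leaves implicit: existence of a coherently signed lift, and the topological check that the two explicit representatives above are $T_\gamma^{\pm1}(\alpha)$ rather than other members of the same Dehn-twist orbit. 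The Fenchel--Nielsen/hexagon alternative you mention would also work but remains a sketch.
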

 
We now set $\alpha_{i} := T_{\gamma}^i(\alpha)$ and $\gamma_{i} := T_{\alpha}^{i}(\gamma)$. Then \[
i_{alg}(\gamma, \alpha_{i})=i_{alg}(\alpha, \gamma_{i}) = i_{alg}(\alpha_{i-1}, \alpha_{i})=i_{alg}(\gamma_{i-1}, \gamma_{i})= 0\] and \[
i_{geom}(\gamma, \alpha_{i})=i_{geom}(\alpha, \gamma_{i}) = i_{geom}(\alpha_{i-1}, \alpha_{i})=i_{geom}(\gamma_{i-1}, \gamma_{i})=  2
\]
hold. Consequently, one of $\{T_{\alpha_{i}}^{\pm 1} (\alpha_{i-1})\}$ ($\{T_{\gamma_{i}}^{\pm 1}(\gamma_{i-1})\}$, resp.) is $\gamma$ ($\alpha$, resp.) and the other one is denoted by $\beta_{i}$ ($\epsilon_{i}$, resp.).

\begin{lem}\label{lem:lengthIDSet2}
For each $i \in \mathbb{Z}$ and any of \[
(\eta_{1}, \ldots, \eta_{8}) = \left\{\begin{array}{cc} (\alpha_{2i}, \gamma_{0}, \alpha_{2i-1}, \alpha_{2i+1}, \delta_{2}, \delta_{3}, \delta_{1}, \delta_{4}) \\ (\alpha_{2i+1}, \gamma_{0}, \alpha_{2i}, \alpha_{2i+2}, \delta_{1}, \delta_{3}, \delta_{2}, \delta_{4}) \\
 (\gamma_{2i},\alpha_{0}, \gamma_{2i-1}, \gamma_{2i+1}, \delta_{2}, \delta_{3}, \delta_{1}, \delta_{4}) \\ 
 (\gamma_{2i+1}, \alpha_{0}, \gamma_{2i}, \gamma_{2i+2}, \delta_{1}, \delta_{2}, \delta_{3}, \delta_{4}) \\
 (\alpha_{i-1}, \alpha_{i}, \gamma_{0}, \beta_{i}, \delta_{1}, \delta_{2}, \delta_{3}, \delta_{4}) \\
 (\gamma_{i-1}, \gamma_{i}, \alpha_{0}, \epsilon_{i}, \delta_{1}, \delta_{3}, \delta_{2}, \delta_{4}) \end{array}\right.,
\]
the identity \begin{equation}\label{eqn:lengthIDSet2}
f(X; \eta_{1}, \eta_{2}) = g(X; \eta_{3}, \eta_{4}) + f(X; \eta_{5}, \eta_{6}) + f(X; \eta_{7} ,\eta_{8})
\end{equation} holds on all of $\T(S)$.
\end{lem}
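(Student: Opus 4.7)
The plan is to derive each of the six identities in Lemma~\ref{lem:lengthIDSet2} as a single application of Fact~\ref{fact:lengthID2} to a well-chosen pair of curves on the immersed generalised shirt $\psi:S'\to S$ spanned by $\alpha\cup\gamma$. The key observation is that Fact~\ref{fact:lengthID2} is really a statement about any ordered pair $(\alpha',\gamma')$ of curves with $i_{alg}(\alpha',\gamma')=0$ and $i_{geom}(\alpha',\gamma')=2$ on an immersed shirt: its right-hand side $g(T_{\gamma'}^{1}\alpha',T_{\gamma'}^{-1}\alpha')+\sum f(\delta',\delta'')$ is determined purely by how $\gamma'$, $\alpha'$, and $T_{\gamma'}^{\pm 1}(\alpha')$ partition the four peripheral curves, and the two $f$-summands always correspond to the ``third'' $(2{+}2)$-partition, namely the one realized neither by $\alpha'$ nor by $\gamma'$. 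Since a 4-holed sphere admits only three nontrivial $(2{+}2)$-partitions of its peripherals, this pins down the summands completely.

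For the first four cases I would apply Fact~\ref{fact:lengthID2} directly to $(\eta_1,\eta_2)$, which is either $(\alpha_k,\gamma)$ or $(\gamma_k,\alpha)$. All the $\alpha_k$ and $\gamma_k$ lie on the same shirt with the same peripherals $\{\delta_j\}$, because the integer powers $T_\gamma^{2}=T_\gamma$ and $T_\alpha^{2}=T_\alpha$ are genuine Dehn twist mapping classes, hence preserve each $\delta_j$. The partition of $\{\delta_j\}$ induced by $\alpha_k$ therefore agrees with that of $\alpha$ when $k$ is even and with that of $T_\gamma^{\pm 1}(\alpha)$ when $k$ is odd; an identical parity alternation holds for $\gamma_k$. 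Combining this with the identifications $T_\gamma^{\pm 1}(\alpha_k)=\alpha_{k\pm 1}$ and $T_\alpha^{\pm 1}(\gamma_k)=\gamma_{k\pm 1}$ reads off Cases~(1)--(4).

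For Cases~(5) and~(6) I apply Fact~\ref{fact:lengthID2} to the pairs $(\alpha_{i-1},\alpha_i)$ and $(\gamma_{i-1},\gamma_i)$, both of which satisfy $i_{alg}=0$ and $i_{geom}=2$ by the intersection-number computations recorded just above the lemma. Their regular neighbourhoods are again the same immersed shirt with peripherals $\delta_1,\ldots,\delta_4$. The defining property of $\beta_i$, namely $\{T_{\alpha_i}^{\pm 1}(\alpha_{i-1})\}=\{\gamma,\beta_i\}$, supplies the $g$-term, and the partition these curves induce on the peripherals coincides with that of $\gamma$, that is $\{\delta_1,\delta_2\}\mid\{\delta_3,\delta_4\}$, giving the summands $f(\delta_1,\delta_2)+f(\delta_3,\delta_4)$. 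Case~(6) is entirely symmetric with the roles of $\alpha$ and $\gamma$ exchanged, producing $f(\delta_1,\delta_3)+f(\delta_2,\delta_4)$.

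The only real obstacle is peripheral-label bookkeeping: in each of the six cases one must verify that the $(2{+}2)$-partition of $\{\delta_j\}$ induced by $T_{\eta_2}^{\pm 1}(\eta_1)$ matches the tuple $(\eta_5,\eta_6,\eta_7,\eta_8)$ specified in the lemma. Because there are only three such partitions of four points, and each $\alpha_k,\gamma_k$ determines its partition by the parity of $k$, this is a short finite verification requiring no further geometric input beyond Fact~\ref{fact:lengthID2} itself.
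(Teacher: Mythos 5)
Your argument is correct and is precisely the paper's implicit approach: Lemma~\ref{lem:lengthIDSet2} is stated without a separate proof, being an immediate consequence of Fact~\ref{fact:lengthID2} applied to the pairs $(\alpha_{k},\gamma_{0})$, $(\gamma_{k},\alpha_{0})$, $(\alpha_{i-1},\alpha_{i})$, $(\gamma_{i-1},\gamma_{i})$, together with the intersection-number computations displayed just above it. Your write-up simply spells out the bookkeeping the paper leaves implicit — that each pair spans the same immersed shirt with peripherals $\delta_{1},\ldots,\delta_{4}$, that the $g$-term is read off from the defining property of $\beta_{i}$ and $\epsilon_{i}$, and that the parity of the fractional twist determines which $(2{+}2)$-partition of $\{\delta_{j}\}$ supplies the two $f$-summands.
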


\subsection{First length identity: one-holed/punctured torus}

We now discuss the converse of Lemma~
\ref{lem:lengthIDSet1}. For the converse of Lemma \ref{lem:lengthIDSet2}, see Subsection \ref{subsec:convofIDSet2}. The following lemma partially relates the length identities to the configuration of the curves involved. Recall that for $X \in \T(S)$ and curves $\eta_1, \eta_2 \subset S$,
\[
f(X;\eta_{1}, \eta_{2}):= 2 \cosh \frac{l_{X}(\eta_{1})}{2}  \cosh \frac{l_{X}(\eta_{2})}{2},
\]\[
g(X;\eta_{1}, \eta_{2}) := \cosh \frac{l_{X}(\eta_{1})}{2} +  \cosh \frac{l_{X}(\eta_{2})}{2}.
\]

\begin{lem}\label{lem:fiveIDSuff} Let $\alpha_{-2}$, $\alpha_{-1}$, $\alpha$, $\alpha_{1}$, $\alpha_{2}$, $\gamma$, $\beta_{0}$, $\beta_{1}$ be essential or boundary curves on $S$, where $\alpha$, $\alpha_{-1}$, $\alpha_{1}$ and $\gamma$ are distinct. If the inequalities \begin{subequations}
\begin{align}
f(X;\alpha, \gamma) &\ge g(X;\alpha_{-1}, \alpha_{1}), \label{eqn:lengthTemp1} \\
f(X;\alpha_{-1}, \gamma) &\ge g(X;\alpha_{-2}, \alpha),\label{eqn:lengthTemp2} \\
f(X;\alpha_{1}, \gamma) &\ge g(X;\alpha, \alpha_{2}), \label{eqn:lengthTemp3} \\
f(X;\alpha, \alpha_{1}) &\ge g(X;\gamma, \beta_{1}), \label{eqn:lengthTemp4} \\
f(X;\alpha_{-1}, \alpha) &\ge g(X;\gamma, \beta_{0})\label{eqn:lengthTemp5}
\end{align}
are satisfied by all $X \in \T(S)$, then $$i(\alpha, \gamma) >0 \mbox{ and }\{\alpha_{1}, \alpha_{-1}\} = \{T_{\gamma}^{1}(\alpha), T_{\gamma}^{-1}(\alpha)\}.$$
\label{eqn:lengthTemp}
\end{subequations}
\end{lem}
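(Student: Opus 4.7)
The plan is to translate the five functional inequalities into constraints on geometric intersection numbers via pinching asymptotics, and then identify the curves topologically. For any simple closed curve $C$ on $S$, extend $\{C\}$ to a pants decomposition and pinch $C$, i.e., take a path $X_r \in \T(S)$ with $l_{X_r}(C) = r \to 0$ and other Fenchel--Nielsen parameters fixed. Lemma~\ref{lem:pinching} gives $\cosh(l_{X_r}(\eta)/2) \sim \tfrac{1}{2}\,r^{-i(\eta, C)}$ when $i(\eta, C) > 0$ and $\cosh(l_{X_r}(\eta)/2) = O(1)$ otherwise. Comparing leading orders in any $f(X; \eta_1,\eta_2) \geq g(X; \eta_3, \eta_4)$ thus yields
\[
i(\eta_1, C) + i(\eta_2, C) \;\geq\; \max\bigl(i(\eta_3, C),\, i(\eta_4, C)\bigr)
\]
for every test curve $C$.

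Set $k := i(\alpha, \gamma)$. Instantiating this constraint on \eqref{eqn:lengthTemp1} with $C = \alpha$ and $C = \gamma$ yields $k \geq i(\alpha_{\pm 1}, \alpha)$ and $k \geq i(\alpha_{\pm 1}, \gamma)$; instantiating \eqref{eqn:lengthTemp4} with $C = \alpha$ and \eqref{eqn:lengthTemp3} with $C = \gamma$ yields the reverse bounds $i(\alpha_1, \alpha), i(\alpha_1, \gamma) \geq k$, so $i(\alpha, \alpha_1) = i(\gamma, \alpha_1) = k$; symmetric plays with \eqref{eqn:lengthTemp5} and \eqref{eqn:lengthTemp2} produce the same equalities for $\alpha_{-1}$. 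To rule out $k = 0$: were it the case, the four distinct curves $\alpha, \gamma, \alpha_1, \alpha_{-1}$ would each be disjoint from $\alpha$ and $\gamma$, so $\{\alpha, \gamma\}$ would extend to a pants decomposition along which one can build $X_r$ with $l_{X_r}(\alpha) = l_{X_r}(\gamma) = r \to 0$ while $l_{X_r}(\alpha_{\pm 1})$ converges to positive limits (since $\alpha_{\pm 1}$ is essential or boundary, distinct from $\alpha, \gamma$, and disjoint from them, it survives in the noded surface with positive length). Then $f(X_r; \alpha, \gamma) \to 2$ while $g(X_r; \alpha_{-1}, \alpha_1)$ stays strictly above $2$, contradicting \eqref{eqn:lengthTemp1}. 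Hence $k \geq 1$.

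For the topological identification $\{\alpha_1, \alpha_{-1}\} = \{T_\gamma^{+1}(\alpha), T_\gamma^{-1}(\alpha)\}$, fix a pants decomposition containing $\gamma$ and its Dehn--Thurston basis $\{C_i, C_i', C_i''\}$ from Fact~\ref{fact:DehnThurston}, and compute $i(T_\gamma^{\pm 1}(\alpha), C)$ on each basis curve $C$ using Lemma~\ref{lem:fracDehn} and Fact~\ref{fact:triangleIneqInters}. The plan is to propagate the five inequalities across the chain $\alpha_{-2}, \ldots, \alpha_2$ (using \eqref{eqn:lengthTemp2}--\eqref{eqn:lengthTemp3}) and the twist partners $\beta_0, \beta_1$ (using \eqref{eqn:lengthTemp4}--\eqref{eqn:lengthTemp5}) against each such $C$, matching $i(\alpha_1, C) = i(T_\gamma^{-1}(\alpha), C)$ on every basis curve $C \neq \gamma$. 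Fact~\ref{fact:onlyTwist} then forces $\alpha_1$ to differ from $T_\gamma^{-1}(\alpha)$ only by a fractional Dehn twist along $\gamma$, while the constraint $i(\alpha_1, \alpha) = k$ combined with Fact~\ref{fact:triangleIneqInters} pins that twist exponent to zero (simultaneously forcing $k = 1$ so that $T_\gamma^{-1}(\alpha)$ is a genuine single curve), giving $\alpha_1 = T_\gamma^{-1}(\alpha)$; a symmetric argument yields $\alpha_{-1} = T_\gamma^{+1}(\alpha)$. The main obstacle lies here: while the pinching analysis cheaply produces the numerical intersection numbers of $\alpha_{\pm 1}$ with $\alpha$ and $\gamma$, many distinct simple closed curves can share these numbers, so the topological identification requires all five inequalities acting together against a full Dehn--Thurston basis, with careful bookkeeping of the intersection patterns of $T_\gamma^j(\alpha)$ to rule out impostors.
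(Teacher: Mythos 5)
Your overall framework --- pinching asymptotics via Lemma~\ref{lem:pinching} to translate the five inequalities into intersection-number constraints, then pinning down the twist via Fact~\ref{fact:onlyTwist} and Lemma~\ref{lem:fracDehn} --- is the same as the paper's, and the general comparison rule you extract,
\[
i(\eta_1, C) + i(\eta_2, C) \;\geq\; \max\bigl(i(\eta_3, C),\, i(\eta_4, C)\bigr),
\]
is a clean abstraction that recovers all the paper's pinching computations at once. The derivation of $i(\alpha, \alpha_{\pm 1}) = i(\gamma, \alpha_{\pm 1}) = k$ from the five inequalities, and the exclusion of $k = 0$, are both correct.

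However, the topological-identification step is left as a plan rather than carried out, and the sketch you give contains two substantive errors. First, you assert the constraints \emph{force} $k = 1$. They do not: $i(T_\gamma^{\pm 1}(\alpha), \alpha) = k$ holds by Lemma~\ref{lem:fracDehn} for any $k$, and indeed this lemma is later applied in Proposition~\ref{prop:lengthEqn1} without yet knowing $k \in \{1, 2\}$; that dichotomy is established there, not here. Second, the phrase \textquotedblleft pins the twist exponent to zero, giving $\alpha_1 = T_\gamma^{-1}(\alpha)$\textquotedblright\ is not forced: once Fact~\ref{fact:onlyTwist} tells you $\alpha_1 = T_\gamma^m(\alpha)$ for some $m$, the relation $i(\alpha_1, \alpha) = k$ and Lemma~\ref{lem:fracDehn} give $|m| = 1$, so $\alpha_1$ is either $T_\gamma^{+1}(\alpha)$ or $T_\gamma^{-1}(\alpha)$; the symmetric argument for $\alpha_{-1}$ gives the same two options, and one then uses $\alpha_1 \neq \alpha_{-1}$ to conclude $\{\alpha_1, \alpha_{-1}\} = \{T_\gamma^{\pm 1}(\alpha)\}$.

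The piece you actually need and don't supply is the one the paper makes explicit: in order to invoke Fact~\ref{fact:onlyTwist} you must verify that $\alpha$, $\alpha_1$, $\alpha_{-1}$ have the \emph{same} intersection number with every curve $\eta$ disjoint from $\gamma$ --- in particular with the Dehn--Thurston basis curves $\{C_i, C_i', C_i''\}_{i \neq j}$ where $C_j = \gamma$. The paper gets this by pinching any such $\eta$ in inequalities \eqref{eqn:lengthTemp1}--\eqref{eqn:lengthTemp3}: \eqref{eqn:lengthTemp1} gives $i(\alpha, \eta) \geq i(\alpha_{\pm 1}, \eta)$ (since $l(\gamma)$ stays bounded) and \eqref{eqn:lengthTemp2}, \eqref{eqn:lengthTemp3} give the reverse inequalities, yielding $i(\alpha, \eta) = i(\alpha_{\pm 1}, \eta)$. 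Without this the hypothesis of Fact~\ref{fact:onlyTwist} is not verified, and the identification of $\alpha_{\pm 1}$ as twists of $\alpha$ along $\gamma$ does not follow from the intersection numbers with $\alpha$ and $\gamma$ alone, precisely because of the \textquotedblleft impostor\textquotedblright\ issue you flag. So you have correctly named the obstacle, but the five inequalities do resolve it; your proof is missing that concrete derivation.
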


\begin{proof}
Suppose first that $i(\alpha, \gamma) = 0$. We fix a pants decomposition containing $\alpha$ and $\gamma$, and pinch them simultaneously. Since $\alpha_{\pm 1}$ are neither $\alpha$ nor $\gamma$, their lengths tend to either infinity (if they intersect with $\alpha$ or $\gamma$) or a finite value (if they do not intersect with $\alpha$ and $\gamma$). Thus, $f(X; \alpha, \gamma)$ tends to 2 while $g(X; \alpha_{-1}, \alpha_{1})$ converges to a term greater than 2. This contradicts Inequality \ref{eqn:lengthTemp1}, and we conclude $i(\alpha, \gamma) > 0$. In particular, both $\alpha$, $\gamma$ are essential.

Now we fix a pants decomposition containing $\alpha$ and pinch $\alpha$. Then $f(X; \alpha, \gamma)$ grows in the order of $l(\alpha)^{-i(\alpha, \gamma)}$, while $g(X; \alpha_{-1}, \alpha_{1})$ grows in the order of $l(\alpha)^{- \max(i(\alpha, \alpha_{-1}), i(\alpha, \alpha_{1}))}$. From this and Inequality \ref{eqn:lengthTemp1}, we deduce that $i(\alpha, \gamma) \ge i(\alpha, \alpha_{\pm 1})$.

Meanwhile, $f(X; \alpha_{-1}, \alpha)$ grows in the order of $l(\alpha)^{-i(\alpha, \alpha_{-1})}$ while $g(X; \gamma, \beta_{0})$ grows in the order of $l(\alpha)^{-\max(i(\alpha, \gamma), i(\alpha, \beta_{0}))}$. Then Inequality \ref{eqn:lengthTemp5} implies that $i(\alpha, \alpha_{-1}) \ge i(\alpha, \gamma)$. Similarly, investigating each side of Inequality \ref{eqn:lengthTemp4}  yields that $i(\alpha, \alpha_{1}) \ge i(\alpha, \gamma)$. Thus, we obtain \begin{equation}\label{eqn:lengthEqn1Inter1}
i(\alpha, \alpha_{\pm 1}) = i(\alpha, \gamma).
\end{equation}

Next, we fix a pants decomposition containing $\gamma$ and pinch $\gamma$. We again investigate Inequality \ref{eqn:lengthTemp1} to deduce that $i(\gamma, \alpha) \ge i(\gamma, \alpha_{\pm 1})$. The reverse inequalities are now obtained from Inequality \ref{eqn:lengthTemp2} and \ref{eqn:lengthTemp3} and we conclude \begin{equation} \label{eqn:lengthEqn1Inter2}
i(\alpha, \gamma) = i(\alpha_{\pm 1}, \gamma).
\end{equation} 
Finally, we consider  an arbitrary interior curve $\eta$ disjoint from  $\gamma$, fix a pants decomposition containing $\gamma$ and $\eta$, and pinch $\eta$.  While pinching, $\cosh l_{X}(\gamma)/2$ remains bounded and the LHS of Inequality \ref{eqn:lengthTemp1} grows in the order of $l(\eta)^{-i (\alpha, \eta)}$, while the RHS grows in the order of $l(\eta)^{- \max (i(\alpha_{-1}, \eta), i(\alpha_{1}, \eta))}$. Hence, from Inequality \ref{eqn:lengthTemp1} we obtain $i(\alpha, \eta) \ge i(\alpha_{\pm 1}, \eta)$. A similar argument using \ref{eqn:lengthTemp2} and \ref{eqn:lengthTemp3} leads to the reverse inequality and we deduce\begin{equation} \label{eqn:lengthEqn1Inter3}
i(\alpha, \eta) = i(\alpha_{\pm 1}, \eta).
\end{equation}

Combined with Equations \ref{eqn:lengthEqn1Inter2} and \ref{eqn:lengthEqn1Inter3}, Fact \ref{fact:onlyTwist} implies that $\alpha$, $\alpha_{-1}$, $\alpha_{1}$ differ only by a fractional Dehn twists along $\gamma$. 
Now consider a finite-type subsurface $S' \subset S$ containing $\alpha, \alpha_1, \alpha_{-1}$, and $\gamma$. Since we have seen that $\gamma$ is essential, we may also assume that $\gamma$ is essential in $S'$. Let $\{B_1, \ldots, B_m, C_1, \ldots, C_n\}$ be a pants decomposition of $S'$ where $B_j$'s are boundary curves and $C_1 = \gamma$. We set the curves $\{C_1', \ldots, C_n', C_1'', \ldots, C_n''\}$ on $S'$ given by Fact \ref{fact:DehnThurston}. In particular, $C_j'$ and $C_j''$ are disjoint from $C_1 = \gamma$ for all $j \neq 1$. Hence, by Equation \ref{eqn:lengthEqn1Inter3}, we have that $i(\alpha, C_j) = i(\alpha_{\pm 1}, C_j)$, $i(\alpha, C_j') = i(\alpha_{\pm 1}, C_j')$, and $i(\alpha, C_j'') = i(\alpha_{\pm 1}, C_j'')$ for all $j \neq 1$. In addition, by Equation \ref{eqn:lengthEqn1Inter2} and $C_1 = \gamma$, we also have $i(\alpha, C_1) = i(\alpha_{\pm 1}, C_1)$. Therefore, it follows from Fact \ref{fact:onlyTwist} that $\alpha$ and $\alpha_{\pm 1}$ are related by a fractional Dehn twist along $C_1 = \gamma$.
Then Lemma \ref{lem:fracDehn} reads Equation \ref{eqn:lengthEqn1Inter1} as $\{\alpha_{1}, \alpha_{-1}\} =  \{T_{\gamma}^{1}(\alpha), T_{\gamma}^{-1}(\alpha)\}$. (Here we used the condition that $\alpha_{-1}$ and $\alpha_{1}$ are distinct) 
\end{proof}

	As one can observe, not all points in the entire Teichm\"uller space are involved in the proof. Hence, one can modify the statement so that the inequalities are checked along paths in the Teichm\"uller space along which certain curves are pinched.

Before stating the next lemma, we first introduce some notation. Let $\alpha$, $\gamma$ be two curves on $S$ with $k=i(\alpha, \gamma) > 1$. Then $\alpha$ is cut by $\gamma$ into $k$ segments $\{a_{1}, \ldots, a_{k}\}$ and $\gamma$ is cut by $\alpha$ into $k$ segments $\{c_{1}, \ldots, c_{k}\}$. Each segment $a_{i}$ then splits $\gamma$ into two segments, giving a (bi)partition of $\{c_{1}, \ldots, c_{k}\}$ into two disjoint collection. The segment with fewer $c_{j}$'s is denoted by $\gamma(a_{i})$ and its number of $c_{j}$'s is denoted by $N_{\gamma}(a_{i})$. If two numbers are equal, then take either of the two segments. See Figure \ref{fig:alphaDissectGamma}.

The annular neighborhood of $\gamma$ is separated by $\gamma$ into two sides, which we label by left and right. Then $\{a_{1}, \ldots, a_{k}\}$ is partitioned into three collections, $A_{1}$, $A_{2}$ and $A_{3}$. $A_{1}$ ($A_{2}$, resp.) consists of those segments departing from and arriving at $\gamma$ on the left side (right side, resp.). $A_{3}$ consists of those segments connecting two sides of $\gamma$.

\begin{figure}[ht]
	
	\centering
	
	\begin{subfigure}[c]{0.3\textwidth}
		\centering
		
		\begin{tikzpicture}
		\begin{scope}[shift={(-1, 0.7)}]
		\draw (-1.4, -0.3) .. controls (-0.9, -0.1) and (-0.55, 0) .. (0, 0) .. controls (0.55, 0) and (0.9, -0.1) .. (1.4, -0.3);
		\draw (-1.2, -1.2) .. controls (-0.9, -0.95) and (-0.5, -0.8) .. (0, -0.8) .. controls (0.5, -0.8) and (0.9, -0.95) .. (1.2, -1.2);
		\draw[thick] (0, 0) arc (90:130:0.15 and 0.4);
		\draw[thick, ->] (0, -0.8) arc(270:130:0.15 and 0.4);
		\draw[thick, dashed] (0, 0) arc (90:-90:0.15 and 0.4);
		\draw (0, -1.05) node {$\gamma$};
		\draw (-0.7, -1.25) node {$A_{1}$};
		\draw (0.7, -1.25) node {$A_{2}$};

		\draw[thick] (-1.3, -1.02) .. controls (-0.9, -0.75) and (-0.45, -0.65) .. (0, -0.65) .. controls (0.45, -0.65) and (0.9, -0.75) .. (1.3, -1.02);
		
		\draw[thick] (-1.33, -0.65) .. controls (-0.9, -0.45) and (-0.5, -0.32) .. (0, -0.32) .. controls (0.5, -0.32) and (0.9, -0.45) .. (1.33, -0.64);
		\end{scope}
		\end{tikzpicture}
		\caption{Two groups of segments of $\alpha \setminus \gamma$.}
	\end{subfigure}
	\begin{subfigure}[c]{0.69\textwidth}
		\centering
		\begin{tikzpicture}
		
		\begin{scope}[shift={(7, 0)}]
		%\draw[thick] (0, 0) circle (1.8);

		\draw[thick, -<] (1.8, 0) arc(0:220:1.8);
		\draw[thick] (1.8, 0) arc(0:-145:1.8);

		\foreach \i in {1, ..., 10}{
			\fill[rotate=\i*36-18] (1.8, 0) circle (0.05);
		}
		
		\draw[thick, rotate=36] (0, -1.8) -- (0, 1.8);
		\draw[thick, rotate=-36] (0, -1.8) -- (0, 1.8);
		\draw[thick, rotate=72] (0, -1.8) arc (180:108:1.8*1.376381920471174);
		\draw[thick] (0, -1.8) arc (180:108:1.8*1.376381920471174);
		\draw[thick, rotate=-108] (0, -1.8) arc (180:36:1.8*0.324919696232906);

		\draw[rotate=-36] (0, -2) arc (-90:90:2);
		\draw[rotate=-126] (1.9, 0) -- (2.1, 0);
		\draw[rotate=54] (1.9, 0) -- (2.1, 0);
		
		\draw (-0.55, -0.3) node {$b_{1}$};
		\draw (1.6, -1.9) node {$N_{\gamma}(b_{1})$};
		
		\end{scope}
		
		\begin{scope}[shift={(2.9, 0)}]
		%\draw[thick] (0, 0) circle (1.8);
		\draw[thick, ->] (1.8, 0) arc(0:323:1.8);
		\draw[thick] (1.8, 0) arc(0:-37:1.8);

		\foreach \i in {1, ..., 10}{
			\fill[rotate=\i*36-18] (1.8, 0) circle (0.05);
		}
		
		\draw[thick,rotate=108] (0, -1.8) -- (0, 1.8);
		\draw[thick, rotate=36] (0, -1.8) -- (0, 1.8);
		\draw[thick, rotate=144] (0, -1.8) arc (180:108:1.8*1.376381920471174);
		\draw[thick, rotate=72] (0, -1.8) arc (180:108:1.8*1.376381920471174);
		\draw[thick,rotate=-36] (0, -1.8) arc (180:36:1.8*0.324919696232906);
		
		\draw[rotate=144] (0, -2) arc (-90:18:2);
		\draw[rotate=54] (1.9, 0) -- (2.1, 0);
		\draw[rotate=162] (1.9, 0) -- (2.1, 0);
		
		\draw (0, -2) arc (-90:-126:2);
		\draw[rotate=-90] (1.9, 0) -- (2.1, 0);
		\draw[rotate=-126] (1.9, 0) -- (2.1, 0);
		
		\draw (-0.9, 0.22) node {$a_{1}$};
		\draw (-0.52, -1.01) node {$a_{2}$};
		\draw (-1.3, 2.1) node {$N_{\gamma}(a_{1})$};
		\draw (-0.77, -2.28) node {$N_{\gamma}(a_{2})$};
		\draw (1.6, -1.22) node {$\gamma$};
		
		\end{scope}
		
		\end{tikzpicture}
		
		\caption{Segments in $A_{1}$ and $A_{2}$. Boundaries are $\gamma$. Here $N_{\gamma}(a_{1}) = 3$, $N_{\gamma}(a_{2}) = 1$ and $N_{\gamma}(b_{1}) = 5$.}
	\end{subfigure}
	
	\caption{Grouping segments}
	\label{fig:alphaDissectGamma}
\end{figure}
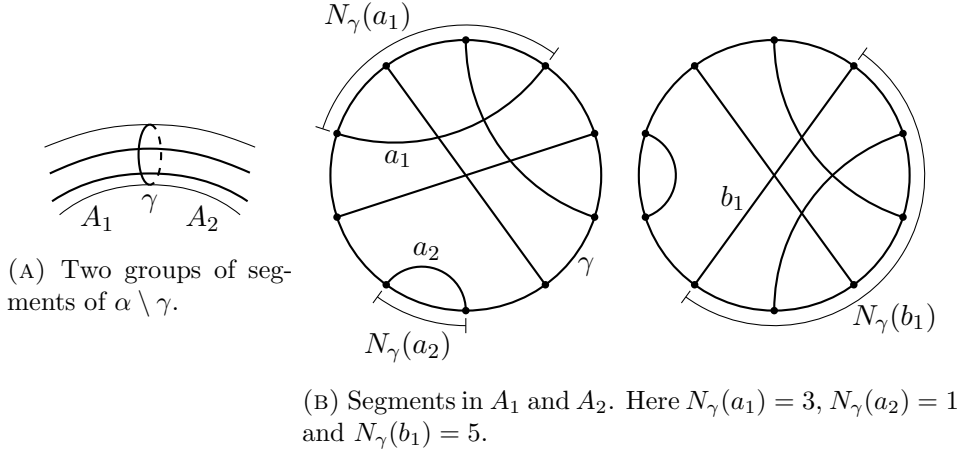

Recall that the \emph{index} of $(\alpha, \gamma)$ at an intersection point $p$ is $+1$ if the oriented basis made by the tangent vectors along $\alpha$ and $\gamma$ at $p$ agrees with the orientation of the surface $S$, and $-1$ otherwise.
Recall also that a fractional Dehn twist of a curve along another curve is a priori a multicurve, not necessarily a curve.

\begin{lem}\label{lem:joinEachSide}
Let $\alpha$, $\gamma$ be curves on $S$ with $k=i(\alpha, \gamma) > 1$. If $T_{\alpha}^{i}(\gamma)$ and $T_{\gamma}^{i}(\alpha)$ are single curves for every $i=0, 1, \ldots, k-1$, then \begin{enumerate}
\item $A_{3} = \emptyset$ and $k$ is even,
\item the indices of $(\alpha, \gamma)$ keeps alternating between $+1$ and $-1$ along each of $\alpha$ and $\gamma$, and
\item $N_{\gamma}(a_{i})$ is odd for each segment $a_{i} \in A_{1} \cup A_{2}$.
\end{enumerate}
If $k> 2$ moreover, then \begin{enumerate}
\setcounter{enumi}{3}
\item $N_{\gamma}(a_{i}) \neq N_{\gamma}(a_{i'})$ for all $a_{i} \in A_{1}$ and $a_{i'} \in A_{2}$.
\end{enumerate}
\end{lem}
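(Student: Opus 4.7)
The plan is to encode $\alpha$ near $\gamma$ by a combinatorial graph. Take an annular neighbourhood $N = N(\gamma)$ and label the $2k$ points of $\alpha \cap \partial N$ by $\{t_n\}_{n=1}^{k} \subset \partial_+ N$ and $\{b_n\}_{n=1}^{k} \subset \partial_- N$ in cyclic order, with $t_n, b_n$ coming from the crossing $p_n$. The arcs of $\alpha \setminus N$ then form an involution $\pi_M$ on these $2k$ endpoints, classified into types $A_1, A_2, A_3$ as in the statement. By the explicit formula for $T_\gamma^j$, the radial inside arcs $t_n \leftrightarrow b_n$ of $\alpha$ are replaced by the rotated arcs $t_{n+j} \leftrightarrow b_n$, so $T_\gamma^j(\alpha)$ is encoded by a $2$-regular multigraph $G_j$ on $V = \{t_n\} \cup \{b_n\}$ whose cycles correspond to components of $T_\gamma^j(\alpha)$. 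The hypothesis becomes: each $G_j$ $(0 \le j < k)$ is a single cycle of length $2k$, and similarly with $\alpha, \gamma$ interchanged.

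For (1) and (2), I would suppose for contradiction that $(t_m, b_n)$ is an $A_3$ arc and set $j := m - n \pmod{k}$. In $G_j$ the inside arc at $p_m$ connects $t_m$ to $b_{m-j} = b_n$, which together with the outside arc $(t_m, b_n)$ produces a $2$-cycle. Since $k \ge 2$ gives $2k \ge 4$, this $2$-cycle is a proper component, contradicting connectivity of $G_j$ (or of $\alpha$ itself when $j = 0$). Hence $A_3 = \emptyset$, and $2|A_1| + |A_3| = k$ forces $k$ even. A consecutive pair of crossings along $\alpha$ lies on the same side of $\gamma$ precisely when the intermediate outside arc is in $A_3$, so $A_3 = \emptyset$ is equivalent to alternation of indices along $\alpha$; the symmetric argument applied to $T_\alpha^i(\gamma)$ yields alternation along $\gamma$, giving (2).

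For (3), let $a_i \in A_1$ have endpoints $p_m, p_{m'}$. Since $p_m, p_{m'}$ are consecutive on $\alpha$, alternation along $\alpha$ gives $\epsilon_{m'} = -\epsilon_m$. The shorter $\gamma$-subsegment from $p_m$ to $p_{m'}$ contains $r := N_\gamma(a_i)$ subarcs $c_j$ and $r-1$ intermediate crossings, so alternation along $\gamma$ gives $\epsilon_{m'} = (-1)^r \epsilon_m$. Equating, $(-1)^r = -1$, so $N_\gamma(a_i)$ is odd; the case $a_i \in A_2$ is identical.

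For (4), now with $A_3 = \emptyset$ decompose $\pi_M = \phi^+ \sqcup \phi^-$ into fixed-point-free involutions of $\mathbb{Z}/k\mathbb{Z}$ coming from $A_1$ and $A_2$, and set $\phi^-_j(n) := \phi^-(n-j)+j$. A direct computation in $G_j$ shows that a fixed point of $\phi^-_j \circ \phi^+$ produces a $4$-cycle in $G_j$ (namely $t_m \to t_{\phi^+(m)} \to b_{\phi^+(m)-j} \to b_{\phi^-(\phi^+(m)-j)} \to t_m$), which for $k > 2$ is a proper component. The fixed-point equation $\phi^-_j \phi^+(m) = m$ reduces to $\phi^-(n) - n = -(\phi^+(m) - m)$ with $n := \phi^+(m) - j$, so by choosing $j$ freely such a fixed point exists whenever the (symmetric) shift-sets of $\phi^+$ and $\phi^-$ meet in $\mathbb{Z}/k\mathbb{Z}$. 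Since $N_\gamma$ of an arc with shift $d$ equals $\min(d, k-d)$, two arcs share the same $N_\gamma$ iff these shift-sets intersect, yielding (4). The main obstacle is the orbit bookkeeping underlying this last step: once $A_3 = \emptyset$ the permutation $\pi_I^j \pi_M$ alternates $\partial_\pm N$, so each cycle of $G_j$ of length $2\ell$ yields \emph{two} orbits of $\phi^-_j \phi^+$ of length $\ell$, and correctly tracking this $2$-to-$1$ doubling is what converts the connectivity of $G_j$ into the clean shift-set criterion that gives~(4).
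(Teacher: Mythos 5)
Your proof is correct and follows essentially the same strategy as the paper's: encode the effect of a fractional twist on the inside/outside arc matching, and derive the structural constraints from the requirement that each $T_\gamma^j(\alpha)$ be a single curve. The paper states the key mechanism informally (``some twist $T_\gamma^j$ will tie this segment into a closed curve, while other segments will constitute at least one more curve''), whereas you make it fully explicit via the $2$-regular multigraph $G_j$ built from the inside matching $\pi_I^j$ and outside matching $\pi_M$, and the shift/involution bookkeeping; for (2)--(3) both arguments coincide. This is the same underlying idea, rendered more rigorously, so I will not treat it as a genuinely different route.

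One small correction to your closing ``bookkeeping'' remark: since $\pi_I^j\pi_M$ swaps $\partial_+N$ and $\partial_-N$, a $G_j$-cycle of length $L$ decomposes into two $\pi_I^j\pi_M$-orbits of length $L/2$, each alternating between the two boundary components, and restricting $(\pi_I^j\pi_M)^2$ to $\partial_+N$ gives orbits of length $L/4$. So a cycle of length $4\ell$ (not $2\ell$) yields two orbits of $\phi_j^-\phi^+$ of length $\ell$; equivalently $G_j$ is connected iff $\phi_j^-\phi^+$ has exactly two orbits, each of size $k/2$. This off-by-two does not affect your argument for (4), which uses only the (correct) observation that a fixed point of $\phi_j^-\phi^+$ forces a $4$-cycle component, impossible for $k>2$ when $G_j$ must be a single $2k$-cycle.
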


\begin{proof}
We first suppose that there exists a segment of $\alpha \setminus \gamma$ joining the two sides of $\gamma$. Then some fractional Dehn twist $T_{\gamma}^i$ will tie this segment up into a closed curve, and the  other segments of $\alpha \setminus \gamma$ will combine to form at least one more curve. This contradicts the assumption that $\alpha_{i} =T_{\gamma}^i(\alpha)$ is a single curve. Thus, $A_{3} = \emptyset$ and $\alpha \setminus \gamma$ is partitioned into $A_{1}$ and $A_{2}$. Since $|A_{1}| = |A_{2}|$, their sum $k$ is even. This proves (1).

Now pick an arbitrary component $a_{i}$ of $\alpha \setminus \gamma$. Since $a_{i}$ departs from and arrives at $\gamma$ on the same side, the indices of $(\alpha, \gamma)$ at the two endpoints of $a_{i}$ are different. This implies that the indices alternate along $\alpha$. Similarly, the indices alternate along $\gamma$, proving (2).

Now fix an $a_{i}$ with endpoints $p$ and $q$, separating $\gamma$ into two segments $\Gamma_{1}$ and $\Gamma_{2}$. Without loss of generality, assume that the index of $(\alpha, \gamma)$ is 1 at $p$ and $-1$ at $q$. Since the indices of $(\alpha, \gamma)$ alternate along $\gamma$, the number of $c_{j}$'s along $\Gamma_{1}$ is odd. Similarly, the number of $c_{j}$'s along $\Gamma_{2}$ is odd, so their minimum $N_{\gamma}(a_{i})$ is also odd. Now (3) follows.

Now suppose further that $i(\alpha, \gamma) > 2$. If $N_{\gamma}(a_{i}) = N_{\gamma}(a_{i'})$ for some $a_{i} \in A_{1}$ and $a_{i'} \in A_{2}$, then some twist $T_{\gamma}^{j}$ will tie them into a single curve, while other segments will combine to form  at least one more curve. This again contradicts the assumption, so it cannot happen. It completes the proof of (4).
\end{proof}

\begin{proposition}\label{prop:lengthEqn1}
Let $\{\alpha_{i}, \beta_{i}, \gamma_{i}, \epsilon_{i}\}_{i \in \Z}$ be essential or boundary curves on $S$, where each of the collections $\{\alpha_{i}\}_{i \in \Z} \cup \{\gamma_{0}\}$ and $\{\gamma_{i}\}_{i \in \Z} \cup \{\alpha_{0}\}$  consists of distinct curves. 

Suppose that for each of \[
(\eta_{1}, \ldots, \eta_{4}) = \left\{ \begin{array}{c} (\alpha_{i}, \gamma_{0}, \alpha_{i-1}, \alpha_{i+1}) \\ 
(\alpha_{i-1}, \alpha_{i}, \gamma_{0}, \beta_{i}) \\  (\gamma_{i}, \alpha_{0}, \gamma_{i-1}, \gamma_{i+1}) \\ (\gamma_{i-1}, \gamma_{i}, \alpha_{0}, \epsilon_{i}) \end{array}\right\},
\]
Equation \ref{eqn:lengthIDSet1}: $$f(X;\eta_{1}, \eta_{2}) = g(X;\eta_{3}, \eta_{4})$$ where \[
	f(X;\eta_{1}, \eta_{2}):= 2 \cosh \frac{l_{X}(\eta_{1})}{2}  \cosh \frac{l_{X}(\eta_{2})}{2},
	\]\[
	g(X;\eta_{1}, \eta_{2}) := \cosh \frac{l_{X}(\eta_{1})}{2} +  \cosh \frac{l_{X}(\eta_{2})}{2},
	\] holds for every $X \in \T(S)$. Then  $$i(\alpha_{i}, \gamma_{0}) = 1 \mbox{ and }\{\alpha_{1}, \alpha_{-1}\} = \{T_{\gamma_{0}}^{1}(\alpha_{0}), T_{\gamma_{0}}^{-1}(\alpha_{0}) \}.$$
\end{proposition}

\begin{proof}
For convenience, we will denote $\alpha_{0}$ by $\alpha$ and $\gamma_{0}$ by $\gamma$. Since $f$ and $g$ are symmetric with respect to the curves involved, the assumption still holds after relabelling $\alpha_{i}$ as $\alpha_{-i}$ and $\beta_{i}$ as $\beta_{-i + 1}$. We will perform such a relabelling in the case $\alpha_{1}$ is equal to $T_{\gamma}^{-1}(\alpha)$. Similarly, we relabel $\gamma_{i}$ into $\gamma_{-i}$ and $\epsilon_{i}$ into $\epsilon_{-i+1}$ in case $\gamma_{1}$ is equal to $T_{\alpha}^{-1}(\gamma)$. 

\emph{Step 1}. Proving that $\alpha_{i} = T_{\gamma}^{i}(\alpha)$ and $\gamma_{i} = T_{\alpha}^{i}(\gamma)$ for $i \ge -1$.

Using Lemma \ref{lem:fiveIDSuff} we deduce that $i(\alpha, \gamma) > 0$ and $\{\alpha_{1}, \alpha_{-1}\} = \{T_{\gamma}^{1}(\alpha), T_{\gamma}^{-1}(\alpha)\}$. However, $\alpha_{1}$ is not equal to $T_{\gamma}^{-1}(\alpha)$ due to the relabelling procedure. Thus we obtain that $\alpha_{i} = T_{\gamma}^{i}(\alpha)$ for $i=-1, 0, 1$. 

We further assume $\alpha_{i} = T_{\gamma}^{i}(\alpha)$ for $i=-1, 0, \ldots, n$ as the induction hypothesis. Applying Lemma \ref{lem:fiveIDSuff} to curves $(\alpha_{n-2}, \ldots, \alpha_{n+2}, \gamma, \beta_{n}, \beta_{n+1})$, we deduce that $\alpha_{i+1} = T_{\gamma}^{1}(\alpha_{i}) = T_{\gamma}^{i+1}(\alpha)$. Thus, by mathematical induction, we conclude that $\alpha_{i} = T_{\gamma}^{i}(\alpha)$ for $i \ge -1$. Exactly the same argument shows that $\gamma_{i}=T_{\alpha}^{i}(\gamma)$ for $i \ge -1$.

\emph{Step 2}. Proving that $i(\alpha, \gamma) = 1$.

Let $k:= i(\alpha, \gamma)$, and assume that $k > 2$. Since we have proven $\alpha_{i} = T_{\gamma}^{i}(\alpha)$ for each $i > -1$, these are all single curves, not multicurves composed of several disjoint curves. Similarly, $T_{\alpha}^{i}(\gamma) = \gamma_{i}$ are all single curves. Then by Lemma \ref{lem:joinEachSide}, $k$ is even (hence $k/2$ is an integer) and the segments of $\alpha \setminus \gamma$ are partitioned into $A_{1} = \{a_{1}, \ldots, a_{k/2}\}$ and $A_{2} = \{b_{1}, \ldots, b_{k/2}\}$. Moreover, $N_{1} := \{N_{\gamma}(a_{i}) : a_{i} \in A_{1}\}$ and $N_{2}:= \{N_{\gamma}(b_{i}) : b_{i} \in A_{2}\}$ become disjoint sets of odd integers.

Without loss of generality, assume $\min N_{1} > \min N_{2}$ and $N_{\gamma}(a_{j}) = \min N_{1}$. We pick $b_{l}$ such that $N_{\gamma}(b_{l})= \max \{n \in N_{2} : n < \min N_{1}\}$. Note that $1\le N_{\gamma}(b_{l}) \le N_{\gamma}(a_{j}) - 2$.

Some fractional Dehn twist $T_{\gamma}^{i}$ ties $a_{j}$ with $b_{l}$ in a manner that $\gamma(a_{j})$, $\gamma(b_{l})$ overlap each other. In other words, $a_{j}$ is adjacent to $b_{l}$ in $\alpha_{i} = T_{\gamma}^{i}(\alpha)$. $b_{l}$ is then adjacent to yet another segment $a_{j'}$ in $\alpha_{i}$. We now define a curve $\sigma$ by concatenating $a_{j}$, $a_{j'}$ and $b_{l}$ twice, together with two segments $c_{1}$, $c_{2}$ along $\gamma$ as in Figure \ref{fig:4arcsCurves}.

\begin{figure}[ht]
\begin{tikzpicture}[scale=1]
\def\c{0.25};
\draw (-5, 0) -- (7, 0);

\foreach \i in {-3, -2, 2, 3, 4, 5}{
\draw (\i, -1) -- (\i, 1);
}
\draw (-4, -1) -- (-4, 1) arc (180:0:2.5) -- (1, -1) arc (0:-180:1) -- (-1, 1) arc (180:0:3.5) -- (6, -1);
\draw[dashed] (-4+\c, 0-\c) -- (-4+\c, 1) arc (180:0:2.5-\c) -- (1-\c, -1) arc (0:-180:1-\c) -- (-1+\c, 1) arc (180:0:3.5-\c) -- (6-\c, 0-\c) -- (1+\c, 0-\c) -- (1+\c, -1) arc (0:-180:1+\c) -- (-1-\c, 0-\c) -- cycle;

\draw (-5.28, 0) node {$\gamma$};
\draw (-4, -1.2) node {$\alpha_{i}$};
\draw (-3.55, 2.9) node {$a_{j}$};
\draw (5.4, 3.5) node {$a_{j'}$};
\draw (-1, -2.1) node {$\sigma$};
\draw (-3.5, -0.5) node {$B$};
\draw (-3.5, 0.5) node {$A$};
\draw (-1.5, -0.5) node {$F$};
\draw (-1.5, 0.5) node {$E$};
\draw (1.5, -0.5) node {$F$};
\draw (1.5, 0.5) node {$E$};
\draw (4.5, -0.5) node {$D$};
\draw (4.5, 0.5) node {$C$};
\draw (5.5, -0.5) node {$B$};
\draw (5.5, 0.5) node {$A$};
\draw (0, -1.5) node {$A'$};

\fill (-4, -0.03) -- (-1, -0.03) -- (-1, 0.03) -- (-4, 0.03);
\fill (6, -0.03) -- (1, -0.03) -- (1, 0.03) -- (6, 0.03);

\draw (-2.5, 0.23) node {$c_{1}$};
\draw (3.5, 0.23) node {$c_{2}$};

\draw (0.707106781186548, -1.707106781186548) arc (225:315:0.5);
\draw (1.6, -1.7) node {$b_{l}$};

\end{tikzpicture}
\caption{Configurations of $\alpha_{i}$ and $\sigma$. Here $\alpha_{i}$ and $\gamma$ are presented in minimal position. Note that $\sigma$ and $\alpha_{i}$ are intersecting at $6$ points.}
	\label{fig:4arcsCurves}
\end{figure}
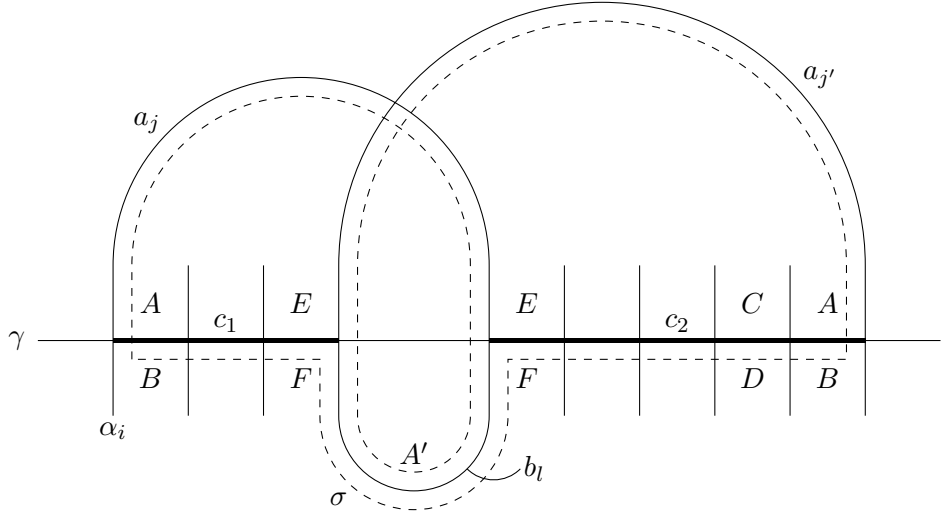

The number of segments of $\gamma \setminus \alpha_{i}$ present in Figure \ref{fig:4arcsCurves} is at most
\[
N_{\gamma}(a_{j}) + \max \{N_{\gamma}(a_{j'}), k - N_{\gamma}(a_{j'})\} - N_{\gamma}(b_{l}) \le N_{\gamma}(a_{j}) + (k - N_{\gamma}(a_{j})) - 1< k.
\]
Thus, $c_{1}$ and $c_{2}$ do not overlap and $\sigma$ is indeed a simple curve. We also note that $c_{1}$ contains at least \[
N_{\gamma}(a_{j}) - N_{\gamma}(b_{l}) \ge 2
\]
segments of $\gamma \setminus \alpha$, and so does $c_{2}$. We now claim the following lemma.

\begin{lem}\label{lem:mainMinPos}
Let $\sigma$ be the concatenation of $a_{j}, a_{j'}, b_{l}$ and parts of $\gamma$ as described in Step 2 of the proof of Proposition \ref{prop:lengthEqn1}. Then $i(\sigma, \gamma) = 4$ and $i(\sigma, T_{\gamma}^{\pm 1}(\alpha_{i})) \le i(\sigma, \alpha_{i}) + 2$.
\end{lem}

We presently postpone the proof of this lemma and first finish the proof of the theorem. Consider a pants decomposition on $S$ containing $\sigma$ and pinch $\sigma$. Then $f(\alpha_{i}, \gamma)$ grows in the order of $l(\sigma)^{-i(\alpha_{i}, \sigma) -4}$, while $g(T_{\gamma}^{1}(\alpha_{i}), T_{\gamma}^{-1}(\alpha_{i}))$ grows in the order of $l(\sigma)^{-i(\alpha_{i}, \sigma)-2}$ at most. This contradiction rules out the case that $i(\alpha, \gamma) > 2$.

In conclusion, $\alpha$ and $\gamma$ satisfy either \begin{enumerate}
\item $i(\alpha, \gamma) = 1$ or
\item $i(\alpha, \gamma) = 2$. 
\end{enumerate}

Let us assume the case (2). Note that Lemma \ref{lem:joinEachSide} asserts $i_{alg}(\alpha, \gamma) = 0$. Thus, $f(X;\alpha, \gamma) = g(X; T_{\gamma}^{1}(\alpha), T_{\gamma}^{-1}(\alpha))+ f(X;\delta_{2}, \delta_{3}) + f(X;\delta_{1}, \delta_{4})$ holds on all of $\T(S)$, where $\delta_{i}$ are curves as in Fact \ref{fact:lengthID2}. Recall that we also have an identity $f(X;\alpha, \gamma) = g(X; T_{\gamma}^{1}(\alpha), T_{\gamma}^{-1}(\alpha))$. 
However, their difference \[
2 \cosh \frac{l_{X}(\delta_{2})}{2} \cosh \frac{l_{X}(\delta_{3})}{2}  + 2\cosh \frac{l_{X}(\delta_{1})}{2} \cosh \frac{l_{X}(\delta_{4})}{2} 
\]
never vanishes. This contradiction excludes the case (2), and we conclude $i(\alpha, \gamma)=1$.
\end{proof}

\begin{proof}[Proof of Lemma \ref{lem:mainMinPos}]
We claim that $\sigma$ and $\gamma$ in Figure \ref{fig:4arcsCurves} are in minimal position. Using the bigon criterion, it suffices to show that there is no  embedded disc bounded by a component segment of  $\sigma \setminus \gamma$ and a component segment of $\gamma \setminus \sigma$. Note that $\sigma \setminus \gamma$ consists of 4 segments: one parallel to $a_{j}$, one parallel to $a_{j}'$, one parallel to $b_{l}$ and one parallel to the concatenation of $c_{1}, b_{l}$ and $c_{2}$. Here, recall that $a_{j}$, $a_{j}'$ and $b_{l}$ are components of $\alpha_{i} \setminus \gamma$. Since $\alpha_{i}$ and $\gamma$ are drawn in minimal position, none of $a_{j}$, $a_{j}'$ and $b_{l}$ can bound a disc together with $\gamma$. Now consider the segment of $\sigma$ that is parallel to $c_{1}$, $b_{l}$ and $c_{2}$. This segment is adjacent to two complementary regions of $S \setminus (\sigma \cup \gamma)$. One region is a quadrangle composed of two arcs from $\gamma$ and two arcs from $\sigma$, which does not count as a bigon. Another region is homotopic to a complementary region made by $b_{l}$ and $\gamma$. Again, since $\alpha_{i}$ and $\gamma$ is in a minimal position, this region also cannot be a bigon. This concludes the minimal position of $\sigma$ and $\gamma$ in Figure \ref{fig:4arcsCurves}, and $i(\sigma, \gamma) = 4$.

We next claim that $\sigma$ and $\alpha_{i}$ are also in minimal position.Note that $\sigma \setminus \alpha_{i}$ consists of two components that are near $b_{l}$ and other components that are parallel to $c_{1}$ or $c_{2}$. In Figure \ref{fig:4arcsCurves}, $\sigma \setminus \alpha_{i}$ consists of: \begin{itemize}
\item the long component $L_{1}$ passing through region $A$'s, $B$'s and $A'$, containing subsegments parallel to $a_{j}$, $b_{l}$ and $a_{j}'$, respectively;
\item the short component $L_{2}$ passing through region $F$'s, containing a subsegment parallel to $b_{l}$;
\item components parallel to $c_{1}$ or $c_{2}$ (such as the one passing through region $D$).
\end{itemize}

Suppose first that a component of $\sigma \setminus \alpha_{i}$ parallel to $c_{2}$  forms a bigon with $\alpha_{i}$. (In Figure \ref{fig:4arcsCurves}, consider the region containing part $D$.) Then such a bigon is homotopic to another bigon formed by $\alpha_{i}$ and $c_{2}$. This contradicts the fact that $\alpha_{i}$ and $\gamma$ are in minimal position. Hence, such a bigon does not exist. Similarly, no component of $\sigma \setminus \alpha_{i}$ parallel to $c_{1}$ can form a bigon with $\alpha_{i}$.

Now we are left with complementary regions of $\sigma \cup \alpha_{i}$ that are close to $b_{l}$. These are: \begin{itemize}
\item the region containing letter $A$ (and $A'$);
\item the region containing letter $B$;
\item the region containing letter $E$, and
\item the region containing letter $F$.
\end{itemize}

Suppose first that the region containing letter $A$ (and $A'$) is a bigon. This means that $L_{1}$ is homotopic (relative to the endpoints) to a component of $\alpha_{i} \setminus \sigma$, which we denote by $L$. Via homotopy, we can bring $L$ very close to $L_{1}$, from the side opposite to $a_{j}$, $b_{l}$ and $a_{j}'$. At this moment, the segment of $L$ parallel to $a_{j}$ is an element of $A_{1}$, whole value of $N_{\gamma}$ is $N_{\gamma}(a_{j}) - 2$. This contradicts the minimality of $N_{\gamma}(a_{j})$ among $N_{1}$.

Now suppose that the region containing letter $B$ is a bigon. Then by pushing the middle of $L_{1}$ toward $a_{j} \cup b_{l} \cup a_{j}'$ via homotopy, we obtain two bigons containing letter $B$. Each of these bigons consist of one segment of $\alpha_{i} \setminus \sigma$ and a horizontal segment that can be homotoped to $\gamma$. Hence, we obtain a bigon bordered by $\alpha_{i}$ and $\gamma$, contradicting their minimal position. 

The region containing letter $E$ is treated in a similar way. If it were a bigon, then we can push the middle of $L_{2}$ toward $b_{l}$ via homotopy. We then obtain two bigons containing letter $E$, each homotopic to a complementary region of $\alpha_{i} \cup \gamma$, which is absurd.

Finally, if the region containing letter $F$ were a bigon, then $b_{l}$ would be homotopic to another segment $b_{l'}$ satisfying $N_{\gamma}(b_{l'}) = N_{\gamma}(b_{l}) + 2$. This contradicts the maximality of $b_{l}$.

As a result of the discussion so far, the curves in Figure \ref{fig:4arcsCurves} are pairwise in minimal position. Together with the representative of $T_{\gamma}^{1}(\alpha_{i})$ drawn in Figure \ref{fig:4arcsCurvesTwist}, we can then deduce that \[
i(\gamma, \sigma) = 4\quad \textrm{and}\quad i(T_{\gamma}^{\pm1}(\alpha_{i}), \sigma) \le i(\alpha_{i}, \sigma) + 2.
\]

\begin{figure}[ht]
\begin{tikzpicture}[scale=1]
\def\c{0.25};
\draw (-5, 0) -- (7, 0);

\foreach \i in {-3, -2, 3, 4, 5}{
\draw (\i, 1) -- (\i, 0) -- (\i - 1, -0.5) --  (\i-1, -1);
}
\draw (-4.8, -0.4) -- (-4, 0)-- (-4, 1) arc (180:0:2.5) -- (1, 0.5) -- (0.4, 0.2);

\draw (2, 1) -- (2, 0.5) -- (1, 0) -- (1, -1) arc (0:-180:1) -- (-1, -0.5) -- (-0.4, -0.2);

\draw (-2, -1) -- (-2, -0.5) -- (-1, 0) -- (-1, 1) arc (180:0:3.5) -- (6, -0.3) -- (5, -0.6) --  (5, -1);

\draw (6, -1) -- (6, -0.6) -- (6.8, -0.36);
\draw[dashed] (-4+\c, 0-\c) -- (-4+\c, 1) arc (180:0:2.5-\c) -- (1-\c, -1) arc (0:-180:1-\c) -- (-1+\c, 1) arc (180:0:3.5-\c) -- (6-\c, 0-\c) -- (1+\c, 0-\c) -- (1+\c, -1) arc (0:-180:1+\c) -- (-1-\c, 0-\c) -- cycle;

\draw (-5.28, 0) node {$\gamma$};
\draw (-4, -1.23) node {$T_{\gamma}^{1}(\alpha_{i})$};
\draw (-3.55, 2.9) node {$a_{j}$};
\draw (5.4, 3.5) node {$a_{j'}$};
\draw (-1, -2.1) node {$\sigma$};

\foreach \i in {-3, -2, -1, 3, 4, 5}{
\draw[very thick] (\i, 0.5) -- (\i, 0) -- (\i-1, -0.5) -- (\i - 1, -0.7);
}
\draw[very thick] (1, 0.7) -- (1, 0.5) -- (0.4, 0.2);
\draw[very thick] (-1, -0.7) -- (-1, -0.5) -- (-0.4, -0.2);

\draw (0.707106781186548, -1.707106781186548) arc (225:315:0.5);
\draw (1.6, -1.7) node {$b_{l}$};

\end{tikzpicture}
\caption{Configurations of $T_{\gamma}^{1}(\alpha_{i})$ and $\sigma$. Two curves are intersecting at 8 points.}
	\label{fig:4arcsCurvesTwist}
\end{figure}
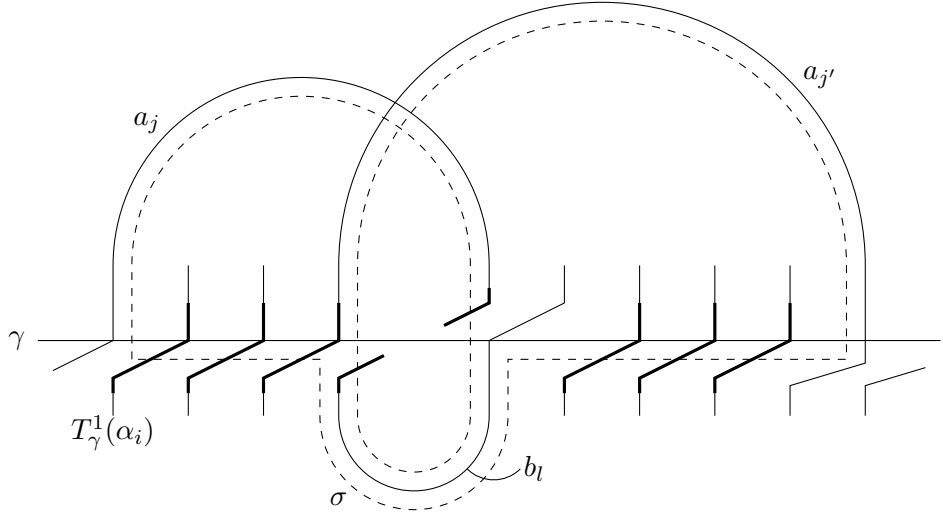
\end{proof}

Using this result, we can construct a subset of $\T(S)$ as follows. Let $\{\alpha_{i}\}_{i=-k-1}^{k+1}$, $\{\beta_{i}\}_{i=-k}^{k+1}$, $\{\gamma_{i}\}_{i=-k-1}^{k+1}$ and $\{\epsilon_{i}\}_{i=-k}^{k+1}$ be essential or boundary curves on $S$, where $\{\alpha_{i}\} \cup \{\gamma_{0}\}$ and $\{\gamma_{i}\} \cup \{\alpha_{0}\}$ consist of distinct curves and $k=i(\alpha_{0}, \gamma_{0}) \neq 1$. Since curves are compact, they are contained in a finite-type subsurface $S_{1}$ of $S$ bounded by some curves $C_{i_{1}}$, $\ldots$, $C_{i_{n}}$. 

If we declare the function \[
\mathfrak{h}(X; \eta_{1}, \ldots, \eta_{4}) := f(X; \eta_{1}, \eta_{2}) - g(X;\eta_{3}, \eta_{4})\]
for each  $(\eta_{1}, \ldots, \eta_{4})$ among $$\begin{aligned}
\{(\alpha_{i}, \gamma_{0}, \alpha_{i-1}, \alpha_{i+1})\}_{i=-k}^{k}, \hspace{1cm} & \{(\alpha_{i}, \alpha_{i-1}, \gamma_{0}, \beta_{i})\}_{i=-k+1}^{k},\\
\{(\gamma_{i}, \alpha_{0}, \gamma_{i-1}, \gamma_{i+1})\}_{i=-k}^{k}, \hspace{1cm} & \{(\gamma_{i}, \gamma_{i-1}, \alpha_{0}, \eta_{i})\}_{i=-k+1},^{k}
\end{aligned}$$ then it becomes an analytic function on $\T(S_{1})$. We index them into a single function $(\mathfrak{h}_{i}) : \T(S_{1}) \rightarrow \mathbb{R}^{8k+2}$. Now the proof of Proposition \ref{prop:lengthEqn1} indicates that $(\mathfrak{h}_{i})$ does not vanish identically on $\T(S_{1})$. Using Lemma \ref{lem:analytic}, we then construct a countable family $\mathcal{F} = \{F_{n}\}$ of submanifolds of $\T(S_{1})$ such that $(\mathfrak{h}_{i})$ does not vanish outside $\cup_{n} F_{n}$. Then $\tilde{F}_{n}:= \pi_{S_{1}}^{-1}(F_{n}) \subseteq \T(S)$ is nowhere dense in $\T(S)$ and their union $\tilde{F} = \cup_n \tilde{F}_n$ becomes a meagre set. \label{page.F}

\subsection{Second length identity: generalized shirt} \label{subsec:convofIDSet2}

We now prove the following converse of Lemma \ref{lem:lengthIDSet2}.

\begin{proposition}\label{prop:lengthEqn2}
Let $\{\alpha_{i}, \beta_{i}, \gamma_{i}, \epsilon_{i}\}_{i \in \Z}$ be essential or boundary curves on $S$, where each of the collections $\{\alpha_{i}\}_{i \in \Z} \cup \{\gamma_{0}\}$ and $\{\gamma_{i}\}_{i \in \Z} \cup \{\alpha_{0}\}$ consists of distinct curves. 
Further, let $\{\delta_{1}, \ldots, \delta_{4}\}$ be curves on $S$.

As in Lemma \ref{lem:lengthIDSet2}, suppose that Equation \ref{eqn:lengthIDSet2}: $$
f(X; \eta_{1}, \eta_{2}) = g(X; \eta_{3}, \eta_{4}) + f(X; \eta_{5}, \eta_{6}) + f(X; \eta_{7} ,\eta_{8})$$ holds for all $X \in \T(S)$ and with each  choice of $$(\eta_{1}, \ldots, \eta_{8})= \left\{\begin{array}{cc} (\alpha_{2i}, \gamma_{0}, \alpha_{2i-1}, \alpha_{2i+1}, \delta_{2}, \delta_{3}, \delta_{1}, \delta_{4}) \\ (\alpha_{2i+1}, \gamma_{0}, \alpha_{2i}, \alpha_{2i+2}, \delta_{1}, \delta_{3}, \delta_{2}, \delta_{4}) \\
	(\gamma_{2i},\alpha_{0}, \gamma_{2i-1}, \gamma_{2i+1}, \delta_{2}, \delta_{3}, \delta_{1}, \delta_{4}) \\ 
	(\gamma_{2i+1}, \alpha_{0}, \gamma_{2i}, \gamma_{2i+2}, \delta_{1}, \delta_{2}, \delta_{3}, \delta_{4}) \\
	(\alpha_{i-1}, \alpha_{i}, \gamma_{0}, \beta_{i}, \delta_{1}, \delta_{2}, \delta_{3}, \delta_{4}) \\
	(\gamma_{i-1}, \gamma_{i}, \alpha_{0}, \epsilon_{i}, \delta_{1}, \delta_{3}, \delta_{2}, \delta_{4}) \end{array}\right.$$ Then $\alpha_{0} \cup \gamma_{0}$ forms a spine of an immersed subsurface $\psi : S' \rightarrow S$, where $S'$ is a generalized shirt. Moreover, $\{\alpha_{-1}, \alpha_{1}\} = \{T_{\gamma}^{-1}(\alpha_0), T_{\gamma}^{1}(\alpha_0)\}$.

Further, we can label the peripheral curves of $S'$ by $\{\epsilon_{1}, \ldots, \epsilon_{4}\}$ such that:
\begin{itemize}
\item $\delta_{i}$, $\epsilon_{i}$ are both bounding (possibly distinct) punctures  or $\delta_{i} = \epsilon_{i}$, and 
\item $\alpha_{0}$ separates $\{\epsilon_{1}, \epsilon_{3}\}$ from $\{\epsilon_{2}, \epsilon_{4}\}$ and $\gamma_{0}$ separates $\{\epsilon_{1}, \epsilon_{2}\}$ from $\{\epsilon_{3}, \epsilon_{4}\}$.
\end{itemize}
\end{proposition} 

\begin{proof}
As in the proof of Proposition \ref{prop:lengthEqn1}, we will denote $\alpha_{0}$ by $\alpha$ and $\gamma_{0}$ by $\gamma$. Moreover, we may assume that $\alpha_{1} \neq T_{\gamma}^{-1}(\alpha)$ and $\gamma_{1} \neq T_{\alpha}^{-1}(\gamma)$.

We note that Lemma \ref{lem:fiveIDSuff} equally applies, since $f(X;\eta, \eta') \ge 0$ for any curves $\eta$ and $\eta'$. As in the proof of Proposition \ref{prop:lengthEqn1}, we thus obtain $k:=i(\alpha, \gamma)>0$ and $\alpha_{i} = T_{\gamma}^{i}(\alpha)$, $\gamma_{i} = T_{\alpha}^{i}(\gamma)$ for $i \ge -1$.

We now prove that $\delta_{1}$ and $\gamma$ are disjoint. From the assumption, we have \[
f(X;\alpha_{i}, \gamma) = g(X;\alpha_{i-1}, \alpha_{i+1}) + f(X;\delta_{1}, \delta_{4}) + f(X;\delta_{2}, \delta_{3})
\]
for even $i$. Here note that the geometric intersection number $i(\alpha_{i}, \gamma)$ between $\alpha_i$ and $\gamma$ is equal to $k$ since  $\alpha_i = \T_{\gamma}^i (\alpha)$ for $i \ge -1$.
Now, if $\gamma$ intersects $\delta_{1}$, then \[
i(\alpha_{n k}, \delta_{1}) = i(T_{\gamma}^{n k}(\alpha), \delta_{1}) \ge nk\, i(\gamma, \delta_{1}) - i(\alpha, \delta_{1}) \ge nk - i(\alpha, \delta_{1})
\]
by Fact \ref{fact:triangleIneqInters}. Thus, if we take $i=nk$ to be an even integer larger than $i(\alpha, \delta_{1}) + k + 1$ and pinch $\alpha_{i}$, then $f(X;\alpha_{i}, \gamma)$ grows in the order of $l(\alpha_{i})^{-k}$ while $f(X;\delta_{1}, \delta_{4})$ grows in the order at least of $l(\alpha_{i})^{-(k+1)}$, a contradiction. Thus $i(\gamma, \delta_{1}) = 0$ and similarly $i(\alpha, \delta_{1}) = 0$. Other $\delta_{i}$'s can be dealt with similarly, so we find that  $\delta_{i}$'s and  $\alpha \cup \gamma$ are disjoint.

Now, if $k=i(\alpha, \gamma) > 2$, then we construct $\sigma$ as in the step 2 of the proof of Proposition \ref{prop:lengthEqn1}. Since $\sigma \subseteq N(\alpha) \cup N(\gamma)$ is disjoint from all $\delta_{j}$'s, we see that $f(X;\delta_{j}, \delta_{j'})$ remains bounded while pinching $\sigma$. Accordingly, the same contradiction follows from Lemma \ref{lem:mainMinPos} by comparing each side of \[
f(X;\alpha_{i}, \gamma) = g(X; \alpha_{i-1}, \alpha_{i+1}) + f(X; \delta_{j}, \delta_{j'}) + f(X; \delta_{j''}, \delta_{j'''}).
\]
while pinching $\sigma$.

We are thus led to the same dichotomy as in the proof of Lemma \ref{lem:mainMinPos}: \begin{enumerate}
\item $i(\alpha, \gamma) = 1$ or
\item $i(\alpha, \gamma) = 2$ and $i_{alg}(\alpha, \gamma)=0$.
\end{enumerate}
As noted before, we cannot simultaneously have $f(X;\alpha, \gamma) = g(X;\alpha_{-1}, \alpha_{1})$ and $f(X;\alpha, \gamma) = g(X;\alpha_{-1}, \alpha_{1})+f(X;\delta_{2}, \delta_{3}) + f(X;\delta_{1}, \delta_{4})$ on all of $\T(S)$. The latter is already assumed true, while $i(\alpha, \gamma) = 1$ forces the former. This contradiction rules out the case $i(\alpha, \gamma) = 1$ and consequently, $\alpha \cup \gamma$ must be a spine of an immersed subsurface of $S'$ that is a generalized shirt.

Let $\{\delta'_{j}\}$ be the boundaries/punctures of $S'$ labelled as in Figure \ref{fig:spineof4holed}. 
At this moment, Fact \ref{fact:lengthID2} and the assumption gives the following set of identities on all of $\T(S)$;\begin{equation}\begin{aligned}\label{eqn:tempEqn1}
f(X;\delta'_{2}, \delta'_{3}) + f(X;\delta'_{1}, \delta'_{4}) & = f(X;\alpha, \gamma) - g(X; \alpha_{-1}, \alpha_{1})\\ & = f(X;\delta_{2}, \delta_{3}) + f(X;\delta_{1}, \delta_{4}),
\end{aligned}\end{equation}
\begin{equation}\begin{aligned}\label{eqn:tempEqn2}
f(X;\delta'_{1}, \delta'_{3}) + f(X;\delta'_{2}, \delta'_{4}) & = f(X;\alpha_{1}, \gamma) - g(X; \alpha, \alpha_{2})\\ & = f(X;\delta_{1}, \delta_{3}) + f(X;\delta_{2}, \delta_{4}).
\end{aligned}\end{equation}
Let $\Lambda$ ($\Gamma$, resp.) be the set of $\delta'_{j}$'s ($\delta_{j}$'s, resp.) that are not punctures.
Note that the LHS (RHS, resp.) of Equation \ref{eqn:tempEqn1} can attain the value 4 at one and all $X \in \T(S)$ if and only if all of $\delta'_{j}$ ($\delta_{i}$, resp.) are punctures. This settles the case when $S = S'$ is a 4-punctured sphere, and we now assume $\Lambda, \Gamma \neq \emptyset$.

Since $S'$ is an immersed subsurface, $\Lambda$ consists of disjoint curves. Thus we can fix a pants decomposition including $\Lambda$ and pinch simultaneously. The LHS of Equation \ref{eqn:tempEqn1} converges to 4 so the RHS should also do so. This is possible only when all of the $l_{X}(\delta_{i})$ terms on the RHS tends to $0$ during the pinching, so $\emptyset \neq \Gamma \subseteq \Lambda$.

We now pick some $\delta_{i}$ in $\Gamma$. Since $\Gamma \subseteq \Lambda$, $\delta_{i} = \delta'_{j}$ for some $j$. By applying one of the following permutations on the indices of $\delta_{i}$, $\delta'_{j}$ \begin{equation} \begin{aligned} \label{eqn.permutate}
(1, 2, 3, 4) \mapsto (1, 2, 3, 4), & \hspace{1cm} (1, 2, 3, 4) \mapsto (2, 1, 4, 3),\\
(1, 2, 3, 4) \mapsto (3, 4, 1, 2), & \hspace{1cm} (1, 2, 3, 4) \mapsto (4, 3, 2, 1),
\end{aligned}\end{equation}
under which Equation \ref{eqn:tempEqn1} and \ref{eqn:tempEqn2} remain unchanged, we may assume that $\delta_{1} = \delta'_{1}\in \Gamma$.

We now show $\delta_{i} = \delta'_{i}$ (up to the permutations above) for all $i$. We first increase $l_{X}(\delta'_{1}) = l_{X}(\delta_{1})$ to infinity, whilst fixing the lengths of $\delta'_{j} \in \Lambda \setminus \{\delta_{1}\}$ as $t$. \begin{itemize}
\item If $\delta'_{1} = \delta'_{3}$, then $\delta'_{2}$, $\delta'_{4}$ are distinct from $\delta'_{1}$. In this case, the LHS of Equation \ref{eqn:tempEqn2} grows in the order of $e^{ l_{X}(\delta'_{1})}$. This implies that at least one of $\delta_{3} = \delta_{1} = \delta'_{1}$ or $\delta_{2} = \delta_{4} = \delta_{1} = \delta'_{1}$ holds. However, the latter case is excluded by comparing $f(X; \delta'_{2}, \delta'_{4})$ and $f(X; \delta_{1}, \delta_{3})$. Thus, we conclude $\delta'_{1} = \delta'_{3} = \delta_{1} = \delta_{3}$ and $f(X; \delta'_{1}, \delta'_{3}) = f(X; \delta_{1}, \delta_{3})$.

At this moment, if $\delta'_{2}$ and $\delta'_{4}$ are punctures, then we have \[
\{ \delta'_{1} = \delta'_{3} = \delta_{1} = \delta_{3}\} \subseteq \Gamma \subseteq \Lambda = \{\delta'_{1} = \delta'_{3}= \delta_{1} = \delta_{3}\},
\]
which is the desired equality. Next, if $\delta'_{2}$ is an essential curve,  then we increase $l_{X}(\delta'_{2})$ to infinity whilst fixing lengths of $\delta'_{j} \in \Lambda \setminus \{\delta'_{2}\}$ as $t$.  Note that \[
\lim_{l_{X}(\delta'_{2}) \rightarrow \infty} \frac{\textrm{(LHS of Equation \ref{eqn:tempEqn2})}}{e^{l_{X}(\delta'_{2})/2}} = \left\{\begin{array}{cc}  +\infty & \delta'_{2} = \delta'_{4} \\ \cosh (t/2)  &\{\delta'_{2}, \delta'_{4} \}\subseteq  \Lambda, \delta'_{2} \neq \delta'_{4}  \\ 
1 & \{\delta'_{2}, \delta'_{4}\} = \{\delta'_{2}, \textrm{puncture}\},\end{array}\right.
\]
 \[
\lim_{l_{X}(\delta'_{2}) \rightarrow \infty} \frac{\textrm{(RHS of Equation \ref{eqn:tempEqn2})}}{e^{l_{X}(\delta'_{2})/2}} = \left\{\begin{array}{cc}  +\infty & \delta_{2} = \delta_{4} =\delta'_{2} \\ \cosh (t/2)  & \delta'_{2} \in \{\delta_{2}, \delta_{4}\} \subseteq \Lambda, \delta_{2} \neq \delta_{4} \\ 
1 &  \{\delta_{2} ,\delta_{4}\} = \{\delta'_{2}, \textrm{puncture}\} \\
0 & \delta'_{2} \notin \{\delta_{2}, \delta_{4}\}.
\end{array}\right.
\]
Since the two growth rates should match, the case $\delta'_{2} \notin \{\delta_{2}, \delta_{4}\}$ is impossible and $\delta'_{2}$ belongs to $\{\delta_{2}, \delta_{4}\}$. Similarly, by using Equation \ref{eqn:tempEqn1}, we deduce that $\delta'_{4} \in \{\delta_{2}, \delta_{4}\}$ whenever $\delta'_{4}$ is an essential curve. In conclusion we have $\{\delta_{2}, \delta_{4}\} = \{\delta'_{2}, \delta'_{4}\}$, and we have $(\delta'_1, \delta'_2, \delta'_3, \delta'_4) = (\delta_1, \delta_2, \delta_3, \delta_4)$ up to the permutation $(1, 2, 3, 4) \mapsto (3, 4, 1, 2)$ in Equation \ref{eqn.permutate}.
\item The case $\delta'_{1} = \delta'_{4}$ can be dealt with in the same way, by switching the role of Equation \ref{eqn:tempEqn1} and \ref{eqn:tempEqn2}. For example, now the LHS of Equation \ref{eqn:tempEqn1} grows in the order of $e^{l_{X}(\delta'_{1})}$, which forces $\delta_{1} = \delta_{4} = \delta'_{1}$ or $\delta_{2} = \delta_{3} = \delta_{1} = \delta'_{1}$, the latter being excluded by comparing $f(X; \delta'_{2}, \delta'_{3})$ and $f(X; \delta_{1}, \delta_{4})$. 

\item If $\delta'_{1} = \delta'_{2}$, then $\delta'_{3}$, $\delta'_{4}$ are distinct from $\delta'_{1}$. In this case, the LHS of Equation \ref{eqn:tempEqn1} and \ref{eqn:tempEqn2} grow in the order of $e^{l_{X}(\delta'_{1})/2}$. Accordingly, both $\delta_{3}$, $\delta_{4}$ are not $\delta'_{1}$. If moreover $\delta_{2}$ is not $\delta'_{1}$, then \[
\lim_{l_{X}(\delta'_{1}) \rightarrow \infty} \frac{\textrm{(RHS of Equation \ref{eqn:tempEqn2})}}{e^{l_{X}(\delta'_{1})/2}} = \left\{\begin{array}{cc}  1 & \delta_{3} \notin \Gamma \\ \cosh (t/2) & \delta_{3} \in \Gamma \setminus \{\delta_{1}\} \end{array}\right.
\]
according to whether $\delta_{3}$ is a curve or not. However,  note\[
\lim_{l_{X}(\delta'_{1}) \rightarrow \infty} \frac{\textrm{(LHS of Equation \ref{eqn:tempEqn2})}}{e^{l_{X}(\delta'_{1}) / 2}} = \left\{\begin{array}{cc} 2 & \delta'_{3}, \delta'_{4}, \notin \Lambda \\ 2 \cosh (t/2) & \delta'_{3}, \delta'_{4} \in \Lambda \setminus \{\delta'_{1}\} \\ 1+\cosh (t/2) & \textrm{otherwise},\end{array}\right.
\]
which gives a contradiction. Thus $\delta_{2} = \delta'_{1}$. Now we increase $l_{X}(\delta'_{3})$ and $l_{X}(\delta'_{4})$ separately to deduce that $\{\delta'_{3}, \delta'_{4}\} = \{\delta_{3}, \delta_{4}\}$.
\item The remaining case is that $\delta'_{1}$ is not equal to any of $\{\delta'_{2}, \delta'_{3}, \delta'_{4}\}$. We first set $l_{X}(\delta'_{3})$ as $3$ if $\delta'_{3} \in \Lambda$, and set lengths $l_{X}(\delta'_{j})$ for $\delta'_{j} \in \Lambda \setminus \{\delta'_{1}, \delta'_{3}\}$ as 2. Finally, we increase $l_{X}(\delta'_{1}) = l_{X}(\delta_{1})$ to infinity. We then observe\[
\lim_{l_{X}(\delta'_{1}) \rightarrow \infty} \frac{\textrm{(LHS of Equation \ref{eqn:tempEqn2})}}{e^{l_{X}(\delta'_{1})/2}} =\left\{\begin{array}{cc}  \cosh (3/2) & \delta'_{3} \in \Lambda \\ 1 & \textrm{otherwise},\end{array}\right.
\]\[
\lim_{l_{X}(\delta'_{1}) \rightarrow \infty} \frac{\textrm{(RHS of Equation \ref{eqn:tempEqn2})}}{e^{l_{X}(\delta'_{1})/2}} = \left\{\begin{array}{cc} \cosh (3/2) & \delta_{3} = \delta'_{3}  \in \Lambda \\ \cosh 1 & \delta_{3} \in \Lambda \setminus \delta'_{3} \\ 1 & \textrm{otherwise}. \end{array}\right.
\]
Thus we conclude that $\delta'_{3}$, $\delta_{3}$ are both bounding punctures or $\delta'_{3} = \delta_{3}$. Similar conclusion for $\delta'_{4}$, $\delta_{4}$ follows from Equation \ref{eqn:tempEqn1}. Finally, we increase $l_{X}(\delta'_{2})$ in Equation \ref{eqn:tempEqn2} to deduce the conclusion.
\end{itemize}
In any case, we can relabel $\delta_i$'s by one of the permutations in Equation \ref{eqn.permutate}, competing the proof.
\end{proof}

Using this result, we can construct a subset of $\T(S)$ as follows. Let $\{\alpha_{i}\}_{i=-(M+k+2)}^{M+k+2}$, $\{\beta_{i}\}_{i=-k}^{k}$, $\{\gamma_{i}\}_{i=-(M+k+2)}^{M+k+2}$, $\{\epsilon_{i}\}_{i=-k}^{k}$ and $\{\delta_{1}, \ldots, \delta_{4}\}$ be curves on $S$ such that: \begin{itemize}
\item $\{\alpha_{i}, \beta_{i}, \gamma_{i}, \epsilon_{i}\}$ are essential or boundary curves,
\item $\{\alpha_{i}\} \cup\{\gamma_{0}\}$, $\{\gamma_{i}\} \cup \{\alpha_{0}\}$ consist of distinct curves,
\item $i(\alpha_{0}, \gamma_{0}) = k$ and $i(\alpha_{0}, \delta_{i})$, $i(\gamma_{0}, \delta_{i}) \le M$, and 
\item $(\alpha_{0}, \gamma_{0}, \delta_{i})$ do not satisfy the conclusion of Proposition \ref{prop:lengthEqn2}. That is, either $\alpha_0 \cup \gamma_0$ does not form a spine of an immersed generalized shirt in $S$, or when they form such a spine of an immersed generlized shirt $S'$ in $S$, we have $\{\alpha_{-1}, \alpha_1\} \neq \{ T_{\gamma}^{-1}(\alpha_0), T_{\gamma}^1(\alpha_0)\}$ or there is no labelling of the peripheral curves of the $S'$ satisfying the conditions stated in Proposition \ref{prop:lengthEqn2}.
\end{itemize}
Since curves are compact, they are contained in a finite-type subsurface $S_{1}$ of $S$ bounded by some curves $C_{i_{1}}$, $\ldots$, $C_{i_{n}}$. 

Then \[
	\mathfrak{h}(X; \eta_{1}, \ldots, \eta_{8}) := f(X; \eta_{1}, \eta_{2}) - g(X;\eta_{3}, \eta_{4}) - f(X; \eta_{5}, \eta_{6}) - f(X; \eta_{7}, \eta_{8}),
\]
for the choices of $(\eta_{1}, \ldots, \eta_{8})$ specified as per Lemma \ref{lem:lengthIDSet2}, define  analytic functions on $\T(S_{1})$. We index them into a single function $(\mathfrak{h}_{i}) : \T(S_{1}) \rightarrow \mathbb{R}^{N}$ for some $N$. Now the proof of Proposition \ref{prop:lengthEqn2} indicates that $(\mathfrak{h}_{i})$ does not vanish identically on $T(S_{1})$. Using Lemma \ref{lem:analytic}, we then construct a countable family $\mathcal{G} = \{G_{n}\}$ of submanifolds of $\T(S_{1})$ such that $(\mathfrak{h}_{i}) \neq 0$ outside $\cup_{n} G_{n}$. Then $\tilde{G}_{n}:= \pi_{S_{1}}^{-1}(F_{n}) \subseteq \T(S)$ is nowhere dense in $\T(S)$ and their union $\tilde{G}$ becomes a meagre set.

We now gather all $\tilde{E}(\alpha, \beta)$ (page \pageref{page.E}), $\tilde{F}(\{\alpha_{i}\}, \ldots, \{\epsilon_{i}\})$ (page \pageref{page.F}), and $\tilde{G}(\{\alpha_{i}\}, \ldots, \{\epsilon_{i}\}, \{\delta_{i}\})$ that have been constructed so far, and denote their union by $V$. This is the union of a countable collection of meagre subsets of $\T(S)$, so $V$ is meagre. Since $\T(S)$ is locally homeomorphic to a complete metric space, 
we again invoke the Baire category theorem to deduce that $\T(S) \setminus V$ is dense in $\T(S)$. Hence Theorem \ref{thm:mcshane}, Proposition \ref{prop:lengthEqn1} and Proposition \ref{prop:lengthEqn2} imply the following proposition.

\begin{proposition}\label{prop:mainProp}
Suppose that  $X \in \T(S) \setminus V$ and let $\{\xi_{1}, \xi_{2}\}$, $\{\alpha_{i},\beta_{i}, \gamma_{i}, \epsilon_{i}\}_{i\in \Z}$ be essential or boundary curves on $S$. \begin{enumerate}
\item If $\xi_{1} \neq \xi_{2}$, then $l_{X}(\xi_{1}) \neq l_{X}(\xi_{2})$.
\item Suppose that at $X$, $\{\alpha_{i},\beta_{i}, \gamma_{i}, \epsilon_{i}\}_{i\in \Z}$ satisfy the identities of Lemma \ref{lem:lengthIDSet1} and each of $\{\alpha_{i}\}_{i \in \Z} \cup \{\gamma_{0}\}$, $\{\gamma_{i}\}_{i \in \Z} \cup \{\alpha_{0}\}$ contains no curves with the same length. Then $\alpha_0 \cup \gamma_0$ forms the spine of a generalized 1-holed torus and $\{\alpha_{-1}, \alpha_{1}\} = \{T_{\gamma_{0}}^{\pm 1} (\alpha_{0})\}$.
\item In addition, let $\{\delta_{1}, \ldots, \delta_{4}\}$ be curves on $S$. Suppose that at $X$, $\{\alpha_{i},\beta_{i}, \gamma_{i}, \epsilon_{i}\}_{i\in \Z}$ and $\{\delta_{1}, \ldots, \delta_{4}\}$ satisfy the identities of Lemma \ref{lem:lengthIDSet2} and each of $\{\alpha_{i}\}_{i \in \Z} \cup \{\gamma_{0}\}$, $\{\gamma_{i}\}_{i \in \Z} \cup \{\alpha_{0}\}$ contains no curves with the same length. 

Then $\alpha_{0} \cup \gamma_{0}$ forms a spine of an immersed subsurface $\psi : S' \rightarrow S$, where $S'$ is a generalized shirt. Moreover, $\{\alpha_{-1}, \alpha_{1}\} = \{ T_{\gamma}^{\pm1}(\alpha)\}$.

Further, we can label the peripheral curves of $S'$ by $\{\eta_{1}, \ldots, \eta_{4}\}$ such that:
\begin{itemize}
\item $\delta_{i}$, $\eta_{i}$ are both bounding (possibly distinct) punctures or $\delta_{i} = \eta_{i}$, and 
\item $\alpha_{0}$ separates $\{\eta_{1}, \eta_{3}\}$ from $\{\eta_{2}, \eta_{4}\}$ and $\gamma_{0}$ separates $\{\eta_{1}, \eta_{2}\}$ from $\{\eta_{3}, \eta_{4}\}$.
\end{itemize}
\end{enumerate}  
\end{proposition}

Suppose now that $S$ is a surface composed of at least two generalized pairs of pants. If $\eta_{1}$, $\eta_{2}$ are disjoint curves on $S$ and $\eta_{1}$ is essential, then we can perform the following procedure. We connect $\eta_{1}$ and $\eta_{2}$ with a simple segment $\tau$. Then $\eta_1$, $\eta_2$, and concatenation $\eta_{1}\tau\eta_2\tau^{-1}$ bound a pair of pants $P$ in $S$. Moreover, at least one of $\eta_{1}$ or $\eta_{2}$ is adjacent to yet another pair of pants $Q$, and $P \cup Q$ becomes an immersed generalized shirt. Here one of $\eta_{1}$, $\eta_{2}$ separates the shirt into $P$ and $Q$, and the other one becomes a boundary curve of $P \cup Q$. 
When both $\eta_1, \eta_2$ are boundary curves, we cal also find an immersed generalized shirt whose boundary component contains $\eta_1, \eta_2$. From this observation, we deduce the following lemma.

\begin{lem}\label{lem:disjointWitness}
	Let $S$ be a surface that is not a generalized pair of pants or a one-holed/punctured torus, $X \in \T(S) \setminus V$, and $\eta_{1}$, $\eta_{2}$ be essential or boundary curves on $X$. Then the following are equivalent: \begin{enumerate}
\item $\eta_{1}$ and $\eta_{2}$ are disjoint;
\item there exists essential or boundary curves $\{\alpha_{i},\beta_{i}, \gamma_{i}, \epsilon_{i}\}_{i\in \Z}$ on $X$ and curves $\{\delta_{1}, \ldots, \delta_{4}\}$ such that:
\begin{itemize}
\item each of  $\{\alpha_{i}\}_{i \in \Z} \cup \{\gamma_{0}\}$, $\{\gamma_{i}\}_{i \in \Z} \cup \{\alpha_{0}\}$ contains no curves with the same length;
\item $\{\eta_{1}, \eta_{2}\} =
\{\gamma_{0}, \delta_{1}\}$ or $\{\eta_{1}, \eta_{2}\} = 
\{\delta_1, \delta_2\}$; and
\item the identities of Lemma \ref{lem:lengthIDSet2} are satisfied.
\end{itemize}
\end{enumerate}
\end{lem}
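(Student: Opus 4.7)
The plan is to establish the two directions of the equivalence separately, with $(2)\Rightarrow(1)$ coming directly from earlier results and $(1)\Rightarrow(2)$ requiring a topological construction.

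For $(2)\Rightarrow(1)$, we invoke Proposition~\ref{prop:mainProp}(3). The hypotheses are precisely what item (2) supplies: $X\in\T(S)\setminus V$, the identities of Lemma~\ref{lem:lengthIDSet2} hold at $X$, and each of the families $\{\alpha_{i}\}\cup\{\gamma_{0}\}$ and $\{\gamma_{i}\}\cup\{\alpha_{0}\}$ has pairwise distinct lengths. The proposition then produces an immersed generalised shirt $\psi:S'\to S$ with spine $\alpha_{0}\cup\gamma_{0}$, together with a labelling $\{\eta'_{1},\ldots,\eta'_{4}\}$ of its peripheries so that for each index either $\delta_{i}=\eta'_{i}$ or both $\delta_{i}$ and $\eta'_{i}$ bound punctures. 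In either case $\delta_{1}$ can be realised disjoint from $\gamma_{0}$: a peripheral curve of the shirt is disjoint from the spine, and a puncture-bounding curve admits a horocycle representative disjoint from any essential curve. Since $\{\gamma_{0},\delta_{1}\}=\{\eta_{1},\eta_{2}\}$, we conclude $i(\eta_{1},\eta_{2})=0$.

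For $(1)\Rightarrow(2)$, we realise the construction alluded to in the paragraph preceding the lemma. Assuming $\eta_{1},\eta_{2}$ disjoint, we may suppose without loss of generality that $\eta_{1}$ is essential (at least one must be, since $\gamma_{0}$ in (2) is a spine curve of an immersed shirt). First we connect $\eta_{1}$ and $\eta_{2}$ by an embedded arc $\tau$; a regular neighborhood of $\eta_{1}\cup\tau\cup\eta_{2}$ is then a pair of pants $P\subseteq S$ whose boundary consists of $\eta_{1}$, $\eta_{2}$ and a third curve $\zeta$ freely homotopic to $\eta_{1}\tau\eta_{2}\tau^{-1}$. Because $S$ is neither a generalised pair of pants nor a one-holed/punctured torus and $\eta_{1}$ is essential (hence two-sided in $S$), any pants decomposition of $S$ extending $\{\eta_{1},\eta_{2},\zeta\}$ supplies a second pair of pants $Q$ sharing $\eta_{1}$ with $P$; the union $P\cup Q$ is then an immersed generalised shirt in $S$ with $\eta_{1}$ as its separating spine curve.

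We then set $\gamma_{0}:=\eta_{1}$ and pick $\alpha_{0}$ to be a companion spine curve of $P\cup Q$ crossing $\gamma_{0}$ once through $P$ and once through $Q$ (so $i(\alpha_{0},\gamma_{0})=2$ and $i_{alg}(\alpha_{0},\gamma_{0})=0$), and label the peripheries $\delta_{1}:=\eta_{2}$, $\delta_{2}:=\zeta$ on the $P$-side, with $\delta_{3},\delta_{4}$ the two remaining boundary components of $Q$, ordered so that $\alpha_{0}$ separates $\{\delta_{1},\delta_{3}\}$ from $\{\delta_{2},\delta_{4}\}$. Defining $\alpha_{i}:=T_{\gamma_{0}}^{i}(\alpha_{0})$, $\gamma_{i}:=T_{\alpha_{0}}^{i}(\gamma_{0})$ and $\beta_{i},\epsilon_{i}$ as in the setup of Lemma~\ref{lem:lengthIDSet2}, the required identities hold identically on $\T(S)$ by that lemma. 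Distinct integer fractional Dehn twists of a curve about an essential curve yield distinct curves, so both families are pairwise curve-distinct, and Proposition~\ref{prop:mainProp}(1) (which applies because $X\notin V$) upgrades this to length-distinctness at $X$. The main obstacle is precisely this direction: producing the adjacent pair of pants $Q$ (using essentiality of $\eta_{1}$ together with the exclusion of $S$ being a pants or one-holed/punctured torus) and verifying that the fractional twists $\alpha_{i},\gamma_{i},\beta_{i},\epsilon_{i}$ are in fact single curves rather than multicurves, which in turn follows from the shirt configuration $i(\alpha_{0},\gamma_{0})=2$, $i_{alg}(\alpha_{0},\gamma_{0})=0$.
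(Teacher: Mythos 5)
Your proof is correct and matches the paper's own (informal) argument in the paragraph preceding the lemma: $(2)\Rightarrow(1)$ follows from Proposition~\ref{prop:mainProp}(3), and $(1)\Rightarrow(2)$ is obtained by forming a pair of pants from an arc joining $\eta_1$ and $\eta_2$, attaching a second pair of pants along the essential one, and instantiating the curves of Lemma~\ref{lem:lengthIDSet2} on the resulting immersed shirt, with Proposition~\ref{prop:mainProp}(1) supplying length-distinctness at $X\notin V$. Your parenthetical ``at least one must be essential'' is apt, but note it is really a tacit hypothesis shared with the paper rather than a genuine WLOG for $(1)\Rightarrow(2)$ --- if both $\eta_i$ are boundary curves, $(1)$ holds while $(2)$ cannot --- and the lemma is only ever invoked when at least one curve is essential.
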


This lemma describes how to detect the disjointness of two given curves on a surface by investigating the length identities of Lemma \ref{lem:lengthIDSet2}.
Note that in the above lemma, the implication $(1) \Rightarrow (2)$ does not require $X$ to be outside of $V$. Hence, from the above observation, we also have the following:

\begin{lem}\label{lem:disjointWitness2}
	Let $S$ be a surface that is not a generalized pair of pants or a one-holed/punctured torus, $X \in \T(S)$, and $\eta_{1}$, $\eta_{2}$ be essential or boundary curves on $X$. Suppose  that $\eta_1, \eta_2$ are disjoint. Then, there exists essential or boundary curves $\{\alpha_{i},\beta_{i}, \gamma_{i}, \epsilon_{i}\}_{i\in \Z}$ on $X$ and curves $\{\delta_{1}, \ldots, \delta_{4}\}$ such that:
	\begin{itemize}
		\item each of  $\{\alpha_{i}\}_{i \in \Z} \cup \{\gamma_{0}\}$, $\{\gamma_{i}\}_{i \in \Z} \cup \{\alpha_{0}\}$ contains no curves with the same length;
		\item $\{\eta_{1}, \eta_{2}\} =
		\{\gamma_{0}, \delta_{1}\}$ or $\{\eta_{1}, \eta_{2}\} = 
		\{\delta_1, \delta_2\}$; and
		\item the identities of Lemma \ref{lem:lengthIDSet2} are satisfied.
		\end{itemize}
	\end{lem}

\section{Surfaces with low complexity} \label{sec:lowcomplexity}

Before proving the main theorem in the general setting, we first deal with surfaces of low complexity. The case of generalized pair of pants is dealt with using the following lemma.

\begin{lem}\label{lem:pantsIsometry}
Let $X$, $X'$ be generalized pairs of pants with peripheral curves $\{\delta_{i}\}_{i=1, 2, 3}$ and $\{\delta_{i}'\}_{i=1. 2, 3}$, respectively.
\begin{enumerate}
\item If $l_{X}(\delta_{i}) = l_{X'}(\delta_{i}')$ for each $i$, then $X$ and $X'$ are isometric.
\item Suppose in addition that $l_{X}(\delta_{1}) \neq l_{X}(\delta_{2})$, and let $\eta$ ($\eta'$, resp.) be the unique simple geodesic segment perpendicular to $\delta_{1}$ and $\delta_{2}$ ($\delta_{1}'$ and $\delta_{2}'$, resp.). Then there exist exactly two isometries $\phi_{1}, \phi_{2} : X \rightarrow X'$ sending each $\delta_{i}$ to $\delta_{i}'$ and $\eta$ to $\eta'$. Here $\phi_{2}^{-1} \circ \phi_{1}$ becomes an orientation-reversing automorphism of $X$ fixing all boundaries setwise.
\end{enumerate}
\end{lem}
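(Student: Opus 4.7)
The plan is to use the classical decomposition of a generalised pair of pants into two isometric right-angled hexagons (with ideal vertices replacing vertices at cusps, if any) via the three \emph{seams}, the unique geodesic arcs perpendicular to each pair of boundary components. The uniqueness of a right-angled hexagon given three alternating side lengths (via the hyperbolic hexagon cosine rule) then gives both existence and the count of isometries.

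For part (1), I would first observe that in a hyperbolic pair of pants, between any two boundary geodesics (or cusps) there is a unique simple geodesic arc $\eta_{ij}$ perpendicular to both (with the convention that at a cusp it terminates orthogonally to each horocycle). Cutting $X$ along $\eta_{12}, \eta_{23}, \eta_{13}$ decomposes $X$ into two right-angled hexagons $H_1, H_2$, each with three alternating sides of lengths $\tfrac{1}{2}l_X(\delta_1)$, $\tfrac{1}{2}l_X(\delta_2)$, $\tfrac{1}{2}l_X(\delta_3)$ (with an ideal vertex replacing the corresponding side when $l_X(\delta_i)=0$). Performing the same cut on $X'$ and applying the hexagon cosine rule, the three alternating sides determine the remaining alternating sides (the seam lengths), and hence the hexagon up to a unique isometry matching the labelled boundary pieces. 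Gluing the two hexagon isometries along the shared seams yields an isometry $X \to X'$ sending each $\delta_i$ to $\delta_i'$.

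For part (2), I would establish uniqueness as follows. Any isometry $\phi : X \to X'$ sending $\delta_i$ to $\delta_i'$ must send common perpendiculars to common perpendiculars, so it must send each seam $\eta_{ij}$ to $\eta_{ij}'$; in particular, it sends $\eta = \eta_{12}$ to $\eta' = \eta_{12}'$. Since the condition $l_X(\delta_1) \neq l_X(\delta_2)$ prevents any ambiguity in the identification of $\delta_1$ versus $\delta_2$, $\phi$ determines a labelling-preserving correspondence of the seam decompositions and therefore sends $\{H_1, H_2\}$ to $\{H_1', H_2'\}$. By the hexagon rigidity used in part (1), each of the two possible pairings $(H_1 \mapsto H_1', H_2 \mapsto H_2')$ and $(H_1 \mapsto H_2', H_2 \mapsto H_1')$ uniquely determines an isometry; these give the two isometries $\phi_1$ and $\phi_2$. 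Finally, $\phi_2^{-1} \circ \phi_1 : X \to X$ is the isometry that fixes the seam system pointwise and swaps $H_1$ with $H_2$; geometrically it is the reflection of $X$ across $\eta_{12} \cup \eta_{23} \cup \eta_{13}$, which is orientation-reversing and fixes each $\delta_i$ setwise (reversing its orientation).

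The main obstacle, rather than any substantive geometric difficulty, is handling the degenerate hexagons that appear when some $l_X(\delta_i) = 0$: one must verify the hexagon cosine rule, the uniqueness of common perpendiculars, and the rigidity of the hexagon, all in the limiting case where a side collapses to an ideal vertex. These are standard (see, e.g., the treatment of generalised right-angled hexagons in the Fenchel--Nielsen theory of surfaces with cusps), so the proof reduces cleanly to the hexagon case.
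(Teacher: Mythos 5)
The paper does not supply a proof of Lemma~\ref{lem:pantsIsometry} at all --- it is stated and then used as a standard fact --- so there is no in-text argument to compare against. Your proof, cutting the pair of pants along the three seams into two right-angled (possibly degenerate) hexagons and invoking the hexagon cosine rule together with label-preserving rigidity of such hexagons, is the standard argument and is correct. In particular, your verification that each of the two pairings $(H_1\mapsto H_1', H_2\mapsto H_2')$ and $(H_1\mapsto H_2', H_2\mapsto H_1')$ really does glue to an isometry of pairs of pants --- because the two hexagon isometries agree on each shared seam, both matching its labelled endpoints --- is the right way to see that both isometries exist, and label-preserving rigidity of a convex polygon shows there cannot be more. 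One remark: the hypothesis $l_X(\delta_1)\neq l_X(\delta_2)$ plays no real role in your argument, and it is not actually needed for part~(2) as stated --- once the isometry is required to carry each labelled $\delta_i$ to $\delta_i'$, it automatically carries the unique common perpendicular $\eta=\eta_{12}$ to $\eta'=\eta_{12}'$, so the only ambiguity is the hexagon pairing. You correctly do not try to force the hypothesis into the uniqueness count; it is most plausibly there for how the lemma is invoked later, where cuffs are identified by their lengths.
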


We now begin our discussion on one-holed/punctured tori and generalized shirts.

\begin{proposition}\label{prop:main1Torus}
Theorem \ref{thm:aeLengthSpectra} holds when $S$ is a one-holed/punctured torus.
\end{proposition}

\begin{proof}
Note that the assumption $\mathcal{L}(X) = \mathcal{L}(X')$ forces $\mathcal{L}(X')$ to be simple since $\mathcal{L}(X)$ is assumed to be so. That is, every element of $\mathcal{L}(X')$ has multiplicity one.

Let $\gamma'$ be an essential curve on $X'$. There exists another essential curve $\alpha'$ on $X'$ intersecting with $\gamma'$ once. We then set essential curves $\{\alpha'_{i}, \beta'_{i},\gamma'_{i}, \epsilon'_{i}\}_{i \in \Z}$ on $X'$ as the curves involved in Lemma \ref{lem:lengthIDSet1}: \begin{enumerate}
\item $\alpha'_{i} = T_{\gamma'}^{i}(\alpha')$, $\gamma'_{i} = T_{\alpha'}^{i}(\gamma')$, and
\item $\{T_{\alpha'_{i}}^{\pm 1} (\alpha'_{i-1})\} = \{\gamma', \beta'_{i}\}$ and $\{T_{\gamma'_{i}}^{\pm 1} (\gamma'_{i-1})\} = \{\alpha', \epsilon'_{i}\}$.
\end{enumerate}
Since $\{\alpha'_{i}, \beta'_{i}, \gamma_{i}', \epsilon_{i}'\}$ are essential, their lengths lie in $\mathcal{L}(X')$. Note also that $\{\alpha'_{i}\}_{i \in \Z}\cup \{\gamma'\}$ and $\{\beta'_{i}\}_{i \in \Z} \cup \{\alpha'\}$ are collections of distinct curves. Their lengths are distinct in $\mathcal{L}(X')$.

From the equality $\mathcal{L}(X) = \mathcal{L}(X')$ between simple length spectra, we can take essential or boundary curves $\{\alpha_{i}, \beta_{i}, \gamma_{i}, \epsilon_{i}\}_{i \in \Z}$ on $X$ such that \[
l_{X}(\alpha_{i}) = l_{X'}(\alpha'_{i}),\,\,l_{X}(\beta_{i}) = l_{X'}(\beta'_{i}),\,\,l_{X}(\gamma_{i}) = l_{X'}(\gamma'_{i}),\,\,l_{X}(\epsilon_{i}) = l_{X'}(\epsilon'_{i}).
\]
Note that $\{\alpha_{i}\}_{i \in \Z} \cup \{\gamma_{0}\}$ and $\{\gamma_{i}\}_{i \in \Z} \cup \{\alpha_{0}\}$ are comprised of distinct lengths. We then apply Proposition \ref{prop:mainProp} to deduce that $i(\alpha_{0}, \gamma_{0}) = 1$ and $\{\alpha_{-1}, \alpha_{1}\} = \{T_{\gamma_{0}}^{\pm 1}(\alpha_{0})\}$. Thus, $\alpha_{0} \cup \gamma_{0}$ serves as a spine of $X$. 
Moreover, $l_{X}(\alpha_{0})$, $l_{X}(\gamma_{0})$, $l_{X}(T_{\gamma_{0}}^{\pm 1}(\alpha_{0}))$ determine a unique isometry class of $X$ in the following way. First, three consecutive `twists' of $\alpha_{0}$ by $\gamma_{0}$ read the (unsigned) twist parameter at $\gamma_{0}$, or equivalently, the (unsigned) angle between the geodesics $\alpha_{0}$ and $\gamma_{0}$. Using $l_{X}(\alpha_{0})$, $l_{X}(\gamma_{0})$ and this angle, one can compute the length of (geodesic representative of) $\alpha_{0} \gamma_{0} \alpha_{0}^{-1} \gamma_{0}^{-1}$, the boundary curve of $X$. As a result, we obtain three boundary lengths of the pair of pants for $X$, and the twist for the gluing along $\gamma_{0}$. Since this information agrees with that of $X'$, we conclude that $X$ and $X'$ are isometric.

Let $\phi$ be the isometry from $X$ to $X'$. Then $f_2^{-1} \circ \phi \circ f_1$ becomes a (possibly orientation-reversing) homeomorphism on $S$ that sends $[f_2, X']$ to $[f_1, X]$ by pre-composition.
\end{proof}

We now move on to the case of generalized shirts. Note that a generalized shirt might not necessarily be embedded into another surface but only immersed. For instance, a generalized shirt is immersed in a closed surface of genus 2 and cannot be embedded. As such, we need a variant in the following format. 

\begin{proposition}\label{prop:mainShirt}
Suppose that $[f_1, Y] \in \T(S) \setminus V$ and $[f_2, Y'] \in \T(S)$ have the same simple length spectrum. Let $\psi' : X' \rightarrow Y'$ be an immersed subsurface of $Y'$ where $X'$ is a generalized shirt. Then there exists an immersed subsurface $\psi : X \rightarrow Y$ of $Y$ such that $X$ and $X'$ are isometric. In particular, Theorem \ref{thm:aeLengthSpectra} holds when $S$ is a generalized shirt.
\end{proposition}

\begin{proof}
Note that since each of the values in $\mathcal{L}(Y')$ have multiplicity 1, so do each of the values in  $\mathcal{L}(X')$.

Let $\gamma'$ be an essential curve on $X'$. We take another essential curve $\alpha'$ on $X'$ such that $i(\alpha', \gamma') = 2$ and $i_{alg}(\alpha', \gamma') = 0$. Then $\alpha' \cup \gamma'$ serves as a spine of $X'$, bounded by peripheral curves $\{\delta_{i}'\}_{i=1}^{4}$ labelled as in Figure \ref{fig:spineof4holed}. Let $P'$ ($Q'$, resp.) be the generalized pair of pants of $X'$ bounded by $\delta_{1}'$, $\delta_{2}'$ and $\gamma'$ ($\delta_{3}'$, $\delta_{4}'$ and $\gamma'$, resp.)

We then draw a simple geodesic segment $\kappa'_{P'}$ on $P'$, perpendicular to $\delta_1'$ and $\gamma'$. We given the orientation $\kappa'_{P'}$ so that it is from $\delta_1'$ to $\gamma'$. We use the inverse notation $\kappa'^{-1}_{P'}$ for the same segment with reversed orientation, and same for other oriented segments. Similarly we draw a simple segment $\kappa'_{Q'}$ on $Q'$ from $\delta'_{3}$ to $\gamma'$. Then there exists a unique segment $\xi'$ immersed along $\gamma'$ such that $\alpha'$ equals the concatenation $(\kappa_{P'}'^{-1} \delta_{1}' \kappa_{P'}' )\xi'( \kappa_{Q'}'^{-1} \delta'_{3} \kappa_{Q'}')\xi'^{-1}$. 

We now set essential curves $\{\alpha'_{i}, \beta'_{i},\gamma'_{i},\epsilon'_{i}\}_{i \in \Z}$ on $X'$ as the curves involved in Lemma \ref{lem:lengthIDSet2}: namely,  We label the peripheral curves of $X'$ by $\{\delta_{1}'$, $\ldots$, $\delta_{4}'\}$ in such a way that $\gamma'$ separates $\{\delta_{1}', \delta_{2}'\}$ from $\{\delta_{3}', \delta_{4}'\}$ and $\alpha'$ separates $\{\delta_{1}', \delta_{3}'\}$ from $\{\delta_{2}', \delta_{4}'\}$.

The corresponding essential or boundary curves $\{\alpha_{i}, \beta_{i}, \gamma_{i}, \epsilon_{i}\}_{i \in \Z}$ on $Y$ are taken by comparing the length spectra of $Y$ and $X'$. In other words, we require \[
l_{Y}(\alpha_{i}) = l_{X'}(\alpha_{i}'), \,\, l_{Y}(\beta_{i}) = l_{X'}(\beta'_{i}),\,\,l_{Y}(\gamma_{i}) = l_{X'}(\gamma'_{i}),\,\,l_{Y}(\epsilon_{i}) = l_{X'}(\epsilon'_{i}).
\] 
Note that each of the collections $\{\alpha_{i}\}_{i \in \Z} \cup \{\gamma_{0}\}$ and $\{\gamma_{i}\}_{i \in \Z} \cup \{\alpha_{0}\}$ is comprised of distinct lengths. We also take $\delta_{i}$'s appropriately: $\delta_{i}$ is taken to be  any puncture if the corresponding $\delta_{i}'$ is; otherwise $\delta_{i}$ is the essential or boundary curve on $Y$ having the same length with $\delta'_{i}$.

We then apply Proposition \ref{prop:mainProp} to deduce that $i(\alpha_{0}, \gamma_{0}) = 2$, $i_{alg}(\alpha_{0}, \gamma_{0}) = 0$ and $\{\alpha_{-1}, \alpha_{1}\} = \{T_{\gamma_{0}}^{\pm 1}(\alpha_{0})\}$. Thus, $\alpha_{0} \cup \gamma_{0}$ serves as a spine of a generalized shirt $X$, immersed in $Y$. Further, by the choice of $\delta_{i}$, we may assume that $\delta_{i}$ are indeed the peripheral curves of $X$, $\alpha_{0}$ separates $\{\delta_{1}, \delta_{3}\}$ from $\{\delta_{2}, \delta_{4}\}$, and $\gamma_{0}$ separates $\{\delta_{1}, \delta_{2}\}$ from $\{\delta_{3}, \delta_{4}\}$, as in $X'$ and in Figure \ref{fig:spineof4holed}. We then define generalized pairs of pants $P$, $Q$ and segments $\kappa_{P}$, $\kappa_{Q}$ and $\xi$ on $X$ analogously to $X'$.

From now on, we orient $X$, $X'$ such that $\alpha_{1}$ ($\alpha_{1}'$, resp.) becomes the positive twist $T_{\gamma}^{1}(\alpha)$ ($T_{\gamma'}^{1}(\alpha')$, resp.). By Lemma \ref{lem:pantsIsometry}, there exist unique orientation-preserving isometries $\phi_{P} : P \rightarrow P'$ and $\phi_{Q} : Q \rightarrow Q'$ sending $\kappa_{P}$ to $\kappa_{P'}$ and $\kappa_{Q}$ to $\kappa_{Q'}$. It remains to show that $\phi_{P}$ and $\phi_{Q}$ agree on $\gamma$.

In short, the twist at $\gamma$ ($\gamma'$, resp.) is read off using $l_{X}(\alpha)$ and $l_{X}(T_{\gamma}^{\pm 1}(\alpha))$ ($l_{X'}(\alpha')$ and $l_{X'}(T_{\gamma'}^{\pm 1} (\alpha'))$, resp.). Indeed, the signed length of $\xi$ is determined by $(l_{X}(T_{\gamma}^{1}(\alpha)), l_{X}(T_{\gamma}^{-1}(\alpha))) = (l_{X}(\alpha_{1}), l_{X}(\alpha_{-1}))$ and the boundary lengths $(l_{X}(\delta_{1}), \ldots, l_{X}(\delta_{4}))$. (See Proposition 3.3.11 and 3.3.12 of~\cite{buser1992compact} for an explicit calculation.) Similarly, the signed length of $\xi'$ is determined by lengths $(l_{X'}(\alpha_{1}'), l_{X'}(\alpha_{-1}'), l_{X'}(\delta_{1}'), \ldots, l_{X'}(\delta_{4}'))$. Since the lengths involved are identical, we conclude that the signed lengths of $\xi$ and $\xi'$ are also the same, and hence $\phi_{P}$ and $\phi_{Q}$ agree on $\gamma$. Thus $X$ is isometric to $X'$.
\end{proof}

As in Proposition \ref{prop:main1Torus}, the twist $\tau_{X'}(\gamma')$ of $X'$ at $\gamma'$ cannot be a multiple of $\pi$. This is because the lengths of $\{T_{\gamma'}^{i}(\alpha')\}_{i=-2, 0, 2}$ differ. If moreover, say, $\delta_{1}'$ and $\delta_{2}'$ have same lengths (e.g. they are punctures), then multiples of $\pi/2$ are also forbidden for $\tau_{X'}(\gamma')$. In any case, there exists only one isometry between $X$ and $X'$.

\begin{proposition}\label{prop:main2Torus}
Theorem \ref{thm:aeLengthSpectra} holds when $S$ is of type $S_{1, p, b}$ for $p+b=2$.
\end{proposition}

\begin{proof}
We first take a curve $\gamma'$ separating $X'$ into a one-holed torus and a generalized pair of pants. Inside that one-holed torus, there exists a curve $\delta'$ longer than $\gamma'$. Indeed, we may pick any pair of once-intersecting curves inside the given one-holed torus and twist one along the other sufficiently many times. Now if we take a curve $\alpha'$ on $X'$ such that $i(\delta', \alpha') = 0$, $i(\gamma', \alpha') = 2$, and $i_{alg}(\gamma', \alpha' ) =0$, then $\alpha' \cup \gamma'$ becomes a spine of an immersed subsurface $\psi_{0}' : X'_{0} \rightarrow X'$, where $X'_{0}$ is a generalized shirt. Let us label the boundaries of $X'_{0}$ by $\delta_{i}'$ as in Lemma \ref{lem:lengthIDSet2} so that $\delta_{1}' = \delta_{2}' = \delta'$ are the same when seen as curves in $X'$.

Let $\kappa_{i}'$ be the simple geodesic segment perpendicular to $\gamma'$ and $\delta_{i}'$ for $i=1, \ldots, 4$, oriented toward $\gamma'$. Further, let $\xi_{i}'$ be the arc immersed along $\gamma'$ such that \[
\alpha' =  (\kappa_{1}'^{-1} \delta_{1}' \kappa_{1}' )\xi_{1}' ( \kappa_{3}'^{-1} \delta_{3}' \kappa_{3}')\xi_{1}'^{-1} =(\kappa_{2}'^{-1} \delta_{2}' \kappa_{2}' )\xi_{2}' ( \kappa_{4}'^{-1} \delta_{4}' \kappa_{4}')\xi_{2}'^{-1}.
\]
Finally, we set $\zeta_{i}'$ to be a shortest simple geodesic segment perpendicular to $\delta_{i}'$ and $\delta_{i+2}'$ for $i=1, 2$. See Figure \ref{fig:twoholedtorus}.

\begin{figure}[ht]
\begin{tikzpicture}[scale=0.8]
%\draw  (4.8, 1.5) .. controls (4.1, 1.5) and (4, 1.2) .. (3.3, 1.2) .. controls (2.6, 1.2) and (2.5, 1.5) .. (1.8, 1.5) .. controls (0.9, 1.5) and (0, 0.9) .. (0, 0) .. controls (0, -0.9) and (0.9, -1.5) .. (1.8, -1.5) .. controls (2.5, -1.5) and (2.6, -1.2) .. (3.3, -1.2) .. controls (4, -1.2) and (4.1, -1.5) .. (4.8, -1.5);

\draw  (4.8, 1.5) .. controls (4.1, 1.5) and (4, 1.2) .. (3.3, 1.2) .. controls (2.6, 1.2) and (2.5, 1.5) .. (1.8, 1.5) .. controls (0.9, 1.5) and (-3, 0.9) .. (-3, 0) .. controls (-3, -0.9) and (0.9, -1.5) .. (1.8, -1.5) .. controls (2.5, -1.5) and (2.6, -1.2) .. (3.3, -1.2) .. controls (4, -1.2) and (4.1, -1.5) .. (4.8, -1.5);

\draw (1.8, 0) circle (0.6 and 0.3);
\draw (4.8, 0.3) arc (90:270:0.6 and 0.3);

\draw[thick] (1.2, 0) arc (0:-180:2.1 and 0.17);
\draw[thick, dashed] (1.2, 0) arc (0:180:2.1 and 0.17);

\draw[thick]  (4.2, 0) arc (0:-180:0.9 and 0.17);
\draw[thick, dashed] (4.2, 0) arc (0:180:0.9 and 0.17);

\draw[thick]  (4.8, 0.3) arc (-90:90:0.17 and 0.6);
\draw[thick, dashed] (4.8, 0.3) arc (270:90:0.17 and 0.6);

\draw[thick]  (4.8, -0.3) arc (90:-90:0.17 and 0.6);
\draw[thick, dashed] (4.8, -0.3) arc (90:270:0.17 and 0.6);

\draw[thick] (3.3, 1.2) arc (90:-90:0.17 and 1.2);
\draw[thick, dashed] (3.3, 1.2) arc (90:270:0.17 and 1.2);

\draw[thick] (1.8, 0) circle (1 and 0.5);

\draw (-0.7, -0.1) node[below] {$\delta_{1}' = \delta_{2}'$};
\draw (3.3, 1.5) node {$\gamma'$};
\draw (4.8, 1.8) node {$\delta_{3}'$};
\draw (4.8, -1.8) node {$\delta_{4}'$};
\fill[black!0] (1.3, 0.05) -- (1.65, 0.05) -- (1.65, 0.32) -- (1.3, 0.32);
\draw (1.5, 0.22) node {$\eta'$};
\draw (1.75, -1.1) node {$\sigma'$};
\draw (4.55, 0.05) node {$\alpha'$};

\draw[thick] (-3, -2.15) -- (-3, -2.3) -- (3.25, -2.3) -- (3.25, -2.15);
\draw[thick] (3.35, -2.15) -- (3.35, -2.3) -- (4.75, -2.3) -- (4.75, -2.15);

\draw (1.625, -2.6) node {$X_{1}'$};

\begin{scope}[shift={(-3, 0)}]
\draw[dashed]  (7.8 - 0.6*0.707106781186548, 0.3*0.707106781186548) .. controls  (7.8 - 0.6*0.707106781186548 + 0.6, 0.3*0.707106781186548 +0.3) and  (6.3, 0.9) .. (4.8, 0.9) .. controls (3, 0.9) and (3, -0.7) .. (4.8, -0.7) ..controls (6.3, -0.7) and (7.8 - 0.6*0.707106781186548 - 0.6, -0.3*0.707106781186548 + 0.7*0.3) .. (7.8 - 0.6*0.707106781186548, -0.3*0.707106781186548);
\draw (7.8 - 0.6*0.707106781186548, 0.3*0.707106781186548).. controls (7.8 - 0.6*0.707106781186548 - 0.6, +0.3*0.707106781186548 - 0.7*0.3) and (6.3, 0.7) .. (4.8, 0.7) .. controls (3, 0.7) and (3, -0.9) .. (4.8, -0.9) ..controls (6.3, -0.9) and (7.8 - 0.6*0.707106781186548 + 0.6, -0.3*0.707106781186548 - 0.3) .. (7.8 - 0.6*0.707106781186548, -0.3*0.707106781186548);
\end{scope}

\begin{scope}[shift={(8,  0)}]
\draw (0, 0.5) arc (270:330:1.4);
\draw (0, 0.5) arc (270:210:1.4);
\draw (0, -0.5) arc (90:30:1.4);
\draw (0, -0.5) arc (90:150:1.4);
\draw (2.0784609691, 0.7) arc (150:210:1.4);
\draw (-2.0784609691, 0.7) arc (30:-30:1.4);

\draw[shift={(2.0784609691, 0.7)}, rotate=60] (0, 0) arc (-90:90:0.2 and 0.5);
\draw[dashed, shift={(2.0784609691, 0.7)}, rotate=60] (0, 0) arc (270:90:0.2 and 0.5);
\draw[shift={(2.0784609691, -0.7)}, rotate=-60] (0, 0) arc (90:-90:0.2 and 0.5);
\draw[dashed, shift={(2.0784609691, -0.7)}, rotate=-60] (0, 0) arc (90:270:0.2 and 0.5);

\draw[shift={(-2.0784609691, 0.7)}, rotate=120] (0, 0) arc (90:-90:0.2 and 0.5);
\draw[dashed, shift={(-2.0784609691, 0.7)}, rotate=120] (0, 0) arc (90:270:0.2 and 0.5);
\draw[shift={(-2.0784609691, -0.7)}, rotate=-120] (0, 0) arc (-90:90:0.2 and 0.5);
\draw[dashed, shift={(-2.0784609691, -0.7)}, rotate=-120] (0, 0) arc (270:90:0.2 and 0.5);

%\draw[thick] (0, 0.4) arc (-90:-25.7:1.5);

\draw[thick] (0, 0.15) arc (-90:-22:1.7);

\draw (0, 0.35) arc (-90:-145:1.55);

\fill[black!0] (0, 0) circle (0.2 and 0.5);

\draw (0, -0.35) arc (90:95:1.55) arc  (95:34.5:1.55);
%\draw[thick] (0, -0.42) arc (90:85.5:1.48) arc (85.5:154:1.48);
\draw[thick] (0, -0.15) arc (90:84:1.6) arc (85.5:158:1.7);

\draw[thick, red] (0, -0.25) arc (90:156.7:1.6) arc (157:23.3:1.6);
\draw[thick, blue] (0, 0.25) arc (-89:-156.7:1.6) arc (-157:-23.3:1.6);

\draw (0, -0.5) arc (-90:90:0.2 and 0.5);
\draw[dashed] (0, -0.5) arc (270:90: 0.2 and 0.5);

\draw (1.887, 0) arc (200:172:1.4);
\draw[thick] (1.887, 0) arc (160:202.5:1.4);

\begin{scope}[rotate=180]
	\draw (1.887, 0) arc (200:172:1.4);
\draw[thick] (1.887, 0) arc (160:202.5:1.4);
	
\end{scope}

%\draw[thick] (-1.887+0.15, 0) arc (0:35.2:1.4+0.15);
%\draw[thick] (-1.887+0.15, 0) arc (0:-35.2:1.4+0.15);

\draw  (1.15, 0.3) node {$\kappa_{3}'$};
\draw (0.67, -0.95) node {$\kappa_{4}'$};

\draw (-0.65, 0.95) node {$\kappa_{1}'$};
\draw  (-1.15, -0.25)  node {$\kappa_{2}'$};
\draw (-2, 1.3) node {$\delta_{1}'$};
\draw (-2, -1.3) node {$\delta_{2}'$};
\draw (2, 1.3) node {$\delta_{3}'$};
\draw (2, -1.3) node {$\delta_{4}'$};

\draw[blue] (0.66, 0.95) node {$\zeta_{1}'$};
\draw[red] (-0.67, -0.95) node {$\zeta_{2}'$};
\end{scope}

\end{tikzpicture}

\caption{Curves on the surface $S_{1, p, b}$ with $p+b=2$} \label{fig:twoholedtorus}
\end{figure}
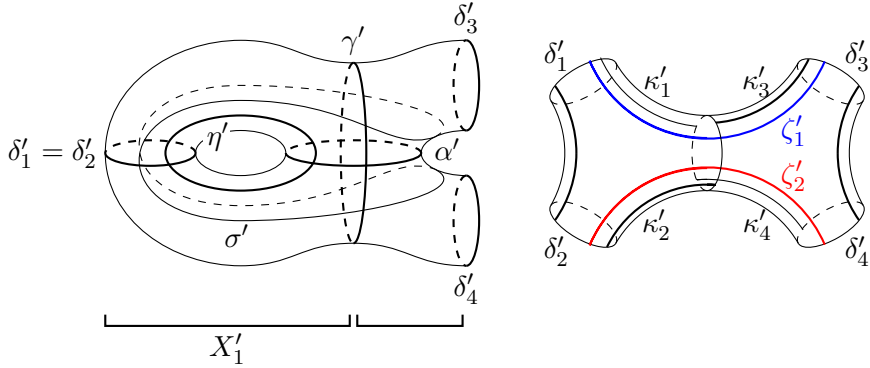

We now set curves $\{\alpha'_{i}, \beta'_{i},\gamma'_{i}, \epsilon'_{i}\}_{i \in \Z}$ on $X'$ as in Lemma \ref{lem:lengthIDSet2}. The corresponding essential or boundary curves $\{\alpha_{i}, \beta_{i}, \gamma_{i}, \epsilon_{i}\}_{i \in \Z}$ on $X$ are taken by comparing the lengths, i.e., requiring \[
l_{X}(\alpha_{i}) = l_{X'}(\alpha_{i}'), \,\, l_{X}(\beta_{i}) = l_{X'}(\beta'_{i}),\,\,l_{X}(\gamma_{i}) = l_{X'}(\gamma'_{i}),\,\,l_{X}(\epsilon_{i}) = l_{X'}(\epsilon'_{i}).
\] 
Note that $\{\alpha_{i}\}_{i \in \Z} \cup \{\gamma_{0}\}$ and $\{\gamma_{i}\}_{i \in \Z} \cup \{\alpha_{0}\}$ are comprised of distinct lengths. We also take $\delta_{i}$ appropriately: $\delta_{i}$ is taken as any puncture if the corresponding $\delta_{i}'$ is; otherwise $\delta_{i}$ is the essential or boundary curve on $X$ having the same length as $\delta'_{i}$. Here $\delta_{1} = \delta_{2}$ since $\delta_{1}'$ and $\delta_{2}'$ are both essential curves and have the same length. From now on, we fix the orientation of $X$ so that $\alpha_{1} = T_{\gamma_{0}}^{1}(\alpha_{0})$ and similarly for $X'$.

We first cut $X$ along $\delta_{1}$ to obtain an immersed subsurface $\psi_{0} : X_{0} \rightarrow X$. Proposition \ref{prop:mainProp}(3) tells us that $X_{0}$ is a generalized shirt. Thus, we can also define $\kappa_{i}$, $\xi_{i}$, $\zeta_{i}$ on $X_{0}$ analogously. Now, Proposition \ref{prop:mainShirt} gives an isometry $\phi_{0} : X_{0} \rightarrow X'_{0}$ sending each $\delta_{i}$ to $\delta_{i}'$. In particular, $\phi_{0}$ becomes orientation-preserving due to our choice of orientations. Moreover, $\kappa_{i}$, $\xi_{i}$, $\zeta_{i}$ are sent to the corresponding $\kappa_{i}'$, $\xi_{i}'$, $\zeta_{i}'$ with orientations preserved.

We further take $\eta'$, $\sigma'$ on $X'$ such that $i(\delta', \eta') = i(\eta', \alpha') = 1$, $i(\eta', \gamma') = 0$, and $i(\alpha', \sigma') = 0$, $i(\delta', \sigma') = 2$ and $i_{alg}(\delta', \sigma') = 0$. Then $\delta' \cup \eta'$ ($\delta' \cup \sigma'$, resp.) becomes a spine of an immersed subsurface $\psi_{1}' : X'_{1} \rightarrow X'$ ($\psi_{2}' : X'_{2} \rightarrow X'$, resp.) where $X_{1}'$ ($X'_{2}$, resp.) is a one-holed torus (generalized shirt, resp.). See Figure \ref{fig:twoholedtorus}.

Similarly, one can copy $\eta'$, $\sigma'$ (and other necessary curves) to $X$ using the length spectra. Then $X$ cut along $\gamma$ becomes an immersed subsurface $\psi_{1} : X_{1} \rightarrow X$ where $X_{1}$ is a one-holed torus, and $X$ cut along $\alpha$ becomes an immersed subsurface $\psi_{2} : X_{2} \rightarrow X$ where $X_{2}$ is a generalized shirt, and these are immersed along boundaries. Furthermore, Proposition \ref{prop:main1Torus} gives an isometry $\phi_{1} : X_{1} \rightarrow X'_{1}$, sending $\kappa_{1}$ to $\kappa_{1}'$ and $\kappa_{2}$ to $\kappa_{2}'$. Proposition \ref{prop:mainShirt} also gives an isometry $\phi_{2} : X_{2} \rightarrow X'_{2}$, sending $\zeta_{1}$ to $\zeta_{1}'$ and $\zeta_{2}$ to $\zeta_{2}'$.

At this moment, $\phi_{1}$ may or may not agree with $\phi_{0}$ on $X_{0} \cap X_{1}$, depending on whether $\phi_{1}$ is orientation-preserving or not. Once $\phi_{1}$ is shown to be orientation-preserving, the gluing of $\phi_{0}$ and $\phi_{1}$ becomes an isometry between $X$ and $X'$, completing the proof. Suppose to the contrary that $\phi_{1}$ is orientation-reversing. For clearer explanation, we from now on flip the orientation of $X'$ to make $\phi_1$ orientation-preserving, while we have that $\phi_{0}$ is orientation-reversing. We then show that the (unsigned) distance between $\zeta_{1}$ and $\zeta_{2}$ along $\delta_{1}$ differs to the analogous one on $X'$. This will then contradict the fact that $\phi_{2}$ is an isometry, which must preserve the unsigned twist of $X_{2}$ at $\delta_{1}$.

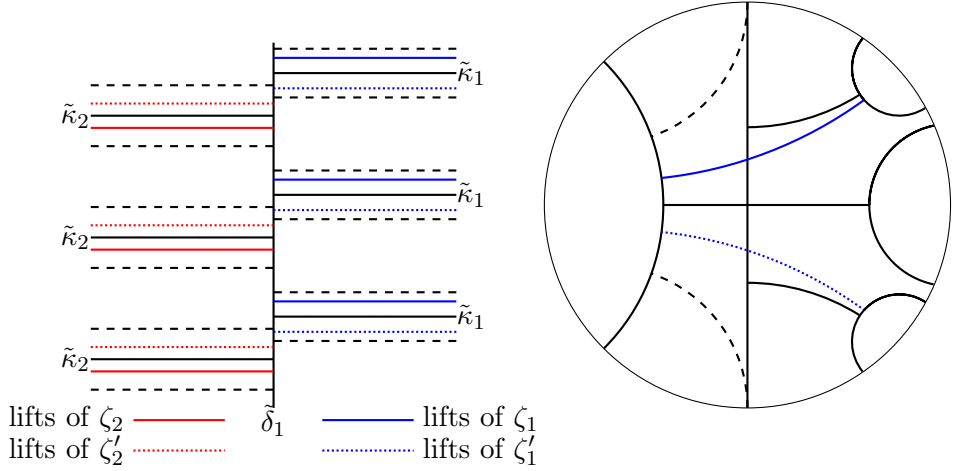
\begin{figure}[ht]
\begin{tikzpicture}[scale=0.8]
\begin{scope}[shift={(-1, -3)}, scale=0.9]
\draw[thick] (0, 0) -- (0, 6);
\foreach \i in {0, 1, 2}{
\draw[thick] (-3, 0.8 + 2*\i) -- (0, 0.8+ 2*\i);
\draw (-3.25, 0.8+2*\i) node {$\tilde{\kappa}_{2}$}; 
\draw[thick] (3, 1.5 + 2*\i) -- (0, 1.5+2*\i);
\draw (3.25, 1.5+2*\i) node {$\tilde{\kappa}_{1}$};
\draw[thick, dashed] (-3, 0.3 + 2*\i) -- (0, 0.3+ 2*\i);
\draw[thick, dashed] (-3, 1.3 + 2*\i) -- (0, 1.3+ 2*\i);
\draw[thick, dashed] (3, 1.1+2*\i) -- (0, 1.1+2*\i);
\draw[thick, dashed] (3, 1.9+2*\i) -- (0, 1.9+2*\i);
\draw[thick, red, densely dotted] (-3, 1+2*\i)  -- (0, 1+2*\i);
\draw[thick, red] (-3, 0.6+2*\i)  -- (0, 0.6+2*\i);
\draw[thick, blue] (3, 1.75 + 2*\i) -- (0, 1.75+2*\i);
\draw[thick, blue, densely dotted] (3, 1.25+2*\i) -- (0, 1.25+2*\i);
}
\draw (0, -0.2) node {$\tilde{\delta}_{1}$};

\draw[thick, red] (-2.3, -0.2) -- (-0.8, -0.2);
\draw[thick, red, densely dotted] (-2.3, -0.7) -- (-0.8, -0.7);

\draw (-3.4, -0.18) node  {lifts of $\zeta_{2}$};
\draw (-3.4, -0.7) node  {lifts of $\zeta_{2}'$};

\draw[thick, blue] (0.8, -0.2) -- (2.3, -0.2);
\draw[thick, blue, densely dotted] (0.8, -0.7) -- (2.3, -0.7);

\draw (3.4, -0.18) node {lifts of $\zeta_{1}$};
\draw (3.4, -0.7) node {lifts of $\zeta_{1}'$};

\end{scope}

\begin{scope}[shift={(6, 0)}]
\draw[thick] (0, -3) -- (0, 3);
\draw[thick] (-3*0.707106781186548,3*0.707106781186548) arc (45:-45:3);
\draw[thick] (1.8, 0) arc (180:102:1.2) arc (102:258:1.2);
\draw[thick, rotate=42] (2.32, 0) arc (180:99:0.7) arc (99:261:0.7);
\draw[thick] (-3*0.414213562373095, 0) -- (1.8, 0);
\draw[thick, rotate=-42] (2.32, 0) arc (180:99:0.7) arc (99:261:0.7);
\draw[thick] (0, 1.15) arc (-90:-57.8:3.1);
\draw[thick] (0, -1.15) arc (90:57.8:3.1);
\draw[blue, thick, rotate=6] (-1.23, 0.53) arc (-90:-60:6.2);
\draw[blue, densely dotted, thick, rotate=-6] (-1.23, -0.53) arc (90:60:6.2);
\draw[thick, dashed] (0, 3) arc (0:-71:2.1);
\draw[thick, dashed] (0, -3) arc (0:71:2.1);
\draw (0, 0) circle (3);
\end{scope}

\end{tikzpicture}
\caption{Description on the hyperbolic plane}
\label{fig:4holedHolonomy}
\end{figure}

We parametrize $\delta_{1}$ by arc length $\lambda$ so that $\kappa_{1}$ is located on the right side of $\delta_{1}$ while $\lambda$ increases, as in Figure \ref{fig:4holedHolonomy}. On $X$, we denote the signed displacement from  $\kappa_{1}$ ($\kappa_{2}$, resp.) to $\zeta_{1}$ ($\zeta_{2}$, resp.) along $\delta_{1}$ by $d_{1}$ ($d_{2}$, resp.). Further, we denote the signed displacement from $\kappa_{1}$ to $\kappa_{2}$ along $\delta_{1}$ by $D$. Here, the signed displacement is taken inside the range $[-l_{X}(\delta_{1})/2, l_{X}(\delta_{1})/2]$. Similarly, we define the displacements $d_{1}'$, $d_{2}'$ and $D'$  for curves and segments on $X'$. 

From the assumption, the twists at $\gamma$, $\gamma'$ are nonzero and opposite: this forces $d_{1}=-d_{1}' \neq 0$ and $d_{2} = -d_{2}' \neq 0$. Furthermore, $d_{1}$, $d_{2}$ have opposite signs since intersecting points of $\kappa_1$ and $\zeta_1$ with $\delta_1$ do not separate the intersection of $\kappa_2$ with $\delta_1$ from the one of $\zeta_2$ with $\delta_1$ on $\delta_1$. Finally, note that a lift $\tilde{\zeta}_{1}$ must cross the corresponding lift $\tilde{\gamma}$. This forces $\tilde{\zeta}_{1}$ to be sandwiched between $\tilde{\kappa}_{1}$ and a geodesic from $\tilde{\delta}_{1}$ `spiraling toward' $\gamma$ (the black dashed line in the right of Figure \ref{fig:4holedHolonomy}). If we denote the displacement between $\tilde \kappa_1$ and the spiraling geodesic by $L$, then we observe that $2L < l_{X}(\delta_{1})/2$, as depicted in Figure \ref{fig:hornedHexagon}.

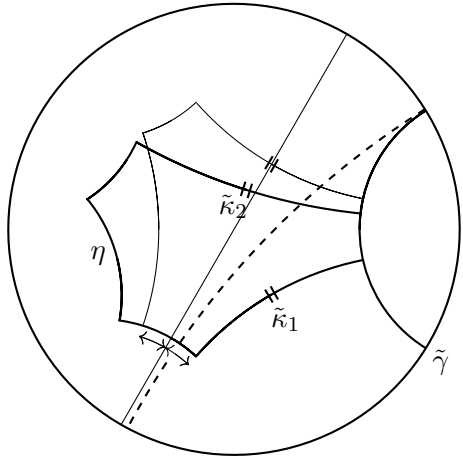
\begin{figure}[h]
\begin{tikzpicture}
\begin{scope}[scale=1]

\begin{scope}[rotate=30]
\draw[thick] (0, 0) circle (3);
\draw[red, thick, rotate=-60 ] (0, -1.67) arc (90:77:1.859610778443114) arc (77:112:1.85961077844311);
\draw[red] (-2.2, -0.5) node {$\tilde \delta_1$};
%\draw[thick] (0, -1) arc (90:127:4) arc (127:53:4);

\draw[<->, rotate=-60] (0, -1.82) arc (90:77:1.709610778443114);
\draw[<->, rotate=-60] (0, -1.82) arc (90:103:1.709610778443114);

\draw[thick, rotate=0] (0, -1) arc (90:108.5:4) arc (108.5:71.5:4);
\begin{scope}[shift={(0, -5)}]
\draw[thick,rotate=0.6] (0, 4.1) -- (0, 3.9);
\draw[thick, rotate=-0.6] (0, 4.1) -- (0, 3.9);
\end{scope}

\draw[ rotate=-180 ] (0, -1.67) arc (90:77:1.859610778443114) arc (77:103:1.85961077844311);
%\draw[thick, rotate=60 ] (0, -1.67) arc (90:77:1.859610778443114) arc (77:103:1.85961077844311);
\draw[thick, rotate=60 ] (0, -1.67) arc (90:32.1:1.859610778443114) arc (32.1:147.9:1.85961077844311);

\draw[rotate=120] (0, -1) arc (90:108.5:4) arc (108.5:71.5:4);
\begin{scope}[rotate=120, shift={(0, -5)}]
\draw[thick,rotate=0.6] (0, 4.1) -- (0, 3.9);
\draw[thick, rotate=-0.6] (0, 4.1) -- (0, 3.9);
\end{scope}

\draw[rotate=240] (0, -1) arc (90:108.5:4) arc (108.5:71.5:4);

\draw[thick, rotate=132.5] (0, -0.55) arc(90:78:7.631818181818182) arc (78:101.7:7.631818181818182);
\begin{scope}[rotate=132.5, shift={(0, -8.181818181818182)}]
\draw[thick,rotate=0.35] (0, 7.631818181818182 + 0.1) -- (0, 7.631818181818182 - 0.1);
\draw[thick, rotate=-0.35] (0, 7.631818181818182 + 0.1) -- (0, 7.631818181818182 - 0.1);
\end{scope}

\draw[thick, rotate=207] (0, -1.7) arc (90:65.8:1.797058823529412) arc (65.8:99:1.797058823529412);
\draw[thick, rotate=254] (0, -1.6) arc (90:66:2.0125) arc (66:115.7:2.0125);

\draw[rotate=-60] (0, -3) -- (0, 3);

\draw (0, -1.38) node {$\tilde{\kappa}_{1}$};
\draw (-1.74, 0.6) node {$\tilde{\eta}$};
\draw (0.15, 0.27) node {$\tilde{\kappa}_{2}$};
\draw (1.5, -2.9) node {$\tilde{\gamma}$};

\draw (3, 1.8) node {$\mathcal{G}$};

\end{scope}

\draw[thick, dashed] (3*0.846193166127564, 3*0.53287627607073) arc (122.2:152:11.343461538461538);
\fill[black!0, rotate=-30] (0.03, -1.75) -- (0.03, -1.92) -- (0.15, -1.92) -- (0.15, -1.75);

\draw[<->, rotate=-30] (0, -1.82) arc (90:77:1.709610778443114);
\draw[<->, rotate=-30] (0, -1.82) arc (90:103:1.709610778443114);

\end{scope}
\end{tikzpicture}
\caption{A hexagon bounded by $\tilde{\delta}_{1}$, $\tilde{\gamma}$, $\tilde{\kappa}_{1}$ and other geodesics.}
\label{fig:hornedHexagon}
\end{figure}

Indeed, the distance between $\tilde{\eta}$ and $\tilde{\kappa}_1$ and the distance between $\tilde{\eta}$ and $\tilde{\kappa}_2$ in Figure \ref{fig:hornedHexagon} are equal to $l_{X}(\delta_1)/2$ and the distance between $\tilde{\kappa}_1$ and $\tilde{\kappa}_2$ is $l_X(\gamma)/2$. We denote by $\mathcal{G}$ the bi-infinite geodesic orthogonal to $\tilde{\delta}_1$ and sharing an endpoint with the dashed line in Figure \ref{fig:hornedHexagon}. Then the distance between $\mathcal{G}$ and $\tilde{\kappa}_1$ gives an upper bound of $L$. 

 If $\mathcal{G}$ were bisecting $\tilde{\delta}_1$, then the distance between $\tilde{\kappa}_1$ and $\tilde{\kappa}_2$ is same as the distance between $\tilde{\eta}$ and $\tilde{\kappa}_2$, which is also same as the distance between $\tilde{\kappa}_1$ and $\tilde{\eta}$. Based on this observation, we explain step by step using Figure \ref{fig:hornedHexagon} why $2L$ is smaller than the length of $\tilde \delta_1$, from which  $2L < l_X(\delta_1)/2$ follows.  Recall that we have fixed the lift $\tilde{\gamma}$ of $\gamma$ and the lift $\tilde{\kappa}_{1}$  of $\kappa_1$ (of the same length) perpendicular to $\tilde{\gamma}$. Draw a line $\Delta$ perpendicular to $\tilde{\kappa}_{1}$  that is not $\tilde{\gamma}$ (in Figure \ref{fig:hornedHexagon}, it is the geodesic line containing $\tilde{\delta}_{1}$). Now, we will draw the hexagon that is half of the left half of $X_{2}’$ as follows. 

For a moment, we consider the length of $\tilde \delta_1$ as a variable: let $t > 0$. We will vary the endpoint of $\tilde{\delta}_{1}$ that is other than $\Delta \cap \tilde{\kappa}_{1}$, setting that $\tilde{\delta}_{1}$ has length $t$. Then, we decide $\tilde{\eta}$ perpendicular to $\tilde{\delta}_{1}$, a segment $\tilde{\delta}_{2}$ perpendicular to $\tilde{\eta}$ with the same length as $\tilde{\delta}_{1}$, and $\tilde{\kappa}_{2}$ perpendicular to both $\tilde{\delta}_{2}$ and $\tilde{\gamma}$. This can be done as follows: when the length of $\tilde{\delta}_{1}$ is determined, the line $N$ (the geodesic line containing $\tilde{\eta}$) is determined. Then we draw a simultaneous perpendicular to $N$ and $\tilde{\gamma}$, and reflect $\tilde{\delta}_{1}$ and $\tilde{\kappa}_{1}$ with respect to it and get $\tilde \delta_2$ and $\tilde \kappa_2$ respectively. We get the desired hexagon, and as a direct consequence of the construction, $\tilde{\kappa}_{1}$ and $\tilde{\kappa}_{2}$ have the same length, and $\tilde \delta_1$ and $\tilde \delta_2$ do so.

Now recall that $L$ is the distance between the dashed line and $\tilde{\kappa}_{1}$, which  is fixed regardless of $t$. When $t$ increases, the distance between $\tilde \kappa_1$ and $\tilde \kappa_2$ along $\tilde \gamma$ decreases. This implies that the
length of $\gamma$ decreases and the length of $\delta_1(=2t)$ increases. Now, recall that $\mathcal{G}$ is the perpendicular to $\Delta$ witht the endpoint same as the endpoint of the dashed line which is not shared with $\tilde \gamma$, as declared. When $t = 2 d(\mathcal{G}, \tilde{\kappa}_{1})$, we see that the hexagon is symmetric with respect to $\mathcal{G}$. Then, $\tilde{\kappa}_{1}, \tilde{\kappa}_{2}$ and $\tilde{\eta}$ all have the same length, and the hexagon has 120 degree symmetry. In particular, the length of $\delta$ equals the length of $\gamma$. At this point, still $t > 2 L$. Hence, at the situation $t \leq 2L$, we have that the length of $\delta$ is smaller than the length of $\gamma$, which contradicts the assumption. Therefore, we must have $2L < l_X(\delta_1)/2$.

Now we calculate the unsigned displacements between $\zeta_{1}$ and $\zeta_{2}$ ($\zeta_{1}'$ and $\zeta_{2}'$, respectively). The former is $|D + d_{1} + d_{2} + nl_{X}(\delta_{1})| $ for some $n \in \Z$ and the latter is $|D + d_{1}' + d_{2}' + n'l_{X}(\delta_{1})| = |D-d_{1}-d_{2} + n' l_{X}(\delta_{1})|$ for some $n' \in \Z$. If some of them are equal, then either \[
2D = -(n+n') l_{X}(\delta_{1})  \quad \mbox{or} \quad 2(d_{1}+ d_{2}) = (n' - n) l_{X}(\delta_{1}).
\]

First, note that if $2|D|$ were either $0$ or $l_X(\delta_1)$, the two surfaces $X$ and $X'$ would be related to each other by combinations of orientation-reversing map and Dehn twists along $\delta_1$. Hence, we may assume that $2|D|$ is neither $0$ nor $l_X(\delta_1)$. Recall that $|D| \le l_X(\delta_1)/2$. Then 
the former case is excluded since $2|D|$ cannot be a multiple of $ l_{X}(\delta_{1})$. For the latter case, note first that $d_{1}$, $d_{2}$ are nonzero values having the same sign: their sum cannot vanish. However, since $|d_{1}|$, $|d_{2}| <  l_{X}(\delta_{1})/4$, $2(d_{1}+d_{2})$ cannot become other multiples of $ l_{X}(\delta_{1})$.
\end{proof}

The proof equally applies to the case of genus 2 surface. In both cases, only one isometry is allowed between $X$ and $X'$.

\begin{proposition}\label{prop:main5sphere}
Suppose that $[f_1, Y] \in \T(S) \setminus V$ and $[f_2, Y'] \in \T(S)$ have the same simple length spectrum. Let $\psi' : X' \rightarrow Y'$ be an immersed subsurface of $Y'$ where $X'$ is a surface of type $S_{0, p, b}$ where $p + b = 5$. Then there exists an immersed subsurface $\psi : X \rightarrow Y$ of $Y$ such that $X$ and $X'$ are isometric. In particular, Theorem \ref{thm:aeLengthSpectra} holds when $S$ is of type $S_{0, p, b}$ for $p+ b=5$.
\end{proposition}

\begin{proof}
Let us take curves $\gamma_{1}'$, $\gamma_{2}'$, $\alpha_{0}'$, $\alpha_{1}'$, $\alpha_{2}'$ on $X'$ as in Figure \ref{fig:5holes}, and label the peripheral curves as $\delta_{i}'$. Then we obtain two immersed subsurface $\psi_{i}' : X_{i}' \rightarrow X'$ for $i=1, 2$, where $X_{i}'$ is the generalized shirt with spine $\alpha_{i}' \cup \gamma_{i}'$. In addition, cutting $X'$ along $\alpha_{2}'$ also gives another immersed subsurface $\psi_{0}' : X_{0}' \rightarrow X'$. 

We also draw simple geodesic segments $\kappa_{1}'$ (resp. $\kappa_2'$) perpendicular to $\delta_{2}'$ (resp. $\delta_4'$) and $\gamma_{1}'$ (resp. $\gamma_2'$), $\eta'$ perpendicular to $\gamma_{1}'$ and $\gamma_{2}'$, and $\zeta'$ perpendicular to $\delta_{4}'$ and $\gamma_{1}'$.

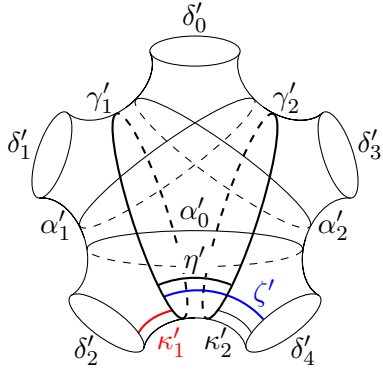
\begin{figure}[ht]
\begin{tikzpicture}

\draw[thick, blue] (-0.416, -1.27) arc(113: 40.4: 1.15 and 1.1);
\draw[red, thick] (-0.751, -1.75) arc (160:110: 0.744 and 0.5);

\foreach \i in {0, ..., 4}{
\draw[rotate=72*\i] (0, 2) circle (0.6 and 0.2);
\draw[rotate=36+72*\i] (0, 1.55) arc (-90:-8:0.688 and 0.47) arc (-8:-172:0.688 and 0.47);
}
\foreach \i in {0,  3}{
\draw[thick, dashed, rotate=-18 + 72*\i] (0.65, 1.425) arc (85:360-85:0.24 and 1.432);
\draw[thick, rotate=-18+72*\i] (0.65, 1.425) arc (85:-85:0.24 and 1.432);
}

\foreach \i in {1,  2}{
\draw[dashed, rotate=-18 + 72*\i] (0.65, 1.425) arc (85:360-85:0.24 and 1.432);
\draw[ rotate=-18+72*\i] (0.65, 1.425) arc (85:-85:0.24 and 1.432);
}

\draw[rotate=-18-72] (0.65, 1.425) arc (85:360-85:0.24 and 1.432);
\draw[dashed, rotate=-18-72] (0.65, 1.425) arc (85:-85:0.24 and 1.432);

\draw (0.751, -1.75) arc (20:70: 0.744 and 0.5);
\draw[thick] (-0.482, -1.11) arc (113:67:1.23 and 1.18);

\draw (-1.26, 1.38) node {$\gamma_{1}'$};
\draw (1.22, 1.38) node {$\gamma_{2}'$};
\draw (-1.85, -0.32) node {$\alpha_{1}'$};
\draw (1.84, -0.32) node {$\alpha_{2}'$};
\draw[red] (-0.312, -1.87) node {$\kappa_{1}'$};
\draw (0.312, -1.87) node {$\kappa_{2}'$};
\fill[black!0, shift={(0, -0.78)}] (-0.25, -0.17) -- (0.25, -0.17) -- (0.25, 0.15) -- (-0.25, 0.15);
\draw (0, -0.78) node {$\eta'$};
\draw[blue] (0.92, -1.22) node {$\zeta'$};
\draw (0, -0.138) node {$\alpha_{0}'$};

\draw (0, 2.45) node {$\delta_{0}'$};
\draw[rotate=72] (0, 2.45) node {$\delta_{1}'$};
\draw[rotate=144] (0, 2.45) node {$\delta_{2}'$};
\draw[rotate=216] (0, 2.45) node {$\delta_{4}'$};
\draw[rotate=288] (0, 2.45) node {$\delta_{3}'$};

\end{tikzpicture}
\caption{Configuration of curves on $X'$ of type $S_{0, p, b}$, $p+b=5$.}
\label{fig:5holes}
\end{figure}

Now we take curves $\{\gamma_{i}, \alpha_{i}\}$ on $Y$ by requiring $l_{Y}(\gamma_{i}) = l_{X'}(\gamma_{i}')$ and $l_{Y}(\alpha_{i}) = l_{X'}(\alpha_{i}')$. Then as in the previous proofs, using other auxiliary curves, Proposition \ref{prop:mainProp} detects the intersection patterns of curves, which means that: \begin{itemize}
\item some curves of $Y$ can be labelled as $\delta_{i}$ such that $l_{X}(\delta_{i}) = l_{X'}(\delta_{i}')$ for each $i$;
\item for $i=1, 2$, $\alpha_{i} \cup \gamma_{i}$ serves as a spine of generalized shirt $X_{i}$ immersed in $X$ by $\psi_{i} : X_{i} \rightarrow Y$;
\item $\alpha_{0} \cup \gamma_{1}$ also serves a spine of generalized shirt $X_{0}$ immersed in $X$ by $\psi_{0} : X_{0} \rightarrow Y$;
\item $\sigma \in \{\alpha_{i}, \gamma_{i}\}$ separates $\{\delta_{i_{1}}, \delta_{i_{2}}\}$ from $\{\delta_{i_{3}}, \delta_{i_{4}}, \delta_{i_{5}}\}$ if and only if $\sigma' \in \{\alpha_{i}', \gamma_{i}'\}$ separates $\{\delta_{i_{1}}', \delta_{i_{2}}'\}$ from $\{\delta_{i_{3}}', \delta_{i_{4}}', \delta_{i_{5}}'\}$.
\end{itemize} Moreover, $\psi_1$ and $\psi_2$ induce an immersion $\psi:X \to Y$ of a surface $X$ of type $S_{0, p, b}$ for $p + b = 5$, and we may assume that each $\delta_i$ is a peripheral curve of $X$ again from Proposition \ref{prop:mainProp}. We then orient $X$, $X'$ by requiring that $T_{\gamma_{1}}(\alpha_{1})$ and $T_{\gamma_{1}'}(\alpha_{1}')$ have the same length. We also define segments $\kappa_{i}$, $\eta$, $\zeta$ on $X$, analogously to those on $X'$.

Now Proposition \ref{prop:mainShirt} gives isometries $\phi_{i} : X_{i} \rightarrow X_{i}'$ that send each boundary to the corresponding boundary. This in particular implies that $\phi_{i}$ sends $\kappa_{i}$ to $\kappa_{i}'$ and $\eta$ to $\eta'$ for $i=1, 2$; $\phi_{0}$ sends $\kappa_{1}$ to $\kappa_{1}'$ and $\zeta$ to $\zeta'$.

Moreover, due to our choice of orientations, $\phi_{1}$ can be chosen as orientation-preserving. If $\phi_{2}$ is also orientation-preserving, the proof is done by gluing $\phi_{1}$ and $\phi_{2}$. Suppose to the contrary that $\phi_{2}$ is orientation-reversing. Our goal is to show that the (unsigned) distance between $\kappa_{1}$ and $\zeta$ differs to that between $\kappa_{1}'$ and $\zeta'$. This will then contradict the fact that $\phi_{0}$ is an isometry, which must preserve the unsigned twist of $\gamma_{1}$ at $X$.

We now parametrize $\gamma_{1}$ by arc length $\lambda$ so that $\kappa_{1}$ is located on the left side of $\delta_{1}$ while $\lambda$ increases, as in Figure \ref{fig:5holedHolonomy}. On $X$, we denote the signed displacement from  $\eta$ to $\zeta$ along $\gamma_{1}$ by $d$.  Further, we denote the signed displacement from $\kappa_{1}$ to $\eta$ along $\gamma_{1}$ by $D$. Similarly, we define the displacements $d'$ and $D'$ for curves and segments on $X'$.  

\begin{figure}[ht]
\begin{tikzpicture}[scale = 0.9]
\begin{scope}[shift={(-1, -3)}, scale=0.9]
\draw[thick] (0, 0) -- (0, 6);
\foreach \i in {0, 1, 2}{
\draw[thick, red] (-3, 0.8 + 2*\i) -- (0, 0.8+ 2*\i);
\draw (-3.25, 0.8+2*\i) node {$\tilde{\kappa}_{1}$}; 
\draw[thick] (3, 1.5 + 2*\i) -- (0, 1.5+2*\i);
\draw (3.25, 1.5+2*\i) node {$\tilde{\eta}$};
\draw[thick, dashed] (3, 0.5+2*\i) -- (0, 0.5+2*\i);
\draw[thick, blue] (3, 1.9 + 2*\i) -- (0, 1.9+2*\i);
\draw[thick, blue, densely dotted] (3, 1.1+2*\i) -- (0, 1.1+2*\i);
}
\draw (0, -0.2) node {$\tilde{\gamma}_{1}$};

\draw[thick, blue] (0.8, -0.2) -- (2.3, -0.2);
\draw[thick, blue, densely dotted] (0.8, -0.7) -- (2.3, -0.7);

\draw (3.4, -0.18) node {lifts of $\zeta$};
\draw (3.4, -0.7) node {lifts of $\zeta'$};

\end{scope}

\end{tikzpicture}
\caption{Description on the lifts of curves in Figure \ref{fig:5holes}. Here the black dashed lines are lifts of the geodesic segment perpendicular to $\gamma_{1}$ and $\delta_{0}$.}  \label{fig:5holedHolonomy}
\end{figure}
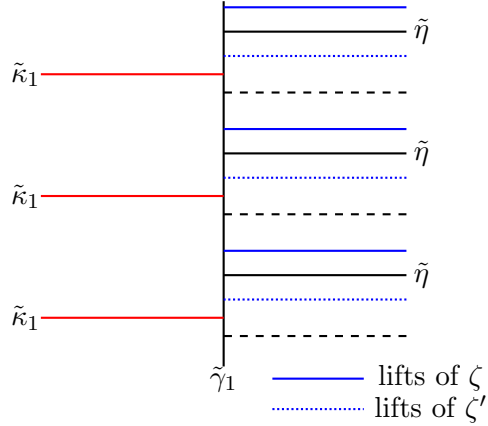

From the assumption, the twist at $\gamma_{2}$, $\gamma_{2}'$ are nonzero and opposite: this forces $d=-d' \neq 0$. We also observe that $|d|$, $|d'|$ is bounded by half of $l_{X}(\gamma_{1}) = l_{X'}(\gamma_{1}')$. This is because the geodesic perpendicular to $\gamma_{1}$ and $\delta_{0}$ is equidistant from $\eta$ along $\gamma_{1}$, and $\zeta$, $\zeta'$ cannot go across it. (See Figure \ref{fig:5holedHolonomy}: the black dashed lines are lifts of the geodesic perpendicular to $\gamma_{1}$ and $\delta_{0}$.)

Now we calculate the unsigned displacements between $\kappa_{1}$ and $\zeta$, as well as between $\kappa_{1}'$ and $\zeta'$. The former is $|D + d + n l_{X}(\gamma_{1})| $ for integers $n$ and the latter is $|D + d'+ n l_{X}(\gamma_{1})| = |D-d+ n l_{X}(\gamma_{1})|$ for integers $n$. If some of them are equal, then either \[
2D = n  l_{X}(\gamma_{1})\,\, \textrm{for some integer} \,\, n \quad \textrm{or} \quad 2d= n  l_{X}(\gamma_{1})\, \textrm{for some integer} \,\, n.
\]

If $2|D|$ were $0$ or $l_X(\gamma_1)$, the two surfaces $X$ and $X'$ would be related to each other.  Hence, we may assume that $2|D|$ is neither $0$ nor $l_X(\gamma_1)$. The former case is excluded since $|D|\le l_X(\gamma_1)/2$, and hence $2|D|$ cannot be a multiple of $  l_{X}(\gamma_{1})$. For the latter case, $2d$ is neither 0 (since $\gamma_{2}$ has nonzero twist) nor other multiples of $l_{X}(\gamma_{1})$ (since $|d| < l_{X}(\gamma_{1})/2$). This ends the proof.
\end{proof}

\section{Proof of the main theorem} \label{sec:proofofmainthm}

We are now ready to prove the main theorem, which we state here again. 
Roughly, our idea in the proof is to reconstruct a hyperbolic structure from a given length spectrum, with subsurfaces of low complexity as building blocks. A similar idea also appeared in Grothendieck's program \cite{Grothendieck_program} (see also \cite{Luo_survey}).

\aeLengthSpectra*

\begin{proof}
Suppose that $X \in \T(S) \setminus V$, $X' \in T(S)$ and $\mathcal{L}(X) = \mathcal{L}(X')$. 
We fix a pants decomposition $\C$ and an exhaustion $\{S_n\}$ of $S$ as described in Proposition \ref{prop:goodPantsDec}. That is, $S_n$ forms an increasing sequence of finite-type subsurfaces with $S = \cup_n S_n$ and for each $n \in \N$, the boundary $\partial S_n$ consists of curves in $\C$  and $S_{n+1}$ is made by attaching a generalized pair of pants or one-holed torus to $S_n$ along only one curve.
Furthermore, after modifying the pants decomposition as in the proof of Proposition \ref{prop:main2Torus}, one may assume the following: if $C_{k}\in \mathcal{C}$ bounds a one-holed torus that hosts another curve $C_{k'} \in \mathcal{C}$, then $l_{X'}(C_{k'}) > l_{X}(C_{k})$. Since taking different pants decompositions does not alter the simple length spectrum, we may assume so.

 We denote the subsurface of $X'$ corresponding to $S_{n}$ by $X'_{n}$. To clarify, $X'$ is decomposed into generalized pair of pants $P'_{i}$, glued with each other along boundaries, and $X'_{n} = \cup_{i=1}^{n} P'_{i}$ for each $n$. 

We can first rule out the cases of generalized pair of pants and one-holed/punctured torus since they were treated in the last section. Thus, we may begin with $X_{2}'$, a subsurface made out of two generalized pairs of pants. These cases were dealt with in Proposition \ref{prop:mainShirt} and \ref{prop:main2Torus}, so we can assume the isometric embedding $\psi_{2}$ of $X'_{2}$ into $X$. From this isometric embedding, we inductively extend along the exhaustion $X_n'$ and eventually obtain the desired isometry $X' \to X$. The fact that our base case $X_2'$ consists of two generalized pairs of pants play a role in this extension.

Now suppose that $\psi_{n} : X'_{n} \rightarrow X$ is an isometric embedding.  Let us denote the subsurface $X_{n+1}' \setminus X_{n}'$ by $P'$. Then $P'$ is attached to a pair of pants $Q' \subseteq X_{n}'$ along a curve $\gamma_{1}'$ constituting $\mathcal{C}'$. Since $X_{n}'$ contains at least two pairs of pants, $Q'$ is connected to yet another pair of pants $R' \subseteq X_{n}'$ along a curve $\gamma_{2}' \neq \gamma_{1}'$ comprising $\mathcal{C}'$. Let $Q$, $R$ be their respective images on $X$.

Let us first assume that $P'$ is a generalized pair of pants. 
Since $P'$ is the subsurface $X_{n+1}' \setminus X_n'$ and the exhaustion $\{S_n\}$ is made by attaching a generalized pair of pants or one-holed torus along only one boundary component at each step, we have $P' \cap Q' = \gamma_1'$. By the same reason, we also have $Q' \cap R' = \gamma_2'$, and hence $P', Q', R'$ are pairs of pants located in that order in Figure \ref{fig:5holes}.
We now define curves $\{\alpha_{i}', \gamma_{i}', \delta_{i}'\}$ on $P'$, $Q'$, $R'$ as in Figure \ref{fig:5holes}, and designate $\alpha_{i}$, $\gamma_{i}$, $\delta_{i}$ on $X$ by comparing lengths. Then the proof of Proposition \ref{prop:main5sphere} shows that $\alpha_{1} \cup \gamma_{1}$ becomes a spine of an immersed generalized shirt in $X$, which is divided into $Q$ and another immersed generalized pair of pants $P$. Moreover, the proposition gives an isometry $\phi : P' \cup Q' \cup R' \rightarrow P \cup Q \cup R$ sending each of $\{\alpha_{i}',\gamma_{i}', \delta_{i}'\}$ to corresponding $\{\alpha_{i}, \gamma_{i}, \delta_{i}\}$. 
In fact, more can be said from the proof of Proposition \ref{prop:main5sphere}. Note that the restriction $\psi_{n}|_{Q'\cup R'} : Q' \cup R' \rightarrow Q \cup R$ is also an isometry. In particular, it sends each of $\alpha_{i}'$, $\gamma_{i}'$, $\delta_{i}'$ on $Q' \cup R'$ to the corresponding curve on $Q \cup R$. The proof then guarantees that such an isometry can be extended to the entire isometry $\phi:P' \cup Q' \cup R' \rightarrow P \cup Q \cup R$. Thus, $\psi_{n}$ and $\phi$ can be glued on $Q' \cup R'$.

Denoting by $X_n$ the image of $X_n'$ under $\psi_n$, it remains to show that $P \cap X_n$ is a single curve, say, the image or $C_n'$ by $\psi_n$. We first claim that $P$ and $X_n$ have disjoint interiors. We observe that $X_n$ cannot cover all of $\operatorname{int}(P)$; otherwise, $P'$ has at least two boundary components and they are contained in $X_n'$, which was forbidden. Hence, if $X_n$ intersects $\operatorname{int}(P)$, then there exists a 
boundary component $\eta$ of $X_n$ that intersects $\operatorname{int}(P)$. Moreover, since $P$ is a generalized pair of pants, $\eta$ cannot be contained in $\operatorname{int}(P)$. This implies that there is a boundary curve, one of $\delta_1, \delta_2$, that $\eta$ intersects. Without loss of generality, we may assume that $\eta$ intersects $\delta_1$. 
Let $\eta'$ be the boundary curve of $X_n'$ corresponding to $\eta$. Since $P'$ is attached to $X_n'$ only along the curve $\gamma_1'$, two curves $\delta_1'$  and $\eta'$ are disjoint. Hence, we apply Lemma \ref{lem:disjointWitness2} to two $X'$, $\delta_1'$, and $\eta'$ and obtain auxiliary curves given by Lemma \ref{lem:disjointWitness2} to satisfy the length identity therein.  Since $\mathcal{L}(X) = \mathcal{L}(X')$, we can also find auxiliary curves in $X$ for $\delta_1$ and $\eta$ so that the same length identity holds. Since $X \in \T(S) \setminus V$, it follows from Lemma \ref{lem:disjointWitness} that $\delta_1$ and $\eta$ are disjoint, contradiction.

Furthermore, since $P'$ is attached to $X_{n}'$ only along $\gamma'$, $\delta'_{1}$ and $\delta_{2}'$ are not boundary curves of $X_{n}'$. Thus, $\delta_{1}$ and $\delta_{2}$ are also different from the boundary curves of $X_{n}$, and $P$ is also attached to $X_{n}$ only along $\gamma$. Thus, gluing $\psi_{n}$ and $\phi$ on $Q' \cup R'$ is sufficient to construct $\psi_{n+1}$.

If $P'$ is a one-holed torus, we still have to investigate whether $\phi$ respects the gluing at $\delta_{1}' = \delta_{2}'$. This time, we observe that $\phi|_{P' \cup Q'}$ becomes an isometry from $P' \cup Q'$, as an immersed generalized shirt, onto $P \cup Q$. Again, the proof of Proposition \ref{prop:main2Torus} asserts that $\phi|_{P' \cup Q'}$ can be extended to the isometry $P' \cup Q'$, as a 2-holed/punctured torus this time, onto $P \cup Q$. Thus, $\psi_{n+1}$ is well-defined also in this case.

Since $\{X_{n}'\}$ is an exhaustion of $X'$, we obtain an isometric embedding $\psi : X' \rightarrow X$ after this induction process. We now claim that $\psi$ is surjective. To show this, suppose not: $\psi(X')$ is a subsurface of $X$. Since $X$ is connected, the only possibility is that $X'$ has a boundary curve $C_{n}'$ while $C=\psi(C_{n}')$ is an interior curve of $X$. In this case, $C$ has a minimally intersecting curve $\eta$: if it is non-separating, then there exists another curve $\eta$ such that $i(C, \eta) = 1$; otherwise, there exists another curve $\eta$ such that $i(C, \eta) = 2$ and $i_{alg}(C, \eta)= 0$. Since $\mathcal{L}(X') = \mathcal{L}(X)$, there exists a curve $\eta' \subseteq X'$ such that $l_{X'}(\eta') = l_{X}(\eta)$. Since $\psi:X' \rightarrow X$ is an isometric embedding and $\mathcal{L}(X)$ is simple, it follows that $\psi(\eta') = \eta$. As such, $i(\psi(\eta'), C) = i(\eta, C) > 0$, which contradicts the assumption that $C$ is a boundary curve of $\psi(X')$.
\end{proof}

%%%%%%%%%%%%%%%%%%%%%%%%%%%%%%%%%%%%%%%
%
%                                                       Questions
%
%%%%%%%%%%%%%%%%%%%%%%%%%%%%%%%%%%%%%%%

\section{Further questions} \label{sec:furtherquestions}

We conclude this article by suggesting some further questions.

\begin{enumerate} 

\item It can be asked whether the meagre set $V$ we constructed is optimal. Indeed, it is not known whether the isometry classes of \emph{all} hyperbolic surfaces are determined by their \emph{simple} length spectra. 
\item For surfaces of finite type, our argument descends to the moduli space. Indeed, we know that $\Mod^{+}(S_{g, p, b})$ (the usual mapping class group which consists of the isotopy classes of orientation-preserving homeomorphisms) acts on $\T_{g, p, b}$ properly discontinuously, whose quotient is the moduli space $\mathcal{M}(S)$. Thus, for example, we have that $V/\Mod^{+}(S_{g, p, b})$ is a meagre, i.e. countable union of submanifolds of positive codimensions, and (unmarked) hyperbolic surfaces outside it will be distinguished by their simple length spectra.

We hope that a similar argument can be made for surfaces of infinite type. For example, the mapping class group acts on the quasiconformal Teichm{\"u}ller space of some Riemann surfaces discretely and faithfully (See~\cite{fujikawa2004MCG}). Similar discussion for Fenchel-Nielsen Teichm{\"u}ller space is expected.

\item While studying Question (1), Aougab et al. suggested in~\cite{aougab2020covers} that finite covers of a closed topological surface might be probed via simple lifts of closed curves. Our result produces at least one (actually abundant) hyperbolic structure whose simple length spectrum is topologically rigid on a co-meagre subset of the Teichm\"uller space. This structure may help deal with this problem.

\end{enumerate}

%%%%%%%%%%%%%%%%%%%%%%%%%%%%%%%%%%%%%%%
%
%                                                       Appendix: analytic function
%
%%%%%%%%%%%%%%%%%%%%%%%%%%%%%%%%%%%%%%%

\appendix

\section{Pants decompositions of surfaces} \label{appendix:A1}

We begin with a version of Richards' classification of surfaces. Besides of the genus $g$ (which may be infinite), each surface $S$ is associated with three invariants: \begin{itemize}
\item the space $X$ of ends of $\textrm{int}(S)$,
\item the space $Y$ of non-planar ends, and 
\item the space $Z$ of boundaries.
\end{itemize}

Here $X$ is a compact, separable, totally disconnected space and $Y$, $Z$ are disjoint closed subsets of $X$, where $Z$ consists of isolated points of $X$. Then $S$ is made from a sphere by removing $X \setminus Z$, then removing disjoint open discs, each containing one element of $Z$ and not containing any other elements of $X$, and then attaching $g$ handles that accumulate to points of $Y$. (See Theorem 3 of~\cite{richards1963surface})

We first consider the case $|X| \le 3$. This corresponds to finite-type surfaces $S_{g, p, b}$ with $p+b \le 3$, the Loch Ness monster with $p + b \le 2$, 1-punctured/bordered Jacob's ladder or the tripod surface. All these surfaces admit the pants decompositions desired in Proposition \ref{prop:goodPantsDec}. (See Figure \ref{fig:sporadic}.)

\begin{figure}
\begin{tikzpicture}
\begin{scope}[scale=1]
\begin{scope}[shift={(0, -1.5)}]
\draw (0, 0) -- (2.2, 0);
\draw (0, 0.6) -- (2.2, 0.6);
\draw (0, 0) .. controls (-0.3, 0) and (-0.4, -0.25) .. (-0.7, -0.25) .. controls (-1, -0.25) and (-1.3, -0.05) .. (-1.3, 0.3) .. controls (-1.3, 0.65) and (-1, 0.85) .. (-0.7, 0.85) .. controls (-0.4, 0.85) and (-0.3, 0.6) .. (0, 0.6);
\draw (-0.68, 0.3) circle (0.22 and 0.15);
\draw[thick, dashed] (-0.9, 0.3) arc (0:180:0.2 and 0.07);
\draw[thick] (-0.9, 0.3) arc (0:-180:0.2 and 0.07);
\draw[thick, dashed] (0.1, 0) arc (-90:90:0.1 and 0.3);
\draw[thick] (0.1, 0) arc (270:90:0.1 and 0.3);
\draw[thick, dashed] (1.3, 0) arc (-90:90:0.1 and 0.3);
\draw[thick] (1.3, 0) arc (270:90:0.1 and 0.3);

\begin{scope}[yscale=-1, shift={(-4.1, 3)}]
\fill[black!0] (4.45, -3.5) .. controls (4.55, -3.54) and (4.65, -3.7) .. (4.4, -3.9)  -- (5, -3.9) .. controls (4.75, -3.7) and (4.85, -3.54)..  (4.95, -3.5);
\draw(4.45, -3.5) .. controls (4.55, -3.54) and (4.65, -3.7) .. (4.4, -3.9) .. controls (4.17, -4.1) and (4.2, -4.3) .. (4.2, -4.35) .. controls (4.2, -4.4) and (4.2, -4.7) .. (4.7, -4.7) .. controls (5.2, -4.7) and (5.2, -4.4) .. (5.2, -4.35) .. controls (5.2, -4.3) and (5.23, -4.1) .. (5, -3.9) .. controls (4.75, -3.7) and (4.85, -3.54)..  (4.95, -3.5);
\draw (4.7, -4.28) circle (0.2 and 0.18);
\draw[thick] (4.847, -3.7) arc (0:180: 0.147 and 0.06);
\draw[thick, densely dashed] (4.847, -3.7) arc (0:-180: 0.147 and 0.06);
\draw[thick] (4.7, -4.46) arc (90:270:0.06 and 0.12);
\draw[thick, densely dashed] (4.7, -4.46) arc (90:-90:0.06 and 0.12);
\end{scope}

\begin{scope}[yscale=-1, shift={(-2.9, 3)}]
\fill[black!0] (4.45, -3.5) .. controls (4.55, -3.54) and (4.65, -3.7) .. (4.4, -3.9)  -- (5, -3.9) .. controls (4.75, -3.7) and (4.85, -3.54)..  (4.95, -3.5);
\draw(4.45, -3.5) .. controls (4.55, -3.54) and (4.65, -3.7) .. (4.4, -3.9) .. controls (4.17, -4.1) and (4.2, -4.3) .. (4.2, -4.35) .. controls (4.2, -4.4) and (4.2, -4.7) .. (4.7, -4.7) .. controls (5.2, -4.7) and (5.2, -4.4) .. (5.2, -4.35) .. controls (5.2, -4.3) and (5.23, -4.1) .. (5, -3.9) .. controls (4.75, -3.7) and (4.85, -3.54)..  (4.95, -3.5);
\draw (4.7, -4.28) circle (0.2 and 0.18);
\draw[thick] (4.847, -3.7) arc (0:180: 0.147 and 0.06);
\draw[thick, densely dashed] (4.847, -3.7) arc (0:-180: 0.147 and 0.06);
\draw[thick] (4.7, -4.46) arc (90:270:0.06 and 0.12);
\draw[thick, densely dashed] (4.7, -4.46) arc (90:-90:0.06 and 0.12);
\end{scope}

\foreach \i in {-1, 0, 1}{
\fill (2.8 - \i*0.25, 0.3) circle (0.03);
}

\end{scope}

\draw[decorate,decoration={brace,amplitude=10pt},xshift=-4pt,yshift=0pt] (4, -6) -- (4, 3.6);

\begin{scope}[shift={(4.5, 3.3)}, xscale=-1]
\draw (0.3, 0) -- (0, 0) .. controls (-0.3, 0) and (-0.4, -0.25) .. (-0.7, -0.25) .. controls (-1, -0.25) and (-1.3, -0.05) .. (-1.3, 0.3) .. controls (-1.3, 0.65) and (-1, 0.85) .. (-0.7, 0.85) .. controls (-0.4, 0.85) and (-0.3, 0.6) .. (0, 0.6) -- (0.3, 0.6);
\draw (-0.68, 0.3) circle (0.22 and 0.15);
\draw[thick, dashed] (-0.9, 0.3) arc (0:180:0.2 and 0.07);
\draw[thick] (-0.9, 0.3) arc (0:-180:0.2 and 0.07);
\draw[thick, dashed] (0.1, 0) arc (270:90:0.1 and 0.3);
\draw[thick] (0.1, 0) arc (-90:90:0.1 and 0.3);
\end{scope}

\begin{scope}[shift={(4.5, 2)}, xscale=-1]
\draw (0.3, 0) -- (-0.1, 0);
\draw (0.3, 0.6) -- (-0.1, 0.6);
\draw[thick] (-0.1, 0) arc (270:90:0.1 and 0.3);
\draw[thick] (-0.1, 0) arc (-90:90:0.1 and 0.3);
\end{scope}

\begin{scope}[shift={(4.4, -0.1)}]
\draw (-0.2, 0) -- (2.2, 0);
\draw (-0.2, 0.6) -- (2.2, 0.6);
\draw[thick, dashed] (0.1, 0) arc (-90:90:0.1 and 0.3);
\draw[thick] (0.1, 0) arc (270:90:0.1 and 0.3);
\draw[thick, dashed] (1.3, 0) arc (-90:90:0.1 and 0.3);
\draw[thick] (1.3, 0) arc (270:90:0.1 and 0.3);

\begin{scope}[yscale=-1, shift={(-4.1, 3)}]
\fill[black!0] (4.45, -3.5) .. controls (4.55, -3.54) and (4.65, -3.7) .. (4.4, -3.9)  -- (5, -3.9) .. controls (4.75, -3.7) and (4.85, -3.54)..  (4.95, -3.5);
\draw(4.45, -3.5) .. controls (4.55, -3.54) and (4.65, -3.7) .. (4.4, -3.9) .. controls (4.17, -4.1) and (4.2, -4.3) .. (4.2, -4.35) .. controls (4.2, -4.4) and (4.2, -4.7) .. (4.7, -4.7) .. controls (5.2, -4.7) and (5.2, -4.4) .. (5.2, -4.35) .. controls (5.2, -4.3) and (5.23, -4.1) .. (5, -3.9) .. controls (4.75, -3.7) and (4.85, -3.54)..  (4.95, -3.5);
\draw (4.7, -4.28) circle (0.2 and 0.18);
\draw[thick] (4.847, -3.7) arc (0:180: 0.147 and 0.06);
\draw[thick, densely dashed] (4.847, -3.7) arc (0:-180: 0.147 and 0.06);
\draw[thick] (4.7, -4.46) arc (90:270:0.06 and 0.12);
\draw[thick, densely dashed] (4.7, -4.46) arc (90:-90:0.06 and 0.12);
\end{scope}

\begin{scope}[yscale=-1, shift={(-2.9, 3)}]
\fill[black!0] (4.45, -3.5) .. controls (4.55, -3.54) and (4.65, -3.7) .. (4.4, -3.9)  -- (5, -3.9) .. controls (4.75, -3.7) and (4.85, -3.54)..  (4.95, -3.5);
\draw(4.45, -3.5) .. controls (4.55, -3.54) and (4.65, -3.7) .. (4.4, -3.9) .. controls (4.17, -4.1) and (4.2, -4.3) .. (4.2, -4.35) .. controls (4.2, -4.4) and (4.2, -4.7) .. (4.7, -4.7) .. controls (5.2, -4.7) and (5.2, -4.4) .. (5.2, -4.35) .. controls (5.2, -4.3) and (5.23, -4.1) .. (5, -3.9) .. controls (4.75, -3.7) and (4.85, -3.54)..  (4.95, -3.5);
\draw (4.7, -4.28) circle (0.2 and 0.18);
\draw[thick] (4.847, -3.7) arc (0:180: 0.147 and 0.06);
\draw[thick, densely dashed] (4.847, -3.7) arc (0:-180: 0.147 and 0.06);
\draw[thick] (4.7, -4.46) arc (90:270:0.06 and 0.12);
\draw[thick, densely dashed] (4.7, -4.46) arc (90:-90:0.06 and 0.12);
\end{scope}

\foreach \i in {-1, 0, 1}{
\fill (2.8 - \i*0.25, 0.3) circle (0.03);
}

\end{scope}

\begin{scope}[shift={(4.5, -1.5)}]
\draw (-0.3, 0) -- (0.1, 0);
\draw (-0.3, 0.6) -- (0.1, 0.6);
\draw[thick] (0.1, 0) arc (270:90:0.1 and 0.3);
\draw[thick, dashed] (0.1, 0) arc (-90:90:0.1 and 0.3);
\draw (0.1, 0.6) arc (-90:-30:0.6);
\draw (0.1, 0) arc (90:30:0.6);
\draw[thick, shift={(0.1+0.45*1.732050807568877, 0.75)}, rotate=-30] (0, 0) circle (0.3 and 0.1);
\draw[thick, shift={(0.1+0.45*1.732050807568877, -0.15)}, rotate=30] (0, 0) circle (0.3 and 0.1);
\draw[shift={(0.1+0.6*1.732050807568877, 0.6)}] (0, 0) arc (150:210:0.6);
\foreach \i in {1, 2}{
\draw[thick] (0.1, 0) arc (270:90:0.1 and 0.3);
\draw[thick, dashed] (0.1, 0) arc (-90:90:0.1 and 0.3);
\draw (0.1, 0.6) arc (-90:-30:0.6);
}
\end{scope}

\begin{scope}[shift={(5.2, -3.7)}]
\draw (-1.1, 0)  -- (2.2, 0);
\draw (-1.1, 0.6) -- (-1, 0.6) .. controls (-0.8, 0.6) and (-0.7, 0.7) .. (-0.7, 0.9); 
\draw (-0.3, 0.9) .. controls (-0.3, 0.7) and (-0.2, 0.6) .. (0.2, 0.6) -- (2.2, 0.6);
\draw[thick] (-0.5, 0.9) circle (0.2 and 0.07);

\draw[thick, dashed] (0.1, 0) arc (-90:90:0.1 and 0.3);
\draw[thick] (0.1, 0) arc (270:90:0.1 and 0.3);
\draw[thick, dashed] (1.3, 0) arc (-90:90:0.1 and 0.3);
\draw[thick] (1.3, 0) arc (270:90:0.1 and 0.3);

\begin{scope}[yscale=-1, shift={(-4.1, 3)}]
\fill[black!0] (4.45, -3.5) .. controls (4.55, -3.54) and (4.65, -3.7) .. (4.4, -3.9)  -- (5, -3.9) .. controls (4.75, -3.7) and (4.85, -3.54)..  (4.95, -3.5);
\draw(4.45, -3.5) .. controls (4.55, -3.54) and (4.65, -3.7) .. (4.4, -3.9) .. controls (4.17, -4.1) and (4.2, -4.3) .. (4.2, -4.35) .. controls (4.2, -4.4) and (4.2, -4.7) .. (4.7, -4.7) .. controls (5.2, -4.7) and (5.2, -4.4) .. (5.2, -4.35) .. controls (5.2, -4.3) and (5.23, -4.1) .. (5, -3.9) .. controls (4.75, -3.7) and (4.85, -3.54)..  (4.95, -3.5);
\draw (4.7, -4.28) circle (0.2 and 0.18);
\draw[thick] (4.847, -3.7) arc (0:180: 0.147 and 0.06);
\draw[thick, densely dashed] (4.847, -3.7) arc (0:-180: 0.147 and 0.06);
\draw[thick] (4.7, -4.46) arc (90:270:0.06 and 0.12);
\draw[thick, densely dashed] (4.7, -4.46) arc (90:-90:0.06 and 0.12);
\end{scope}

\begin{scope}[yscale=-1, shift={(-2.9, 3)}]
\fill[black!0] (4.45, -3.5) .. controls (4.55, -3.54) and (4.65, -3.7) .. (4.4, -3.9)  -- (5, -3.9) .. controls (4.75, -3.7) and (4.85, -3.54)..  (4.95, -3.5);
\draw(4.45, -3.5) .. controls (4.55, -3.54) and (4.65, -3.7) .. (4.4, -3.9) .. controls (4.17, -4.1) and (4.2, -4.3) .. (4.2, -4.35) .. controls (4.2, -4.4) and (4.2, -4.7) .. (4.7, -4.7) .. controls (5.2, -4.7) and (5.2, -4.4) .. (5.2, -4.35) .. controls (5.2, -4.3) and (5.23, -4.1) .. (5, -3.9) .. controls (4.75, -3.7) and (4.85, -3.54)..  (4.95, -3.5);
\draw (4.7, -4.28) circle (0.2 and 0.18);
\draw[thick] (4.847, -3.7) arc (0:180: 0.147 and 0.06);
\draw[thick, densely dashed] (4.847, -3.7) arc (0:-180: 0.147 and 0.06);
\draw[thick] (4.7, -4.46) arc (90:270:0.06 and 0.12);
\draw[thick, densely dashed] (4.7, -4.46) arc (90:-90:0.06 and 0.12);
\end{scope}

\foreach \i in {-1, 0, 1}{
\fill (2.8 - \i*0.25, 0.3) circle (0.03);
}

\end{scope}

\begin{scope}[shift={(5.5, -6.4)}]
\draw (-1.3, 0) -- (-0.9, 0);
\draw (-1.3, 0.6) -- (-0.9, 0.6);
\draw[thick] (-0.9, 0) arc (270:90:0.1 and 0.3);
\draw[thick, dashed] (-0.9, 0) arc (-90:90:0.1 and 0.3);
\draw (-0.9, 0.6) .. controls (-0.5, 0.6) and (-0.3, 1.1) .. (0.1, 1.1) -- (2.2, 1.1);
\draw (-0.9, 0) .. controls (-0.5, 0) and (-0.3, -0.5) .. (0.1, -0.5) -- (2.2, -0.5);
\draw (2.2, 0.5) --  (0, 0.5) arc (90:270:0.2) -- (2.2, 0.1);

\begin{scope}[shift={(0, 0.5)}]

\draw[thick, dashed] (0.1, 0) arc (-90:90:0.1 and 0.3);
\draw[thick] (0.1, 0) arc (270:90:0.1 and 0.3);
\draw[thick, dashed] (1.3, 0) arc (-90:90:0.1 and 0.3);
\draw[thick] (1.3, 0) arc (270:90:0.1 and 0.3);

\begin{scope}[yscale=-1, shift={(-4.1, 3)}]
\fill[black!0] (4.45, -3.5) .. controls (4.55, -3.54) and (4.65, -3.7) .. (4.4, -3.9)  -- (5, -3.9) .. controls (4.75, -3.7) and (4.85, -3.54)..  (4.95, -3.5);
\draw(4.45, -3.5) .. controls (4.55, -3.54) and (4.65, -3.7) .. (4.4, -3.9) .. controls (4.17, -4.1) and (4.2, -4.3) .. (4.2, -4.35) .. controls (4.2, -4.4) and (4.2, -4.7) .. (4.7, -4.7) .. controls (5.2, -4.7) and (5.2, -4.4) .. (5.2, -4.35) .. controls (5.2, -4.3) and (5.23, -4.1) .. (5, -3.9) .. controls (4.75, -3.7) and (4.85, -3.54)..  (4.95, -3.5);
\draw (4.7, -4.28) circle (0.2 and 0.18);
\draw[thick] (4.847, -3.7) arc (0:180: 0.147 and 0.06);
\draw[thick, densely dashed] (4.847, -3.7) arc (0:-180: 0.147 and 0.06);
\draw[thick] (4.7, -4.46) arc (90:270:0.06 and 0.12);
\draw[thick, densely dashed] (4.7, -4.46) arc (90:-90:0.06 and 0.12);
\end{scope}

\begin{scope}[yscale=-1, shift={(-2.9, 3)}]
\fill[black!0] (4.45, -3.5) .. controls (4.55, -3.54) and (4.65, -3.7) .. (4.4, -3.9)  -- (5, -3.9) .. controls (4.75, -3.7) and (4.85, -3.54)..  (4.95, -3.5);
\draw(4.45, -3.5) .. controls (4.55, -3.54) and (4.65, -3.7) .. (4.4, -3.9) .. controls (4.17, -4.1) and (4.2, -4.3) .. (4.2, -4.35) .. controls (4.2, -4.4) and (4.2, -4.7) .. (4.7, -4.7) .. controls (5.2, -4.7) and (5.2, -4.4) .. (5.2, -4.35) .. controls (5.2, -4.3) and (5.23, -4.1) .. (5, -3.9) .. controls (4.75, -3.7) and (4.85, -3.54)..  (4.95, -3.5);
\draw (4.7, -4.28) circle (0.2 and 0.18);
\draw[thick] (4.847, -3.7) arc (0:180: 0.147 and 0.06);
\draw[thick, densely dashed] (4.847, -3.7) arc (0:-180: 0.147 and 0.06);
\draw[thick] (4.7, -4.46) arc (90:270:0.06 and 0.12);
\draw[thick, densely dashed] (4.7, -4.46) arc (90:-90:0.06 and 0.12);
\end{scope}
\foreach \i in {-1, 0, 1}{
\fill (2.8 - \i*0.25, 0.3) circle (0.03);
}

\end{scope}

\begin{scope}[shift={(0, 0.1)}, yscale=-1]

\draw[thick, dashed] (0.1, 0) arc (-90:90:0.1 and 0.3);
\draw[thick] (0.1, 0) arc (270:90:0.1 and 0.3);
\draw[thick, dashed] (1.3, 0) arc (-90:90:0.1 and 0.3);
\draw[thick] (1.3, 0) arc (270:90:0.1 and 0.3);

\begin{scope}[yscale=-1, shift={(-4.1, 3)}]
\fill[black!0] (4.45, -3.5) .. controls (4.55, -3.54) and (4.65, -3.7) .. (4.4, -3.9)  -- (5, -3.9) .. controls (4.75, -3.7) and (4.85, -3.54)..  (4.95, -3.5);
\draw(4.45, -3.5) .. controls (4.55, -3.54) and (4.65, -3.7) .. (4.4, -3.9) .. controls (4.17, -4.1) and (4.2, -4.3) .. (4.2, -4.35) .. controls (4.2, -4.4) and (4.2, -4.7) .. (4.7, -4.7) .. controls (5.2, -4.7) and (5.2, -4.4) .. (5.2, -4.35) .. controls (5.2, -4.3) and (5.23, -4.1) .. (5, -3.9) .. controls (4.75, -3.7) and (4.85, -3.54)..  (4.95, -3.5);
\draw (4.7, -4.28) circle (0.2 and 0.18);
\draw[thick] (4.847, -3.7) arc (0:180: 0.147 and 0.06);
\draw[thick, densely dashed] (4.847, -3.7) arc (0:-180: 0.147 and 0.06);
\draw[thick] (4.7, -4.46) arc (90:270:0.06 and 0.12);
\draw[thick, densely dashed] (4.7, -4.46) arc (90:-90:0.06 and 0.12);
\end{scope}

\begin{scope}[yscale=-1, shift={(-2.9, 3)}]
\fill[black!0] (4.45, -3.5) .. controls (4.55, -3.54) and (4.65, -3.7) .. (4.4, -3.9)  -- (5, -3.9) .. controls (4.75, -3.7) and (4.85, -3.54)..  (4.95, -3.5);
\draw(4.45, -3.5) .. controls (4.55, -3.54) and (4.65, -3.7) .. (4.4, -3.9) .. controls (4.17, -4.1) and (4.2, -4.3) .. (4.2, -4.35) .. controls (4.2, -4.4) and (4.2, -4.7) .. (4.7, -4.7) .. controls (5.2, -4.7) and (5.2, -4.4) .. (5.2, -4.35) .. controls (5.2, -4.3) and (5.23, -4.1) .. (5, -3.9) .. controls (4.75, -3.7) and (4.85, -3.54)..  (4.95, -3.5);
\draw (4.7, -4.28) circle (0.2 and 0.18);
\draw[thick] (4.847, -3.7) arc (0:180: 0.147 and 0.06);
\draw[thick, densely dashed] (4.847, -3.7) arc (0:-180: 0.147 and 0.06);
\draw[thick] (4.7, -4.46) arc (90:270:0.06 and 0.12);
\draw[thick, densely dashed] (4.7, -4.46) arc (90:-90:0.06 and 0.12);
\end{scope}
\foreach \i in {-1, 0, 1}{
\fill (2.8 - \i*0.25, 0.3) circle (0.03);
}

\end{scope}

\end{scope}

\end{scope}
\end{tikzpicture}
\caption{Pants decompositions of few-ended surfaces}
\label{fig:sporadic}
\end{figure}
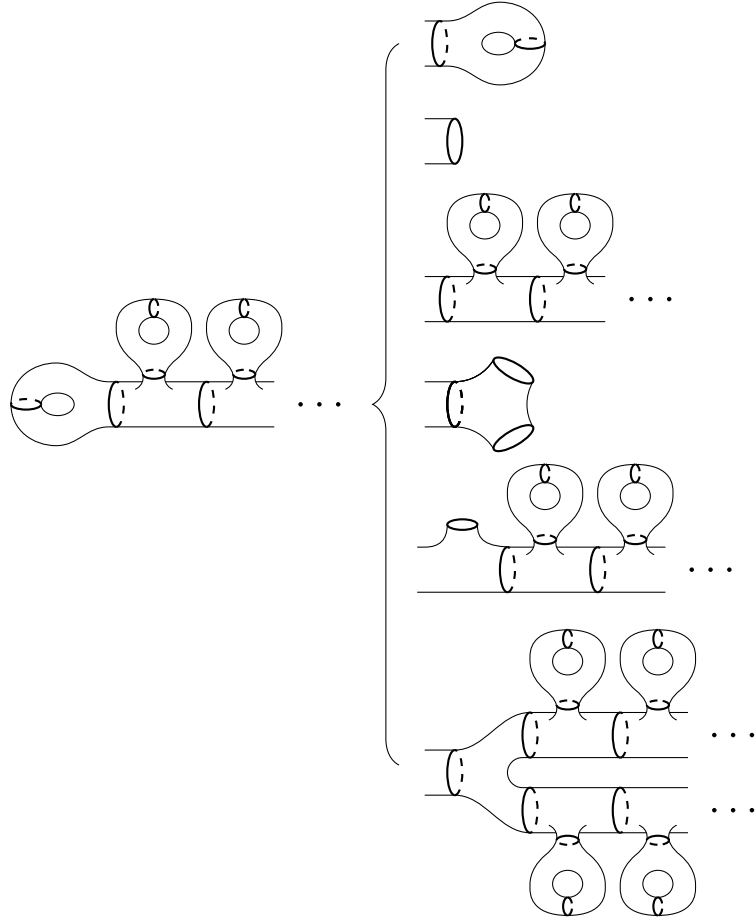

From now on, we consider the case $|X| \ge 4$. Let $\Sigma$ be a sphere, $K$ be a Cantor set on $\Sigma$, let $Y \subseteq X \subseteq K$ be closed sets of $\Sigma$, and let $Z$ be a subset of $X \setminus Y$ consisting of isolated points. For convenience, let $\mathcal{I}_{0} := \{(k, i) : k \in \Z_{>0}, 1 \le i \le 2^{k}\}$. Since $K$ is a Cantor set, there exist open discs $\{ U_{k, i}\}_{(k, i) \in \mathcal{I}_{0}}$ such that  \begin{itemize}
\item $\overline{U_{k, 1}}$, $\ldots$, $\overline{U_{k, 2^{k}}}$ are disjoint for each $k$,
\item $U_{k, i}$ contains $\overline{U_{k+1, 2i-1}} \cup \overline{U_{k+1, 2i}}$, and 
\item $\bigcap_{k} \left(\cup_{i=1}^{2^{k}} U_{k, i}\right) = K$.
\end{itemize}

Since $|X| \ge 4$, there exists disjoint $U_{k_{t}, i_{t}}$ for $t=1, \ldots, 4$ such that $X \cap U_{k_{t}, i_{t}} \neq \emptyset$ for each $t$ and $X \subseteq \cup_{t=1}^{4} U_{k_{t}, i_{t}}$. By relabelling, we may assume that $U_{2,i}$ intersects $X$ for $i =1, 2, 3, 4$. Now, we declare a subset $\mathcal{I}$ of $\mathcal{I}_{0}$ as follows: \[
\left\{ \begin{array}{ccc} \big[(k, 2j-1) \in \mathcal{I}\, \,\textrm{and} \,\,(k, 2j) \in \mathcal{I}\big] \,\, &\Leftrightarrow &U_{k, 2j-1} \cap X \neq \emptyset \neq U_{k, 2j} \cap X, \\ 
\big[(k, 2j-1) \notin \mathcal{I}\,\, \textrm{and} \,\,(k, 2j) \notin \mathcal{I}\big] \,\, &\Leftrightarrow& \big[U_{k, 2j-1} \cap X = \emptyset \,\,\textrm{or}\, \,U_{k, 2j} \cap X=\emptyset \big]. \end{array}\right.
\]
For convenience, we exclude $(1, 1)$ from $\mathcal{I}_{0}$ as well. ($\ast$)

We claim that $\Sigma \setminus\Big( \{ \partial U_{k, i} : (k, i) \in \mathcal{I}\} \cup X \Big)$ is composed of pairs of pants and cylinders, where the cylinders are precisely of the form \[
\left\{ U_{k, i} \setminus \{p\}: \begin{matrix}
\textrm{$p \in X$ is an isolated point in $X$}, \\
(k, i) = \min \big\{ (l, j) \in \mathcal{I}_{0} : U_{l, j} \cap X = \{p\}\big\}
\end{matrix} \right\}
\]
(Here, we employ the lexicographic order on $\mathcal{I}_{0}$.)

First observe that $(1, 2), (2, 1), \ldots, (2, 4) \in \mathcal{I}_{0}$, and that $\Sigma \setminus \big( \overline{U_{2, 1}} \cup \ldots \cup \overline{U_{2, 4}}\big)$ is decomposed into two pairs of pants thanks to the modification $(\ast$). 

Now let $z$ be a point on $U_{2, 1} \setminus\Big( \{ \partial U_{k, i} : (k, i) \in \mathcal{I}\} \cup X \Big)$. Pick the maximal $(M, t) \in \mathcal{I}_{0}$ such that $U_{M, t} \ni z$; then $z \in U_{M, t} \setminus \big( \overline{U_{M+1, 2t-1}} \cup \overline{U_{M+1, 2t}} \big)$. Among the ancestors of $U_{M, t}$, pick the most ancient one $U_{l, j}$ such that $U_{l, j} \cap X = U_{M, t} \cap X$.  Here, we know that $l \ge 2$ because $U_{1,1}$ contains $U_{2, 2} \cap X$, a nonempty subset of $X$ disjoint from $U_{M, t} \cap X \subseteq U_{2, 1} \cap X$. Let $(l, j')$ be the other immediate child of the parent of $(l, j)$, i.e., \[
\{(l, j), (l, j')\} = \big\{ (l, 2 \lceil j/2 \rceil - 1), (l, 2 \lceil j/2 \rceil) \big\}.
\]
Due to the minimality of $(l, j)$, we know that $U_{l, j'} \cap X = (U_{l-1, \lceil j/2 \rceil} \cap X) \setminus (U_{l, j} \cap X)$ is nonempty. Hence, both $(l, j)$ and $(l', j')$ belong to $\mathcal{I}$.

We now have two cases:
\begin{enumerate}
\item If $U_{l, j} \cap X$ is a singleton $\{p\}$, then $p$ is an isolated point of $X$. Since $U_{l, j} \cap X = \{p\}$, no two distinct descendants of $U_{k, i}$ can both intersect $X$. Hence, $U_{k, i} \cap \Big(\{ \partial U_{k, i} : (k, i) \in \mathcal{I}\} \cup X \Big) = U_{k, i} \setminus \{p\}$.
\item If $U_{l, j} \cap X$ is not a singleton, then \[
\# \{ (m, u): 2^{m-l} (j-1) < u \le 2^{m-l} j , U_{m, u} \cap X \neq \emptyset\} \ge 2
\]
for some $m>l$. Indeed, if not, then $U_{l, j} \cap X$ is an intersection of nested sequence of connected compact sets, which is connected in $K$, hence is a singleton. 

Pick a minimal $m$. Here, recall that $U_{M, t} \cap X = U_{l, j} \cap X$. This implies that \begin{equation}\label{eqn:pantsNoOther}
U_{M, t'} \cap X =\emptyset \quad (t' \neq t \,\textrm{and}\,2^{M-l}(j-1) < u \le 2^{M-l} j). 
\end{equation}
In other words, among the descendants of $U_{l, j}$ at some level between $l$ and $M$, only one can intersect $X$. Hence, $m$ is greater than $k$.

Note that if $U_{m, u}$ intersects $X$ for some $u$, so does its parent. Also note that at most 2 distinct indices can have the same parent. This implies that $\# \{ (m, u): 2^{m-l} (j-1) < u \le 2^{m-l} j , U_{m, u} \cap X \neq \emptyset\}$ cannot be greater than 2 and is exactly 2. Let $U_{m, u'}, U_{m, u''}$ be the ones intersecting $X$. By Display \ref{eqn:pantsNoOther}, $U_{m, u'}$ and $U_{m, u''}$ must be descendants of $U_{M, t}$. Moreover, due to the minimality of $m$, we have \begin{equation}\label{eqn:pantsExact1}
\# \{ (k, i): 2^{k-l} (j-1) <i \le 2^{k-l} j , U_{k, i} \cap X \neq \emptyset\} = 1 \quad(l \le k < m).
\end{equation}

We now observe the descendants of $U_{l, j}$ recorded by $\mathcal{I}$. Each open set $U_{k, i}$ contained in $U_{l, j}$ is either disjoint from $U_{m, u'}$ and $U_{m, u''}$, is a parent of one of $U_{m, u'}$ or $U_{m, u''}$, or is a descendant of one of $U_{m, u'}$ and $U_{m, u''}$ (themselves included). Those of the first category cannot intersect $X$, because $U_{l, j} \cap X$ is partitioned into $U_{m, u'} \cap X$ and $U_{m, u'} \cap X$. Hence, they are not recorded by $\mathcal{I}$. Those of the second category are not recorded by $\mathcal{I}$ either, because of Display \ref{eqn:pantsExact1}. Hence, the descendants of $U_{l, j}$ recorded by $\mathcal{I}$ must be descendants of $U_{m, u'}$ and $U_{m, u''}$.  In other words, $\partial U_{k, i}$'s for $(k, i) \in \mathcal{I}$ either lie outside $U_{l, j}$ or is contained in $U_{m, u'}$ and $U_{m, u''}$. Hence, the connected component of $U_{2, 1} \setminus\Big( \{ \partial U_{k, i} : (k, i) \in \mathcal{I}\} \cup X\Big)$ containing $z$ is precisely $U_{l, j} \cap \big(\overline{U_{m, u'}} \cup \overline{U_{m, u''}}\big)$, a pair of pants.
\end{enumerate}
In the above discussion, we note that each pair of pants have boundaries coming from three  indices in $\mathcal{I}$, where one is an ancestor (and is the direct parent in $\mathcal{I}$) of the other two, with the exception of $U_{1, 2}^{c} \cap U_{2, 1}^{c} \cap U_{2, 2}^{c}$. Conversely, for each $(k, i) \in \mathcal{I}$, $\partial U_{k, i}$ either is the boundary of two pairs of pants (when $(k, i)$ has its decendants in $\mathcal{I}$), or is the boundary of a pair of pants and a cylinder (when $U_{k, i}$ is the largest open set intersecting $X$ precisely at $p$).

We now draw the seam. For each $(k, i) \in \mathcal{I}$, pick two points $p_{k, i}, q_{k, i}$ on $\partial U_{k, i}$. Now, on each pair of pants $P$ bounded by $U_{k, i}, U_{k', i'}$ and $U_{k'', i''}$, where the latter two are descendants of the former one, we draw three disjoint continuous paths in $P$ connecting $p_{k, i}$ to $p_{k', i'}$, $q_{k, i}$ to $q_{k'', i''}$, and $q_{k', i'}$ to $p_{k'', i''}$. On a cylinder containing a single point $p \in X$ and bounded by $\partial U_{k, i}$, we draw two continuous paths, disjoint in the interior, that connect $p_{k, i}$ and $q_{k, i}$ to $p$. We declare the union of these paths to be a seam $\mathcal{S}$. Then a connected component of $\mathcal{S}$ is an embedded arc in $\Sigma \setminus X$, as it is an embedded arc when restricted to each pair of pants and cylinder, and near each boundaries $\partial U_{k, i}$'s for $(k, i) \in \mathcal{I}$.

Now, we remove the cylinders containing a point of $Z$. This way, we can attach boundary components to ends in $Z$. Note that $\mathcal{S}$ restricted to $\Sigma \setminus \{\textrm{cylinders}\}$ still serves as a seam.

When $g$ is nonzero and finite (which implies that $Y = \emptyset$), we cut $\Sigma \setminus X$ along $\partial U_{1, 2}$ and insert a surface $\Sigma_{g, 2}$ with genus $g$ and with two boundaries. We can also reconstruct the seam on $\Sigma_{g, 2}$.

It remains to realize the non-planar ends $Y$ when they exist. First, isolated points in $Y$ are associated with disjoint cylinders. We replace each cylinder with half of (seamed) Jacob's ladder, making sure that the existing seam and the seam on the Jacob's ladder are glued up well. Next, for each $(k, i) \in \mathcal{I}$ such that $U_{k, i} \cap Y \neq \emptyset$, we cut the surface along $\partial U_{k, i}$ and insert a surface $\Sigma_{1, 2}$ with genus 1 and with two boundaries. We can also reconstruct the seam on the inserted surface. This way, each point in $Y$ is approached by genera while those outside $Y$ is not.

See Figure \ref{fig:infinitesurface} for the resulting pants decomposition.

\section{Analytic functions} \label{appendix:A}
\label{section:analytic}

In this section, we prove the following lemma.

\begin{lem}\label{lem:analytic}
Let $f : U \rightarrow \mathbb{R}^{m}$ be an analytic function on a domain $U \subseteq \mathbb{R}^{n}$ that does not vanish identically. Then there exists a countable family of submanifolds $\{S_{i}\}_{i \in \mathbb{N}}$ of $U$ such that $f \neq 0$ outside $\cup_{i} S_{i}$.
\end{lem}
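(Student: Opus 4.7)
The plan is to reduce immediately to the scalar case and then cover the zero locus by a countable collection of smooth hypersurfaces, one for each multi-index of partial derivatives.

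First, write $f = (f_1, \ldots, f_m)$. Since $f$ does not vanish identically on $U$, some component $f_j$ is not identically zero. As $f^{-1}(0) \subseteq f_j^{-1}(0)$, it suffices to produce the countable family of submanifolds for the scalar analytic function $f_j$. So I may assume $m = 1$ and $f \colon U \to \mathbb{R}$ analytic and not identically zero. I will also use that a domain is connected; otherwise one handles each connected component separately and takes a countable union.

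For each multi-index $\beta \in \mathbb{N}^n$, define
\[
S_\beta := \{x \in U : \partial^\beta f(x) = 0 \text{ and } \nabla (\partial^\beta f)(x) \neq 0 \}.
\]
The set $\{\nabla(\partial^\beta f) \neq 0\}$ is open, and on it $\partial^\beta f$ has surjective differential, so by the implicit function theorem each $S_\beta$ is a smooth codimension-one submanifold of $U$ (possibly empty, and possibly with countably many connected components). Since the index set $\mathbb{N}^n$ is countable, $\{S_\beta\}_\beta$ is a countable family of submanifolds.

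It remains to verify that $f^{-1}(0) \subseteq \bigcup_\beta S_\beta$. Fix $p \in f^{-1}(0)$ and suppose for contradiction that $p \notin S_\beta$ for all $\beta$. I claim by induction on $|\beta|$ that $\partial^\beta f(p) = 0$ for every $\beta$. The base case $\beta = 0$ holds by assumption. For the inductive step, write $\beta = \gamma + e_i$ with $|\gamma| = |\beta| - 1$. By the inductive hypothesis $\partial^\gamma f(p) = 0$, so the assumption $p \notin S_\gamma$ forces $\nabla(\partial^\gamma f)(p) = 0$, which in particular gives $\partial^\beta f(p) = \partial_i(\partial^\gamma f)(p) = 0$. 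Hence the Taylor series of $f$ at $p$ is identically zero, so $f$ vanishes on a neighborhood of $p$. Since $f$ is analytic on the connected set $U$, the identity theorem forces $f \equiv 0$ on $U$, contradicting the hypothesis.

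The only subtlety I anticipate is making sure that $S_\beta$ genuinely qualifies as a submanifold (as opposed to a disjoint union of open pieces of different submanifolds); this is immediate from the implicit function theorem because on the open set where $\nabla(\partial^\beta f)$ is nonzero, $0$ is a regular value of $\partial^\beta f$. Otherwise the argument is purely formal, the key input being the identity principle for real analytic functions on a connected open set.
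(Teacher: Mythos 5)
Your proof is correct and follows essentially the same strategy as the paper: reduce to the scalar case, cover the zero set by level sets $\{\partial^\beta f = 0\}$ at points where the next-order derivative is nonzero (so the implicit function theorem yields a hypersurface), and invoke real analyticity on the connected domain to show the cover is exhaustive. The only cosmetic difference is that you package each level of the filtration into a single regular level set $S_\beta$, whereas the paper first introduces the intermediate sets $C_j$ and then splits each $C_{j-1}\setminus C_j$ into pieces $S_{\alpha,j}$ indexed by a coordinate direction; your version is slightly tidier but not a different argument.
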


\begin{proof}
It suffices to prove for $m=1$. We define the sets $$C_{j} = \{x \in U : \partial_{\alpha} f(x) = 0 \mbox{ for all index } \alpha \mbox{ with }|\alpha| \le j\}$$ for $j \ge 0$. We observe that $C_{0} \supseteq C_{1} \supseteq \cdots$ and $\cap_{i} C_{i} = \emptyset$. Indeed, the existence of a point $x \in \cap_{i} C_{i}$ will imply $f \equiv 0$ due to the analyticity of $f$.

We now define $S_{i} = C_{i-1} \setminus C_{i}$. It follows from the previous observation that $\bigcup_{i \in \mathbb{N}} S_{i} = C_{0}$, and $f \neq 0$ outside $C_{0}$. It remains to show that $S_{i}$ is contained in a finite union of submanifolds. For each index $\alpha$ we define $S_{\alpha, j} = \{ x : \partial_{\alpha} f(x) = 0$ but $\partial_{\alpha'} f(x) \neq 0\}$, where $\alpha'_{i} = \alpha_{i} + \delta_{ij}$. Then the implicit function theorem tells us that $S_{\alpha, j}$ is a submanifold. Since $S_{i} \subseteq \cup_{|\alpha| = i, 1\le j\le n} S_{\alpha, j}$, the proof is done.
\end{proof}

\section{Proof of Lemma \ref{lem:pinching}} \label{appendix:B}
\label{section:pinching}

This appendix stands for the proof of the following lemma. A convergence of Fenchel-Nielsen coordinates as the moduli go to infinity was also dealt with in \cite{bourque2015conformal}.

\pinching*

\begin{proof}
Here $C_{1}$ is adjacent to one or two pairs of pants. In each case, we draw the simple geodesic segments perpendicular to the boundaries of pairs of pants as described in Figure \ref{fig:pantsC1}. We also pick basepoints $\eta \cap \kappa_{3} = p$, $\eta^{\pm} \cap \kappa_{3}^{\pm} = p^{\pm}$ in each case. The complement of these pants in $S$ is denoted by $R$.

\begin{figure}[ht]
\begin{tikzpicture}
\draw[thick, dashed] (0.17, 1.8) .. controls (0.9, 1.8) and (1.5, 1.4) .. (1.5, 0.6);
\draw[thick] (1.5, 0.6) .. controls (1.5, -0.1) and (0.7, -0.4) .. (0.8, -1.15);
\draw[thick, dashed] (0.17, 1.2) .. controls (-0.2, 1.2) and (-0.5, 1) .. (-0.5, 0.6) .. controls (-0.5, 0.2) and (-0.3, 0) .. (0, 0);
\draw[thick] (-0.17, 1.2) .. controls (0.2, 1.2) and (0.5, 1) .. (0.5, 0.6) .. controls (0.5, 0.2) and (0.3, 0) .. (0, 0);

\draw (0, 2) .. controls (-0.8, 2) and (-1.5, 1.5) .. (-1.5, 0.6) .. controls (-1.5, -0.2) and (-0.8, -0.4) .. (-1, -1) arc (180:360:1 and 0.25) .. controls (0.8, -0.4) and (1.5, -0.2) .. (1.5, 0.6) .. controls (1.5, 1.6) and (0.8, 2) .. cycle;
\draw[dashed] (1, -1) arc (0:180:1 and 0.25);
\draw (0, 1) arc (90:450:0.3 and 0.5);
\draw (0, 2) arc (90:270:0.2 and 0.5);
\draw[dashed] (0, 2) arc (90:-90:0.2 and 0.5);
\draw[thick] (-0.17, 1.8) .. controls (-0.7, 1.8) and (-1.27, 1.4) .. (-1.27, 0.65) .. controls (-1.27, -0.2) and (-0.7, -0.4) .. (-0.8, -1.15);
\draw[thick] (0, 0) arc (90:180+13.8:0.22 and 1);
\draw[thick, dashed] (0, 0) arc (90:180-166.2:0.22 and 1);

\draw (0, 2.3) node {$C_{1}$};
\draw (0.7, 0.6) node {$\eta$};
\draw (-0.6, -0.52) node {$\kappa_{1}$};
\draw (0.64, -0.52) node {$\kappa_{2}$};
\draw (-0.2, -1.45) node {$\kappa_{3}$};

\begin{scope}[shift={(4.8, 0.3)}]
\draw (0, 0.5) arc (270:330:1.4);
\draw (0, 0.5) arc (270:210:1.4);
\draw (0, -0.5) arc (90:30:1.4);
\draw (0, -0.5) arc (90:150:1.4);
\draw (2.0784609691, 0.7) arc (150:210:1.4);
\draw (-2.0784609691, 0.7) arc (30:-30:1.4);

\draw[shift={(2.0784609691, 0.7)}, rotate=60] (0, 0) arc (-90:90:0.2 and 0.5);
\draw[dashed, shift={(2.0784609691, 0.7)}, rotate=60] (0, 0) arc (270:90:0.2 and 0.5);
\draw[shift={(2.0784609691, -0.7)}, rotate=-60] (0, 0) arc (90:-90:0.2 and 0.5);
\draw[dashed, shift={(2.0784609691, -0.7)}, rotate=-60] (0, 0) arc (90:270:0.2 and 0.5);

\draw[shift={(-2.0784609691, 0.7)}, rotate=120] (0, 0) arc (90:-90:0.2 and 0.5);
\draw[dashed, shift={(-2.0784609691, 0.7)}, rotate=120] (0, 0) arc (90:270:0.2 and 0.5);
\draw[shift={(-2.0784609691, -0.7)}, rotate=-120] (0, 0) arc (-90:90:0.2 and 0.5);
\draw[dashed, shift={(-2.0784609691, -0.7)}, rotate=-120] (0, 0) arc (270:90:0.2 and 0.5);

\draw[thick] (0, -0.25) arc (-90:0:1.887 and 0.25);
\draw[thick, dashed] (0, 0.15) arc (90:180.7:1.887 and 0.15);
\draw[thick] (0, 0.4) arc (-90:-25.7:1.5);
\draw[thick, dashed] (0, 0.35) arc (-90:-155:1.55);
\draw[thick] (0, -0.4) arc (90:25.7:1.5);
\draw[thick, dashed] (0, -0.42) arc (90:155:1.48);

\fill[black!0] (0, 0) circle (0.2 and 0.5);

\draw (0, -0.5) arc (-90:90:0.2 and 0.5);
\draw[dashed] (0, -0.5) arc (270:90: 0.2 and 0.5);

\draw[thick, dashed] (1.887, 0) arc (0:95:1.887 and 0.25);
\draw[thick] (-1.887, 0) arc (180:275.7:1.887 and 0.15);

\draw[thick] (1.887-0.15, 0) arc (180:144.8:1.4+0.15);
\draw[thick] (1.887-0.15, 0) arc (180:215.2:1.4+0.15);

\draw[thick] (-1.887+0.15, 0) arc (0:35.2:1.4+0.15);
\draw[thick] (-1.887+0.15, 0) arc (0:-35.2:1.4+0.15);

\draw (0, 0.8) node {$C_{1}$};
\draw (0.66, 1) node {$\kappa_{1}^{-}$};
\draw (0.67, -1) node {$\kappa_{2}^{-}$};
\draw (2.25, -0.4) node {$\kappa_{3}^{-}$};
\draw (1.3, -0.38) node {$\eta^{-}$};

\draw (-0.65, 1) node {$\kappa_{1}^{+}$};
\draw (-0.67, -1) node {$\kappa_{2}^{+}$};
\draw (-2.2, -0.4) node {$\kappa_{3}^{+}$};
\draw (-1.2, -0.35) node {$\eta^{+}$};

\end{scope}
\end{tikzpicture}
\caption{Pants containing $C_1$} \label{fig:pantsC1}
\end{figure}
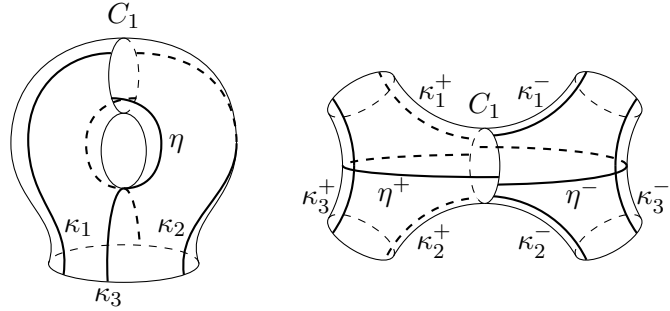

For the sake of simplicity, we explain for the case that $C_{1}$ is non-separating. In order to define representations for $X_{r}$, we first fix a unit vector $\vec{V}$ on $\Ha$ based at some $\tilde{p} \in \Ha$. Also, let $\vec{v}$ be a unit vector on $X_{r}$ based at $p$. Then there exists representations $\Gamma_{r} : \pi_{1}(S, p) \rightarrow \PR$ corresponding to $X_{r}$'s such that $\vec{v}$ is lifted to $\vec{V}$.

We now investigate the monodromy of loops $\alpha$ in $\pi_{1}(S)$. Suppose first that $\alpha$ and $C_{1}$ are disjoint. On each $X_{r}$, $\alpha$ is homotopic to a concatenation of the following segments: \begin{itemize}
\item geodesics on $R$ (meeting $\partial R$ orthogonally), 
\item geodesics along $\partial R$ or 
\item geodesic $\kappa_{3}$.
\end{itemize} 
The angles among such geodesics are kept perpendicular during the pinching. Moreover, the lengths of the first two types are unchanged during the pinching. The length of $\kappa_{3}$ continuously grows and converges to a finite value. Moreover, the length proportion of segments of $\kappa_{3}$ cut by $p$ also varies continuously and converges to a finite value. Consequently, the image $\vec{V}_{\alpha, r} = \Gamma_{r}(\alpha)(\vec{V})$ of $\vec{V}$ by the monodromy along $\alpha$ varies continuously along $r$, converging to a limit $\vec{V}_{\alpha, 0}$ as $r \rightarrow 0$. 

Let us now consider a segment $\beta$ from $p$ to $C_{1}$, where $\beta$ is not transversing $C_{1}$ but only meets at one endpoint. Concatenating $\beta$ with a segment along $\eta$, from $C_1$ to $p$, it follows that $\beta$ is homotopic to a concatenation of a loop $\alpha \in \pi_{1}(S)$ disjoint from $C_{1}$ and a segment along $\eta$. This segment along $\eta$ is exactly half of entire $\eta$. Then, we can characterize the lift $\bar{C}_{1}$ of $C_{1}$ on $\Ha$ as follows: the geodesic transport $\vec{V}_{\alpha, r}'$ of $V_{\alpha, r}$ by distance $\pm l_{X_{r}}(\eta)/2$ becomes a normal vector to $\bar{C}_{1}$. Such lifts bound a convex region in $\Ha$, which we denote by $K$. As $r \rightarrow 0$, those lifts converge to points in $\partial \Ha$, namely, the endpoints of the geodesic along $\vec{V}_{\alpha, 0}$ for various $\alpha$. Accordingly, $K$ becomes the full $\Ha$.

We now discuss the asymptotic behavior (along the pinching process) of a curve $\alpha$ with $i(\alpha, C_{1}) = k > 0$. On each $X_{r}$, $\alpha$ is homotopic to a concatenation of geodesics $\{\beta_{i}\}_{i=1}^{k}$ along $C_{1}$ and $\{\gamma_{i}\}_{i=1}^{k}$ orthogonal to $C_{1}$. It follows that \begin{equation}\label{eqn:lengthComp}
\sum_{i} l_{X_{r}} (\gamma_{i})  \le l_{X_{r}}(\alpha) \le \sum_{i} l_{X_{r}}(\beta_{i}) + \sum_{i} l_{X_{r}} (\gamma_{i}).
\end{equation}

We claim that $l_{X_{r}}(\beta_{i})$ is of class $O(r)$ during the pinching. This is because $\gamma_{i}$, $\gamma_{i+1}$ never crosses $\eta$'s. To see this, note that the geodesics orthogonally departing from $C_{1}$ are parametrized by their departure point on $C_{1}$. If we slightly perturb $\eta$, it will still return to $C_{1}$ but not orthogonally. Moreover, the threshold of this perturbation is locally uniform along $r$. Thus, if $\gamma_{i}$ were $\eta$ at a moment $X_{r}$, then it would stay at $\eta$ forever. Note also that the shear among the lifts $\bar{\eta}$ of $\eta$ each side of $\bar{C}_{1}$ is kept constant during the pinching. As a result, if a lift $\bar{\alpha}$ of $\alpha$ is sandwiched by two $\eta$'s on each side of $\bar{C}_{1}$ marking twist $\tau$, then $l_{X_{r}}(\beta_{i})$ is always dominated by $\tau l(\bar{C}_{1}) = \tau r$. See Figure \ref{fig:twistC1}.

\begin{figure}[ht]
\begin{tikzpicture}[scale=0.85]
\draw[thick, ->] (0, -4) -- (0, 4);
\foreach \i in {1, ..., 4}{
	\draw[dashed] (-4, 1.8*\i - 4.3) -- (0, 1.8*\i - 4.3);
	\draw (4, 1.8*\i - 5) -- (0, 1.8*\i - 5);
	\draw[very thick, ->] (0.9, 1.8*\i - 5) -- (0, 1.8*\i - 5) -- (0, 1.8*\i - 4.3) -- (-0.9, 1.8*\i - 4.3);
}
\draw[very thick, ->] (4, -2.8) .. controls (-2, -2.8) and (2, 2.4) .. (-4, 2.4);
\draw[very thick, dashed] (4, -2.8) -- (0, -2.8);
\draw[very thick, dashed] (-4, 2.4) -- (0, 2.4);

\draw (0.45, 3.9) node {$\bar{C}_{1}$};
\draw (-3, -3.8) node {$K$};
\draw (3, -3.8) node {$K'$};
\end{tikzpicture}
\caption{Twist at $C_1$} \label{fig:twistC1}
\end{figure}
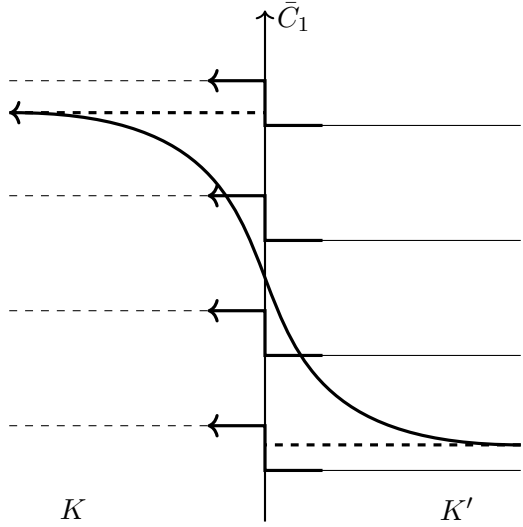

Meanwhile, $l_{X_{r}}(\gamma_{i})$ grows exponentially. To elaborate this, we first fix a lift $\bar{\gamma}_{i}$ of $\gamma_{i}$ in $K$, which meets lifts $\bar{C}_{1}$, $\bar{C}_{1}'$ of $C_{1}$ at endpoints. As explained before, $\bar{C}_{1}$ and $\bar{C}_{1}'$ converge to limits $c$, $c' \in \partial \Ha$, respectively. (Here $c \neq c'$ because $\gamma_{i}$ is nontrivial) Let $\hat{\gamma}_{i}$ be the geodesic connecting $c$ and $c'$, and fix an arbitrary point $p_{ref}$ on $\hat{\gamma}_{i}$. Let $p_{perp}$ be the foot of perpendicular from $p_{ref}$ to $\bar{\gamma}_{i}$. Note that $d(p_{perp}, p_{ref}) \rightarrow 0$ as $r \rightarrow 0$.

$\bar{C}_{1}$ is adjacent to a sequence of lifts of $\eta$. Among them we pick two consecutive lifts $\bar{\eta}_{+}$, $\bar{\eta}_{-}$ inside $K$, sandwiching the ray $\bar{\gamma}_{i}$. (Recall that $\bar{\gamma}_{i}$ will not cross $\bar{\eta}_{+}$ or $\bar{\eta}_{-}$ during pinching) $\bar{\eta}_{+}$ meets $\bar{C}_{1}$ at one endpoint and meets another lift $\bar{C}_{1, +}$ of $C_{1}$ at another endpoint. Similarly, $\bar{\eta}_{-}$ meets $\bar{C}_{1}$ and another lift $\bar{C}_{1, -}$ of $C_{1}$ at endpoints. $\bar{C}_{1, +}$ and $\bar{C}_{1, -}$ also converge to points $c_{+}$ and $c_{-}$ on $\partial \Ha$, respectively, as $r \rightarrow 0$.

\begin{figure}[ht]

\def\c{1.1}
\begin{tikzpicture}[scale=1]
\draw[thick] (-3, 0) -- (2.8, 0);
\draw (0.35, 0) -- (0.35, 4);
\draw (-0.35, 0) -- (-0.35, 4);

\draw[thick] (0.1 + 0.377027781, 0) arc (180:150:8.1);
\draw[thick] (0.1 - 0.590169943749474, 0) arc (0:52:5);

\draw[thick] (0.1 - 1.11803398875, 2.2390679775) arc (116.56505117707798:72.847578259788:2.5);

\draw[thick] (-0.17, 0.8) -- (-0.17, 2.481934729198171);
\draw (-0.17, 0.9) -- (-0.17, 4);
\draw[thick] (0.14, 0.9) arc (174:171:30.45);
\fill (-0.17, 0.8) circle (0.05);
\fill (0.14, 0.9) circle (0.05);
\draw[dashed] (-1.5, 2.2390679775) -- (1.5, 2.2390679775);
\draw[dashed] (-1.5, 2.505) -- (1.5, 2.505);

%\draw(0.1 - 1.11803398875, 2.2390679775) -- (0.1, 0);
%\draw (0.1 + 0.737286719055202, 2.388808969738856) -- (0.1, 0);
%\draw[shift={ (0.1 - 1.11803398875, 2.2390679775)}, rotate=26.565051177077989, dashed] (0, 0) -- (-3, 0);
%\draw[shift={ (0.1 - 1.11803398875, 2.2390679775)}, rotate=26.565051177077989] (-2.6, 0.2) node[ rotate=26.565051177077989] {$\leftarrow$};
%\draw[shift={ (0.1 - 1.11803398875, 2.2390679775)}, rotate=26.565051177077989] (-3.2, 0.2) node[ rotate=26.565051177077989] {$\frac{a_{-} + b_{-}}{2}$};
%\draw[shift={( 0.1 + 0.737286719055202, 2.388808969738856)}, rotate=-17.152421740211835, dashed] (0, 0) -- (3, 0);

%\draw[shift={( 0.1 + 0.737286719055202, 2.388808969738856)}, rotate=-17.152421740211835] (2.6, 0.2) node[ rotate=-17.152421740211835]  {$\rightarrow$};
%\draw[shift={( 0.1 + 0.737286719055202, 2.388808969738856)}, rotate=-17.152421740211835] (3.2, 0.2) node[ rotate=-17.152421740211835]  {$\frac{a_{+} + b_{+}}{2}$};

\fill[black!0, shift={(-0.55, 0.75)}] (-0.32, -0.2) -- (0.32, -0.2) -- (0.32, 0.2) -- (-0.32, 0.2) -- cycle;
\fill[black!0, shift={(0.62, 0.88)}] (-0.32, -0.2) -- (0.32, -0.2) -- (0.32, 0.2) -- (-0.32, 0.2) -- cycle;
\draw (-0.55, 0.75) node {$p_{ref}$};
\draw (0.62, 0.88) node {$p_{perp}$};

%\draw (0.1, -0.16) node {$d$};
\draw (-0.3, -0.2) node {$c_{ -}$};
\draw (0.38, -0.2) node {$c_{+}$};
\draw (0, 5) node {$c$};
\draw (0, 4.7) node {$\uparrow$};

\draw (-0.2, 4.2) node {$\hat{\gamma}_{i}$};

\draw (-2, 3.2) node {$\bar{\eta}_{ -}$};
\draw (1.76, 3.6) node {$\bar{\eta}_{+}$};

\fill[opacity=0.18] (0.1 + 0.377027781, 0)  -- (0.1 - 0.590169943749474, 0) arc (0:26.56505117707798:5) arc (116.56505117707798:72.847578259788:2.5) arc (162.847578259788:180:8.1);
\draw (-0.5, 1.5) arc (75:105:1.6);
\draw (-1.5, 1.5) node {$K$};
\draw (1.8, 2.505) node {$L$};
\draw (-1.8, 2.2390679775) node {$L_{-}$};

\begin{scope}[shift={(5.8, 1.2)}]
\fill[black!18] (-0.707106781186548*\c, 0.707106781186548*\c) arc (135:45:1*\c) arc (135:180:\c) -- (-1.414213562373095*\c +1*\c, 0) arc (0:45:\c) -- cycle;
\draw[thick] (-2.8*\c, 0) -- (2.8*\c, 0);
\draw[thick] (-1.414213562373095*\c - 1*\c, 0) arc (180:0:1*\c);
\draw[thick] (1.414213562373095*\c + 1*\c, 0) arc (0:180:1*\c);
\draw[thick] (-0.707106781186548*\c, 0.707106781186548*\c) arc (135:45:1*\c);
\draw (-1.414213562373095*\c , 0) --  (-0.707106781186548*\c, 0.707106781186548*\c) -- (0, 0) -- (0.707106781186548*\c, 0.707106781186548*\c) -- (1.414213562373095*\c , 0);

\draw(-1.414213562373095*\c - 1*\c, -0.3) node {$b_{-}$};
\draw(-1.414213562373095*\c , -0.3*\c) node {$\frac{a_{-} + b_{-}}{2}$};
\draw(-1.414213562373095*\c + 1*\c, -0.3) node {$a_{-}$};
\draw (0, -0.22) node {$d$};
\draw(1.414213562373095*\c + 1*\c, -0.3) node {$b_{+}$};
\draw(1.414213562373095*\c , -0.3*\c) node {$\frac{a_{+} + b_{+}}{2}$};
\draw(1.414213562373095*\c - 1*\c, -0.3) node {$a_{+}$};

\end{scope}

\end{tikzpicture}
\caption{Pinching described on $\Ha$}
\label{fig:pinching}
\end{figure}

Let us work on the upper half plane model with $c = \infty$, $c_{+}= 1$ and $c_{-}= 0$. We fix points $a_{\pm}$, $b_{\pm}$, $d$ on the real line as in Figure \ref{fig:pinching}, and define $r_{1, \pm}$ and $r_{2, \pm}$ as follows: \[
r_{1, \pm} := \left|a_{\pm} - \frac{a_{\pm} + b_{\pm}}{2}\right|, \quad r_{2, \pm} := \left| d - \frac{a_{\pm} + b_{\pm}}{2}\right|.
\]
Note that $a_{\pm}$, $d$ remain bounded while $|b_{\pm}| \rightarrow \infty$ as $r \rightarrow 0$. Thus, $r_{1, \pm} / r_{2, \pm}$ tends to 1 during the pinching.

We now consider three horocycles based at $\infty$: $L$ passing thourgh the highest point of $\bar{C}_{1}$, and $L_{\pm}$ passing though $\bar{C}_{1} \cap \bar{\eta}_{\pm}$. We record their Euclidean $y$-coordinates of $L$, $L_{\pm}$, and $p_{perp}$ by $y_{L}$, $y_{L_{\pm}}$, and $y_{p_{perp}}$, respectively. Then we have \begin{equation}\label{eqn:pinchingLimit}
\lim_{r\rightarrow 0} \frac{y_{L_{\pm}}}{y_{L}} = \lim_{r\rightarrow 0} \frac{r_{1, \pm}}{r_{2, \pm}} = 1
\end{equation}
and 
\begin{equation}\label{eqn:sandwichLength}
\ln \frac{\min(y_{L_{+}}, y_{L_{-}})}{y_{p_{perp}}}  \le d(p_{perp}, \bar{C}_{1}) \le \ln\frac{y_{L}}{y_{p_{perp}}}.
\end{equation}
from the geometry. 

Let $w$ be the Euclidean width of $\overline{C}_1$. Then we finally relate the length $r$ of $C_{1}$ with $y_{L}$ as follows: \begin{multline*}
\ln \left( \frac{1}{y_{L}} \int_{\bar{C_{1}}} dx \right) = \ln w - \ln y_{L} \le \ln r = \ln \left( \int_{\bar{C}_{1}} \frac{ds}{y} \right) 
\\
\le \ln \left( \frac{y_{L}}{\min(y_{L_{+}}, y_{L_{-}})^{2}} \int_{\bar{C}_{1}} dx \right)= \ln w - \ln y_{L}+\ln \frac{y_{L}^{2}}{\min(y_{L_{+}}, y_{L_{-}})^{2}}.
\end{multline*}
Here the Euclidean width $w$ of $\bar{C}_{1}$ is equal to \[
\frac{r_{2, +}^{2} - r_{1, +}^{2}}{r_{2, +}} + \frac{r_{2, -}^{2} - r_{1, -}^{2}}{r_{2, -}}.
\]
Recall that $r_{1, \pm} / r_{2, \pm} \rightarrow 1$ as $r \rightarrow 0$. Moreover, \[
\lim_{r \rightarrow 0} [(r_{2, +} - r_{1, +}) + (r_{2, -} - r_{1, -})] = \lim_{r \rightarrow 0} (a_{+} - a_{-}) = 1.
\]
Using this, we conclude that $w \rightarrow 2$ and $\ln y_{L} / \ln r \rightarrow -1$ as $r \rightarrow 0$. From this conclusion and Equation \ref{eqn:pinchingLimit}, \ref{eqn:sandwichLength}, we obtain that \[
\lim_{r \rightarrow 0} \frac{d(p_{perp}, \bar{C}_{1})}{- \ln r}=1.
\]

The same logic applies to $\bar{C}'_{1}$, the other end. Thus we obtain\[
\lim_{r \rightarrow 0} \frac{l_{X_{r}} (\gamma_{i})}{-\ln r} = 2.
\]
Similar discussion also holds for other $\gamma_{i}$'s. Since $l_{X_{r}}(\beta_{i})$'s are of class $O(r)$, we conclude that \begin{multline*}
2 i(\alpha, C_{1}) = \lim_{r \rightarrow 0} \sum_{i=1}^{k} \frac{l_{X_{r}} (\gamma_{i})}{-\ln r} \le \liminf_{r \rightarrow 0} \frac{l_{X_{r}}(\alpha)}{-\ln r} \\\le \limsup_{r \rightarrow 0} \frac{l_{X_{r}}(\alpha)}{-\ln r} \le \lim_{r\rightarrow 0}  \sum_{i=1}^{k}\left( \frac{l_{X_{r}} (\gamma_{i})}{-\ln r} + \frac{l_{X_{r}} (\beta_{i})}{-\ln r} \right)= 2i(\alpha, C_{1}). \qedhere
\end{multline*}
\end{proof}

\medskip
\bibliographystyle{alpha}
\bibliography{geodesic}

\end{document}